\documentclass[reqno]{amsart}

\usepackage{amsmath,amssymb}
\usepackage{amsfonts, amscd, epsfig, amsmath, amssymb,enumerate}
\usepackage{amstext}
\usepackage{graphicx}
\usepackage[usenames,dvipsnames,svgnames,table]{xcolor}
\usepackage{mathrsfs}

\usepackage[normalem]{ulem}
\usepackage[charter]{mathdesign}
\usepackage{cancel}




\usepackage{tikz}
\usetikzlibrary{backgrounds}
\usetikzlibrary{patterns,fadings}
\usetikzlibrary{arrows,decorations.pathmorphing}
\usetikzlibrary{calc}
\definecolor{light-gray}{gray}{0.95}
\usepackage{float}
\usepackage[colorlinks=true,linkcolor=blue,citecolor=magenta]{hyperref}

\allowdisplaybreaks

\newcommand{\pfrac}[2]{\genfrac{}{}{}{1}{#1}{#2}}

\newtheorem{theorem}{Theorem}[section]
\newtheorem{lemma}[theorem]{Lemma}
\newtheorem{proposition}[theorem]{Proposition}
\newtheorem{corollary}[theorem]{Corollary}
\newtheorem{remark}[theorem]{Remark}
\newtheorem{definition}[theorem]{Definition}

\renewcommand\vec[1]{\overrightarrow{#1}}
\newcommand\vecleft[1]{\overleftarrow{#1}}
\renewcommand{\bar}{\overline}
\renewcommand{\leq}{\leqslant}
\renewcommand{\le}{\leqslant}
\renewcommand{\geq}{\geqslant}
\renewcommand{\ge}{\geqslant}
\renewcommand{\tilde}{\widetilde}

\numberwithin{equation}{section}

\newcommand{\mc}[1]{{\mathcal #1}}
\newcommand{\mf}[1]{{\mathfrak #1}}
\newcommand{\mb}[1]{{\mathbf #1}}
\newcommand{\bb}[1]{{\mathbb #1}}
\newcommand{\mr}[1]{{\mathrm #1}}

\renewcommand{\epsilon}{\varepsilon}

\newcommand{\RR}{\mathbb R}
\newcommand{\NN}{\mathbb N}

\newcommand{\tmop}[1]{\ensuremath{\operatorname{#1}}}

\newcommand{\Ctwo}{\ensuremath{\mc S_{\tmop{Dir}}}}
\newcommand{\SD}{\ensuremath{\mc S_{\tmop{Dir}}}}
\newcommand{\SN}{\ensuremath{\mc S_{\tmop{Neu}}}}

\def\centerarc[#1](#2)(#3:#4:#5){\draw[#1] ($(#2)+({#5*cos(#3)},{#5*sin(#3)})$) arc (#3:#4:#5);}

\let\oldtocsection=\tocsection
\let\oldtocsubsection=\tocsubsection
\let\oldtocsubsubsection=\tocsubsubsection
\renewcommand{\tocsection}[2]{\hspace{0em}\oldtocsection{#1}{#2}}
\renewcommand{\tocsubsection}[2]{\hspace{1em}\oldtocsubsection{#1}{#2}}
\renewcommand{\tocsubsubsection}[2]{\hspace{2em}\oldtocsubsubsection{#1}{#2}}
\DeclareRobustCommand{\SkipTocEntry}[5]{}

\begin{document}

\title[Stochastic Burgers equation with Dirichlet boundary conditions]{Derivation of the stochastic Burgers equation with Dirichlet boundary conditions from the WASEP}

\author{Patr\' icia  Gon\c calves}

\address{Center for Mathematical Analysis,  Geometry and Dynamical Systems,
Instituto Superior T\'ecnico, Universidade de Lisboa,
Av. Rovisco Pais, 1049-001 Lisboa, Portugal and   Institut  Henri
Poincar\'e, UMS 839 (CNRS/UPMC), 11 rue Pierre et Marie Curie, 75231 Paris Cedex 05, France.}
\email{{\tt patricia.goncalves@math.tecnico.ulisboa.pt}}

\author{Nicolas Perkowski}
\address{Humbolt-Universit\"at zu Berlin, Institut f\"ur Mathematik, Unter den Linden 6, 10099 Berlin, Germany.}
\email{}

\author{Marielle Simon}
\address{\noindent INRIA Lille Nord Europe, \'equipe-projet MEPHYSTO, 40 avenue du Halley, 59650 Villeneuve d'Ascq, France. }
\email{marielle.simon@inria.fr}


\begin{abstract}
We consider the weakly asymmetric simple exclusion process on the discrete space $\{1,...,n-1\}\ (n\in\bb N)$, in contact with stochastic  reservoirs, both with density $\rho\in{(0,1)}$ at the extremity points, and starting from the invariant state, namely the Bernoulli product measure of parameter $\rho$. Under time diffusive scaling $tn^2$ and for $\rho=\frac12$, when the asymmetry parameter is taken of order $1/ \sqrt n$, we prove that the density fluctuations at stationarity  are macroscopically governed by the  energy solution of the stochastic Burgers equation with Dirichlet boundary conditions, which is shown to be  unique and different from the Cole-Hopf solution.
\end{abstract}

\maketitle


\section{Introduction}\label{s1}

\subsection{Context}
A vast amount of physical phenomena  were first described
at the macroscopic scale, in terms of the classical partial differential equations (PDEs) of mathematical physics. Over the last decades the scientific community has tried to give a precise understanding of their derivation from first principles at the microscopic level in order to identify the limits of their validity. Typically, the microscopic systems are composed of a huge number of atoms
and one  looks at a very large time scale with respect to the typical
frequency of atom vibrations.
Mathematically, this corresponds to a space-time scaling limit procedure.

The macroscopic laws that can arise from microscopic systems can either be partial differential equations (PDEs) or stochastic PDEs (SPDEs) depending on whether one is looking at the convergence to the mean or at the fluctuations around that mean. Among the classical SPDEs is the Kardar-Parisi-Zhang (KPZ) equation which   has been first introduced more than thirty years ago in  \cite{KPZ} as the \emph{universal} law describing the fluctuations of randomly growing interfaces of one-dimensional
stochastic dynamics close to a stationary state (as for example, models of bacterial growth, or fire propagation). Since then, it has generated an intense research activity among the physics and mathematics community. In particular, the  \emph{weak KPZ universality conjecture} \cite{Corwin12, QuaS, Sp16} states that  the  fluctuations of a large class
of one-dimensional microscopic interface growth models
are ruled at the macroscopic scale by  solutions of the \emph{KPZ equation}, which reads as follows: consider a time variable $t$ and a one-dimensional space variable $u$, then the evolution of the height $\mc Z(t,u)$ of the randomly growing interface can be described by
\begin{equation}
d\mc Z(t,u) = A \Delta \mc Z(t,u)\; dt + B \big( \nabla \mc Z(t,u)\big)^2 \; dt + \sqrt C d \mc W_t, \label{eq:KPZintro}
\end{equation}
where $A,B,C$ are thermodynamic constants depending on the model and $\partial_t\mc W$ is a space-time white noise. Note that the non-linearity $( \nabla \mc Z(t,u))^2$
makes the KPZ equation \eqref{eq:KPZintro} ill-posed, essentially because the trajectory of the solution lacks space regularity  (due to the  presence of the white noise), and therefore the square of its distributional derivative is not defined. 

One possible way to
solve this equation is to consider its \emph{Cole-Hopf transformation} $\Phi$, which solves a stochastic heat equation with a multiplicative noise (SHE) and is related to $\mc Z$ through a $\log$--transformation. Since the SHE is linear, its solution can easily
be constructed and it is unique, and the solution to the KPZ equation can then simply be defined as the inverse Cole-Hopf transformation of $\Phi$. However, the solution to the SHE is too irregular to allow for a change of variable formula, and a priori there is no meaningful equation associated to its inverse Cole-Hopf transformation.
 Only recently Hairer has developed
a meaningful notion of solution for the KPZ equation and proved existence and uniqueness of such solutions with periodic boundary conditions, see \cite{Hairer2013}. His approach uses rough path integrals to construct the nonlinearity $(\nabla \mc Z(t,u))^2$, and it has inspired the development of new technologies (regularity structures~\cite{Hairer2014} and paracontrolled distributions~\cite{Gubinelli2015Paracontrolled}) for the so-called \emph{singular} stochastic partial differential equations (SPDEs).

The first breakthrough towards the  weak KPZ universality conjecture is due to Bertini and Giacomin: in their seminal paper \cite{BerG}, they show that the Cole-Hopf solution can be obtained as a scaling limit of the \emph{weakly asymmetric exclusion process} (WASEP) (which will be defined ahead). 
Their approach consists in performing the Cole-Hopf transformation at the microscopic level, following \cite{GAR}, and then showing that this microscopic Cole-Hopf transformation solves a linear equation (similarly to what happens at the macroscopic level). Since then, this strategy has been used in more sophisticated models, see \cite{Corwin2016,CST,corwin2017,Dembo2016},  however the applicability of the microscopic Cole-Hopf transformation is  limited to a very specific class of particle systems.

Another way to look at the KPZ equation is via the \emph{stochastic Burgers equation} (SBE), which is obtained from \eqref{eq:KPZintro} by taking its derivative: if $\mc Y_t = \nabla \mc Z_t$, then $\mc Y_t$ satisfies
\begin{equation}
\label{eq:SBEintro}
d\mc Y(t,u) = A \Delta \mc Y(t,u)\; dt + B\nabla \big( \mc Y^2(t,u)\big) \; dt + \sqrt C d\nabla\mc W_t,
\end{equation}
which has of course the same regularity issues as the KPZ equation. Nevertheless, this formulation is well adapted to derive KPZ behavior from  microscopic models. Indeed, the work initiated by Gon\c{c}alves and Jara in \cite{gj2014} has introduced a new tool, called \emph
{second order Boltzmann-Gibbs principle} (BGP), which makes the non-linear term $\nabla ( \mc Y^2(t,u))$ of the SBE  naturally emerge from
the underlying microscopic dynamics. The authors have first proved the BGP for general weakly asymmetric simple  exclusion processes, and 
shortly  thereafter it has been extended to a wider class of microscopic systems, such as
 zero-range models \cite{GJS}, integrable Hamiltonian one-dimensional chains \cite{GJSi}, non-degenerate kinetically constrained
models \cite{blondel2016}, exclusion processes with defects \cite{fgsimon2015}, non-simple exclusion processes \cite{GJLR}, semilinear SPDEs \cite{Gubinelli2016Hairer}, or interacting diffusions \cite{Diehl2017}.
From the BGP, it comes naturally that some suitably rescaled microscopic quantity, called \emph{density fluctuation field} (see below for a precise meaning)  subsequentially converges, as the size of the microscopic system goes to infinity, to random
fields which are solutions $\mc Y$ of a generalized martingale problem for \eqref{eq:SBEintro}, where
the singular non-linear drift  $\nabla ( \mc Y^2(t,u))$ is a well-defined space-time distributional random field. Gon\c{c}alves and Jara in \cite{gj2014} (see also \cite{Gubinelli2013}) called them \emph{energy solutions}. Recently, Gubinelli and Perkowski \cite{Gubinelli2015Energy} proved uniqueness of  energy solutions to \eqref{eq:SBEintro} and as a significant consequence, the proof of the weak KPZ universality conjecture could be concluded for all the models mentioned above. We note that
none of these models admits a microscopic Cole-Hopf transformation, which prevents the use of the methods of \cite{BerG}.

\subsection{Purposes of this work}

 Our goal in this article is to go beyond the weak KPZ universality conjecture and to derive a new SPDE, namely,  the KPZ equation with boundary conditions, from an interacting particle system in contact with stochastic reservoirs.
Indeed, the presence of boundary conditions in evolution equations  often lacks understanding from a physical and mathematical point of view. Here we intend to legitimate the choice done at the macroscopic level for the KPZ/SBE equation from the microscopic description of the system. 
For that purpose, we first prove two main theorems:\begin{itemize}
\item[(Theorem \ref{def:energy})] We extend the notion of energy solutions to the stochastic Burgers equation \eqref{eq:SBEintro} by adding Dirichlet boundary conditions: we set up a rigorous definition and prove existence and uniqueness of such solutions.
\item[(Theorem \ref{theo:convergence})] We construct a microscopic model (inspired by the WASEP mentioned above) from which the energy solution naturally emerges as the macroscopic limit of its stationary density fluctuations. 
\end{itemize}

This gives a physical justification for the Dirichlet boundary conditions in \eqref{eq:SBEintro}. We also introduce the notion of energy solutions to two related SPDEs: the KPZ equation with Neumann boundary conditions and the SHE with Robin boundary conditions; we prove their existence and uniqueness, and we rigorously establish the formal links between the equations discussed above. This is more subtle than expected, because the boundary conditions do not behave canonically: a formal computation suggests that the Cole-Hopf transform of the KPZ equation with Neumann boundary conditions should also satisfy Neumann boundary conditions, but we show that instead it satisfies Robin boundary conditions.

We also  associate an interface growth model to our microscopic model, roughly speaking by integrating it in the space variable, and show that it converges to the energy solution of the KPZ equation, thereby giving a physical justification of the Neumann boundary conditions.

In the remaining lines we go into further details and explain these results.

\subsubsection{WASEP with reservoirs} At the microscopic level, the model that we consider is the following: let a discrete system of particles  evolve on the finite set $\Lambda_n=\{1,\dots,n-1\}$ of size $n\in\bb N$. For any site $x\in\Lambda_n$ there is at most one particle, and we denote by $\eta(x)\in\{0,1\}$ its occupation variable, namely: $\eta(x)=1$ if there is a particle at site $x$, and 0 otherwise. We then define a continuous-time Markov process $\eta_t=\{\eta_t(x)\; ; \; x\in\Lambda_n\}$ on the state space $\{0,1\}^{\Lambda_n}$ using the following possible moves: for any $x \notin\{1,n-1\}$, a particle at site $x$ can attempt to jump to its neighbouring sites $x-1$ or $x+1$, provided that they are empty. Similarly, a particle at site $1$ can jump to its right neighbour $2$ or it can leave the system, and the particle at site $n-1$ can jump to its left neighbour $n-2$ or it can leave the system. Moreover, attached to the extremities of $\Lambda_n$ there are two reservoirs of particles: one at site $0$, which can send a particle to site $1$ (if this site is empty), and the other one at site $n$, which can send a particle to site $n-1$ (if this site is empty). Another interpretation of the boundary dynamics could be given as follows: particles  can either be created (resp. annihilated)  at the sites $x=1$ or $x=n-1$ if the site is empty (resp. occupied). 

All possible moves are endowed with random Poissonian clocks, independently of each other. Along the time evolution, we launch the jump whose clock rings first, and after the jump all clocks are reset. Namely, we are given a family of independent Poisson processes, indexed by all the possibles moves $x \curvearrowleft y $, with intensities $\lambda(x,y)$, on the time line $[0,\infty)$.  The intensities $\lambda(x,y)$ depend on the occupation variables $\eta(x),\eta(y)$ and on some small parameter $\varepsilon_n >0$ as follows:

\begin{itemize}
\item if $x \in \{2,\dots,n-2\}$, then we assume \begin{align}
\lambda(x,x+1) & = (1+\varepsilon_n)\eta(x)\big(1-\eta(x+1)\big), \label{eq:jumpright}\\
\lambda(x,x-1) & = \eta(x)\big(1-\eta(x-1)\big), \label{eq:jumpleft}
\end{align} 
\item while at the boundaries, for some fixed $\rho \in(0,1)$ we set
\begin{align}
\lambda(0,1) & = \rho(1+\varepsilon_n) \big(1-\eta(1)\big), \qquad \lambda(n-1,n) = (1-\rho)(1+\varepsilon_n)\eta(n-1), \label{eq:rightboundary}\\
\lambda(1,0)& = (1-\rho)\eta(1), \qquad   \qquad \quad\;\;\lambda(n,n-1) = \rho\big(1-\eta(n-1)\big).\label{eq:leftboundary}
\end{align}
\end{itemize}
In \eqref{eq:jumpright} and \eqref{eq:rightboundary}, the factor $1+\varepsilon_n$ breaks the symmetry of the jumps: there is a non-trivial drift towards the right. But since $\varepsilon_n \to 0$, we are in the weakly asymmetric setting. Moreover,  note that the product $\eta(x)(1-\eta(x+1))$ in \eqref{eq:jumpright} (for instance) corresponds to the \emph{exclusion rule} explained above: for the jump $x\curvearrowleft x+1$ to have a non-zero intensity, there has to be a particle at site $x$, and no particle at site $x+1$.

An invariant measure for these dynamics is the Bernoulli product measure on $\{0,1\}^{\Lambda_n}$ of parameter $\rho$, which we denote by  $\nu_\rho^n$, and whose marginals  satisfy: $$\nu_\rho^n\big\{\eta(x)=1\big\}=1-\nu_\rho^n\big\{\eta(x)=0\big\}=\rho.$$
We start the Markov process $\eta_t$ under the invariant measure $\nu_\rho^n$ and we look at the evolution on the diffusive time scale $tn^2$, where $t\ge 0$ is the macroscopic time. The \emph{microscopic density fluctuation field} is then defined as 
\begin{equation}
\label{eq:fieldintro}
\mc Y_t^n(\cdot) = \frac{1}{\sqrt n}\sum_{x=1}^{n-1} \big(\eta_{tn^2}(x) - \rho \big) \delta_{x /n}(\cdot),
\end{equation}
where $\delta_{x/n}$ is the Delta dirac distribution, and therefore $(\cdot)$ is meant to be a test function. We prove in Theorem \ref{theo:convergence} two main results on the large $n$ limit of that field, assuming that the initial density is $\rho=\frac12$ (this assumption, which aims at removing a transport phenomenon inside the system, will be explained in detail in Remark \ref{rem:rho}):
\begin{itemize} 
\item if $\varepsilon_n = E/\sqrt n$ (for some $E>0$), the sequence of processes $\mc Y^n$ converges, in a suitable space of test functions, towards the unique energy solution $\mc Y$ to \eqref{eq:SBEintro} with Dirichlet boundary conditions, with parameters $A=1$, $B=E$ and $C=\frac12$;
\medskip
\item whenever $\sqrt n\varepsilon_n \to 0$ as $n\to \infty$,  the sequence $\mc Y^n$ converges towards the unique Ornstein-Uhlenbeck  (OU) process with Dirichlet boundary conditions, which is equivalent to the unique energy solution to \eqref{eq:SBEintro} with parameters $A=1$, $B=0$ and $C=\frac12$.
\end{itemize}

\subsubsection{The second order Boltzmann-Gibbs principle} The main ingredient that we use at the microscopic level is the BGP, that we have to reprove completely in our particular setting. Indeed, this is the first time that the BGP is established in a space with boundaries.

This tool, which was first introduced in \cite{gj2014}, permits to investigate the space-time fluctuations of the microscopic density fluctuation field, and to prove that, when properly recentered, the latter is close in the macroscopic  limit to a quadratic functional of the conservative field.
It was originally proved for general weakly asymmetric simple exclusion processes by a multi-scale argument (also given in \cite{G}),  which consists in replacing, at each step,  a local function of the dynamics by another function whose
support increases  and whose variance decreases, and its proof used a key spectral gap inequality which is uniform in the density of particles, and is not available for many models. Later in \cite{GJS} it is assumed that the models satisfy a spectral gap bound which does not
need to be uniform in the density of particles and allows more general  jump rates. More recently in \cite{fgsimon2015,GJSi}, and then in \cite{blondel2016}, a new proof of the BGP has permitted
to extend the previous results to models which do not need to satisfy a spectral gap bound,
as, for example, symmetric exclusion processes with a slow bond, and microscopic dynamics that
allow degenerate exchange rates. In this paper, we adapt that strategy, which turns out to be quite robust, to our finite model with stochastic boundary reservoirs. This is the goal of Theorem \ref{theo:BG}.

\subsubsection{Boundary behavior at the microscopic scale}

As we already mentioned, the KPZ equation and SBE are closely related, and this relation can be seen also at the microscopic level. There is a natural \emph{height function} $h(x)$ associated to the occupation variable $\eta(x)$, and in particular its increments satisfy: 
\[h(x+1)-h(x) = \eta(x) -\rho.\] Because of the presence of the reservoirs in our dynamics, the microscopic height process $\{h_t(x)\; ; \; t \geq 0, x=1,\dots,n\}$, which is derived from the Markov process $\eta_t$ defined above, has to be carefully defined. We refer the reader to Section \ref{ssec:height} for a rigorous definition. Similarly to \eqref{eq:fieldintro}, we are interested in the macroscopic limit of height fluctuations  starting the evolution from $\nu_\rho^n$ and looking in the time scale $tn^2$. In this case the averaged local height at site $x \in \{1,\dots,n\}$ and microscopic time $tn^2$ is equal to $c_n t$, where $c_n$ is related to the initial density $\rho$ and the strength of the asymmetry $\varepsilon_n$, as follows: $c_n=n^2\varepsilon_n\rho(1-\rho)t$. Therefore, the \emph{microscopic height fluctuation field} is given by 
\begin{equation}
\label{eq:height-field-intro}
\mc Z_t^n(\cdot) = \frac{1}{n^{3/2}} \sum_{x=1}^n \big(h_{tn^2}(x)-c_n t\big)\delta_{x/n}(\cdot),
\end{equation}
which means, formally, that $\mc Y_t^n = -\nabla \mc Z_t^n$.  In the same spirit as Theorem \ref{theo:convergence}, for $\varepsilon_n=E/\sqrt n$ and $\rho=\frac12$, we prove in Theorem \ref{theo:convheight} that the sequence of processes $\mc Z^n$ converges, in a suitable space of distributions, towards the unique energy solution $\mc Z$ to \eqref{eq:KPZintro} with Neumann boundary conditions, with the same parameters $A=1$, $B=E$ and $C=\frac12$.

A closely related convergence result was recently shown by Corwin and Shen~\cite{Corwin2016}, who consider the height function associated to a variant of the WASEP in contact with reservoirs, apply G\"artner's microscopic Cole-Hopf transform, and show that in the scaling limit the process converges to a solution of the SHE with Robin boundary conditions. However, the model we study here has parameters that are not admissible in~\cite{Corwin2016}, because in their formulation the parameters $A$ and $B$ have to be positive (see e.g. Proposition~2.7 and p.14 in~\cite{Corwin2016}), while we would get $A = B = -E^2/8$ (see Proposition~\ref{prop:link} below and also Remark~\ref{rmk:CS}). Apart from that, our methods are very different from the ones used in~\cite{Corwin2016} and we study the microscopic density fluctuation field without relying on the Cole-Hopf transform.

\subsubsection{Uniqueness of energy solutions and boundary behavior at the macroscopic scale}

The convergence proofs described above show relative compactness of the microscopic density fluctuations field (resp. the microscopic height fluctuation field) under rescaling, and that any limiting point is an energy solution to the stochastic Burgers equation with Dirichlet boundary conditions (resp. the KPZ equation with Neumann boundary conditions). To conclude the convergence, it remains to prove the uniqueness of energy solutions. We achieve this following the same strategy used in~\cite{Gubinelli2015Energy} for proving the uniqueness of energy solutions on the real line: We mollify an energy solution $\mc Y$ to the SBE, $\mc Y^n$, find a suitable anti-derivative $\mc Z^n$ of $\mc Y^n$, and let $\Phi^n = e^{\mc Z^n}$. Now we take the mollification away and show that $\Phi = \lim_n \Phi^n$ solves (a version of) the SHE. Since uniqueness holds for solutions to the SHE, $\mc Y = \nabla \log \Phi$ must be unique. But while the strategy is the same as in~\cite{Gubinelli2015Energy}, the technical details are considerably more involved because our space is no longer translation invariant and many of the tools of~\cite{Gubinelli2015Energy} break down. Moreover, the dynamics of $\Phi^n$ contain a singular term that converges to Dirac deltas at the boundaries, a new effect which can be interpreted as a change of boundary conditions: Formally we would expect that $\nabla \Phi_t(0) = e^{\mc Z_t(0)} \nabla \mc Z_t(0) = 0$ because $\nabla \mc Z_t(0) = 0$ by the Neumann boundary conditions for $\mc Z$. However, the singular term in the dynamics changes the boundary conditions to Robin's type, $\nabla \Phi_t(0) = -D \Phi_t(0)$, $\nabla \Phi_t(1) = D \Phi_t(1)$ for a constant $D \in \bb R$ which depends on the parameters $A,B,C$.

In that sense $\mc Z$ can be interpreted as a Cole-Hopf solution to the KPZ equation with inhomogeneous Neumann boundary conditions, and then the question arises which of the two formulations (inhomogeneous or homogeneous Neumann conditions) accurately describes the behavior of our stochastic process at the boundary. While of course the main difficulty with the KPZ equation is that its solutions are non-differentiable and in particular we cannot evaluate $\nabla \mc Z_t(0)$ pointwise, we show in Proposition~\ref{prop:boundary} that after averaging a bit in time there are canonical interpretations for $\nabla \mc Z_t(0)$ and $\nabla \mc Z_t(1)$, and both indeed vanish. We also show in Proposition~\ref{prop:CH-boundary} that the formal change of the boundary conditions from Neumann to Robin in the exponential transformation $\Phi = e^{\mc Z}$ is reflected in the ``pointwise''  boundary behavior of $\Phi$ (again after averaging in time).

This should be compared with the recent work of Gerencs\'er and Hairer~\cite{Gerencser2017} who show that the classical solution $\mc Z^\varepsilon$ to the KPZ equation,
\begin{equation}\label{eq:GH-KPZ-intro}
   d\mc Z^\varepsilon_t = \Delta \mc Z^\varepsilon_t dt + \big((\nabla\mc Z^\varepsilon_t)^2 - c^\varepsilon\big) dt +  d \mc W^\varepsilon_t \;,
\end{equation}
with Neumann boundary condition $\nabla \mc Z^\varepsilon_t(0) = \nabla \mc Z^\varepsilon_t(1) = 0$ and where $\mc W^\varepsilon$ is a mollification of $\mc W$ and $(c^\varepsilon)_{\varepsilon>0}$ is a sequence of diverging constants, may converge to different limits satisfying different boundary conditions as $\varepsilon \to 0$, depending on which mollifier was used for $\mc W^\varepsilon$. But if the noise is only mollified in space and white in time, then the limit is always the same and it agrees with the Cole-Hopf solution with Neumann boundary conditions. Since our results show that the Cole-Hopf solution exhibits ``non-physical'' boundary behavior, this suggests that in order to obtain the ``correct'' limit it is not only necessary to subtract a large diverging constant $c^\varepsilon$ in~\eqref{eq:GH-KPZ-intro}, but additionally one should perform a boundary renormalization. Indeed, under boundary conditions the solution $\mc Z^\varepsilon$ to~\eqref{eq:GH-KPZ-intro} is not spatially homogeneous, and therefore there is no reason to renormalize it by subtracting a constant $c^\varepsilon$.

\subsection{Outline of the paper:}
In Section \ref{s2} we give the precise definition of our microscopic dynamics and its invariant measures, we also introduce all the spaces of test functions where the microscopic fluctuation fields, namely the density and the height, will be defined and we give the proper definition of these fields. Section  \ref{s3} contains all the rigorous definitions of solutions to the SPDEs that we obtain, namely the  OU/SBE with Dirichlet boundary conditions, the KPZ equation with Neumann boundary conditions  and the SBE with linear Robin boundary conditions, and we explain how these equations are linked.  In Section \ref{s4} we prove the convergence of the microscopic fields to the  solutions of the respective SPDEs, namely  Theorems \ref{theo:convergence} and  \ref{theo:convheight}.  In Section \ref{s5} we prove  the  second order Boltzmann-Gibbs principle, which is the main technical result that we need at the microscopic level in order to recognize the macroscopic limit of the density fluctuation field as an energy solution to the SBE. Finally, in Section \ref{sec:energy-pr} we give the proof of the uniqueness of solutions to the aforementioned SPDEs.  The appendices contain some important aside  results that are needed along the paper, but to facilitate the reading flow we removed them from the main body of the text. In particular, Appendix \ref{app:micro-CH} sketches how one could prove that the microscopic Cole-Hopf transformation of the microscopic density fluctuation field converges to the SHE, and in particular we show that already at the microscopic level the Cole-Hopf transformation changes the boundary conditions from Neumann to Robin's type.

\section{Model and definitions}
\label{s2}

\subsection{The microscopic dynamics: WASEP in contact with stochastic reservoirs} 
For $n$ a positive integer, we define $\Lambda_n = \{1,\ldots, n-1\}$ and   $\Omega_n=\{0,1\}^{\Lambda_n}$ as the state space of a  Markov process  $\{\eta_t^n\,; \, t \geq 0\}$, whose dynamics is entirely encoded into its infinitesimal generator, denoted below by $\mc L_n$. More precisely, our process belongs to the family of well-known \emph{weakly asymmetric simple exclusion processes}. Here we consider that the strength of asymmetry is ruled by a parameter $\gamma \geq \frac12$, and  we put the system of particles in contact with two stochastic reservoirs, whose rates of injection/removal of particles from the bulk depend on a parameter $\rho\in{(0,1)}$, which is fixed. To keep notation simple we omit the dependence on $\rho$ in all the quantities that we define ahead and in order to  facilitate future computations, we write the generator as
\begin{equation*}
\mathcal L_n  =  \mc L_n^{\textrm{bulk}} + \mc L_n^{\mathrm{bnd}}\,,
\end{equation*}
where $\mc L_n^{\textrm{bulk}}$ and  $\mc L_n^{\mathrm{bnd}}$  given below encode respectively the dynamics on the bulk, and on the left/right boundaries. For any $E>0$ and $\gamma\geq \frac12$  we define $\mc L_n^{\textrm{bulk}}$ and  $\mc L_n^{\mathrm{bnd}}$  acting on  functions $f: \Omega_n \to \bb R$ as follows: first,
\begin{equation*}
\big(\mc L_n^{\textrm{bulk}} f\big)(\eta) = 
\sum_{x=1}^{n-2} \bigg\{\Big(1+\frac{E}{n^\gamma} \Big) \eta(x)\big(1-\eta(x+1)\big)+\eta(x+1)\big(1-\eta(x)\big)\bigg\} \big(\nabla_{x,x+1}f\big)(\eta),
\end{equation*}
where $\big(\nabla_{x,x+1}f\big)(\eta)=f(\sigma^{x,x+1}\eta)
-f(\eta)$ and $\sigma^{x,x+1} \eta$ is the configuration obtained from $\eta$ after exchanging $\eta(x)$ with $\eta(x+1)$, namely
\[
(\sigma^{x,x+1} \eta)(y)=
\left\{
\begin{array}{ll}
\eta(x)\,,& \textrm{ if } y=x+1,\\
\eta(x+1)\,,& \textrm{ if } y=x,\\
\eta(y)\,,& \textrm{ if } y\notin \{x,x+1\}.\\
\end{array}
\right.
\]
Second, the generator of the dynamics at the boundaries  is given by
\begin{equation*}
\begin{split}
\big(\mc L_n^{\mathrm{bnd}} f\big)(\eta) & =  
 \Big\{\Big(1+\frac{E}{n^\gamma} \Big) \rho(1-\eta(1))+\eta(1)\big(1-\rho\big)\Big\}\big(f(\sigma^{1}\eta)
-f(\eta)\big)\\
&\; +
 \Big\{\Big(1+\frac{E}{n^\gamma} \Big) \eta(n-1)(1-\rho)+\rho\big(1-\eta(n-1)\big)\Big\}\big(f(\sigma^{n-1}\eta)
-f(\eta)\big)\\
\end{split}
\end{equation*}
where  
\[
(\sigma^{x} \eta)(y)=
\left\{
\begin{array}{ll}
1-\eta(y)\,,& \textrm{ if } y=x\,,\\
\eta(y)\,,&  \textrm{ if } y\neq x.\\
\end{array}
\right.
\] 
From now on to simplify notation we denote the rates that appear above 
by  $r_{x,x+1}(\eta)$: precisely, at any $x\in\{0,\dots,n-1\}$, and assuming the convention $\eta(0)=\eta(n)=\rho$, they are given by 
\begin{equation}\label{eq:rate}
r_{x,x+1}(\eta)=\Big(1+\frac{E}{n^\gamma} \Big) \eta(x)\big(1-\eta(x+1)\big)+\eta(x+1)\big(1-\eta(x)\big).
\end{equation} 
We refer to Figure \ref{fig1} for an illustration of the dynamics.  

\begin{figure}[H]
\centering
\begin{tikzpicture}
\centerarc[thick,<-](0.5,0.3)(10:170:0.45);
\centerarc[thick,->](0.5,-0.3)(-10:-170:0.45);
\centerarc[thick,->](4.5,-0.3)(-10:-170:0.45);
\centerarc[thick,<-](4.5,0.3)(10:170:0.45);
\centerarc[thick,->](8.5,-0.3)(-10:-170:0.45);
\centerarc[thick,<-](8.5,0.3)(10:170:0.45);
\draw (1,0) -- (8,0);

\shade[ball color=black](1,0) circle (0.25);
\shade[ball color=black](3,0) circle (0.25);
\shade[ball color=black](5,0) circle (0.25);
\shade[ball color=black](6,0) circle (0.25);
\shade[ball color=black](8,0) circle (0.25);

\filldraw[fill=white, draw=black]
(2,0) circle (.25)
(4,0) circle (.25)
(7,0) circle (.25);

\draw (1.3,-0.05) node[anchor=north] {\small  $\bf 1$}
(2.3,-0.05) node[anchor=north] {\small $\bf 2 $}
(8.5,-0.05) node[anchor=north] {\small $\bf n\!-\!1$};
\draw (0.5,0.8) node[anchor=south]{$\rho (1+\frac{E}{n^\gamma})$};
\draw (0.5,-0.8) node[anchor=north]{$1-\rho$};
\draw (4.5,0.8) node[anchor=south]{$1+\frac{E}{n^\gamma}$};
\draw (8.5,-0.8) node[anchor=north]{$\rho$};
\draw (8.5,0.8) node[anchor=south]{$(1-\rho) (1+\frac{E}{n^\gamma})$};
\draw (4.5,-0.8) node[anchor=north]{$1$};
\end{tikzpicture}
\caption{Illustration of the  jump rates. The leftmost and rightmost rates are the entrance/exiting rates.}\label{fig1}
\end{figure}
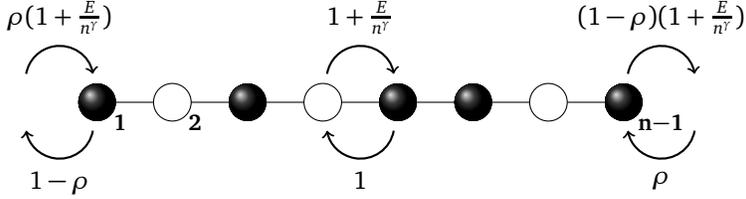

Throughout the paper, a time horizon line $T>0$ is fixed. We are interested in the evolution of this exclusion process in the diffusive time scale $tn^2$, thus we denote by $\{\eta^n_{tn^2}\,; \, t\in [0,T]\}$ the Markov process on $\Omega_n$ associated to the accelerated generator $n^2 \mathcal{L}_n$. The path space of c\`adl\`ag trajectories with values in $\Omega_n$ is denoted by $\mathcal{D}([0,T],\Omega_n)$. For any initial probability measure $\mu$ on $\Omega_n$, we denote by $\bb P_{\mu}$ the probability measure on  $\mathcal{D}([0,T],\Omega_n)$ induced by $\mu$ and the Markov process $\{\eta^n_{tn^2}\; ;\; t \in [0,T]\}$. The corresponding expectation is denoted by $\bb E_{\mu}$. 

\medskip

\textbf{Notations:} Throughout the paper, for any measurable space $(U,\nu$) we denote by $L^2(\nu)$ or $L^2(U)$ the usual $L^2$--space with norm $\|\cdot\|_{L^2(\nu)}$ and scalar product $\langle \cdot,\cdot\rangle_{L^2(\nu)}$. Whenever the integration variable $u$ may be not clear to the reader, we enlighten it by denoting $L_u^2(\nu)$ or $L^2_u(U)$. We also write $f\lesssim g$ or $g \gtrsim f$ if there exists a constant $C>0$, independent of the parameters involved in $f$ and $g$, such that $f \leq Cg$. If $f \lesssim g$ and $f \gtrsim g$, we also write $f \simeq g$. Finally we denote by $\bb N=\{1,2,\dots\}$ the set of positive integers and by $\bb N_0=\{0\}\cup\bb N$ the set of non-negative integers.

\subsection{Invariant measures and Dirichlet form}
For $\lambda \in (0,1)$, let $\nu^n_\lambda$ be the  \emph{Bernoulli product measure} on $\Omega_n$ with density $\lambda$: its marginals satisfy
$$
\nu^n_\lambda\big\{\eta: \eta(x)=1\big\} = 1-\nu^n_\lambda\big\{\eta:\eta(x)=0\big\} =\lambda, \quad  \textrm {for each } x\in \Lambda_n.
$$
When $\lambda=\rho$, the  measure $\nu^n_\rho$ is invariant but not reversible with respect to the evolution of $\{\eta_t^n\,; \,t \geq 0\}$.
To ensure the invariance, it is enough to check that
$
\int \mathcal L_n f(\eta)\, \nu^n_\rho(d\eta) =0\,,$
for any  function $f:\Omega_n\to \bb R$. This is a long albeit elementary computation, which is omitted here.

 In what follows we consider that the initial measure of the Markov process $\{\eta^n_{tn^2}\,; \, t\in [0,T]\}$ is the invariant measure $\nu^n_\rho$. For short, we denote the probability measure $\bb P_{\nu^n_\rho}$ by $\bb P_\rho$ and the corresponding expectation by $\bb E_\rho$.

\medskip

 The \emph{Dirichlet form} $\mf D_n$ is the  functional acting on functions $f:\Omega_n\to \bb R$ which belong to $L^2(\nu^n_\rho)$ as:
\begin{equation}\label{eq:dir_form}
\mf D_n(f) =\int f(\eta)\,(-\mc L_nf)(\eta)\, \nu^n_\rho(d\eta)\,.
\end{equation}
Invoking a general result \cite[Appendix 1, Prop. 10.1]{kl} we can rewrite $\mf D_n$ as
\begin{equation}\mf D_n  (f)=\mf D_n^{\textrm{bulk}} (f)+\mf D_n^{{\textrm{bnd}}}  (f),\label{eq:direxpl}\end{equation} where 
\begin{align*}
\mf D_n^{\textrm{bulk}} (f)&=\sum_{x=1}^{n-2}\int \Big(1+\frac{E}{n^\gamma} \Big){\big(\eta(x)-\eta(x+1)\big)^2}\big(\nabla_{x,x+1} f(\eta)\big)^2\nu^n_\rho(d\eta)\\
\mf D_n^{\textrm{bnd}} (f)&=
\int r_{0,1}(\eta)\big(f(\sigma^{1}\eta)
-f(\eta)\big)^2\nu^n_\rho(d\eta) \\ & \quad +
\int r_{n-1,n}(\eta)\big(f(\sigma^{n-1}\eta)
-f(\eta)\big)^2\nu^n_\rho(d\eta).
\end{align*}

\subsection{The spaces of test functions}
\label{ssec:test_functions}

Let $\mc C^\infty ([0, 1])$ be the space of real valued functions $\varphi : [0, 1] \to \bb R$ such that $\varphi$ is
continuous in $[0,1]$ as well as all its derivatives. We denote by ${\rm d}^{k}\varphi$ the $k$--th derivative of $\varphi$, and for $k=1$ (resp. $k=2$) we simply denote it by $\nabla\varphi$ (resp. $\Delta\varphi$).

Before defining the \emph{fluctuation fields} associated to our process, we first need to introduce the suitable space for test functions for each one of the fields that we will analyze. First of all, let 
$\mc S_{\tmop{Dir}}$ and $\mc S_{\tmop{Neu}}$  be the following spaces of  
functions:
\begin{align}
\mc S_{\tmop{Dir}} =&\Big\{ \varphi \in \mc C^\infty ([0, 1]) \; ; \;  {\rm d}^{2k}\varphi (0) ={\rm d}^{2k} \varphi
   (1) = 0, \text{ for any }k\in\mathbb{N}_0\Big\} \label{eq:space_test_funcDir}
\\
   \mc S_{\rm Neu} =& \Big\{\varphi \in \mc C^\infty([0,1]) \; ; \; {\rm d}^{2k+1} \varphi(0) ={\rm d}^{2k+1}  \varphi(1) = 0, \text{ for any } k\in\mathbb{N}_0 \Big\},\label{eq:space_test_funcNeu}
   \end{align}
both equipped with the family of seminorms $\big\{\sup_{u \in [0,1]} |{\rm d}^k \varphi(u)|\big\}_{k \in \bb N_0}$. Then $\SD$ and $ \mc S_{\rm Neu}$ are Fr\'echet spaces, and we write $\SD'$ and $ \mc S_{\rm Neu}'$ for their topological duals.

  Now, let $-\Delta$ be the closure of the Laplacian operator $-\Delta \colon {\mc S_{\tmop{Dir}}} \to L^2([0,1]) $ as an unbounded operator in the Hilbert space $L^2([0,1])$. It is a positive and self-adjoint operator whose eigenvalues and  eigenfunctions are given for any integer $m\geq 1$, respectively by  $\lambda_m=(m\pi)^2$ and $e_m(u)=\sqrt{2}\sin(m\pi u)$. We note that  the set
$\{e_m\; ; \;m\ge 1\}$ forms an orthonormal basis of $L^2([0,1])$. 
We denote by  $\{ T_t^{\rm Dir} \; ; \; t \ge 0\}$ the semi-group associated to $\Delta$.

For $k\in\mathbb{N}$, let us define 
${\mathcal{H}_{\tmop{Dir}}^k}=\mathrm{Dom}((-\Delta)^{k/2})$, endowed with the inner product
$$(\varphi,\psi)_k=\int_0^1(-\Delta)^{k/2}\;\varphi(u)\;(-\Delta)^{k/2}\;\psi(u)\, du.$$
By the spectral
theorem for self-adjoint operators, ${\mathcal{H}_{\tmop{Dir}}^k}$ equals
\begin{equation*}
{\mathcal{H}_{\tmop{Dir}}^k} = \Big\{\varphi\in L^2([0,1])\; ;\;
\|\varphi\|_k^2<\infty\Big\} ,
\end{equation*}
where $\|\varphi\|_k^2=(\varphi,\varphi)_k$
and for $\varphi,\psi\in  L^2([0,1])$ we also have 
\begin{equation*}
(\varphi,\psi)_k = \sum_{m=1}^{+\infty}\bigg\{(m\pi)^{2k}\langle\varphi,e_m\rangle_{L^2([0,1])}\langle\psi,e_m\rangle_{L^2([0,1])}\bigg\}.
\end{equation*}
Moreover, if ${\mathcal{H}_{\tmop{Dir}}^{-k}}$ denotes the topological dual space of ${\mathcal{H}_{\tmop{Dir}}^k}$, then
\[
{\mathcal{H}_{\tmop{Dir}}^{-k}}= \Big\{\mc Y \in \SD' \; ;\;\|\mc Y\|_{-k}^2<\infty\Big\},\] 
where  $\|\mc Y\|_{-k}^2=(\mc Y,\mc Y)_{-k}$
and the inner product $(\cdot,\cdot)_{-k}$ is defined as
\begin{equation}(\mc Y,\mc Z)_{-k} = \sum_{m=1}^{+\infty}\Big\{(m\pi)^{-2k} \mc Y(e_m)\mc Z(e_m)\Big\}, \label{eq:norm-k}
\end{equation}
with  $ \mc Y(e_m)$ denoting the action of the distribution $\mc Y$ on $e_m$. 

\medskip

Finally, we define $\Delta_{\rm Neu}$ (resp.~$\Delta_{\rm Dir}$) as the Neumann (resp.~Dirichlet) Laplacian acting on $\varphi\in\SN$ and $\mc Y\in\SN'$ (resp.~$\varphi \in \SD$ and $\mc Y \in \SD'$) as follows: 
\[
   \Delta_{\rm Neu} \mc Y(\varphi) = \mc Y(\Delta \varphi), \qquad \text{resp. } \quad \Delta_{\rm Dir} \mc Y(\varphi) = \mc Y(\Delta \varphi). 
\]
Let  ${\mathcal{H}_{\tmop{Neu}}^{-k}}$ be the topological dual space of $\mc H_{\rm Neu}^k$, both defined similarly to $\mc H_{\rm Dir}^k$
 and ${\mathcal{H}_{\tmop{Dir}}^{-k}}$ but replacing the basis $e_m(u)=\sqrt{2}\sin(m\pi u)$ by  ${\tilde{e}_m(u)}=\sqrt{2}\cos(m\pi u)$. 
In the next sections we will also need to consider one last operator, denoted by $\nabla_{\rm Dir}$ and defined as follows: given $k > 0$,  $\mc Y \in\SN'$  and $\varphi \in\SD$ we set
\begin{equation}\label{eq:nabladir}
   \nabla_{\rm Dir} \mc Y(\varphi) = - \mc Y(\nabla \varphi).
\end{equation}

\subsection{Fluctuation fields}\label{sec:fields}
Now we introduce all the microscopic fluctuation fields for which we will prove convergence to some infinite dimensional stochastic partial differential equations (SPDEs).   In the following, for any space $\mc S$ of distributions, we denote by $\mathcal{D}([0,T], \mc S)$ (resp.~$\mathcal C([0,T], \mc S$) the set of c\`adl\`ag (resp.~continuous) trajectories   taking values in $\mc S$ and  endowed
with the Skorohod topology.

\subsubsection{Density fluctuation field}\label{ssec:density} 

Since the particle system is stationary, the microscopic average $\bb E_\rho[\eta^n_{tn^2}(x)]$ is constantly equal to $\rho$. We are therefore looking at the fluctuations of the microscopic configurations around their mean, namely: 

\begin{definition} For any $t>0$, let $\mathcal{Y}^n_t$ be the \emph{density fluctuation
field} which acts on functions $\varphi\in \mc S_{\tmop{Dir}}$ as
\begin{equation*}
\mathcal Y^n_t (\varphi) = \frac 1{\sqrt n} \sum_{x=1}^{n-1} \varphi\big(\tfrac x n\big)
\big( \eta^n_{tn^2}(x) - \rho\big). 
\end{equation*}
\end{definition}
In the following, we will see that the right space one has to consider in order to prove the convergence of the field $\mc Y_t^n$ is the space $\mc H_{\rm Dir}^{-k}$, with $k>\frac52$. Note that, for any $t>0$ and any $k > \frac{5}{2}$, $\mc Y_t^n$ is indeed an element of $\mathcal{H}_{\tmop{Dir}}^{-k}$ and $\mc  Y^n_\cdot \in \mathcal{D}([0,T], \mathcal{H}_{\tmop{Dir}}^{-k})$.

\medskip

\subsubsection{Height function} \label{ssec:height}

Alternatively, instead of working with the density fluctuation field of the particle system, we may also consider the height function associated to it. Roughly speaking, the height function integrates the density fluctuation field in the space variable, i.e.~it describes a curve $h$ from $\{1, \dots, n\}$ to $\RR$ which satisfies $h(x+1) = h(x) + (\eta(x)-\rho)$. This suggests the definition $h(x) = \sum_{y=1}^{x-1} (\eta(y) - \rho)$, which however has the disadvantage that if a new particle enters at
site $1$, then $h(x)$ increases by $1$ for all $x$. Therefore, we  set
\begin{equation}\label{eq:htn}
   h^n_t (x) = h^n_t (1) + \sum_{y = 1}^{x - 1} (\eta^n_{t} (y) - \rho), \qquad \text{for any }x \in \{ 1, \ldots, n \},
\end{equation}
where, by definition, $h_0^n (1) = 0$ and  \begin{itemize}
\item  $h_t^n (1)$ increases by $1$ whenever a particle at site
$1$ leaves the system to the left, 
\item $h_t^n (1)$ decreases by $1$ whenever a
new particle enters the system at the site $1$.
\end{itemize} In other words $h_t^n (1)$ is
exactly the net number of particles removed from the system at the left
boundary until time $tn^2$. 

The weak asymmetry in the particle system causes the height function to slowly decrease because $E>0$  (it would grow if $E<0$), and at the first order the decrease is of order {$n^{-2} c_n t$} for
\begin{equation}
   c_n = - \rho (1 - \rho) n^{2 - \gamma} E. \label{eq:cn}
\end{equation}
For instance, with our stationary dynamics, one can easily see (see  \eqref{eq:height-boundary-pr1} for the case $x=1$ but note that for the other values of $x$  we also have a similar expression) that 
\[ 
\bb E_\rho\big[h_t^n(x)\big] = \bb E_\rho\big[h_0^n(x)\big] - \frac{E}{n^\gamma} \rho(1-\rho)t = {n^{-2} c_n} t, \qquad \text{for any } x\in\{1,\dots,n\}.
\]
In the case $\gamma = \frac12$ (where we will see KPZ behavior) note that $c_n = - n^{3/2} \rho(1-\rho)$. So while on the microscopic scale the decrease is negligible, it becomes important on time scales of order $n^2$. We want to investigate the fluctuations around the flat interface $c_n t$, namely: 

\begin{definition} For any $t >0$, let $\mc Z_t^n$ be the \emph{height fluctuation field} which acts on functions $\varphi \in \SN$ as
\[
   \mc Z^n_t(\varphi) = \frac{1}{n^{3/2}} \sum_{x = 1}^n \varphi\big(\tfrac{x}{n}\big) (h^n_{tn^2}(x) - c_n t).
\]
\end{definition} 

\begin{remark}
We will see, as expected, that $\mc Z_t^n$ and $\mc Y_t^n$ are related. Note that: if $\varphi \in\SD$, then $\nabla\varphi \in \SN$ and, a simple computation, based on a summation by parts, shows that $\mc Z_t^n(\tilde \nabla_n \varphi)$ can be written as $-\mc Y_t^n(\varphi)$ plus the  boundary terms: 
\[ 
-\tfrac{1}{\sqrt n}\varphi\big(\tfrac1n\big)\big(h_{tn^2}^n(1)-c_nt\big) +  \tfrac{1}{\sqrt n}\varphi\big(\tfrac n n\big)\big(h_{tn^2}^n(n)-c_nt\big).
\] 
Above $\tilde \nabla_n \varphi$ is essentially a discrete gradient, see \eqref{eq:relation} below.
Note that since $\varphi \in\SD$, the last expression vanishes in  $ L^2(\bb P_\rho)$ as $n\to\infty$, as a consequence of  Lemma \ref{lem:height-boundary} given ahead.
\end{remark}

Below, we will prove that the convergence of the field $\mc Z_t^n$ is in the space $\mc H_{\rm Neu}^{-k}$, with $k>\frac52$ and we  note that $\mc  Z^n_\cdot \in \mathcal{D}([0,T], \mathcal{H}_{\tmop{Neu}}^{-k})$.

\section{Solutions to non-linear SPDEs and statement of the results}
\label{s3}

In this section, we first define properly the notion of solutions for four stochastic partial differential equations, all with boundary conditions (Sections \ref{ssec:0U}, \ref{ssec:SBE}, \ref{ssec:kpz} and \ref{app:she}), and we connect them to each other, especially through their boundary conditions (Section \ref{ssec:boundary}). These SPDEs are going to describe the macroscopic behavior of the fluctuation fields of our system, for different values of the parameter $\gamma$ ruling the strength of the asymmetry: the precise statements are then given in Section \ref{ssec:results}. For the sake of clarity, the proofs will be postponed to further sections.

\subsection{Ornstein-Uhlenbeck process with Dirichlet boundary condition}
\label{ssec:0U}
Let us start with the \emph{generalized Ornstein-Uhlenbeck process} described by the formal SPDE
\begin{equation} \label{eq:OU}
d\mathcal Y_t = A\Delta_{\rm Dir} \;\mathcal Y_t \;dt + \sqrt{D}\; \nabla_{\rm Dir} \;d\mc W_t,
\end{equation}
with $A,D > 0$ and where $\{\mc W_t \; ; \;  t\in [0,T]\}$  is a standard  $\SN'$--valued Brownian motion, with covariance
\begin{equation}\label{eq:Bm-cov}
   \bb E[\mc W_s(\varphi) \mc W_t(\psi)] = (s \wedge t) \langle \varphi,\psi \rangle_{L^2([0,1])}.
\end{equation}
Since  $\{\mc W_t\; ; \; t\geq 0\}$ is  $\SN'$--valued, then $\nabla_{\rm Dir}\mc W_t$ is a well defined object in $\SD'$.

\medskip

The following proposition aims at defining in a unique way the stochastic process solution to \eqref{eq:OU}:

\begin{proposition}\label{prop:OU}
There exists a unique (in law) stochastic process $\{\mc Y_t \; ;\; t\in[0,T]\}$, with trajectories in $\mc C([0,T],\SD')$,  called \emph{Ornstein-Uhlenbeck process solution of} \eqref{eq:OU}, such that: 
\begin{enumerate}

\item  the process $\{\mc Y_t \; ; \; t \in [0,T]\}$  is \emph{``stationary''}, in the following sense: for any $t\in[0,T]$, the random variable $\mc Y_t$ is a $\SD'$--valued space white noise with covariance given on $\varphi,\psi \in \SD$ by 
\[ \bb E \big[ \mc Y_t(\varphi) \mc Y_t(\psi)\big] =\frac{D}{2A} \langle \varphi, \psi \rangle_{L^2([0,1])}\; ;  \]  
\item for any $\varphi \in \mc S_{\tmop{Dir}}$, 
\begin{align*}
\mc M_t(\varphi)&= \mc Y_t(\varphi)-\mc Y_0(\varphi) - A\int_0^t \mc Y_s(\Delta \varphi)\; ds
\end{align*}
is a continuous martingale with respect to the natural filtration associated to $\mc Y_\cdot$, namely
\begin{equation}\label{eq:naturalfiltration} \mc F_t=\sigma\big(\mc Y_s(\varphi)\; ; \; s \leq t, \; \varphi \in \mc S_{\tmop{Dir}} \big),\end{equation}
 of quadratic variation \[ \big\langle \mc M (\varphi) \big\rangle_t = t D \big\|\nabla \varphi\big\|^2_{L^2([0,1])}.\] 
\end{enumerate} 
Moreover, for every function $\varphi \in \mc S_{\tmop{Dir}}$, the stochastic process $\{\mc Y_t(\varphi)\; ; \; t \in [0,T]\}$ is Gaussian, and the distribution of $\mc Y_t(\varphi)$ conditionally to $\{\mc F_u \; ; \; u < s\}$, is normal of mean $\mc Y_s(T_{{A(t-s)}}^{\rm Dir}\; \varphi)$ and variance \[ {D} \int_0^{t-s}\big\|\nabla (T_{{A}r}^{\rm Dir} \; \varphi) \big\|^2_{L^2([0,1])} \; dr.\]
\end{proposition}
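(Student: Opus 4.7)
The plan is to construct $\mc Y$ explicitly by diagonalizing $\Delta_{\rm Dir}$ in the orthonormal basis $\{e_m\}_{m\geq 1}$, and to prove uniqueness in law through a duality argument that propagates test functions backwards in time under the Dirichlet semigroup $\{T^{\rm Dir}_r\}_{r\geq 0}$.

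\textbf{Construction.} On some probability space, let $\{B^m\}_{m\geq 1}$ be i.i.d.\ standard Brownian motions and $\{Y_0^m\}_{m\geq 1}$ be i.i.d.\ $\mc N(0,D/(2A))$ random variables, all mutually independent, and define $Y^m$ as the stationary solution of the one-dimensional SDE
\[
   dY^m_t \; = \; -A\lambda_m Y^m_t\;dt + \sqrt{D\lambda_m}\;dB^m_t,\qquad \lambda_m = (m\pi)^2.
\]
Set $\mc Y_t(\varphi) := \sum_{m\geq 1}\langle\varphi,e_m\rangle_{L^2([0,1])}\,Y^m_t$ for $\varphi\in\SD$. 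Since $\bb E[(Y_t^m)^2] = D/(2A)$ uniformly in $m$, the identity $\bb E\|\mc Y_t\|_{-k}^2 = \tfrac{D}{2A}\sum_m(m\pi)^{-2k}$ is finite as soon as $k>1/2$, so $\mc Y_t\in\mc H_{\rm Dir}^{-k}$; Kolmogorov's continuity criterion (applied to the Gaussian variables $(m\pi)^{-2k}|Y_t^m-Y_s^m|^2$) then yields continuous paths in $\mc H_{\rm Dir}^{-k}$, hence in $\SD'$. A term-by-term computation of the covariance gives (1), and summing the one-dimensional It\^o decompositions identifies the martingale $\mc M_t(\varphi) = -\sum_m \sqrt{D\lambda_m}\langle\varphi,e_m\rangle\,B_t^m$ in (2), whose quadratic variation equals $t\sum_m D\lambda_m\langle\varphi,e_m\rangle^2 = tD\|\nabla\varphi\|_{L^2([0,1])}^2$ as announced. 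Defining $\mc W_t := \sum_m B_t^m \tilde e_m$ in $\SN'$ provides the driving space-time white noise such that $\sqrt D\,\nabla_{\rm Dir}\mc W_t$ matches the noise appearing on the right-hand side of \eqref{eq:OU}.

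\textbf{Uniqueness and Gaussian conditional law.} Let $\mc Y$ be any process in $\mc C([0,T],\SD')$ satisfying (1) and (2), and fix $\varphi\in\SD$ and $0\leq s<t$. Because $\SD$ is invariant under the semigroup $\{T^{\rm Dir}_r\}$ (it acts diagonally in the basis $\{e_m\}$), the time-dependent test function $\varphi_r := T^{\rm Dir}_{A(t-r)}\varphi$ belongs to $\SD$ for each $r\in[s,t]$ and solves $\partial_r\varphi_r = -A\Delta\varphi_r$. Applying (2) to the finite spectral truncations $\varphi_r^N = \sum_{m\leq N}e^{-A\lambda_m(t-r)}\langle\varphi,e_m\rangle\,e_m$ — where a standard one-dimensional It\^o integration by parts shows mode by mode that the drift cancels — and letting $N\to\infty$ in $L^2(\bb P)$ yields the identity
\[
   \mc Y_t(\varphi) \; = \; \mc Y_s\bigl(T^{\rm Dir}_{A(t-s)}\varphi\bigr) \; + \; N_{s,t}(\varphi),
\]
where $r\mapsto N_{s,r}(\varphi)$ is a continuous martingale on $[s,t]$ with deterministic quadratic variation $D\int_0^{t-s}\|\nabla(T^{\rm Dir}_{Ar}\varphi)\|_{L^2([0,1])}^2\,dr$. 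By the Dambis-Dubins-Schwarz time change (equivalently, L\'evy's characterization applied after reparametrization), $N_{s,t}(\varphi)$ is Gaussian with the stated variance, and the fact that this variance is deterministic forces independence from $\mc F_s$. This gives the conditional distribution of $\mc Y_t(\varphi)$ given $\mc F_s$ announced in the proposition. Polarizing in $\varphi$ and iterating over partitions $0=t_0<\dots<t_N=T$ determines all finite-dimensional distributions of $\mc Y$ from the law of $\mc Y_0$ (itself fixed by (1)), and path continuity upgrades this to uniqueness in law on $\mc C([0,T],\SD')$.

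\textbf{Main obstacle.} The technical bottleneck is the justification of It\^o's formula applied to a time-dependent test function in this infinite-dimensional setting: one must commute the spectral sum $\sum_m$ with the stochastic integration, and keep $\varphi_r$ inside $\SD$ so that (2) is directly applicable. I would handle both issues on the finite truncations $\varphi_r^N$, where the computation reduces to a standard finite-dimensional integration by parts, and then pass to the limit $N\to\infty$ using the exponential decay of the Fourier coefficients of smooth Dirichlet test functions together with the uniform-in-$m$ bound on the second moment of $Y^m$ (respectively on the quadratic variation of $\mc M_\cdot(e_m)$), which provide $L^2$-dominated convergence of the truncated identities.
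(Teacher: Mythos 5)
Your proof is correct and complete. The paper itself gives no argument — it simply refers to \cite{kl,LMO} — and the route you take (diagonalization of $\Delta_{\rm Dir}$ in the sine basis to build a stationary countable family of one-dimensional Ornstein-Uhlenbeck processes, then a mild-formulation duality argument with the test function $\varphi_r=T^{\rm Dir}_{A(t-r)}\varphi$ to pin down the conditional law and hence uniqueness) is precisely the standard construction that those references carry out. Your handling of the infinite-dimensional It\^o step via finite spectral truncations and $L^2$-dominated passage to the limit, using the fast decay of $\langle\varphi,e_m\rangle$ for $\varphi\in\SD$, is the right way to make the duality identity rigorous; one should also note that the covariation $\langle\mc M(\varphi),\mc M(\psi)\rangle_t=tD\langle\nabla\varphi,\nabla\psi\rangle_{L^2([0,1])}$ needed in the QV computation follows from item (2) by polarization, which you implicitly use. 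The only imprecision is tangential: the process $\mc W_t=\sum_{m\geq 1}B^m_t\tilde e_m$ you exhibit misses the constant mode $\tilde e_0$, so it does not have covariance \eqref{eq:Bm-cov}; adding an independent $B^0_t\tilde e_0$ fixes this, and the omission has no effect on the martingale problem since $\nabla_{\rm Dir}$ annihilates constants.
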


\begin{proof}[Proof of Proposition \ref{prop:OU}]
The proof of this fact is standard, and we refer the interested reader to \cite{kl,LMO}. 
\end{proof}

\subsection{Stochastic Burgers equation with Dirichlet boundary condition} \label{ssec:SBE}
 Now we define the notion of \emph{stationary energy solutions} for the \emph{stochastic Burgers equation with Dirichlet boundary condition}, written as
\begin{equation}\label{eq:SBE}
d\mathcal Y_t = A \; \Delta_{\rm Dir}\; \mathcal Y_t\; dt + \bar E\;\nabla_{\rm Dir}\;  \big(\mathcal{Y}_t^2 \big) \;dt + \sqrt{D}\;  \nabla_{\rm Dir}\; d\mc W_t,
\end{equation}
with $A,D > 0$ and $\bar E \in \bb R$, and where $\{\mc W_t\; ; \; t\in[0,T]\}$ is a   $\SN'$--valued Brownian motion with covariance~\eqref{eq:Bm-cov}.
Note that $\nabla_{\rm Dir}\;  \big(\mathcal{Y}_t^2 \big)$ so far has not a precise meaning but this will be rigorously given below in Theorem \ref{def:energy}. More precisely, we are going to adapt to our case (i.e.~adding boundary conditions) the  notion of energy solutions as given for the first time in \cite{gj2014, Gubinelli2013}. 

\begin{definition} \label{def:approx}
For $u\in[0,1]$ and $\varepsilon>0$, let $\iota_\varepsilon(u): [0,1]\to \bb R$ be the approximation of the identity defined as 
\[
\iota_\varepsilon(u)(v):= \begin{cases} \varepsilon^{-1}  \; \mathbf{1}_{]u,u+\varepsilon]}(v)  & \text{ if } u \in [0, 1 - 2
       \varepsilon), \\
\varepsilon^{-1}\; \mathbf{1}_{[u-\varepsilon,u[}(v) & \text{ if } u \in [1 - 2
       \varepsilon, 1]. \end{cases}
\] 
\end{definition}

The following theorem, which we prove in this paper, aims at defining uniquely the stochastic process solution to \eqref{eq:SBE}: 
\begin{theorem} \label{def:energy}
There exists a unique (in law) stochastic process $\{\mc Y_t \; ; \; t \in [0,T]\}$ with trajectories in $\mc C([0,T],\SD')$,  called \emph{stationary energy solution} of \eqref{eq:SBE}, such that: 
\begin{enumerate}
\item the process $\{\mc Y_t \; ; \; t \in [0,T]\}$  is \emph{``stationary''}, in the following sense: for any $t\in[0,T]$, the random variable $\mc Y_t$ is a $\SD'$--valued space white noise with covariance given on $\varphi,\psi \in \SD$ by 
\[ \bb E \big[ \mc Y_t(\varphi) \mc Y_t(\psi)\big] =\frac{D}{2A} \langle \varphi, \psi \rangle_{L^2([0,1])}\; ;  \] 

\item  the process $\{\mc Y_t \; ; \; t \in [0,T]\}$  satisfies the following \emph{energy estimate}: there exists $\kappa >0$ such that 
for any $\varphi \in \mc S_{\tmop{Dir}}$, any $\delta,\varepsilon \in (0,1)$ with $\delta < \varepsilon$, and any $s,t \in [0,T]$ with $s<t$, 
\begin{equation}
\label{eq:lipschitz}
\bb E\Big[ \big(\mc A_{s,t}^\varepsilon (\varphi) - \mc A_{s,t}^\delta(\varphi)\big)^2\Big] \leq \kappa (t-s) \varepsilon \; \big\| \nabla \varphi\big\|^2_{L^2([0,1])},
\end{equation}
where 
\begin{equation}\mc A_{s,t}^\varepsilon(\varphi)= - \int_s^t \int_0^1  \Big(\mc Y_r\big(\iota_\varepsilon(u)\big)\Big)^2 \; \nabla \varphi(u)\; du dr; \label{eq:A}\end{equation}

\item \label{item}for any $\varphi \in \mc S_{\tmop{Dir}}$ and any $t\in[0,T]$, the process
\[
   \mc M_t (\varphi) = \mc Y_t(\varphi)-\mc Y_0(\varphi) - A \int_0^t \mc Y_s(\Delta \varphi)\; ds - \bar E \mc A_t(\varphi)
\]
is a continuous martingale with respect to the natural filtration \eqref{eq:naturalfiltration} associated to $\mc Y_\cdot$, of quadratic variation \[ \big\langle \mc M (\varphi) \big\rangle_t = t D \big\|\nabla \varphi\big\|^2_{L^2([0,1])}\; ,\] where the process $\{\mc A_t\; ; \; t\in[0,T]\}$ is obtained through the following limit, which holds in the $L^2$-sense:
\begin{equation} \mc A_t(\varphi)= \lim_{\varepsilon \to 0} \mc A_{0,t}^\varepsilon(\varphi)\; ;\label{eq:B}\end{equation}

\item the reversed process $\{\mc Y_{T-t}\; ; \; t \in [0,T]\}$ satisfies item \eqref{item} with $\bar E$ replaced by $-\bar E$.
\end{enumerate}
\end{theorem}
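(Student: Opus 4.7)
The plan has two essentially independent parts: existence and uniqueness. For existence, the natural strategy is to obtain the process as a scaling limit of the microscopic density fluctuation field $\mc Y^n$ (which is the content of Theorem \ref{theo:convergence}, proved later in the paper). Concretely, I would first establish tightness of $\{\mc Y^n_\cdot\}$ in $\mc D([0,T], \mc H_{\rm Dir}^{-k})$ for $k>5/2$ via Mitoma's criterion together with $L^2(\bb P_\rho)$ bounds on the Dynkin martingales and their quadratic variations, and then verify that any limit point $\mc Y$ satisfies the four items defining a stationary energy solution. Item (1) is immediate because the Markov process is started from the invariant Bernoulli measure $\nu^n_\rho$ with $\rho=1/2$, whose fluctuations under the scaling $1/\sqrt n$ converge to a white noise of the prescribed covariance (with $A=1$, $D=1/2$). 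Item (3), the martingale decomposition, follows from Dynkin's formula applied to $\mc Y^n_t(\varphi)$, together with the identification of the nonlinear drift: the non-trivial content is that the current generated by the asymmetric part of $\mc L_n$, once properly recentered, can be replaced by the quadratic functional $\mc A^\varepsilon_{0,t}(\varphi)$ appearing in \eqref{eq:A}. This replacement, together with the energy estimate \eqref{eq:lipschitz} in item (2), is precisely the content of the second-order Boltzmann-Gibbs principle (Theorem \ref{theo:BG}), which must therefore be proved in the bounded setting with reservoirs. Finally, item (4) is a consequence of the fact that the adjoint of $\mc L_n$ in $L^2(\nu^n_\rho)$ is the generator of the same WASEP with drift $-E$ in the bulk and reversed boundary rates, so reversibility at the microscopic level propagates to the limit with $\bar E\mapsto -\bar E$.

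For uniqueness, the plan is to adapt the Cole-Hopf argument of \cite{Gubinelli2015Energy} to the Dirichlet setting. Let $\mc Y$ be an energy solution. I would spatially mollify $\mc Y$ to obtain a smooth stationary process $\mc Y^\varepsilon_t(u)=\mc Y_t(\iota_\varepsilon(u))$, then construct an anti-derivative $\mc Z^\varepsilon_t$ of $\mc Y^\varepsilon_t$ (choosing the integration constant so as to be compatible with the boundary behavior of the associated height field), and set $\Phi^\varepsilon_t = \exp(c\,\mc Z^\varepsilon_t)$ for the correct constant $c = \bar E/A$. Applying It\^o's formula to $\Phi^\varepsilon_t$ inside test functions, using items (2)-(3) to identify the drift and quadratic variation, and passing to the limit $\varepsilon\to 0$, I would show that $\Phi = \lim_{\varepsilon\to 0}\Phi^\varepsilon$ is a (mild) solution of a linear stochastic heat equation with multiplicative noise on $[0,1]$, subject to Robin-type boundary conditions $\nabla\Phi_t(0)=-D\Phi_t(0)$, $\nabla\Phi_t(1)=D\Phi_t(1)$ for a constant $D$ depending on $A,\bar E, \sqrt D$. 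Since the SHE is linear with globally Lipschitz noise coefficient, and the Robin boundary operator is self-adjoint in $L^2([0,1])$ with nice spectral theory, mild solutions are unique by classical semigroup arguments; strict positivity of $\Phi$ (which follows from $\Phi = \lim \exp(c\mc Z^\varepsilon)$) then allows us to recover $\mc Y = c^{-1}\nabla\log\Phi$ uniquely.

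The hard part will be the passage to the limit $\varepsilon\to 0$ in the It\^o expansion of $\Phi^\varepsilon$. On the real line of \cite{Gubinelli2015Energy}, translation invariance produces a globally spatially homogeneous It\^o correction which can be absorbed into a multiplicative renormalization $e^{-c_\varepsilon t}\Phi^\varepsilon$. On the bounded interval that homogeneity is lost: the It\^o correction concentrates extra mass near the endpoints, and in the limit it produces Dirac contributions at $\{0,1\}$ which precisely encode the change of boundary conditions from Neumann to Robin (this is the phenomenon described in Proposition \ref{prop:CH-boundary} and Remark after it). Controlling these boundary terms requires a careful analysis of $\mc Y_t(\iota_\varepsilon(u))$ for $u$ in an $\varepsilon$-neighborhood of $0$ and $1$, which is exactly the reason for the asymmetric definition of $\iota_\varepsilon$ in Definition \ref{def:approx}: the support of $\iota_\varepsilon(u)$ is chosen to remain inside $[0,1]$. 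All the other steps (tightness, identification via BGP, uniqueness for the limiting SHE) reduce to well-established techniques; the technical heart of the theorem is this boundary-renormalization analysis in the Cole-Hopf step.
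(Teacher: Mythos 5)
Your plan matches the paper's strategy: existence via the microscopic convergence in Theorem~\ref{theo:convergence} (with the second-order Boltzmann--Gibbs principle as the key tool), and uniqueness via a Cole--Hopf transform after mollification, landing on the stochastic heat equation with Robin boundary conditions. That said, two points deserve attention because they are where the paper's proof earns its keep.

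First, the mollifier. You propose $\mc Y^\varepsilon_t(u) = \mc Y_t(\iota_\varepsilon(u))$, but $\iota_\varepsilon(u)$ is a step function, not in $\SD$, and its antiderivative is only piecewise linear; you would then need an extra approximation argument merely to make sense of the It\^o formula for $\Phi^\varepsilon_t$. The paper instead mollifies with the Dirichlet heat kernel $p^{\rm Dir}_\varepsilon(u,\cdot)\in\SD$, which has an explicit smooth antiderivative $\Theta^\varepsilon_u\in\SD$ satisfying $\nabla_u\Theta^\varepsilon_u = p^{\rm Dir}_\varepsilon(u,\cdot)$ and $-\nabla_v\Theta^\varepsilon_u = p^{\rm Neu}_\varepsilon(u,\cdot)-1\in\SN$. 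This choice is not cosmetic: it makes $\mc Z^\varepsilon_t(u)=\mc Y_t(\Theta^\varepsilon_u)$ a well-defined semimartingale with tractable drift and quadratic variation, and the Neumann kernel appears naturally in the noise term, which is precisely what is needed when comparing to the weak/mild formulation of the Robin SHE.

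Second, and more substantively, your assertion that uniqueness of the limiting SHE follows from ``nice spectral theory'' of the self-adjoint Robin Laplacian glosses over a genuine obstacle. The boundary conditions produced by the Cole--Hopf transform here are $\nabla\Phi_t(0)=-\tfrac{E^2}{2}\Phi_t(0)$ and $\nabla\Phi_t(1)=+\tfrac{E^2}{2}\Phi_t(1)$, which correspond to \emph{sources} at the boundary: the associated Robin Laplacian has some positive eigenvalues and the semigroup is not contractive. As the paper explains (Remark~\ref{rmk:CS}), this rules out the heat-kernel-estimate route of Corwin--Shen, whose proof requires $\alpha>0$, $\beta<0$ (sinks). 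The paper's fix (Proposition~\ref{def:she-rob}, proved in Appendix~\ref{app:she-proof}) is to formulate the equation with the \emph{Neumann} heat kernel and treat the Robin correction as an explicit lower-order drift term, then close the Gr\"onwall loop by hand, supplemented by a moment bootstrap (Lemmas~\ref{lem:exp-int} and~\ref{lem:she-moment}) to extend uniqueness to the whole time interval. Your plan should replace the ``classical semigroup'' argument by this reformulation, or by some alternative that addresses the non-contractive semigroup explicitly; otherwise the uniqueness step has a gap.
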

\begin{proof}[Proof of Theorem \ref{def:energy}] The proof is somewhat lengthy and we give it in Section~\ref{sec:energy-pr}.
\end{proof}

\begin{remark}
Note that there is a way to make sense of $\mc Y_s(\iota_\varepsilon(u))$ (which is \emph{a priori} not obvious since $\iota_\varepsilon(u)$ is not a test function), as explained in \cite[Remark 4]{gj2014}.
\end{remark}

\begin{remark}
Note that the definition of the Ornstein-Uhlenbeck process $\mc Y$ given in Proposition \ref{prop:OU} and of the energy solution to the stochastic Burgers equation \eqref{eq:SBE} when $\bar E=0$, are equivalent. Indeed, the only part which is not obvious is that the Ornstein-Uhlenbeck process $\mc Y$ satisfies conditions~(2) and~(4) in Theorem~\ref{def:energy}. But both of these will follow from our convergence result Theorem~\ref{theo:convergence} with $\bar E=0$.
\end{remark}

\subsection{KPZ equation with Neumann boundary condition}
\label{ssec:kpz}
Here we define the notion of solution for  \emph{the KPZ equation with Neumann boundary condition}, which is formally given by 
\begin{equation}\label{eq:KPZ}
d \mc Z_t = A \;\Delta_{\rm Neu}\; \mc Z_t dt + \bar{E}\;  (\nabla \mc Z_t )^{\diamond 2} \;dt + \sqrt{D} \; d\mc W_t,
\end{equation}
with $A,D > 0$ and $\bar{E} \in \bb R$, and where $\{\mc W_t\; ; \; t\in[0,T]\}$ is a standard $\SN'$--valued Brownian motion with covariance~\eqref{eq:Bm-cov} and $(\nabla \mc Z_t )^{\diamond 2}$ denotes a \emph{renormalized square} which will have a precise meaning in Theorem \ref{def:energy-KPZ} below.
\begin{definition}

For $\varepsilon>0$  and $\mc Z\in\mc C([0,1])$, let us define
 \begin{equation}\label{eq:nablah} \nabla_{\varepsilon} \mc Z (u) =  \begin{cases}
       \varepsilon^{-1} (\mc Z (u + \varepsilon) - \mc Z (u)), & \text{ if } u \in [0, 1 - 2
       \varepsilon), \vphantom{\Big(}\\
       \varepsilon^{-1}  (\mc Z(u) - \mc Z(u - \varepsilon)), & \text{ if } u \in [1 - 2
       \varepsilon, 1]\vphantom{\Big(} .
     \end{cases}  \end{equation}  
\end{definition}

The following theorem aims at defining uniquely the stochastic process solution to \eqref{eq:KPZ}:
\begin{theorem} \label{def:energy-KPZ}
Let $Z$ be a random variable with values in $\mc C([0,1])$, such that $\nabla_{\rm Dir} Z$ is a white noise with variance $D/(2A)$ and such that $\sup_{u \in [0,1]} \bb E[e^{2Z(u)}] < \infty$.

\medskip

 There exists a unique (in law) stochastic process $\{ \mc Z_t \; ; \; t \in [0,T]\}$ with trajectories in $\mc C([0,T],\SN')$,
called \emph{almost stationary energy solution} of \eqref{eq:KPZ}, such that:
\begin{enumerate}
\item $\tmop{law}(\mc Z_0) = \tmop{law} (Z)$;

\item there exists a stationary energy solution $\{\mc Y_t \; ; \; t \in [0,T]\}$ to the stochastic Burgers equation~\eqref{eq:SBE} -- with trajectories in $\mc C([0,T],\SD')$ -- such that $\nabla_{\rm{Dir}}\mc Z = \mc Y$;

\item for any $\varphi \in \SN$ and any $t\in[0,T]$, the process
\[
   \mc N_t(\varphi) = \mc Z_t(\varphi)- \mc Z_0(\varphi) - A\int_0^t  \mc Z_s(\Delta \varphi)\; ds - \bar{E} \mc B_t(\varphi)
\]
is a continuous martingale with respect to the natural filtration associated to $\mc Z_\cdot$, defined as in~\eqref{eq:naturalfiltration}  but with $\varphi\in \SN$ and with $\mc Z_s$ in the place of $\mc Y_s$, of quadratic variation \[\big\langle \mc N(\varphi) \big\rangle_t = t D \big\| \varphi \big\|^2_{L^2([0,1])}\; ,\] where the process $\{\mc B_t\; ; \; t\in[0,T]\}$ is obtained through the following limit, which holds in the $ L^2$-sense:
\begin{equation}\label{eq:B-def}
   \mathcal{B}_t (\varphi) = \lim_{\varepsilon \to 0} \int_0^t \int_0^1 \left\{ (\nabla_{\varepsilon} \mc Z_s (u))^2 - \frac{1}{4\varepsilon} \right\} \varphi (u) d u d s 
\end{equation}
  where $ \nabla_{\varepsilon} \mc Z_s (u)$ has been defined in \eqref{eq:nablah}. 
\end{enumerate}
\end{theorem}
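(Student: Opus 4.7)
The plan is to reduce Theorem~\ref{def:energy-KPZ} to the existence/uniqueness for the SBE provided by Theorem~\ref{def:energy}, by exploiting the decomposition of $\SN$ into its one-dimensional constant mode and the range $\nabla \SD$. Indeed, any $\varphi \in \SN$ can be written uniquely as $\varphi = c_\varphi + \nabla \psi_\varphi$ with $c_\varphi = \int_0^1 \varphi(u)\, du$ and $\psi_\varphi(u) = \int_0^u (\varphi(v) - c_\varphi)\, dv$, and the Neumann conditions on $\varphi$ are precisely what force $\psi_\varphi \in \SD$. Condition (2) then forces $\mc Z_t(\nabla \psi) = -\mc Y_t(\psi)$ for all $\psi \in \SD$, so the only remaining freedom in $\mc Z$ is the scalar process $t \mapsto \mc Z_t(1)$, and the entire argument amounts to determining this constant mode.

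For uniqueness, Theorem~\ref{def:energy} first gives that $\mc Y := \nabla_{\rm Dir} \mc Z$ has uniquely determined law, since by condition~(1) the initial value $\mc Y_0 = \nabla_{\rm Dir} Z$ is fixed. Testing condition~(3) with $\varphi \equiv 1$ and using $\Delta 1 = 0$ yields $\mc Z_t(1) = \mc Z_0(1) + \bar E\, \mc B_t(1) + \mc N_t(1)$, where $\mc N_\cdot(1)$ is a continuous martingale of quadratic variation $tD$, hence a Brownian motion by L\'evy's characterization. The polarized identity $\langle \mc N(1), \mc N(\nabla \psi)\rangle_t = tD \langle 1, \nabla \psi\rangle_{L^2([0,1])} = 0$ for $\psi \in \SD$, together with the identification $\mc N_t(\nabla \psi) = -\mc M_t(\psi)$ (where $\mc M$ is the SBE martingale from Theorem~\ref{def:energy}; see below), implies that $\mc N(1)$ has vanishing covariation with all the Brownian motions driving $\mc Y$. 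A standard Gaussian argument (a continuous martingale with deterministic quadratic variation is independent of the initial $\sigma$-algebra) further shows that $\mc N(1)$ is independent of $Z$. Combined with the prescribed joint law of $(Z, \mc Y)$, this pins down the joint law of $(Z, \mc Y, \mc N(1))$ and hence that of $\mc Z$ via the decomposition.

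For existence, I would construct $\mc Z$ on an enlarged probability space supporting $Z$ together with an independent $\SN'$-valued Brownian motion $\mc W$. Let $\mc Y$ be the stationary energy solution of the SBE from Theorem~\ref{def:energy}, driven by the noise $\nabla_{\rm Dir}\mc W$ with initial condition $\nabla_{\rm Dir} Z$, and define $\mc B_t$ as the $L^2$-limit in~\eqref{eq:B-def} (see below for its construction). Then set $\mc Z_t(1) := \int_0^1 Z(u)\, du + \bar E\, \mc B_t(1) + \mc W_t(1)$ and, for general $\varphi \in \SN$, $\mc Z_t(\varphi) := c_\varphi\, \mc Z_t(1) - \mc Y_t(\psi_\varphi)$; as a function, $\mc Z_t(u)$ is recovered as a suitable antiderivative of $\mc Y_t$ with mean equal to $\mc Z_t(1)$. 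Verifying the martingale problem then reduces to combining the SBE martingale identity for $\mc Y$ against $\psi_\varphi$ with the scalar Brownian motion $\mc W(1)$; the orthogonality $\|\varphi\|_{L^2([0,1])}^2 = c_\varphi^2 + \|\nabla \psi_\varphi\|_{L^2([0,1])}^2$ together with $\langle 1, \nabla \psi\rangle_{L^2([0,1])} = 0$ yields the required quadratic variation $tD\|\varphi\|_{L^2([0,1])}^2$. The crucial algebraic identity is $\mc B_t(\nabla \psi) = -\mc A_t(\psi)$ for $\psi \in \SD$, which follows from the pointwise identification $\nabla_\varepsilon \mc Z_s(u) = \mc Y_s(\iota_\varepsilon(u))$ (a direct consequence of the integral form of $\nabla_{\rm Dir}\mc Z = \mc Y$), a discrete integration by parts, and the vanishing of $\int_0^1 \nabla \psi(u)\, du$ which kills the renormalizing constant.

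The main obstacle is the existence of the $L^2$-limit defining $\mc B_t$, particularly for the constant test function $\varphi \equiv 1$. The renormalization $1/(4\varepsilon)$ exactly cancels $\bb E[(\mc Y_s(\iota_\varepsilon(u)))^2] = D/(2A\varepsilon)$ (which equals $1/(4\varepsilon)$ in the parameter regime of interest, where $D/A = 1/2$), so that $\mc B_t^\varepsilon$ is centered. For the gradient modes $\varphi = \nabla \psi$, convergence follows from the SBE energy estimate~\eqref{eq:lipschitz} applied via the integration-by-parts identity $\mc B_t^\varepsilon(\nabla \psi) = -\mc A_t^\varepsilon(\psi)$. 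For the constant mode however~\eqref{eq:lipschitz} does not apply (indeed the gain in~\eqref{eq:lipschitz} comes from $\nabla \varphi$, which vanishes for $\varphi = 1$), and one must argue directly from the stationary white noise law of $\mc Y_t$, computing fourth-moment bounds for $\mc Y_s(\iota_\varepsilon(u))^2 - \bb E[\mc Y_s(\iota_\varepsilon(u))^2]$ and exploiting an It\^o-type time decomposition in the spirit of~\cite{Gubinelli2015Energy}. Adapting that scheme to the present non-translation-invariant setting, where boundary effects obstruct the Fourier and shift arguments used on the line/torus, is the most delicate technical piece of the construction.
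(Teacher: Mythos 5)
Your structural reduction (decompose $\varphi\in\SN$ as $c_\varphi + \nabla\psi_\varphi$ with $\psi_\varphi\in\SD$, so that the gradient modes are governed by Theorem~\ref{def:energy} and only the scalar process $t\mapsto\mc Z_t(1)$ remains to be pinned down) is precisely the mechanism by which the paper (via the cited argument of \cite[Theorem~2.10]{Gubinelli2015Energy}) passes from uniqueness for the SBE to uniqueness for KPZ, and the identity $\mc N_t(\nabla\psi)=-\mc M_t(\psi)$ together with $\langle 1,\nabla\psi\rangle_{L^2}=0$ is exactly the right orthogonality. However, as written the argument has two real gaps. First, the step ``combined with the prescribed joint law of $(Z,\mc Y)$'' is not available from the statement of Theorem~\ref{def:energy} alone: that theorem fixes $\mathrm{law}(\mc Y)$, but you need the conditional law of $(\mc Y_t)_{t>0}$ given $\mc F_0=\sigma(Z)$, i.e.\ that $\mc Y$ is measurable with respect to $(\mc Y_0,\mc M)$ for a martingale $\mc M$ that is (jointly with $\mc N(1)$) independent of $\mc F_0$. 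This is supplied by the \emph{proof} of Theorem~\ref{def:energy} -- the Cole--Hopf map and the strong uniqueness of the SHE from Proposition~\ref{def:she-rob} give exactly that pathwise representation -- so your argument is correct only once you open up that proof rather than quoting its conclusion.

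Second, you correctly flag that the existence of the $L^2$-limit defining $\mc B_t(\varphi)$ for $\varphi$ with $\int_0^1\varphi\neq 0$ (in particular $\varphi\equiv 1$) is the delicate piece, since the energy estimate \eqref{eq:lipschitz} only controls $\mc A=\mc B\circ\nabla$ and is blind to the constant mode. This is not a detail to be ``adapted'': it is the bulk of the work. The paper resolves it in Lemma~\ref{lem:kpz nonlin} and Corollary~\ref{cor:kpz-nonlin-A-B} by viewing $\mc Y_s^{\diamond 2}(\rho(u,\cdot))$ as a second-chaos functional, solving the Poisson equation $L_0 F=W_2(g)$ (Corollary~\ref{cor:poisson-solution norm}), applying the It\^o trick (Lemma~\ref{lem:ito}), and controlling the $\mc H^{-1}_{\rm Dir,Sym}([0,1]^2)$ norm by duality with Dirichlet and Neumann heat-kernel estimates (Appendix~\ref{app:heat-kernel}). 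The resulting bound is in $\|\psi\|_{L^\infty}$ rather than $\|\nabla\psi\|_{L^2}$, which is what makes the constant test function accessible; your fourth-moment/It\^o-decomposition plan is pointing in the right direction, but without the chaos decomposition and the boundary-adapted kernel estimates it does not deliver a Cauchy estimate in $\varepsilon$. Lemma~\ref{lem:kpz-nonlin-conv} then upgrades the convergence to $\alpha$-H\"older norm, which is also needed to justify the Young-integral manipulations in Section~\ref{ssec:map}.
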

\begin{proof}[Proof of Theorem \ref{def:energy-KPZ}]
The proof of that result is also  contained in Section \ref{sec:energy-pr}.
\end{proof}

\begin{remark}
Note that we did not require $\mc Z$ to be a continuous function in $u$, so it is not obvious that $\nabla_\varepsilon \mc Z(u)$ as defined in~\eqref{eq:nablah} is well defined. But of course $\nabla_\varepsilon \mc Z(u) = \mc Y(\iota_\varepsilon(u))$, and as discussed before the right hand side can be made sense of with the arguments of~\cite[Remark 4]{gj2014}.
\end{remark}

\subsection{Stochastic heat equation with Robin boundary conditions}\label{app:she}

Finally we define the notion of  solutions to the  \emph{stochastic heat equation with Robin boundary condition}, written as
\begin{equation}\label{eq:she-rob}
    d \Phi_t = A\; \Delta_{\mathrm{Rob}} \Phi_t dt + \sqrt{D}\; \Phi_t d \mc W_t, 
    \end{equation} 
    with $A, D>0$ and $\{\mc W_t\; ; \; t\in[0,T]\}$ is a standard $\SN'$--valued Brownian motion with covariance~\eqref{eq:Bm-cov}, and where we want to impose (formally): 
    \begin{equation}
    \label{eq:BC-rob}
 \nabla \Phi_t(0) = \alpha \Phi_t(0), \qquad \nabla \Phi_t(1) = \beta \Phi_t(1),
\end{equation}
with $\alpha, \beta \in \RR$. To see how we should implement these boundary conditions, consider $f \in \mc C^2([0,1])$ and $\varphi \in \SN$, and assume that $f$ satisfies the Robin boundary conditions $\nabla f(0) = \alpha f(0)$, and $\nabla f(1) = \beta f(1)$. Integrating by parts twice and using the Neumann boundary conditions for $\varphi$ and the Robin boundary conditions for $f$, we obtain
\[
   \int_0^1 \Delta f(u) \varphi(u) du = \int_0^1 f(u) \Delta \varphi(u) du - \alpha f(0) \varphi(0) + \beta f(1) \varphi(1).
\]
This suggests to define the operator
\[
   \Delta_{\rm{Rob}} \colon \SN'\cap\mc C([0,1]) \to \SN',\qquad \Delta_{\rm{Rob}} f(\varphi) =  f(\Delta \varphi) - \alpha f(0) \varphi(0) + \beta f(1) \varphi(1),
\]
and in principle this leads to a definition of solutions to \eqref{eq:she-rob}. But for technical reasons we do not want to a priori assume our solution to be continuous in $(t,u)$, and this means we could change the values of $\Phi_t(0)$ and $\Phi_t(1)$ without changing $\Phi_t(\varphi)$, so we cannot hope to get uniqueness without further assumptions. Let us introduce a suitable class of processes for which the boundary term is well defined: we write $\Phi \in \mc L^2_C([0,T])$ if $\Phi$ takes values in the Borel measurable functions on $[0,T] \times [0,1]$, if  \[\sup_{(u,t) \in [0,1] \times [0,T]} \bb E\big[\Phi_t(u)^2\big] < \infty,\] and if for all $t > 0$ the process $u \mapsto \Phi_t(u)$ is continuous in $L^2(\bb P)$, where $\bb P$ is the law of the process $\Phi_\cdot$ and above $\bb E$ is the expectation w.r.t.~to $\bb P$. For $\Phi \in \mc L^2_C([0,T])$ we cannot change the value of $\Phi_t(0)$ without changing $\Phi_t$ in an environment of $0$, and this would also change $\Phi_t(\varphi)$ for some test functions $\varphi$.

\begin{proposition}\label{def:she-rob} 
Let $F$ be a random variable with values in the Borel measurable functions on $[0,1]$, such that $\sup_{u \in [0,1]} \bb E[F(u)^2] < \infty$.
\medskip

Then there exists at most one (law of a) process $\{\Phi_t\; ; \; t \in [0,T]\}$ with trajectories in $\mc L^2_C([0,T])$, called \emph{weak solution} to~\eqref{eq:she-rob} with boundary condition \eqref{eq:BC-rob}, such that:

\begin{enumerate}
\item $\tmop{law}(\Phi_0) = \tmop{law}(F)$; 
 \item for any $\varphi \in \mc \SN$ and any $t\in[0,T]$, the process
\begin{equation}\label{eq:she-rob-weak}
   \mathfrak M_t (\varphi) =  \Phi_t(\varphi)- \Phi_0(\varphi) - A \int_0^t  \Phi_s(\Delta \varphi)\; ds - A \int_0^t (- \alpha \Phi_s(0) \varphi(0)+ \beta \Phi_s(1) \varphi(1)) ds 
\end{equation}
is a continuous martingale of quadratic variation \[ \big\langle \mathfrak M (\varphi) \big\rangle_t = D \int_0^t \int_0^1 |\Phi_s(u) \varphi(u)|^2 du ds\; .\]
\end{enumerate}
\end{proposition}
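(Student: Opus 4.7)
The plan is to reduce uniqueness of the martingale problem to pathwise uniqueness of an equivalent mild equation driven by a cylindrical Wiener noise, in the spirit of Walsh and Yamada--Watanabe. Given a weak solution $\Phi$, I would first apply the martingale representation theorem (enlarging the probability space if necessary) to the orthogonal family $\{\mathfrak M_\cdot(\varphi)\}_{\varphi\in\SN}$, whose cross-variation is $D\int_0^\cdot \langle \Phi_s\varphi,\Phi_s\psi\rangle_{L^2([0,1])}\, ds$, to produce a space--time white noise $\mc W$ on $[0,T]\times[0,1]$ with
\[
\mathfrak M_t(\varphi) = \sqrt D \int_0^t\!\!\int_0^1 \Phi_s(u)\varphi(u)\,\mc W(du, ds).
\]
Since the law of $(F,\mc W)$ is then prescribed, uniqueness in law of $\Phi$ will follow from pathwise uniqueness of the corresponding mild equation.

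To derive the mild equation, I would upgrade the weak formulation to the time-dependent test function $\varphi_s := T^{\rm Neu}_{t-s}g$, where $T^{\rm Neu}_\cdot$ is the Neumann heat semigroup on $[0,1]$ with kernel $p^{\rm Neu}_\cdot$ and $g\in\SN$. Because $\varphi_s\in\SN$ for every $s$ and $\partial_s\varphi_s = -A\Delta\varphi_s$, an It\^o's formula applied to $s\mapsto\Phi_s(\varphi_s)$ together with property~(2) of Proposition~\ref{def:she-rob} produces a cancellation of the bulk Laplacian and leaves only boundary and noise contributions. A density argument testing against Dirac approximations, justified through the $\mc L^2_C$ continuity in $u$, then promotes the result to the pointwise representation
\begin{align*}
\Phi_t(u) &= T^{\rm Neu}_t F(u) - A\alpha\!\int_0^t p^{\rm Neu}_{t-s}(0,u)\,\Phi_s(0)\, ds + A\beta\!\int_0^t p^{\rm Neu}_{t-s}(1,u)\,\Phi_s(1)\, ds \\
&\quad + \sqrt D \int_0^t\!\!\int_0^1 p^{\rm Neu}_{t-s}(v,u)\,\Phi_s(v)\,\mc W(dv, ds).
\end{align*}

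For pathwise uniqueness, let $\Phi^1,\Phi^2$ be two mild solutions with the same $F$ and $\mc W$; the difference $\Psi:=\Phi^1-\Phi^2$ satisfies the linear version of the above identity with vanishing initial data. Set $f(t):=\sup_{u\in[0,1]}\bb E[\Psi_t(u)^2]$, finite by the $\mc L^2_C$-hypothesis. It\^o's isometry controls the stochastic integral by $\int_0^t f(s)\, p^{\rm Neu}_{2(t-s)}(u,u)\, ds\lesssim \int_0^t f(s)(t-s)^{-1/2}\, ds$. On the boundary integrals the naive Cauchy--Schwarz estimate diverges, since $\int_0^t p^{\rm Neu}_{t-s}(0,0)^2\, ds \sim \int_0^t (t-s)^{-1}\, ds = +\infty$; I would replace it by a weighted Cauchy--Schwarz with weight $p^{\rm Neu}_{t-s}(0,u)$, giving
\[
\Big(\int_0^t \Psi_s(0)\,p^{\rm Neu}_{t-s}(0,u)\, ds\Big)^{\!2} \leq \Big(\int_0^t p^{\rm Neu}_{t-s}(0,u)\, ds\Big) \Big(\int_0^t \Psi_s(0)^2\, p^{\rm Neu}_{t-s}(0,u)\, ds\Big),
\]
in which the first factor is $\lesssim\sqrt t$, and after $\sup_u$ with $\sup_u p^{\rm Neu}_{t-s}(0,u)\lesssim (t-s)^{-1/2}$ the second yields $\lesssim \int_0^t f(s)(t-s)^{-1/2}\, ds$. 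Collecting the three contributions gives $f(t)\lesssim \int_0^t f(s)(t-s)^{-1/2}\, ds$, and the weakly singular Gronwall lemma forces $f\equiv 0$. Hence $\Phi^1\equiv\Phi^2$ a.s., and the Yamada--Watanabe principle upgrades this to uniqueness in law.

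The hard part will be precisely this singular boundary kernel: the pointwise square $p^{\rm Neu}_{t-s}(0,u)^2$ is not $ds$-integrable near $u=0$, so no standard $L^2(du\, ds)$-estimate suffices and the weighted Cauchy--Schwarz above is essential. A secondary but delicate point is the rigorous justification of the extended martingale identity for the $\mc C^1$-in-time test function $\varphi_s = T^{\rm Neu}_{t-s}g$ and the passage to the pointwise mild formula, both of which rely on the $\mc L^2_C$ regularity built into the statement of Proposition~\ref{def:she-rob}.
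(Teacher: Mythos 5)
Your proposal follows essentially the same strategy as the paper's proof in Appendix~\ref{app:she-proof}: extract a driving noise via martingale representation, derive the mild (Neumann semigroup) formulation by testing against heat kernel approximations of $\delta_u$ (justified by the $\mc L^2_C$ continuity), and close a Gronwall estimate with the singular kernel $(t-s)^{-1/2}$. The only cosmetic differences are that the paper applies Cauchy--Schwarz for the boundary integrals with the weight $(t-s)^{-1/2}\,ds$ rather than your weight $p^{\rm Neu}_{t-s}(0,u)\,ds$ (both give the same singular Volterra bound), and it concludes pathwise uniqueness by iterating the inequality directly (citing Walsh) rather than naming the Yamada--Watanabe principle.
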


\begin{proof}
The proof of this proposition is given in Appendix \ref{app:she-proof}.
\end{proof}

\begin{remark}\label{rmk:CS}
	In~\cite[Proposition~2.7]{Corwin2016} the authors show uniqueness for (the mild solution of) a similar equation, but they require $\alpha >0$ and $\beta < 0$, while here we will need to take $\alpha = - E^2/2$ and $\beta = E^2/2$ (where $E>0$ is the parameter ruling the weak asymmetry). As the nice probabilistic representation given in \cite{Freidlin1985, Papanicolaou1990} shows, the key difference is that our boundary conditions correspond to sources at the boundary, while the case of~\cite{Corwin2016} corresponds to sinks. Such sink terms make the Robin Laplacian negative and in particular its spectrum is negative, while for the Robin Laplacian with a source term some eigenvalues may be positive. In particular, the method used in Section~4.1 of ~\cite{Corwin2016} for proving heat kernel estimates breaks down for our choice of parameters. Theorem~3.4 of~\cite{Papanicolaou1990} gives $L^1$ and $L^\infty$ bounds for the heat kernel that are sufficient for our purposes, but it is not quite obvious whether the heat kernel is in $\mc C^{1,2}((0,\infty) \times [0,1])$ and satisfies the Robin boundary condition in the strong sense, which is what we would need here. While we expect this to be true, we avoid the problem by formulating the equation slightly differently: we do not work with the Robin heat kernel but we use the Neumann heat kernel instead and deal with the resulting boundary corrections by hand.
\end{remark}

\begin{remark}
	It would be possible to weaken the assumptions on the initial conditions (say to allow for a Dirac delta initial condition), and of course we would also be able to prove existence and not just uniqueness. Since given the computations in Appendix~\ref{app:she-proof} these results follow from standard arguments as in~\cite{Quastel2011,Walsh1986} and we do not need them here, we choose not to include the proofs.
\end{remark}

We conclude this section by making the link between the KPZ equation \eqref{eq:KPZ} and the stochastic heat equation \eqref{eq:she-rob}, with their respective boundary conditions. This link is, as expected, done through the Cole-Hopf transformation, although the boundary conditions are linked in a more complicated way then one might guess naively:

\begin{proposition}\label{prop:link}
Let $\mc Z$ be the almost stationary energy solution of the KPZ equation \eqref{eq:KPZ} with Neumann boundary condition as defined in Theorem \ref{def:energy-KPZ}.  Then, for any $t\in [0,T]$ we have \[\mc Z_t = \frac{A}{\bar{E}} \log \Phi_t + \frac{D^2(\bar{E})^3}{48 A^4}t,\] where $\Phi$ solves the stochastic heat equation 
\[  d \Phi_t = A \Delta_{\mathrm{Rob}} \Phi_t dt + \frac{\sqrt{D}\; \bar{E}}{A} \Phi_t d \mc N_t  \]
with the Robin boundary condition
\[
   \nabla \Phi_t(0) = - \frac{D(\bar{E})^2}{4A^3} \Phi_t(0), \qquad \nabla \Phi_t(1) = \frac{D(\bar{E})^2}{4A^3} \Phi_t(1),
\] 
and where above $\mc N_t$ has been defined in item (2) of Theorem \ref{def:energy-KPZ}.
\end{proposition}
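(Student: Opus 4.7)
The plan is to verify the Cole-Hopf identity at the level of a spatial regularization of $\mc Z$, then pass to the limit and identify the limiting equation using the uniqueness statement of Proposition~\ref{def:she-rob}. This computation is essentially what will underpin the uniqueness proof of Theorem~\ref{def:energy-KPZ} in Section~\ref{sec:energy-pr}, so the main new content here is the bookkeeping of the constants and the isolation of the Robin boundary terms.

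Concretely, I would fix $\varepsilon>0$ and introduce a smoothed field $\mc Z^\varepsilon_t(u)$ (for instance by convolving $\mc Z_t$ with a Neumann-adapted approximate identity built from the mollifier $\iota_\varepsilon$), which is smooth in $u$ and a continuous semimartingale in $t$ whose drift and quadratic variation are inherited from item~(3) of Theorem~\ref{def:energy-KPZ}. I would then set $\Phi^\varepsilon_t(u) = \exp\bigl(\tfrac{\bar E}{A}\mc Z^\varepsilon_t(u) - c_\varepsilon t\bigr)$, apply the classical Itô formula, and use the pointwise Cole-Hopf cancellation $\tfrac{\bar E}{A}\bigl(A\Delta \mc Z^\varepsilon + \bar E (\nabla \mc Z^\varepsilon)^2\bigr)\Phi^\varepsilon = A\Delta \Phi^\varepsilon$ to convert the drift into a plain Laplacian on $\Phi^\varepsilon$ (modulo a commutator that tends to zero with $\varepsilon$). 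What remains is the martingale $\tfrac{\sqrt D \bar E}{A}\Phi^\varepsilon d\mc N^\varepsilon$ and a pure drift $\Phi^\varepsilon(R^\varepsilon(u) - c_\varepsilon)$, where $R^\varepsilon$ collects the Itô correction $\tfrac12(\tfrac{\bar E}{A})^2 D \|\iota_\varepsilon(u)\|_{L^2}^2$ together with the renormalization constant $\tfrac{\bar E^2}{4A\varepsilon}$ from the definition~\eqref{eq:B-def} of $\mc B_t$. Choosing $c_\varepsilon$ so that $R^\varepsilon(u) - c_\varepsilon$ admits a finite limit in the bulk fixes precisely the shift $\kappa = \tfrac{D^2(\bar E)^3}{48 A^4}$ appearing in the statement.

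The hard part will be the boundary contributions. Integrating $\int_0^1 \Delta \Phi^\varepsilon \varphi \, du$ by parts against $\varphi \in \SN$, the endpoint terms involving $\nabla \varphi$ vanish, but the terms $\tfrac{\bar E}{A}\Phi^\varepsilon(\cdot)\nabla \mc Z^\varepsilon(\cdot)\varphi(\cdot)$ evaluated at $0$ and $1$ do not. Moreover, the asymmetric definition of $\iota_\varepsilon(u)$ near $u=0,1$ (Definition~\ref{def:approx}) means that the expectation of $(\nabla_\varepsilon \mc Z_s(u))^2$ departs from its bulk value $\tfrac{D}{2A\varepsilon}$ on the layers $[0,2\varepsilon]$ and $[1-2\varepsilon,1]$, so the excess contribution of $R^\varepsilon(u) - c_\varepsilon$ on these boundary strips does \emph{not} vanish but rather concentrates into boundary Dirac masses as $\varepsilon \to 0$. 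A careful expansion of both effects should identify the limiting boundary contribution as $-\alpha \Phi_t(0)\varphi(0) + \beta \Phi_t(1)\varphi(1)$ with $\alpha = -\tfrac{D(\bar E)^2}{4A^3}$ and $\beta = \tfrac{D(\bar E)^2}{4A^3}$, matching the boundary integrand in the definition~\eqref{eq:she-rob-weak} of $\Delta_{\mathrm{Rob}}$.

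Finally, the hypothesis $\sup_u \bb E[e^{2Z(u)}]<\infty$ from Theorem~\ref{def:energy-KPZ} guarantees that $\Phi_0 = \exp(\tfrac{\bar E}{A}Z) \in \mc L^2_C([0,T])$, and this $L^2$-integrability propagates in time through the martingale structure of the limiting equation (standard Grönwall, using the linearity of the noise). Thus the limit $\Phi$ satisfies the hypotheses of Proposition~\ref{def:she-rob}, whose uniqueness statement identifies its law with that of the weak solution to~\eqref{eq:she-rob} with the claimed Robin boundary data, establishing the identity $\mc Z_t = \tfrac{A}{\bar E}\log \Phi_t + \tfrac{D^2(\bar E)^3}{48 A^4}t$ of the statement.
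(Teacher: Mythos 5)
Your high-level plan (mollify, apply It\^o, track the renormalization, let $\varepsilon\to 0$, invoke the uniqueness in Proposition~\ref{def:she-rob}) matches the paper's approach in Section~\ref{ssec:map}. But the mechanism you propose for the Robin boundary terms is not correct, and this is the heart of the matter.

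You claim that the asymmetric definition of $\iota_\varepsilon(u)$ near $u=0,1$ makes $\bb E[(\nabla_\varepsilon\mc Z_s(u))^2]$ depart from its bulk value on boundary layers. This cannot happen: since $\nabla_\varepsilon\mc Z_s(u)=\mc Y_s(\iota_\varepsilon(u))$ and $\mc Y_s$ is a white noise with $\bb E[\mc Y_s(\varphi)^2]=\tfrac{D}{2A}\|\varphi\|_{L^2}^2$, we always have $\bb E[(\nabla_\varepsilon\mc Z_s(u))^2]=\tfrac{D}{2A}\|\iota_\varepsilon(u)\|_{L^2}^2=\tfrac{D}{2A\varepsilon}$, independently of whether $\iota_\varepsilon(u)$ is one-sided to the left or to the right. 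The Dirichlet ``boundary conditions'' of $\mc Y$ live in the choice of test space $\SD$, not in a second-moment deficit near $0$ and $1$, and the renormalization constant in~\eqref{eq:B-def} is exactly correct uniformly in $u$. So no Dirac masses are produced by this route.

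What actually produces the Robin terms in the paper is a mismatch between two \emph{different} ultraviolet renormalizations which is invisible in the bulk but concentrates at the boundary. Concretely, the paper mollifies $\mc Y$ with the Dirichlet heat kernel $p^{\rm Dir}_\varepsilon$ and takes the antiderivative $\Theta^\varepsilon_u$ with $\nabla_u\Theta^\varepsilon_u=p^{\rm Dir}_\varepsilon(u,\cdot)$; the resulting $\mc Z^\varepsilon_t(u)=\mc Y_t(\Theta^\varepsilon_u)$ automatically satisfies $\nabla\mc Z^\varepsilon_t(0)=\nabla\mc Z^\varepsilon_t(1)=0$ (since $p^{\rm Dir}_\varepsilon(0,\cdot)\equiv p^{\rm Dir}_\varepsilon(1,\cdot)\equiv 0$), so the naive integration-by-parts boundary terms that you single out vanish identically. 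The Robin terms come instead from the It\^o correction: the quadratic variation of $\mc Z^\varepsilon(u)$ involves $\|J^\varepsilon_u\|_{L^2}^2$ with $J^\varepsilon_u=p^{\rm Neu}_\varepsilon(u,\cdot)-1$, whereas the renormalization built into the energy solution uses $\|p^{\rm Dir}_\varepsilon(u,\cdot)\|_{L^2}^2$. Their difference satisfies
\[
  \|p^{\rm Neu}_\varepsilon(u,\cdot)\|_{L^2}^2-\|p^{\rm Dir}_\varepsilon(u,\cdot)\|_{L^2}^2=\sum_{\ell\ge 1}2e^{-2\varepsilon\pi^2\ell^2}\cos(2\ell\pi u)\;\xrightarrow[\varepsilon\to 0]{}\;\tfrac12\big(\delta_0(u)+\delta_1(u)\big)
\]
weakly, and these half Dirac masses are exactly what become the Robin boundary integrand in~\eqref{eq:she-rob-weak}. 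Your proposal also skips two further nontrivial steps in the paper: the multiplicative correction by $e^{\mc X_t}$ (with $\mc X_t$ containing both $\langle\mc W_t,1\rangle$ and $\langle\mc B_t,1\rangle$), needed because $\Theta^\varepsilon_u$ has zero spatial mean so $\mc Z^\varepsilon$ only recovers $\mc Z$ modulo a time-dependent constant, and the explicit computation $K^\varepsilon\to E^2/12$ in Lemma~\ref{lem:constant-conv}, which is what actually pins down the drift constant $\tfrac{D^2\bar E^3}{48A^4}$. Finally, the ``commutator that tends to zero'' you gesture at is the remainder $R^\varepsilon$; showing it vanishes (Lemma~\ref{lem:remainder-1}) is one of the most technical parts and relies on Kipnis--Varadhan type estimates in $\mf H_0^{-1}$, not on a soft argument.
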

\begin{proof}[Proof of Proposition \ref{prop:link}]
This proof is contained in Section \ref{sec:energy-pr}. 
\end{proof}

\subsection{Boundary behavior for singular SPDEs} \label{ssec:boundary}
The stochastic Burgers equation is an important example of a \emph{singular SPDE}, a class of equations for which tremendous progress was made in the last years~\cite{Hairer2014, Gubinelli2015Paracontrolled}. The vast majority of all papers on singular SPDEs treats domains without boundaries (mostly the torus, sometimes Euclidean space), and only quite recently some articles appeared that deal with boundaries~\cite{Corwin2016, Gerencser2017}. 

But in some cases it is not quite obvious how to formulate the boundary conditions. For example, the solution to the stochastic Burgers equation is almost surely distribution valued and therefore we cannot evaluate it at the boundary. In Theorem~\ref{def:energy} we proposed a formulation for the notion of solutions to the stochastic Burgers equation with Dirichlet boundary condition $\mc Y_t(0) \equiv \mc Y_t(1) \equiv 0$ (for any $t \in [0,T]$), that seems natural to us. But as we saw in Proposition~\ref{prop:link}, we  have $\mc Y = \nabla_{\rm Dir} (\frac{A}{\bar{E}} \log \Phi)$, where $\Phi$ solves the stochastic heat equation with Robin boundary condition $\nabla \Phi_t(0) \equiv - \tfrac{D(\bar{E})^2}{4A^3} \Phi_t(0)$ and $\nabla \Phi_t(1) \equiv \tfrac{D(\bar{E})^2}{4A^3} \Phi_t(1)$. This means that according to the definition of Corwin and Shen~\cite{Corwin2016}, $\mc Y$ would solve the stochastic Burgers equation with Dirichlet boundary condition $\mc Y_t(0) \equiv - \tfrac{D(\bar{E})^2}{4A^3}$ and $\mc Y_t(1) \equiv \tfrac{D(\bar{E})^2}{4A^3}$.

Moreover, Gerencs\'er and Hairer observe in~\cite[Theorem~1.6]{Gerencser2017} that the classical solution $\mc Y^\varepsilon$ to the stochastic  Burgers equation without renormalization,
\begin{equation}\label{eq:gh-approx}
   d\mc Y^\varepsilon_t = \Delta_{\rm Dir} \mc Y^\varepsilon_t dt + \nabla_{\rm Dir} \big((\mc Y^\varepsilon_t)^2\big) dt +  \nabla_{\rm Dir} d \mc W^\varepsilon_t \;,
\end{equation}
with Dirichlet boundary condition $\mc Y^\varepsilon_t(0) \equiv \mc Y^\varepsilon_t(1) \equiv 0$ and where $\mc W^\varepsilon$ is a mollification of $\mc W$, may converge to different limits satisfying different boundary conditions as $\varepsilon \to 0$, depending on which mollifier was used for $\mc W^\varepsilon$. But if the noise is only mollified in space and white in time, then the limit is always the same and it agrees with the Cole-Hopf solution of~\cite{Corwin2016}. 

So it is not obvious whether there is a ``canonical'' way of formulating singular SPDEs with boundary conditions, and in the case of the stochastic Burgers equation with Dirichlet boundary conditions it may seem that the Cole-Hopf solution is the most canonical solution. But below, in Proposition \ref{prop:boundary}, we will see that our solution $\mc Y$  indeed satisfies $\mc Y_t(0) \equiv \mc Y_t(1) \equiv 0$, as long as we do not try to evaluate $\mc Y_t(0)$ at a fixed time but allow for a bit of averaging in time instead. We also show, in Lemma \ref{prop:CH-boundary},  that the (approximate) Cole-Hopf transformation $\Psi$ of $\mc Y$ satisfies the Robin boundary condition $\nabla \Psi_t(0) \equiv - \tfrac{D(\bar{E})^2}{4A^3} \Psi_t(0)$ and $\nabla \Psi_t(1) \equiv \tfrac{D(\bar{E})^2}{4A^3} \Psi_t(1)$ after averaging in time. This sheds some light on the actual boundary behavior of solutions to singular SPDEs and indicates that our formulation of the equation is maybe more natural than the Cole-Hopf formulation of~\cite{Corwin2016}. Although the approximation result in~\eqref{eq:gh-approx}  looks natural at first sight, note that \emph{not} renormalizing $ \nabla_{\rm Dir} \big((\mc Y^\varepsilon_t)^2\big)$ means that we renormalize $(\mc Y^\varepsilon_t)^2$ with a constant which is killed by $\nabla_{\rm Dir}$ and therefore does not appear in the equation. But of course $\mc Y^\varepsilon$ is not spatially homogeneous, so there is no reason why the renormalization should be spatially constant. And as our results show, taking it constant actually leads to an unnatural boundary behavior in the limit.

\begin{proposition}\label{prop:boundary}
 Let $\mc Y$ be the solution to the stochastic Burgers equation \eqref{eq:SBE} as defined in Theorem \ref{def:energy}, and let $\{\rho_{\varepsilon}\}_{\varepsilon >0} \subset L^2 ([0, 1])$ be a sequence of positive functions
  that converges weakly to the Dirac
  delta at $0$ (respectively $1$), in the sense that, for any function  $f \in \mc C ([0, 1])$, $\lim_{\varepsilon
  \rightarrow 0} \int_0^1\rho_{\varepsilon}(u) f(u)du = f (0)$ (respectively
  $= f (1)$). 
  \medskip

  Then we have for all $p \in [1, \infty)$
  \[ \lim_{\varepsilon \rightarrow 0} \mathbb{E} \bigg[ \sup_{t \in [0,T]}
     \bigg| \int_0^t \mc Y_s (\rho_{\varepsilon}) d s \bigg|^p \bigg] = 0.
  \]
\end{proposition}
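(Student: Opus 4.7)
The strategy is to rewrite $J_t := \int_0^t \mc Y_s(\rho_\varepsilon)\, ds$ as a pure martingale expression whose quadratic variation vanishes with $\varepsilon$, by combining the forward and backward martingale representations of Theorem~\ref{def:energy}. Let $\varphi_\varepsilon \colon [0,1] \to \bb R$ solve the Dirichlet Poisson problem $A \Delta \varphi_\varepsilon = \rho_\varepsilon$ with $\varphi_\varepsilon(0) = \varphi_\varepsilon(1) = 0$, so that $J_t = A \int_0^t \mc Y_s(\Delta \varphi_\varepsilon)\, ds$. Using the Dirichlet Green's function $G_D(u,v) = \min(u,v) - uv$ one writes $\varphi_\varepsilon$ and $\nabla \varphi_\varepsilon$ as explicit integrals against $\rho_\varepsilon$, and since the weak convergence $\rho_\varepsilon \rightharpoonup \delta_0$ forces $\|\rho_\varepsilon\|_{L^1}$ to be bounded, the dominated convergence theorem yields $\|\varphi_\varepsilon\|_{L^2([0,1])} \to 0$ and $\|\nabla \varphi_\varepsilon\|_{L^2([0,1])} \to 0$ as $\varepsilon \to 0$. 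The function $\varphi_\varepsilon$ need not lie in $\SD$, but by first regularizing $\rho_\varepsilon$ with a smooth mollifier (or by extending the martingale identities to suitable Sobolev-type test functions by density and the $L^2$ extension of $\mc Y_s$) one reduces to that case; I omit this routine technical step.

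Item~(3) of Theorem~\ref{def:energy} applied to $\varphi_\varepsilon$ reads
\[
   \mc Y_t(\varphi_\varepsilon) = \mc Y_0(\varphi_\varepsilon) + J_t + \bar E\, \mc A_t(\varphi_\varepsilon) + \mc M_t(\varphi_\varepsilon),
\]
while item~(4) applied to the reversed process $\tilde{\mc Y}_t := \mc Y_{T-t}$, after the change of variable $u = T-s$ in the associated time integral, produces
\[
   \mc Y_s(\varphi_\varepsilon) = \mc Y_T(\varphi_\varepsilon) + (J_T - J_s) - \bar E\, \tilde{\mc A}_{T-s}(\varphi_\varepsilon) + \tilde{\mc M}_{T-s}(\varphi_\varepsilon).
\]
The same change of variable inside the definition \eqref{eq:A}--\eqref{eq:B} of $\tilde{\mc A}$ yields the pointwise time-reversal identity
\[
   \tilde{\mc A}_{T-s}(\varphi_\varepsilon) = \mc A_T(\varphi_\varepsilon) - \mc A_s(\varphi_\varepsilon),
\]
so that equating the two representations for $\mc Y_s(\varphi_\varepsilon)$ makes every occurrence of $\mc A_s$ cancel, leaving
\[
   2 J_s = (\mc Y_T - \mc Y_0)(\varphi_\varepsilon) + \bigl( J_T - \bar E\, \mc A_T(\varphi_\varepsilon) \bigr) + \tilde{\mc M}_{T-s}(\varphi_\varepsilon) - \mc M_s(\varphi_\varepsilon).
\]
Evaluating at $s=0$ (using $J_0 = \mc M_0 = 0$) determines the constant $J_T - \bar E\, \mc A_T(\varphi_\varepsilon) = -(\mc Y_T - \mc Y_0)(\varphi_\varepsilon) - \tilde{\mc M}_T(\varphi_\varepsilon)$, and reinjecting this value produces the clean identity
\[
   J_t = -\tfrac{1}{2}\, \mc M_t(\varphi_\varepsilon) - \tfrac{1}{2}\bigl(\tilde{\mc M}_T(\varphi_\varepsilon) - \tilde{\mc M}_{T-t}(\varphi_\varepsilon)\bigr),
\]
in which the nonlinear term has disappeared entirely.

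The conclusion is now immediate from Burkholder--Davis--Gundy: $\mc M_\cdot(\varphi_\varepsilon)$ and $\tilde{\mc M}_\cdot(\varphi_\varepsilon)$ are continuous martingales (with respect to their own filtrations) of quadratic variation $tD\|\nabla \varphi_\varepsilon\|_{L^2}^2$, so
\[
   \bb E \Big[ \sup_{t \in [0,T]} |J_t|^p \Big] \lesssim_{p,T,D} \|\nabla \varphi_\varepsilon\|_{L^2}^p \xrightarrow[\varepsilon \to 0]{} 0,
\]
and the case of a Dirac mass at $1$ is handled identically. The principal difficulty here is conceptual rather than technical: a direct bound on $\sup_t |\mc A_t(\varphi_\varepsilon)|$ via the energy estimate of item~(2) is not at all easy and would presumably require a delicate chaining argument. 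The forward/backward computation circumvents this entirely by exploiting the time-reversibility identity $\tilde{\mc A}_{T-s} = \mc A_T - \mc A_s$ to cancel $\mc A$ out of the expression for $J_t$ and reveal it as a pure martingale difference.
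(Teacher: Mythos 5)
Your forward--backward martingale cancellation is precisely the Lyons--Zheng trick that underlies the paper's It\^o trick (Lemma~\ref{lem:ito}). The paper's proof invokes that lemma together with Corollary~\ref{cor:poisson-solution norm} (solvability of the Poisson equation $L_0 F = W_1(\rho_\varepsilon)$ in the first chaos) to get
\[
\mathbb E\Big[\sup_{t\leq T}\Big|\int_0^t\mc Y_s(\rho_\varepsilon)\,ds\Big|^p\Big]\lesssim T^{p/2}\,\|\rho_\varepsilon\|^p_{\mc H_{\rm Dir}^{-1}},
\]
and then bounds $\|\rho_\varepsilon\|_{\mc H_{\rm Dir}^{-1}}\leq\int_0^1\rho_\varepsilon(u)\sqrt u\,du\to 0$ by writing $f(u)=f(u)-f(0)$ and applying the fundamental theorem of calculus and Cauchy--Schwarz for unit vectors $f\in\mc H_{\rm Dir}^{1}$. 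You carry out the same cancellation explicitly from items~(3)--(4) of Theorem~\ref{def:energy} and land on the same quantity, since $\|\nabla\varphi_\varepsilon\|_{L^2}=A^{-1}\|\rho_\varepsilon\|_{\mc H_{\rm Dir}^{-1}}$ after integrating $-\langle\varphi_\varepsilon,\Delta\varphi_\varepsilon\rangle_{L^2}$ by parts. So the two arguments are parallel; the paper's is shorter only because the It\^o trick is already packaged, and your version is the ``unrolled'' form of it.

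Two points in your write-up need more care than ``routine.'' (i) Mollifying $\rho_\varepsilon$ does \emph{not} place $\varphi_\varepsilon$ in $\SD$: the Poisson equation forces $\Delta\varphi_\varepsilon(0)=A^{-1}\rho_\varepsilon(0)\neq 0$ however smooth $\rho_\varepsilon$ is, while $\SD$ demands all even derivatives vanish at the endpoints. The workable fix is a sine-series truncation: $\rho_\varepsilon\in L^2$ gives $\varphi_\varepsilon\in\mc H_{\rm Dir}^2$, so $\psi_k=\sum_{m\leq k}\langle\varphi_\varepsilon,e_m\rangle_{L^2} e_m\in\SD$ converges to $\varphi_\varepsilon$ in $\mc H_{\rm Dir}^2$, and one passes to the limit in the martingale decomposition using the continuity of $\varphi\mapsto\mc A(\varphi)=-(\int_0^\cdot\mc Y_s^{\diamond 2}ds)(\nabla\varphi)$ in $\nabla\varphi$, which follows from Lemma~\ref{lem:kpz nonlin} and Corollary~\ref{cor:kpz-nonlin-A-B}. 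This is exactly the approximation that the paper's closure-based $\|\cdot\|_{\pm 1,0}$-machinery absorbs for free. (ii) Your dominated-convergence step glosses over the fact that $v\mapsto\partial_u G_D(u,v)$ jumps at $v=u$, so the weak convergence hypothesis (stated only for \emph{continuous} $f$) does not directly give $\nabla\varphi_\varepsilon(u)\to 0$ pointwise; you first need a sandwiching argument to conclude $\int_\delta^1\rho_\varepsilon\,dv\to 0$ for each fixed $\delta>0$. The paper avoids this entirely by testing against the continuous function $\sqrt u$.
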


\begin{proof}[Proof of Proposition \ref{prop:boundary}]
We prove this proposition in Appendix \ref{sec:proof-boundary}. 
\end{proof}

Consider now, for any $u,v\in[0,1]$
\[
   \Theta_u(v) =\mathbf{1}_{[0, u]} (v) + v-1.
\]
This is an integral kernel which will be used in Section \ref{ssec:map}  to map the energy solution $\mc Y$ to an approximate solution of the stochastic heat equation\footnote{In the notation of Section~\ref{ssec:map} we have $\Theta^\varepsilon_u(v) = \langle \Theta_u, p_\varepsilon^{\rm Dir}(v,\cdot)\rangle$.}. We set $\mc Z_t(u) = \mc Y_t(\Theta_u)$, so that $\nabla_{\rm Dir} \mc Z = \mc Y$, and then $\Psi_t(u) = \exp( \tfrac{\bar{E}}{A} \mc Z_t(u))$. Then one could compute formally
\[
   \nabla \Psi_t(0) = \Psi_t(0) \tfrac{\bar{E}}{A} \nabla \mc Z_t(0) = \Psi_t(0) \tfrac{\bar{E}}{A} \mc Y_t(0) = 0.
\]
But we will show now that this formal computation is incorrect and the boundary condition for $\Psi$ is not of Neumann type (i.e.~$\nabla \Psi_t(0) \equiv \nabla \Psi_t(1) \equiv 0$), but rather of Robin type, more precisely $\nabla \Psi_t(0) \equiv - \tfrac{D(\bar{E})^2}{4A^3} \Psi_t(0)$ and $\nabla \Psi_t(1) \equiv \tfrac{D(\bar{E})^2}{4A^3} \Psi_t(1)$.

\begin{proposition}\label{prop:CH-boundary}
Let $\mc Y$ be the solution to the stochastic Burgers equation \eqref{eq:SBE} as defined in Theorem \ref{def:energy} and  let $\Psi_t(u)=\exp(\frac E A \mc Y_t(\Theta_u))$.
\medskip

Then, we have for all $t \in [0,T]$
  \[ \lim_{\varepsilon \rightarrow 0} \mathbb{E} \bigg[ 
     \bigg| \int_0^t \bigg( \frac{\Psi_s (\varepsilon) - \Psi_s(0)}{\varepsilon}  + \frac{D(\bar{E})^2}{4A^3} \Psi_s(0) \bigg)d s \bigg|^2 \bigg] = 0
  \]
  and
 \[ \lim_{\varepsilon \rightarrow 0} \mathbb{E} \bigg[ 
     \bigg| \int_0^t \bigg( \frac{\Psi_s (1) - \Psi_s(1-\varepsilon)}{\varepsilon}  - \frac{D(\bar{E})^2}{4A^3} \Psi_s(1) \bigg)d s \bigg|^2 \bigg] = 0.
  \]
\end{proposition}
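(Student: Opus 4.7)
The strategy is to transfer the statement from $\Psi$ to its Cole--Hopf counterpart $\Phi$. By Proposition~\ref{prop:link}, $\Psi_t(u) = \Phi_t(u)\exp(\tfrac{D^2 \bar E^4}{48 A^5}t)$; the prefactor depends only on $t$ and is bounded on $[0,T]$, hence commutes with the finite-difference operator in $u$, so it suffices to prove the analogous statement with $\Phi_s$ in place of $\Psi_s$. By Propositions~\ref{prop:link} and~\ref{def:she-rob}, $\Phi \in \mc L^2_C([0,T])$ is characterized by the martingale problem~\eqref{eq:she-rob-weak} with Robin parameters $\alpha = -\tfrac{D\bar E^2}{4A^3}$ and $\beta = \tfrac{D\bar E^2}{4A^3}$.

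The key step is to test~\eqref{eq:she-rob-weak} against a sequence $\varphi^\varepsilon \in \SN$ that isolates the one-sided boundary derivative at $u=0$. For auxiliary scales $\delta' \ll \delta \ll \varepsilon$, take $\varphi^\varepsilon$ to be a smooth mollification (at scale $\delta'$) of the piecewise-affine profile $\phi^\varepsilon(u) = \mathbf{1}_{[0,\delta]}(u) + \tfrac{\varepsilon-u}{\varepsilon-\delta}\mathbf{1}_{[\delta,\varepsilon]}(u)$. Because $\phi^\varepsilon$ is constant in a neighborhood of both endpoints, the mollification preserves the Neumann condition, so $\varphi^\varepsilon \in \SN$ with $\varphi^\varepsilon(0)=1$, $\varphi^\varepsilon(1)=0$, $\|\varphi^\varepsilon\|_\infty\leq 1$ and $\mathrm{supp}\,\varphi^\varepsilon\subset[0,\varepsilon]$. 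Its Laplacian tested against $\Phi_s$ satisfies
\[ \int_0^1 \Phi_s(u)\Delta\varphi^\varepsilon(u)\,du \;=\; \frac{\Phi_s(\varepsilon)-\Phi_s(\delta)}{\varepsilon-\delta} \;+\; r_{\delta'}(s), \]
with $\|r_{\delta'}(s)\|_{L^2(\bb P)}$ controlled uniformly in $s$ by the $L^2(\bb P)$-modulus of continuity of $u\mapsto \Phi_s(u)$ granted by $\mc L^2_C$, and vanishing as $\delta'/\delta \to 0$.

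Plugging $\varphi^\varepsilon$ into~\eqref{eq:she-rob-weak} and using $\varphi^\varepsilon(0)=1$, $\varphi^\varepsilon(1)=0$ produces, after rearrangement and division by $A$,
\[ \int_0^t \frac{\Phi_s(\varepsilon)-\Phi_s(\delta)}{\varepsilon-\delta}\,ds \;+\; \frac{D\bar E^2}{4A^3}\int_0^t \Phi_s(0)\,ds \;=\; \tfrac{1}{A}\bigl[\Phi_t(\varphi^\varepsilon)-\Phi_0(\varphi^\varepsilon) - \mathfrak M_t(\varphi^\varepsilon)\bigr] \;-\; \int_0^t r_{\delta'}(s)\,ds. \]
Each term on the right vanishes in $L^2(\bb P)$ as $\varepsilon\to 0$: the quadratic variation satisfies $\mathbb E[\mathfrak M_t(\varphi^\varepsilon)^2] \leq DT\|\varphi^\varepsilon\|_{L^2}^2 \sup_{s,u}\mathbb E[\Phi_s(u)^2] = O(\varepsilon)$; $\|\Phi_t(\varphi^\varepsilon)\|_{L^2(\bb P)}^2 \leq \varepsilon\int_0^\varepsilon\mathbb E[\Phi_t(u)^2]\,du = O(\varepsilon^2)$ by Cauchy--Schwarz and the support condition, similarly for $\Phi_0$; and $\|r_{\delta'}\|$ is made small by the choice of $\delta'=\delta'(\varepsilon)$. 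It remains to replace the interior difference $(\Phi_s(\varepsilon)-\Phi_s(\delta))/(\varepsilon-\delta)$ by the boundary one $(\Phi_s(\varepsilon)-\Phi_s(0))/\varepsilon$, via the algebraic identity
\[ \frac{\Phi_s(\varepsilon)-\Phi_s(\delta)}{\varepsilon-\delta} = \frac{\Phi_s(\varepsilon)-\Phi_s(0)}{\varepsilon} + \frac{\delta\,[\Phi_s(\varepsilon)-\Phi_s(0)]}{\varepsilon(\varepsilon-\delta)} + \frac{\Phi_s(0)-\Phi_s(\delta)}{\varepsilon-\delta}, \]
combined with the H\"older-type estimate $\|\Phi_s(u)-\Phi_s(u')\|_{L^2(\bb P)}\lesssim \sqrt{|u-u'|}$ uniform in $s\in[0,T]$. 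This last bound follows from the explicit representation $\Phi_s(u) = \exp(\tfrac{\bar E}{A}\mc Y_s(\Theta_u) - c_0 s)$, the identity $\|\Theta_u-\Theta_{u'}\|_{L^2}^2 = |u-u'|$, the white-noise variance of $\mc Y_s$ from item~(1) of Theorem~\ref{def:energy}, and the uniform boundedness of the exponential moments of $\Psi_s(u)$ by stationarity. Choosing for instance $\delta=\varepsilon^3$ makes both correction terms $o(1)$ in $L^2(\bb P)$ after time integration, and the first limit of the proposition follows.

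The main technical obstacle is the tension between the Neumann constraint on $\SN$ (which rules out the naive step $\mathbf 1_{[0,\varepsilon]}$ since $\nabla\varphi(0)\neq 0$) and the need to extract the sharp one-sided boundary difference at $u=0$. This is resolved by inserting the intermediate scale $\delta\ll\varepsilon$, at the cost of the martingale identity producing an interior finite difference instead of the boundary one; the Gaussian/H\"older regularity of $\Phi$ in $u$ then controls the discrepancy in $L^2(\bb P)$ after time integration. The right-boundary limit at $u=1$ is obtained by a symmetric argument with a test function concentrated near $u=1$ and using the parameter $\beta$.
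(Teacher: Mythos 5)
Your proposal takes a genuinely different route from the paper's (which works directly with a heat-kernel mollification $\Psi^\delta$ of $\Psi$, the fundamental theorem of calculus in $u$, Gaussian integration by parts, and the Kipnis--Varadhan inequality of Corollary~\ref{cor:KV}, plus Lebesgue differentiation). Your idea of testing the SHE martingale problem~\eqref{eq:she-rob-weak} against near-boundary bumps $\varphi^\varepsilon$ with an intermediate scale $\delta$ is a nice way to extract boundary behavior of $\Phi$, and the computations on the right-hand side of the rearranged identity (quadratic variation $O(\varepsilon)$, $\|\Phi_t(\varphi^\varepsilon)\|_{L^2} = O(\varepsilon)$, conversion of the interior difference to the boundary one via the algebraic identity with $\delta = \varepsilon^3$) are all correct.

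However, the very first step --- the reduction from $\Psi$ to $\Phi$ --- contains a real gap. You claim that Proposition~\ref{prop:link} gives $\Psi_t(u) = \Phi_t(u)\exp(\tfrac{D^2 \bar E^4}{48 A^5}t)$ with a deterministic prefactor, but that is not what the proposition says. Proposition~\ref{prop:link} relates the \emph{KPZ energy solution} $\mc Z$ of Theorem~\ref{def:energy-KPZ} to $\Phi$, whereas $\Psi$ in the present proposition is built from the specific antiderivative $\mc Y_t(\Theta_u)$. These two differ by a $u$-independent but \emph{random} process: indeed, from the construction in Section~\ref{ssec:map} (after the normalization $A=1$, $D=2$), one has $\Phi_t(u) = \Psi_t(u)\, e^{\mc X_t}$ with $\mc X_t = E\sqrt 2\langle\mc W_t,1\rangle - \tfrac{E^4}{12}t + E^2\langle\mc B_t,1\rangle$, which is a nontrivial semimartingale. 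So the factor linking $\Psi$ and $\Phi$ is random, not deterministic. As a consequence, your claimed reduction does not go through: even granting the statement for $\Phi$, i.e.\ $\int_0^t g^\varepsilon_s\,ds \to 0$ in $L^2(\bb P)$ with $g^\varepsilon_s = \tfrac{\Phi_s(\varepsilon)-\Phi_s(0)}{\varepsilon} + \tfrac{D\bar E^2}{4A^3}\Phi_s(0)$, this does not imply $\int_0^t e^{-\mc X_s} g^\varepsilon_s\,ds \to 0$. (Even with a deterministic time-dependent factor this implication would fail without additional control, e.g.\ convergence uniformly in the upper time limit, which your argument does not provide.) The same confusion reappears later when you invoke the ``explicit representation $\Phi_s(u) = \exp(\tfrac{\bar E}{A}\mc Y_s(\Theta_u) - c_0 s)$'' to get $L^2$-H\"older continuity in $u$; that formula holds for $\Psi_s$, not for $\Phi_s$, because of the extra random multiplier $e^{\mc X_s}$. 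The overall strategy could conceivably be salvaged --- for instance by proving the stronger statement $\sup_{s\le t}|\int_0^s g^\varepsilon_r\,dr|\to 0$ in $L^2(\bb P)$ and then integrating by parts in time against $e^{-\mc X_s}$ --- but this is a substantive additional argument that you would need to supply, and it is not present in the proposal.
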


\begin{proof}[Proof of Proposition \ref{prop:CH-boundary}]
The proof will be exposed in Appendix \ref{sec:proof-boundary}.
\end{proof}

\subsection{Statement of the convergence theorems} \label{ssec:results}
From now and for the rest of the paper we assume $\rho=\frac12$. We are now ready to state all the convergence results for the different fields that we defined previously in Section \ref{sec:fields}:

\begin{theorem}[Convergence of the density field]
\label{theo:convergence}
Fix $T>0$, $k>\frac52$ and $\rho=\frac12$. 

Then, the sequence of processes $\{\mathcal{Y}_t^n \; ; \; t\in[0,T]\}_{n\in\bb N}$ converges in distribution in $\mc D([0,T],\mc H_{\mathrm{Dir}}^{-k})$ as $n\to \infty$ towards:
\begin{enumerate}
\item if $\gamma>\frac12$,  the Ornstein-Uhlenbeck process solution of
\eqref{eq:OU} with Dirichlet boundary conditions (as defined in Proposition \ref{prop:OU}) with $A=1$ and $D=\frac12$;

\medskip
\item if $\gamma=\frac12$, the unique stationary energy solution of the stochastic  Burgers equation \eqref{eq:SBE} with Dirichlet boundary conditions (as defined in Theorem \ref{def:energy}) with $A=1$, $\bar E=E$ and $D=\frac12$.
\end{enumerate} 
\end{theorem}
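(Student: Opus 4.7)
The plan is to follow the classical ``tightness $+$ characterization of limit points $+$ uniqueness'' strategy, adapted to our bounded domain with Dirichlet-type test functions. Fix $\varphi \in \SD$. A first, purely computational step is to apply Dynkin's formula to $\mc Y^n_\cdot(\varphi)$, which produces the decomposition
\[
   \mc M^n_t(\varphi) = \mc Y^n_t(\varphi) - \mc Y^n_0(\varphi) - \int_0^t n^2 \mc L_n \mc Y^n_s(\varphi)\, ds,
\]
where $\mc M^n_\cdot(\varphi)$ is a martingale with an explicitly computable quadratic variation. The generator acting on the occupation variable is
\[
   n^2 \mc L_n \eta(x) = n^2\big(r_{x-1,x}(\eta) - r_{x,x+1}(\eta)\big)\big(1 - 2\eta(x)\big)/\dots
\]
but in the end a careful summation by parts (using the definition of $r_{x,x+1}$ in \eqref{eq:rate}) yields that $\int_0^t n^2 \mc L_n \mc Y^n_s(\varphi)\,ds$ is the sum of:
\emph{(i)} a linear drift $\int_0^t \mc Y^n_s(\Delta_n \varphi)\, ds$ where $\Delta_n\varphi(x/n)$ is a discrete Laplacian that converges uniformly to $\Delta\varphi$;
\emph{(ii)} a nonlinear ``current'' term of the form $E\, n^{3/2 - \gamma}\int_0^t \frac{1}{n}\sum_{x} (\nabla_n \varphi)(x/n)\, \bar\eta_{sn^2}(x)\bar\eta_{sn^2}(x+1)\, ds$, where $\bar\eta = \eta - \rho$;
\emph{(iii)} boundary contributions coming from the reservoir generator $\mc L_n^{\mathrm{bnd}}$ and from the summation by parts at $x\in\{1,n-1\}$.

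The Dirichlet condition $\varphi(0)=\varphi(1)=0$ together with Taylor expansion kills the main boundary contributions from (iii), since they carry a factor $\varphi(1/n)$ or $\varphi((n-1)/n)$; the remaining boundary terms involve $\nabla\varphi(0)$ or $\nabla\varphi(1)$ multiplied by a mean-zero quantity under $\nu^n_\rho$ with $\rho = \tfrac12$, and a short $L^2(\bb P_\rho)$ estimate (analogous to Lemma~\ref{lem:height-boundary}) shows that they vanish in probability as $n\to\infty$. For the martingale bracket one gets, by direct computation using \eqref{eq:direxpl} and stationarity, that
\[
   \langle \mc M^n(\varphi)\rangle_t \;=\; \int_0^t \frac{1}{n}\sum_{x} (\nabla_n\varphi)(x/n)^2 \big(\eta_{sn^2}(x) - \eta_{sn^2}(x+1)\big)^2\, ds \;+\; \mathrm{boundary},
\]
which converges in $L^1(\bb P_\rho)$ to $t \cdot 2 \rho(1-\rho)\|\nabla\varphi\|_{L^2}^2 = \tfrac{t}{2}\|\nabla\varphi\|_{L^2}^2$.

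The central and hardest step is to handle the quadratic term (ii). When $\gamma > \tfrac12$, the prefactor $n^{3/2-\gamma}$ tends to $0$ and, together with a crude $L^2$ bound on $\bar\eta(x)\bar\eta(x+1)$ under $\nu^n_\rho$, this term vanishes; this immediately gives case (1). When $\gamma = \tfrac12$ the prefactor equals $E$ and we must identify
\[
   \int_0^t \frac{1}{\sqrt n}\sum_x (\nabla_n\varphi)(x/n)\, \bar\eta_{sn^2}(x)\bar\eta_{sn^2}(x+1)\, ds
\]
with the nonlinear term $E\, \mc A_t(\varphi) = -E\int_0^t\!\int_0^1 (\mc Y_s(\iota_\varepsilon(u)))^2 \nabla\varphi(u)\,du\,ds$ of \eqref{eq:A}--\eqref{eq:B}. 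This identification is exactly the content of the \emph{second order Boltzmann--Gibbs principle} (Theorem~\ref{theo:BG}), whose proof on a finite box with stochastic reservoirs is the main technical novelty of the paper. I would invoke it as a black box here; the extra boundary terms that appear when applying the multi-scale argument inside $\Lambda_n$ have to be absorbed using $\varphi\in\SD$ and the Dirichlet form estimate \eqref{eq:direxpl}. The main obstacle to the whole scheme is precisely this BGP at the boundary: the standard proof uses translation invariance of the bulk dynamics, and one has to localize it so that only blocks staying away from $\{1,n-1\}$ are used, while the boundary sites are controlled separately through the reservoir part of the Dirichlet form.

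Having these ingredients, tightness of $\{\mc Y^n\}$ in $\mc D([0,T],\mc H_{\rm Dir}^{-k})$ follows by the Mitoma--Aldous criterion applied to $\mc Y^n_\cdot(\varphi)$ for each $\varphi\in\SD$: the drift part is uniformly Lipschitz in time thanks to the uniform bounds in the BGP, and the martingale part is controlled by its bracket. For the characterization, fix any limit point $\mc Y$. Stationarity and the white-noise covariance in item (1) of Theorem~\ref{def:energy} pass to the limit directly from $\nu^n_{1/2}$ by CLT on the Bernoulli product. Items (3) and (4) (the martingale problem with nonlinear term $\bar E\mc A_t$, and its time reversal) follow from the Dynkin decomposition, the identification of the bracket, the BGP, and the reversibility properties of $\mc L_n^*$ (whose adjoint under $\nu^n_{1/2}$ is obtained by flipping $E$ to $-E$). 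The energy estimate (item~(2)) is a straightforward consequence of the uniform $L^2$ bound provided by the BGP at two different scales $\varepsilon$ and $\delta$. In the case $\gamma>\tfrac12$ the same argument with $\bar E=0$ shows that any limit point is the OU process of Proposition~\ref{prop:OU}. Finally, uniqueness of such limit points (given by Proposition~\ref{prop:OU} and Theorem~\ref{def:energy}, whose proofs are provided in Section~\ref{sec:energy-pr}) upgrades subsequential convergence to full convergence in law, concluding the theorem.
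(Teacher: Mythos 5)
Your proposal follows essentially the same strategy as the paper: Dynkin decomposition, the second-order Boltzmann--Gibbs principle (Theorem~\ref{theo:BG}) invoked as a black box to produce the nonlinear term, Aldous-type tightness, characterization of limit points via the martingale problem, and uniqueness deferred to Proposition~\ref{prop:OU} and Theorem~\ref{def:energy}. One small inaccuracy in the description of step (iii): for the density field the boundary contributions to $n^2\mc L_n\mc Y^n_s(\varphi)$ vanish \emph{identically}, not via a stochastic $L^2(\bb P_\rho)$ estimate; they carry factors $\varphi(0)$ and $\varphi(1)$ (exactly zero for $\varphi\in\SD$), and the algebraic identity $\sum_{x=1}^{n-1}\Delta_n\varphi(\tfrac{x}{n})=-n^2\big(\varphi(\tfrac1n)+\varphi(\tfrac{n-1}{n})\big)$ closes the recentering, so no Kipnis--Varadhan-type bound (Lemma~\ref{lem:height-boundary}) is needed for $\mc Y^n$ --- that lemma only enters when proving Theorem~\ref{theo:convheight} for the height field.
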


\begin{remark} We note that the previous theorem, when the strength of the asymmetry is taken with $\gamma =1$, has been already proved in \cite{GLM} by using a different strategy, namely, considering the microscopic Cole-Hopf transformation -- which avoids the derivation of a Boltzmann-Gibbs principle -- but in a wide scenario since there the initial measure is quite general.  
\end{remark}

\begin{theorem}[Convergence of the height field] \label{theo:convheight}
Fix $T>0$, $k>\frac52$ and $\rho=\frac12$. 

Then, the sequence of processes $\{\mc Z^n_t\; ;\; t \in [0,T]\}_{n\in{\bb N}}$ converges in distribution in $\mc D([0,T], \mc H_{\mathrm{Neu}}^{-k})$ as $n\to\infty$ towards: 
\begin{enumerate}

 \item if $\gamma > \frac12$, the unique almost stationary energy solution of the KPZ equation~\eqref{eq:KPZ} with Neumann boundary conditions  (as defined in Theorem \ref{def:energy-KPZ}) with $A=1$, $\bar E=0$ and $D=\frac12$;
 \medskip
 \item if $\gamma=\frac12$, the unique almost stationary energy solution of the KPZ equation~\eqref{eq:KPZ} with Neumann boundary conditions (as defined in Theorem \ref{def:energy-KPZ}) with $A=1$, $\bar E = E$ and $D=\frac12$.
\end{enumerate}
\end{theorem}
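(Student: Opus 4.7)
The plan is to combine the SBE convergence from Theorem \ref{theo:convergence} with a direct analysis of the height field through Dynkin's martingale decomposition, and then invoke the uniqueness result of Theorem \ref{def:energy-KPZ}. The general scheme is: prove tightness of $\{\mc Z^n\}$ in $\mc D([0,T],\mc H_{\rm Neu}^{-k})$, characterize any limit point as satisfying the three conditions of Theorem \ref{def:energy-KPZ}, and conclude by uniqueness.

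First I would apply Dynkin's formula to $\mc Z^n_t(\varphi)$ to obtain the decomposition
\[
   \mc Z^n_t(\varphi) = \mc Z^n_0(\varphi) + \int_0^t n^2 \mc L_n \mc Z^n_s(\varphi)\, ds - \int_0^t \frac{c_n}{n^{3/2}} \sum_{x=1}^n \varphi(\tfrac{x}{n})\, ds + \mc N^n_t(\varphi),
\]
where $\mc N^n_t(\varphi)$ is the associated martingale. Computing $n^2 \mc L_n h^n_{tn^2}(x)$ for $x$ in the bulk produces a discrete Laplacian term plus an asymmetric contribution of order $n^{2-\gamma}$, while for $x=1$ (and $x=n$) the boundary reservoirs contribute; after summation by parts against $\varphi \in \SN$ (using $\nabla \varphi(0) = \nabla \varphi(1) = 0$), the boundary terms yield negligible contributions controlled by the height fluctuations at the boundary (Lemma \ref{lem:height-boundary}), and the drift $c_n t$ exactly cancels the first-order part of the asymmetric term by the choice \eqref{eq:cn}. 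This reduces the symmetric contribution to a term converging to $A\int_0^t \mc Z_s(\Delta\varphi)\, ds$ with $A=1$, and isolates a quadratic term of the form $\int_0^t \frac{n^{1/2-\gamma}E}{n} \sum_{x} \nabla^n \varphi(\tfrac{x}{n})\, \bar\eta^n_{sn^2}(x)\bar\eta^n_{sn^2}(x+1)\, ds$ (with $\bar\eta = \eta - \rho$).

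Next I would handle the quadratic term. For $\gamma > \tfrac12$ a straightforward $L^2$ estimate shows it vanishes, yielding case (1) with $\bar E = 0$. For $\gamma = \tfrac12$ the coefficient is $O(1)$ and we invoke the second order Boltzmann-Gibbs principle (Theorem \ref{theo:BG}) to replace the product $\bar\eta(x)\bar\eta(x+1)$ by a quadratic functional of a local average of the density field; passing to the limit one identifies this with the renormalized square $\mc B_t(\varphi)$ defined in \eqref{eq:B-def}, up to a renormalization constant $\tfrac{1}{4\varepsilon}$ which is killed because $\sum_x \nabla^n \varphi(\tfrac{x}{n})/n \to \int_0^1 \nabla\varphi = 0$ thanks to the Neumann boundary conditions. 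In parallel, a direct computation of the bracket $\langle \mc N^n(\varphi)\rangle_t$ using the rates \eqref{eq:rate} yields, after a Riemann-sum argument, convergence to $t D \|\varphi\|^2_{L^2([0,1])}$ with $D=\tfrac12$.

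For tightness, I would use the Mitoma criterion: it suffices to check tightness of $\{\mc Z^n_\cdot(\varphi)\}$ for each $\varphi \in \SN$. The decomposition above, together with the uniform bounds on the quadratic variation and the $L^2$-control of the quadratic term from the Boltzmann-Gibbs principle, yields Aldous' tightness criterion for the drift and martingale parts; tightness of the initial condition $\mc Z^n_0$ follows from the explicit covariance under $\nu^n_{1/2}$ (independent Bernoulli sums give a CLT towards a random variable $Z$ with $\nabla_{\rm Dir}Z$ a white noise of the correct variance $D/(2A)=1/4$).

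Finally I would identify the limit. Any subsequential limit $\mc Z$ has continuous trajectories (since the jumps of $\mc Z^n$ are of order $n^{-3/2}$) and by the remark following the definition of $\mc Z^n$, the identity $\mc Z^n_t(\tilde\nabla_n\varphi) = -\mc Y^n_t(\varphi) + o_{L^2}(1)$ passes to the limit so that $\nabla_{\rm Dir}\mc Z = \mc Y$, where $\mc Y$ is the stationary energy solution of \eqref{eq:SBE} identified in Theorem \ref{theo:convergence}. Combined with the martingale characterization and the identification of $\mc B_t$ just established, $\mc Z$ satisfies all three conditions of Theorem \ref{def:energy-KPZ}, and uniqueness therein concludes the convergence in both cases $\gamma>\tfrac12$ (with $\bar E=0$) and $\gamma=\tfrac12$ (with $\bar E=E$). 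The main obstacle is the treatment of the quadratic contribution together with the boundary terms: proving the Boltzmann-Gibbs principle in this bounded domain with reservoirs (Theorem \ref{theo:BG}) and simultaneously controlling the boundary fluctuations of $h^n_t(1)$ and $h^n_t(n)$ so that the resulting limiting nonlinearity is exactly the translation-invariant $\mc B_t$ with homogeneous Neumann compensation, rather than producing spurious Dirac contributions at the endpoints.
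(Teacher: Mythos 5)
Your overall architecture matches the paper's: Dynkin decomposition of $\mc Z^n$ via the joint generator, control of the boundary contributions through Lemma \ref{lem:height-boundary}, the second-order Boltzmann--Gibbs principle to treat the quadratic term, the relation $\nabla_{\rm Dir}\mc Z=\mc Y$ to link with Theorem \ref{theo:convergence}, and uniqueness from Theorem \ref{def:energy-KPZ}. However, there is a genuine error in your treatment of the renormalization, and it is not cosmetic.

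First, the quadratic term in the height decomposition carries $\varphi$, not $\nabla\varphi$: applying the joint generator to $h^n(x)$ yields (cf.~\eqref{eq:gen-h-2}) $\nabla^- h(x)\nabla^+ h(x)=\bar\eta(x-1)\bar\eta(x)$ multiplied by $E/n^\gamma$, and this piece is then simply tested against $\varphi(x/n)$; no integration by parts acts on it. So the correct form is $\mc B_t^n(\varphi)=\frac{\sqrt n}{n^\gamma}\int_0^t\sum_x\varphi(\tfrac xn)\bar\eta_{sn^2}(x-1)\bar\eta_{sn^2}(x)\,ds$, not a sum against $\nabla\varphi$. Consequently, after the BGP replacement $\bar\eta(x-1)\bar\eta(x)\mapsto(\vec\eta^{\,\varepsilon n})^2-\frac{\chi(\rho)}{\varepsilon n}$, the constant piece contributes
\[
-\int_0^t \frac{1}{n}\sum_x \varphi\big(\tfrac xn\big)\,\frac{1}{4\varepsilon}\,ds \;\longrightarrow\; -\frac{t}{4\varepsilon}\int_0^1\varphi(u)\,du,
\]
which does not vanish. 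It is precisely this term that is absorbed by the $-\frac{1}{4\varepsilon}$ in the definition \eqref{eq:B-def} of $\mc B_t$; this is the reason $\mc B_t$ must be renormalized while $\mc A_t$ in \eqref{eq:A} need not be (there the constant multiplies $\varphi(\tfrac{n-1}{n})-\varphi(\tfrac1n)\to 0$ because $\varphi\in\SD$). Second, even granting your (incorrect) form with $\nabla\varphi$, the identity $\int_0^1\nabla\varphi\,du=0$ fails for $\varphi\in\SN$: Neumann conditions impose $\nabla\varphi(0)=\nabla\varphi(1)=0$, but $\int_0^1\nabla\varphi=\varphi(1)-\varphi(0)$ is arbitrary. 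Without the renormalization the unsubtracted quantity $\int_0^t\int_0^1(\nabla_\varepsilon\mc Z_s)^2\varphi\,du\,ds$ has no limit as $\varepsilon\to 0$, so the gap is fatal if left as stated. The fix is exactly the bookkeeping in Section~\ref{app:mart_dec_height}: the deterministic $-\frac14$ coming from $\nabla^-h\nabla^+h-\frac14$ is cancelled against the centering $c_nt$, and the $\chi(\rho)/\ell$ produced by the BGP then matches the $\frac{1}{4\varepsilon}$ in $\mc B_t$ once $\ell=\varepsilon n$, giving convergence to the renormalized square with no spurious constants.
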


The strategy of the proof of Theorem \ref{theo:convergence} is quite well known and has been widely used in the past literature. Let us recall here the main steps:
\begin{enumerate}
\item first, prove that the sequence of probability measures $\{\mc Q_n\}_{n\in\bb N}$, where $\mc Q_n$ is induced by the density fluctuation field $\mc Y^n$ and the initial measure $\nu^n_\rho$, is tight. Note that $\mc Q_n$ is a measure on the Skorokhod space $\mc D([0,T],\mc H_{\mathrm{Dir}}^{-k})$. This is the purpose of Section \ref{sec:tightness} below;
\item second, write down the approximate martingale problem satisfied by $\mc Y_t^n$ in the large $n$ limit, and prove that it coincides with the martingale characterization of the solutions of the SPDEs given in Theorem \ref{theo:convergence}. The closure of the martingale problem is explained in  Section \ref{sec:martingale}. In the case $\gamma=\frac12$, we need to prove an additional important tool, the so-called \emph{second order Boltzmann-Gibbs principle}, which is stated in Section \ref{sec:BG} below and proved in Section \ref{sec:proofBG}. 
\end{enumerate}

The strategy of the proof of Theorem \ref{theo:convheight} is completely similar to the one described above.

\begin{remark}\label{rem:rho}
	Most of our arguments work for any $\rho \in (0,1)$. However, the restriction $\rho=\tfrac12$ is not just for convenience of notation: otherwise we would pick up an additional diverging transport term in the martingale decomposition for $\mc Y^n(\varphi)$, roughly speaking $E(1-2\rho)n^{3/2-\gamma} \mc Y^n(\nabla\varphi)$. In the periodic case or if the underlying lattice is $\bb Z$ we can kill that term by observing our system in a moving frame, see~\cite{gj2014} for instance, but of course this does not work in finite volume with boundary conditions. Therefore, we need to assume either $\gamma \ge \tfrac32$ or $\rho=\tfrac12$. Since we are mostly interested in the case $\gamma = \tfrac12$, we take $\rho=\tfrac12$.
\end{remark}

 From now on and up to the end of the paper we will mainly assume $\gamma=\frac12$ but we will point out the differences with respect to the case $\gamma>\frac12$. We also essentially focus on the convergence of $\mc Y^n$, since the convergence of $\mc Z^n$ will follow by very similar arguments.

\section{Proof of Theorem \ref{theo:convergence} and Theorem \ref{theo:convheight}}

\label{s4}

We start by giving all the details on the proof of Theorem \ref{theo:convergence}, and at the end of this section we present the only necessary steps which need to be adapted for the proof of Theorem \ref{theo:convheight}. They mainly concern the control of boundary terms for the height fluctuation field. 

The section is split in the following way. In Section \ref{sec:martingale} we write down the martingale decomposition which is associated to the density fluctuation field. In Section \ref{sec:BG} we state the second order Boltzmann-Gibbs principle, whose proof will be  given later in Section \ref{sec:proofBG}. This principle is needed to control the term in the martingale decomposition which gives rise in the regime $\gamma=\frac12$ to the Burgers non-linearity. In Section  \ref{sec:tightness} we prove tightness of the density fluctuation field $\mc Y^n$, and finally in Section \ref{sec:charact} we characterize the limit point as a solution to the corresponding SPDE. In Section \ref{app:height} we give the martingale decomposition for the field $\mc Z^n$, and we present the estimate that is needed in order to control extra terms that appear at the boundary.  

For the sake of clarity from now on we denote $\eta_{tn^2}:=\eta^n_{tn^2}$.

\subsection{Martingale decomposition for the density fluctuation field $\mc Y$}
\label{sec:martingale}

Fix a test function $\varphi\in\mc S_{\tmop{Dir}}$ so that $\varphi(0)=\varphi(1)=0$. 
By Dynkin's formula, we know that
\begin{equation} \label{eq:firstmar}
\mathcal{M}_t^n(\varphi)=\mathcal{Y}_t^n(\varphi)-\mathcal{Y}_0^n(\varphi)-\int_0^t n^2\mathcal{L}_n \mathcal{Y}_s^n(\varphi)\, ds
\end{equation}
and
\begin{equation}
\big(\mc M_t^n(\varphi)\big)^2 - \int_0^t n^2\mc L_n \big(\mc Y^n_s(\varphi)^2\big)-2\mc Y^n_s(\varphi)\; n^2\mc L_n \mc Y^n_s(\varphi) ds \label{eq:secondmar}\end{equation}
are martingales.
 The computations  from Appendix \ref{app:mart-dens} show that the integral term in the first martingale \eqref{eq:firstmar} rewrites as
\begin{equation}\label{eq:martdyn} \int_0^t n^2 \mc L_n\mc Y_s^n(\varphi)\; ds = \Big(1+\frac{E}{2n^\gamma}\Big)\mc I_t^n(\varphi) + E\mc A_t^n(\varphi),\end{equation} 
where 
\begin{align}
\mc I_t^n(\varphi)&= \int_0^t \mc Y_s^n(\Delta_n \varphi) \; ds, \label{eq:I}\\
\mc A_t^n(\varphi)&= - \frac{\sqrt n}{n^\gamma}\int_0^t\; \sum_{x=1}^{n-2} \nabla_n^+\varphi\big(\pfrac{x}{n}\big) \bar\eta_{sn^2}(x)\bar\eta_{sn^2}(x+1) \; ds, \label{eq:Bt}
\end{align}
and where,  above, $\nabla_n^+ \varphi$ and $\Delta_n \varphi$ are the two functions that approximate on the discrete line the gradient and Laplacian of $\varphi$, respectively. They are  defined for $x \in \Lambda_n$ by:
\begin{align*}
\nabla_n^+ \varphi\big(\pfrac{x}{n}\big) & = n\big(\varphi\big(\pfrac{x+1}{n}\big)-\varphi\big(\pfrac{x}{n}\big)\big), \\
\Delta_n\varphi\big(\pfrac{x}{n}\big) & =n^2\big(\varphi\big(\pfrac{x+1}{n}\big)-2\varphi\big(\pfrac{x}{n}\big) + \varphi\big(\pfrac{x-1}{n}\big)\big).
\end{align*}
Moreover, in \eqref{eq:Bt} we  have used a short notation for the centered variable defined as: $\bar \eta(x)=\eta(x)-\rho$ for any $x\in\Lambda_n$.
From \eqref{eq:martdyn}  we get the identity
\begin{equation}\label{mart_decomp}
\mathcal{M}_t^n(\varphi)=\mathcal{Y}_t^n(\varphi)-\mathcal{Y}_0^n(\varphi)-\Big(1+\frac{E}{2n^\gamma}\Big)\mc I_t^n(\varphi) - \mc A_t^n(\varphi).
\end{equation}
It is quite easy to see that in the macroscopic limit, the integral term $\mc I_t^n$ shall correspond to the diffusive macroscopic term $\Delta_{\rm Dir} \mc Y_t$.  Moreover, when $\gamma=\frac12$, $\mc A_t^n$ shall give rise to the non-linear term in the stochastic Burgers equation, as explained in the next lines, and it will disappear when $\gamma > \frac12$.

We also note that since  $\varphi\in \SD$, a simple computation shows that the integral term in the second martingale \eqref{eq:secondmar} rewrites as
\begin{equation}\label{eq:QV_density}
\begin{split}
&\int_0^t\frac{1}{n}\sum_{x=1}^{n-2}\Big(1+\frac{E}{n^\gamma}\Big) \big(\eta_{sn^2}(x)-\eta_{sn^2}(x+1)\big)^2 \big(\nabla_n\varphi\big(\tfrac{x}{n}\big)\big)^2ds\\
 + & \int_0^t\frac{1}{n} r_{0,1}(\eta_{sn^2}) \big(\nabla_n\varphi\big(\tfrac{1}{n}\big)\big)^2+\frac{1}{n} r_{n-1,n}(\eta_{sn^2}) \big(\nabla_n\varphi\big(\tfrac{n-1}{n}\big)\big)^2ds.
\end{split}
\end{equation}

\subsection{Case $\gamma=\tfrac12$:  second order Boltzmann-Gibbs principle} \label{sec:BG}

In this section we state another important result of this work, which is essential to the proof of Theorem \ref{theo:convergence}, since we will be able to treat the term $ \mc A_t^n(\varphi)$ given in \eqref{eq:Bt}. We focus on the case $\gamma=\frac12$, but ahead we  make some comments on the case $\gamma>\frac12$.  Before proceeding, we need to introduce some notations.

\begin{definition} For any $x\in\Lambda_n$  and $\ell_1 \in \bb N$ that satisfy $x+\ell_1 \in \Lambda_n$ (resp. $\ell_2 \in \bb N$ that satisfy $x-\ell_2 \in \Lambda_n$), we denote by $\vec\eta^{\ell_1}(x)$  (resp. $\vecleft\eta^{\ell_2}(x)$) the average centered configuration on a box of size $\ell_1$ (resp. $\ell_2$) situated to the right (resp. left) of the site $x \in \Lambda_n$:
\[
\vec\eta^{\ell_1}(x)=\frac{1}{\ell_1}\sum_{z=x+1}^{x+\ell_1} \bar\eta(z)\quad \Big(\textrm{resp.} \quad 
\vecleft\eta^{\ell_2}(x)=\frac{1}{\ell_2}\sum_{z=x-\ell_2}^{x-1}\bar\eta(z)\Big).
\]
\end{definition}

For any measurable function $v:\Lambda_n\to\bb R$, let us define $\|v\|_{2,n}^2=\frac{1}{n}\sum_{x\in\Lambda_n}v^2(x).$
From now on and up to the end,  $C>0$ is a constant that does not depend on $t > 0$, nor on $n,\ell \in \mathbb{N}$, and that may change from line to line.

\begin{theorem}[Second order Boltzmann-Gibbs principle]\label{theo:BG}
Let $v:\Lambda_n\to\bb R$ be a measurable function. 
There exists $C>0$ such that, for any $n,\ell\in\bb N$ such that $\ell < \frac n 4$ and any $t>0$,
\begin{equation}
\label{eq:BG-BCbis}
\bb E_\rho\bigg[\bigg(\int_0^t \sum_{x=1}^{n-2} v(x) \Big[\bar\eta_{sn^2}(x)\bar\eta_{sn^2}(x+1) - \mc Q(x,\ell,\eta_{sn^2})\Big] \; ds\bigg)^2\bigg] \leq Ct \Big(\frac{\ell}{n} + \frac{tn}{\ell^2}\Big) \|v\|^2_{2,n},
\end{equation}
where \begin{equation}\label{eq:Q}
\mc Q(x,\ell,\eta)= \begin{cases} 
\displaystyle \big(\vec\eta^\ell(x)\big)^2 - \frac{\chi(\rho)}{\ell},  &\text{ if }\;  x \in\{1, \dots, n-2\ell-1\}\; ; \\
~ \\
\displaystyle \big(\vecleft\eta^\ell(x)\big)^2 - \frac{\chi(\rho)}{\ell}, & \text{ if }\; x\in\{ n-2\ell, \dots, n-2\}. \end{cases}
\end{equation}
\end{theorem}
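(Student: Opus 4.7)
The plan is to adapt the multi-scale dyadic replacement strategy pioneered in \cite{gj2014} and later refined in \cite{fgsimon2015, blondel2016, GJSi}, while tracking the boundary effects carefully. Since our process is stationary under $\nu_\rho^n$ but not reversible, I would start from a Kipnis-Varadhan type variational inequality,
\[
\bb E_\rho\bigg[\bigg(\int_0^t F_s \, ds\bigg)^2\bigg] \lesssim t \sup_h \Big\{ 2 \langle F, h\rangle_{\nu_\rho^n} - n^2 \mf D_n(h) \Big\},
\]
valid for any mean-zero functional $F \in L^2(\nu_\rho^n)$, in which only the symmetric part of $\mc L_n$ enters the Dirichlet form $\mf D_n$. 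The asymmetric drift is of order $E/n^{\gamma}$ and, after the diffusive speed-up by $n^2$, only perturbs the estimate by a factor $1 + O(n^{-\gamma})$, so it does not affect the final order. This reduces the problem to an $H_{-1}$-type bound for the spatially averaged functional on the left-hand side of \eqref{eq:BG-BCbis}.

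The second step is the dyadic replacement. For $x$ in the bulk strip $\{1, \ldots, n-2\ell-1\}$, I write
\[
\bar\eta(x)\bar\eta(x+1) = \big(\bar\eta(x)\bar\eta(x+1) - \bar\eta(x)\vec\eta^1(x)\big) + \bar\eta(x)\vec\eta^1(x),
\]
and then iteratively compare $\bar\eta(x)\vec\eta^{2^k}(x)$ with $\bar\eta(x)\vec\eta^{2^{k+1}}(x)$ for $k = 0, 1, \ldots, \lfloor\log_2 \ell\rfloor$, each such increment being a sum of gradient-like local functions supported in boxes of size $O(2^k)$. Invoking the classical $\Omega(2^{-2k})$ spectral gap for the symmetric exclusion on a box of that size, each step yields an $H_{-1}^2$ bound of order $2^k \|v\|_{2,n}^2 / n^2$; the resulting geometric series sums to the contribution $t\ell/n \cdot \|v\|_{2,n}^2$. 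The final replacement, passing from $\bar\eta(x)\vec\eta^\ell(x)$ to $(\vec\eta^\ell(x))^2 - \chi(\rho)/\ell$, is then carried out ``by hand'' via a direct $L^2(\nu_\rho^n)$ variance computation — using that $\nu_\rho^n$ is product, so the latter centered square has variance $O(\ell^{-2})$ — combined with Cauchy-Schwarz in time; this produces an error of order $t^2 \|v\|_{2,n}^2 \cdot n/\ell^2$, which upon factoring out one power of $t$ matches the $tn/\ell^2$ contribution. The buffer of $\ell$ extra sites guarantees that the largest box used, of size $2\ell$, lies entirely in $\Lambda_n$.

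For the boundary band $\{n-2\ell, \ldots, n-2\}$, the right box $[x+1, x+\ell]$ would no longer fit in $\Lambda_n$, so we switch to the left-averages $\vecleft\eta^\ell(x)$ and mirror the whole dyadic scheme, using the invariance of $\nu_\rho^n$ and of the symmetric part of $\mc L_n$ under the reflection $x \mapsto n-x$. The main obstacle, and the most delicate point of the argument, is to verify that the local spectral gap estimate of order $k^{-2}$ remains valid for \emph{all} dyadic boxes encountered in the procedure, including those adjacent to the reservoir-coupled sites $\{1, n-1\}$: this cannot simply be quoted from the classical closed-SSEP estimate, because the reservoir part of $\mf D_n$ contributes extra non-conservative terms; however, these extra terms can only add relaxation channels, so one expects the gap to be at least as large as in the closed case, and this intuition has to be made quantitative on a box-by-box basis. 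Once this uniform local spectral gap is established, the interior telescoping and the boundary-band estimate combine to give the announced bound.
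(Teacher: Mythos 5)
Your proposal shares the paper's overall skeleton (the Kipnis--Varadhan variational bound followed by a dyadic telescoping), but the two key analytical ingredients you invoke are precisely where the paper deliberately does \emph{not} go, and both contain genuine gaps.

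First, you rest the dyadic step on ``the classical $\Omega(2^{-2k})$ spectral gap for symmetric exclusion on a box'' and acknowledge at the end that its validity near the reservoir-coupled sites is the most delicate point, ``to be made quantitative on a box-by-box basis''. That is not a minor lacuna; it is the unresolved core of your argument. The paper sidesteps it altogether: its Proposition~\ref{prop:main} is a gradient/Young-inequality estimate (write the block difference as a telescoping sum of $\bar\eta(z)-\bar\eta(z+1)$, change variables $\eta\mapsto\sigma^{z,z+1}\eta$, and compare directly with the Dirichlet form), which requires no spectral gap input at all and therefore behaves identically at the boundary and in the bulk. This is the very reason the paper advertises its BGP proof as ``not needing a spectral gap''. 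Note also that if one tries to salvage your telescoping $\bar\eta(x)\vec\eta^{2^k}(x)\mapsto\bar\eta(x)\vec\eta^{2^{k+1}}(x)$ via Proposition~\ref{prop:main} instead, the fixed prefactor $\bar\eta(x)$ has variance $O(1)$ rather than $O(1/\ell_k)$, the per-scale bound becomes $t\,2^{2k}/n$, and the geometric sum yields $t\ell^2/n$ instead of $t\ell/n$. The paper's more elaborate decomposition \eqref{eq:right1}--\eqref{eq:right7} is engineered so that the ``fixed'' factor at dyadic step $k$ is $\vec\eta^{\ell_k}(x)$ or $\vec\eta^{2\ell_k}(x+2\ell_k)$, whose variance is $O(1/\ell_k)$; that is what makes the dyadic series summable with the non-spectral-gap estimate.

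Second, the ``by hand'' Cauchy--Schwarz treatment of the final replacement $\bar\eta(x)\vec\eta^{\ell}(x)\mapsto(\vec\eta^{\ell}(x))^2-\chi(\rho)/\ell$ does not close. The function $F=\vec\eta^{\ell}(x)\big[\bar\eta(x)-\vec\eta^{\ell}(x)\big]+\chi(\rho)/\ell$ does \emph{not} have variance $O(\ell^{-2})$: by independence of $\bar\eta(x)$ and $\vec\eta^{\ell}(x)$ under $\nu^n_\rho$, the term $\bar\eta(x)\vec\eta^{\ell}(x)$ alone already has variance $\chi(\rho)^2/\ell$. Moreover the $\tau_x F$ for $|x-x'|<\ell$ overlap, so Cauchy--Schwarz in time plus crude $L^2$ expansion gives only $t^2\,n\|v\|_{2,n}^2$, far worse than $t^2 n/\ell^2\cdot\|v\|_{2,n}^2$. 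The paper's Proposition~\ref{prop:last} handles this by splitting $g^{\rightarrow}_{\rm VII}=g_1+g_2$, where $g_2=\chi(\rho)/\ell-\frac{1}{2\ell}(\eta(0)-\eta(1))$ has genuinely tiny support and variance $O(1/\ell^2)$, so Cauchy--Schwarz applies to $g_2$ and gives the announced $t^2 n/\ell^2$; the bulk $g_1$ must again be fed through the variational estimate with the gradient/Young trick, where one more telescoping produces an exact cancellation of the $\chi(\rho)/\ell$ term, yielding $t\ell/n$. So even after the dyadic replacements, the last step is not a simple variance computation.
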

\begin{remark} 
Notice that the assumption $\ell < \frac n 4$ ensures that one of the two conditions in \eqref{eq:Q} is always satisfied and $\vec\eta^\ell(x)$ and $\vecleft\eta^\ell(x)$ are always well defined.
\end{remark}

\begin{proof}[Proof of Theorem \ref{theo:BG}]
The proof  is quite involved and is postponed to Section \ref{sec:proofBG}.
\end{proof}

Now,  let us apply Theorem \ref{theo:BG} when $\ell=\varepsilon n$ (which actually means $\lfloor \varepsilon n \rfloor$ with some abuse of notation): this choice makes the right hand side of \eqref{eq:BG-BCbis} vanish when we let first $n\to \infty$ and then $\varepsilon \to 0$. 

As a consequence, when $\gamma=\frac12$,  $\mc A_t^n(\varphi)$ is well approximated in $ L^2(\bb P_\rho)$ by the time integral of the following quantity:
\begin{align}
-E\sum_{x=1}^{n-2} &\;  \nabla_n^+ \varphi\big(\pfrac{x}{n}\big)\;  \mc Q(x,\varepsilon n,\eta_{sn^2}) \label{eq:first}\\ & = -E\sum_{x=1}^{n-1-2\varepsilon n}\nabla_n^+\varphi\big(\pfrac{x}{n}\big) \bigg[\frac{1}{\varepsilon n} \sum_{y=x+1}^{x+\varepsilon n} \bar\eta_{sn^2}(y)\bigg]^2 \notag \\
& \quad  -E \sum_{x=n-2\varepsilon n}^{n-2}\nabla_n^+\varphi\big(\pfrac{x}{n}\big) \bigg[\frac{1}{\varepsilon n} \sum_{y=x-\varepsilon n}^{x-1} \bar\eta_{sn^2}(y)\bigg]^2  -E\;\frac{\chi(\rho)}{\varepsilon n} \; n \Big[\varphi\big(\pfrac{n-1}{n}\big)-\varphi\big(\pfrac{1}{n}\big)\Big] \notag\\
& =- \frac{E}{n}\sum_{x=1}^{n-2} \nabla_n^+ \varphi\big(\pfrac{x}{n}\big)\bigg[\frac{1}{\sqrt n} \sum_{y=1}^{n-1} \bar\eta_{sn^2}(y)\; \iota_\varepsilon\big(\pfrac{x}{n}\big)\big(\pfrac{y}{n}\big)\bigg]^2   -E \;\frac{\chi(\rho)}{\varepsilon } \;  \Big[\varphi\big(\pfrac{n-1}{n}\big)-\varphi\big(\pfrac{1}{n}\big)\Big]\notag \\
& = -\frac{E}{n}\sum_{x=1}^{n-2} \nabla_n^+\varphi\big(\pfrac{x}{n}\big) \bigg[ \mc Y^n_s\Big(\iota_\varepsilon\big(\pfrac{x}{n}\big)\Big)\bigg]^2   -E \;\frac{\chi(\rho)}{\varepsilon } \;  \Big[\varphi\big(\pfrac{n-1}{n}\big)-\varphi\big(\pfrac{1}{n}\big)\Big]. \label{last}
\end{align} 
Since $\varphi(0)=\varphi(1)=0$, the second term of \eqref{last} is of order $1/n$ and therefore vanishes  when we let $n\to \infty$. We also note that for the case $\gamma>\frac12$ the previous term \eqref{eq:first} has a factor $\sqrt n/n^\gamma$ in front of it and for that reason it vanishes as $n\to\infty$. Finally, the computation above (and more precisely the first term of \eqref{last}) motivates the definition of energy solutions as defined in Theorem \ref{def:energy}, that is the definition of the macroscopic field $\mc A_t$ as given in  \eqref{eq:B}. Indeed, putting all these considerations together in the case $\gamma=\frac12$, we see that \eqref{mart_decomp} rewrites as 
\begin{align}
\notag \mc M_t^n(\varphi)=\mc Y_t^n(\varphi) - \mc Y_0^n(\varphi) &- \int_0^t \mc Y_s^n(\Delta_n\varphi) \; ds \\ & - E \int_0^t \frac{1}{n}\sum_{x=1}^{n-2} \nabla_n^+\varphi\big(\pfrac{x}{n}\big) \bigg[ \mc Y_s^n\Big(\iota_\varepsilon\big(\pfrac{x}{n}\big)\Big)\bigg]^2 \; ds + o^n_t(1), \label{eq:mart_2}
\end{align} where $o^n_t(1)$ is deterministic and satisfies $\sup_{t \in [0,T]} |o^n_t(1)| \to 0$ as $n \to \infty$. 
This computation will be useful to characterize limit points of the density fluctuation field (see Section \ref{sec:charact} below). Before that, let us prove tightness.

\subsection{Tightness of the density fluctuation  field}\label{sec:tightness}

In this section, for the sake of completeness  we show tightness of the sequence $\{{\mathcal Y}_t^n\; ; \; t \in [0,T]\}_{n\in\mathbb{N}}$, following closely \cite{kl}. The main difference  is the presence of the extra term $\mc A_t^n$ in the martingale decomposition. Tightness is a consequence of the following lemma.

\begin{lemma}\label{lem:tight}
For  $k>\frac52$,
\begin{equation*}
\begin{split}
(1) &\quad  \lim_{A\rightarrow{\infty}}\limsup_{n\rightarrow{\infty}}\; \mathbb{P}_{\rho}\Big[\sup_{t\in[0,T]}\|{{\mathcal Y}}_{t}^{n}\|_{-k}>A\Big]=0\\
(2) & \quad \lim_{\delta\rightarrow{0}}\; \limsup_{n\rightarrow{\infty}}\;\mathbb{P}_{\rho}\Big[\omega_{\delta}
({{\mathcal Y}}_{\cdot}^{n})\geq{\epsilon}\Big]=0
\end{split}
\end{equation*}
for every $\epsilon>0$, where for $\delta>0$ we define
\begin{equation*}
\omega_{\delta}({\mathcal{Y}}_{\cdot}^{n})=\sup_{\substack{|s-t|<\delta\\s,t \in [0,T]}}\|{\mathcal Y}_{t}^{n}-{{\mathcal Y}}_{s}^{n}\|_{-k}
\end{equation*}
and $\|\cdot\|_{-k}$ has been defined in \eqref{eq:norm-k}.
  \end{lemma}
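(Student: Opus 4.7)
The strategy follows the standard blueprint for establishing tightness of fluctuation fields in negative Sobolev spaces, see for example~\cite[Chapter 11]{kl}. Expanding in the eigenbasis $\{e_m\}_{m \geq 1}$ of $-\Delta_{\rm Dir}$,
\[
\|\mathcal{Y}^n_t\|_{-k}^2 = \sum_{m \geq 1} (m\pi)^{-2k} (\mathcal{Y}^n_t(e_m))^2,
\]
and similarly for the increments, so that both parts of the lemma reduce to obtaining real-valued estimates on the scalar processes $\{\mathcal{Y}^n_\cdot(e_m)\}_n$ with explicit polynomial-in-$m$ dependence, which can then be summed against the eigenvalue weights as long as $k > 5/2$. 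The $m$-dependence is controlled using the elementary bound $\sup_u |{\rm d}^j e_m(u)| \leq \sqrt{2}(m\pi)^j$ for all $j \in \bb N_0$.

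For part (1), I would apply the martingale decomposition~\eqref{mart_decomp} and bound each of the four pieces of $\bb E_\rho[\sup_{t \in [0,T]}(\mathcal{Y}^n_t(e_m))^2]$ separately. The initial term is controlled by stationarity: $\bb E_\rho[(\mathcal{Y}^n_0(e_m))^2] \leq \chi(\rho)$. The integral term satisfies $\bb E_\rho[\sup_t(\mathcal{I}^n_t(e_m))^2] \lesssim T^2 \|\Delta e_m\|_\infty^2 \lesssim T^2 m^4$ by Cauchy-Schwarz in time and stationarity. The martingale term is handled by Doob's $L^2$ inequality and~\eqref{eq:QV_density}: $\bb E_\rho[\sup_t(\mathcal{M}^n_t(e_m))^2] \lesssim T \|\nabla e_m\|_\infty^2 \lesssim T m^2$. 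For the non-linear term $\mathcal{A}^n_t(e_m)$, when $\gamma > \tfrac12$ the prefactor $n^{1/2-\gamma}$ absorbs a direct variance estimate; when $\gamma = \tfrac12$ one must first apply the Boltzmann-Gibbs principle (Theorem~\ref{theo:BG}) with block size $\ell$ of order $n$ to replace $\bar\eta(x)\bar\eta(x+1)$ by $\mc Q(x,\ell,\eta_{sn^2})$, then bound the resulting block-quadratic integrand using that $\bb E_\rho[\mc Q(x,\ell,\eta)\mc Q(y,\ell,\eta)]$ vanishes for $|x-y| > \ell$ and is $O(\ell^{-2})$ otherwise, obtaining $\bb E_\rho[\sup_t(\mathcal{A}^n_t(e_m))^2] \lesssim T(1+T) m^2$. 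Summing against $(m\pi)^{-2k}$ yields $\bb E_\rho[\sup_t \|\mathcal{Y}^n_t\|_{-k}^2] \lesssim \sum_m m^{4-2k} < \infty$ precisely when $k > 5/2$, and Markov's inequality closes the argument.

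For part (2), I would invoke Aldous' criterion applied to each scalar process $\mathcal{Y}^n_\cdot(e_m)$, combined with a Mitoma-type summation over $m$ to lift the modulus-of-continuity estimate from $\bb R$ to $\mc H^{-k}_{\rm Dir}$. For any stopping time $\tau \leq T$ bounded by $T$ and any $\delta' \leq \delta$, the strong Markov property together with the stationarity of $\nu_\rho^n$ reduces the task to bounding $\bb E_\rho[(\mathcal{Y}^n_{\delta'}(e_m) - \mathcal{Y}^n_0(e_m))^2]$, which is handled by the same four-term decomposition as above, with each piece now gaining an extra factor of $\delta'$ (or $(\delta')^\alpha$ for some $\alpha > 0$). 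The aggregate bound is $C(\delta)\, m^4$ with $C(\delta) \to 0$, summable against $(m\pi)^{-2k}$ when $k > 5/2$, and Markov's inequality then yields the required $\omega_\delta$-estimate.

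The main technical obstacle is the treatment of $\mathcal{A}^n_t$ in the KPZ regime $\gamma = \tfrac12$: a direct second-moment computation of $\sum_x \nabla_n^+\varphi(x/n)\bar\eta(x)\bar\eta(x+1)$ under $\nu_\rho^n$ yields $\chi(\rho)^2 \sum_x (\nabla_n^+\varphi(x/n))^2 = O(n\|\nabla\varphi\|_\infty^2)$, which after time integration is unbounded in $n$. One genuinely needs the coarse-graining afforded by Theorem~\ref{theo:BG}, with a block size $\ell = \lfloor n/5 \rfloor$ (admissible since $\ell < n/4$) chosen so that both error terms $\ell/n$ and $Tn/\ell^2$ in~\eqref{eq:BG-BCbis} remain bounded uniformly in $n$. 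All other ingredients are routine applications of stationarity, Cauchy-Schwarz, Doob's maximal inequality, and the Markov property.
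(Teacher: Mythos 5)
Your proposal follows the same blueprint as the paper's own proof: spectral expansion against $\{e_m\}$, the four-term martingale decomposition, Doob plus Cauchy--Schwarz plus stationarity for the linear pieces, and the Boltzmann--Gibbs principle with block size $\ell \sim n$ for the nonlinear piece, summed against $(m\pi)^{-2k}$ with the threshold $k>5/2$ coming from the $m^4$ contribution of $\mathcal I^n$. Your treatment of part~(2) via the strong Markov property is a clean rephrasing of the Aldous--criterion argument the paper uses, and your diagnosis of why a direct second-moment estimate on $\mathcal A^n$ fails is correct.

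There is one genuine gap in the treatment of $\mathcal A^n$. When you ``apply the Boltzmann--Gibbs principle (Theorem~\ref{theo:BG})'', that theorem gives an $L^2(\bb P_\rho)$ bound for the time integral at a \emph{fixed} endpoint $t$; it does not control $\bb E_\rho[\sup_{t\in[0,T]}|\cdot|^2]$. The same distinction matters for part~(2). For the piece where $\bar\eta(x)\bar\eta(x+1)$ is replaced by $\mc Q(x,\ell,\eta)$, the crude bound $\sup_t|\int_0^t \cdot\,ds| \le \int_0^T|\cdot|\,ds$ followed by Cauchy--Schwarz is enough (this is what the paper does, and your covariance computation for $\mc Q$ is right, modulo $|x-y|>2\ell$ rather than $\ell$ for disjoint supports). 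But for the Boltzmann--Gibbs \emph{error} $\bar\eta\bar\eta-\mc Q$, that crude bound destroys exactly the cancellation that the variational estimate was meant to capture, so one needs a supremum version of the Kipnis--Varadhan inequality for additive functionals of reversible/sectorial Markov processes; the paper invokes \cite[Lemma~4.3]{CLO} for this, essentially a maximal-inequality analogue of the Dirichlet-form bound used in the proof of Theorem~\ref{theo:BG}. You should state this explicitly (or prove a sup-over-$t$ version of the variational estimate), since otherwise the claimed bound $\bb E_\rho[\sup_t(\mathcal A^n_t(e_m))^2]\lesssim T(1+T)m^2$ does not follow from what you have written. A second, much smaller omission: for the martingale increment in part~(2) one also needs to note that the jump sizes of $\mathcal M^n_\cdot(e_m)$ are $O(n^{-1/2})$ and hence eventually below any fixed $\epsilon$, so the discontinuous part of the modulus of continuity is harmless; the paper records this step and you should too.
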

  
  \begin{proof}[Proof of Lemma \ref{lem:tight}]
  We split the proof of this lemma into two steps. To prove $(1)$, from Markov's inequality it is enough to notice that
\begin{equation} \label{bound_df}
\mathbb{E}_{\rho}\bigg[\sup_{t\in[0,T]}\|{{\mathcal{Y}}}^n_{t}\|_{-k}^{2}\bigg] \le
\sum_{m\geq 1}(m\pi)^{-2k}\; \mathbb{E}_{\rho}\bigg[\sup_{t\in[0,T]}\big(\mathcal{Y}^n_{t}(e_m)\big)^{2}\bigg].
\end{equation}
Now we compute the expectation at the right hand side of \eqref{bound_df} using the martingale decomposition \eqref{mart_decomp} for $\varphi=e_m$ which makes sense since $e_m\in\SD$.
First, we have by independence under $\nu^n_\rho$,
\begin{equation}\label{initialtime}
\mathbb{E}_{\rho}\bigg[\big|{\mathcal Y}_{0}^{n}( e_m)\big|^2\bigg]\\
\leq \frac{1}{4n}\sum_{x=1}^{n-1}\Big(e_m\big(\pfrac x n\big)\Big)^2\leq \frac 1 2.
\end{equation}
Now, from the Cauchy-Schwarz inequality, invariance of $\nu^n_\rho$ and independence, we get that
\begin{equation} \label{boundintj}
\mathbb{E}_{\rho}\bigg[\sup_{t\in[0,T]}\big|\mathcal{I}_t^n(e_m)\big|^2\bigg]
\leq 
\frac{T^2}{4n}\; \sum_{x=1}^{n-1}\Big(\Delta_ne_m\big(\pfrac x n\big)\Big)^2\leq \frac{\pi^4T^2}{2} m^4.
\end{equation}
The martingale term can be easily estimated by Doob's inequality as
\begin{equation*}
\mathbb{E}_{\rho}\bigg[\sup_{t\in[0,T]}\big|\mathcal{M}_{t}^n(e_m)\big|^{2}\bigg]\leq 
C\mathbb{E}_{\rho}\bigg[\int_0^T\frac{1}{n}\sum_{x=1}^{n-1}\Big(\nabla_n^+ e_m\big(\pfrac x n\big)\Big)^2\,ds\bigg]\leq CTm^2,
\end{equation*}
where $C>0$ is a positive constant. 
Finally, in order to estimate the remaining term $\mc A_t^n(e_m)$,  we sum and subtract to $\bar\eta(x)\bar\eta(x+1)$ the term $\mathcal {Q}(x,\ell,\eta)$, and from the elementary inequality $(x+y)^2\leq 2x^2+2y^2$,  it remains  to bound the following two expectations:
\begin{equation} \label{eq:eq1}
\mathbb{E}_{\rho}\bigg[\sup_{t\in[0,T]}\bigg(\int_{0}^tE n\sum_{x=1}^{n-2}\nabla_n^+ e_m\big(\pfrac x n \big)\big(\bar\eta_{sn^2}(x)\bar\eta_{sn^2}(x+1)-\mathcal{Q}(x,\ell,\eta_{sn^2})\big)\, ds\bigg)^2\bigg] \end{equation}
and 
\begin{equation} \label{eq:eq2}
\mathbb{E}_{\rho}\bigg[\sup_{t\in[0,T]}\bigg(\int_{0}^tE\sum_{x=1}^{n-2}\nabla_n^+ e_m\big(\pfrac x n \big)\mathcal{Q}(x,\ell,\eta_{sn^2})\, ds
\bigg)^2\bigg].
\end{equation}
From the Cauchy-Schwarz inequality, the expectation \eqref{eq:eq2} is bounded from above by $T^2m^2n/\ell$ and by choosing $\ell=Cn$, we can bound it from above by $T^2m^2$.
The remaining expectation \eqref{eq:eq1} can be bounded by $C(T)m^2$, from \cite[Lemma 4.3]{CLO} and following the same steps as in the proof of Theorem \ref{theo:BG} (given in Section \ref{sec:proofBG}). 
Collecting all the previous computations we get that \eqref{bound_df}  is bounded from above by
 $C(T)\sum_{m\geq1}{m}^{4-2k},$ which is finite as long as $2k-4>1$. 

Now we prove $(2)$. For that purpose, at first we notice that from the previous computations we have:  
for $k>\frac52$
\begin{equation*}
\lim_{j\rightarrow{+\infty}}\limsup_{n\rightarrow{+\infty}}\mathbb{E}_{\rho}\bigg[\sup_{t\in[0,T]}\sum_{|m|\geq{j}}
\Big(\mathcal Y^n_{t}(e_m)\Big)^{2}(m\pi)^{-2k}\bigg]=0.
\end{equation*}
Therefore, to conclude the proof we just have to show that for any $j\in{\mathbb{N}}$ and $\epsilon>{0}$,
\begin{equation}\label{eq_1_tigh}
\lim_{\delta\rightarrow{0}}\limsup_{n\rightarrow{+\infty}}\mathbb{P}_{\rho}
\bigg[\sup_{\substack{|s-t|<\delta\\s,t\in[0,T]}}\quad
\sum_{|m|\leq{j}}\Big(\mathcal Y^n_{t}(e_m)-{\mathcal Y}^n_{s}(e_{m})\Big)^{2}(m\pi)^{-2k}>\epsilon\bigg]=0.
\end{equation}
In fact we prove that for every $m\geq 1$ and $\epsilon>0$
\begin{equation*}
\lim_{\delta\rightarrow{0}}\limsup_{n\rightarrow{+\infty}}\mathbb{P}_{\rho}\bigg[\sup_{\substack{|s-t|<\delta\\s,t\in[0,T]}}\quad
\Big|\mathcal Y^n_{t}(e_m)-{\mathcal Y}^n_{s}(e_{m})\Big|>\epsilon\bigg]=0,
\end{equation*}
from which \eqref{eq_1_tigh} follows. 
Now, as before, we recall \eqref{mart_decomp} so that the previous result is accomplished  if we derive the same result for each term in the martingale decomposition of $\mathcal Y_t^n(e_m)$. We start by the most demanding one, which is the term that involves the martingales, more precisely, we show that for every $m\geq 1$ and  $\epsilon>0$
\begin{equation} \label{eq:lim}
\lim_{\delta\rightarrow{0}}\limsup_{n\rightarrow{+\infty}}\mathbb{P}_{\rho}\bigg[\sup_{\substack{|s-t|<\delta\\s,t\in[0,T]}}\quad
\big|\mathcal{M}_{t}^{n}(e_m)-\mathcal{M}_{s}^{n}(e_m)\big|>\epsilon\bigg]=0.
\end{equation}
Since it is easy to see that 
\begin{equation*}
\limsup_{n\rightarrow{+\infty}}\mathbb{P}_{\rho}\bigg[\sup_{\substack{t\in[0,T]}}\big|\mathcal{M}_{t}^{n}(e_m)-\mathcal{M}_{t_{-}}^{n}(e_m)\big|>\epsilon\bigg]=0,
\end{equation*}
the claim \eqref{eq:lim} becomes a consequence of the following fact: 
\begin{equation*}
\lim_{\delta\rightarrow{0}}\limsup_{n\rightarrow{+\infty}}\mathbb{P}_{\rho}\Big[\omega'_{\delta}(\mathcal{M}_{t}^{n}(e_m))>\epsilon\Big]=0,
\end{equation*}
where $\omega'_{\delta}(\mathcal{M}^{n}_t(e_m))$ is the modified modulus of continuity defined by
\begin{equation*}
\omega'_{\delta}(\mathcal{M}^{n}_t(e_m))=\inf_{\substack{\{t_{i}\}}}\;\max_{\substack{0\leq{i}\leq{r}}}\;\sup_{t_{i}
\leq{s}<{t}\leq{t_{i+1}}}\big|\mathcal{M}_{t}^{n}(e_m)-\mathcal{M}_{s}^{n}(e_m)\big|,
\end{equation*}
the infimum being taken over all partitions of $[0,T]$ such that $0=t_{0}<t_{1}<...<t_{r}=T$ with $t_{i+1}-t_{i}>\delta$. 
By the Aldous criterion, see for example \cite[Proposition 4.1.6]{kl}, it is enough to show that:
\begin{equation*}
\lim_{\delta\rightarrow{0}}\limsup_{n\rightarrow{+\infty}}\sup_{\substack{\tau\in{\mathfrak
{T}_{T}}\\0\leq{\tilde\tau}\leq{\delta}}}\mathbb{P}_{\rho}\Big[\big|\mathcal{M}_{\tau+\tilde\tau}^{n}(e_m)-\mathcal{M}_{\tau}^{n}(e_m)\big|>\epsilon\Big]=0
\end{equation*}
for every $\epsilon>0$, were $\mathfrak {T}_{T}$ denotes the family of all stopping times bounded
by $T$. Using  Tchebychev's inequality  together with the optional stopping Theorem, last probability is bounded from above by
\begin{equation*}
\mathbb{E}_{\rho}\Big[\big(\mathcal{M}_{\tau+\tilde\tau}^{n}(e_m)\big)^{2}-\big(\mathcal{M}_{\tau}^{n}(e_m)\big)^{2}\Big],
\end{equation*}
which, by definition of the quadratic variation of the martingale can be bounded from above by
  $Cm^2\delta$, and vanishes as $\delta\to 0$.

Now, we compute the remaining term that involves $\mathcal I_t^n(e_m)$. We have to show that
 for every $\epsilon>0$
\begin{equation*}
\lim_{\delta\rightarrow{0}}\limsup_{n\rightarrow{+\infty}}
\mathbb{P}_{\rho}\bigg[\sup_{\substack{|s-t|<\delta\\s,t\in[0,T]}}\bigg|
\int_{s}^{t}\mathcal Y_r^n(\Delta_n e_m)\,dr\bigg|>\epsilon\bigg]=0
\end{equation*}
By Tchebychev's inequality, the last probability is bounded by  $T\delta m^4/\epsilon ^2$, which vanishes as $\delta\to0$.
For the last term involving $\mathcal A_t^n(e_m)$ we can repeat the computations that we did above: we sum and substract the term $\mc Q(x,\ell,\eta)$ as in \eqref{eq:eq1} and \eqref{eq:eq2}, then we chose $\ell = C n$, $C>0$, and we prove that each  contribution is of order $T\delta m^2/\varepsilon^2$, and therefore it goes to zero as $\delta\to0$, which finishes the proof.
\end{proof}

\subsection{Characterization of limiting points}\label{sec:charact}
In this section we prove that any limit point of the tight sequence $\{{\mathcal Y}_t^n\; ; \; t \in [0,T]\}_{n\in\mathbb{N}}$ concentrates on stationary energy solutions of \eqref{eq:SBE} as defined in Theorem \ref{def:energy}. Up to extraction, one can consider that the four sequences 
\[
\{{\mathcal Y}_t^n\; ; \; t \in [0,T]\}_{n\in\mathbb{N}};\{{\mathcal M}_t^n\; ; \; t \in [0,T]\}_{n\in\mathbb{N}}  ; \{{\mathcal I}_t^n\; ; \; t \in [0,T]\}_{n\in\mathbb{N}}; \{{{\mathcal A}_t^n}\; ; \; t \in [0,T]\}_{n\in\mathbb{N}}
\] converge as $n \to \infty$ to 
\[
\{{\mathcal Y}_t\; ; \; t \in [0,T]\}\ ;\ \{{\mathcal M}_t\; ; \; t \in [0,T]\}\ ;\  \{{\mathcal I}_t\; ; \; t \in [0,T]\}\ ;\  \{{{\mathcal A}_t}\; ; \; t \in [0,T]\}
\] respectively.  First, one can repeat the argument taken from \cite[Section 5.3]{gj2014} to prove that the limit point $\{{\mathcal Y}_t\; ; \; t \in [0,T]\}$ has continuous trajectories and it is stationary in the sense of Theorem \ref{def:energy} (see item \emph{(1)}). The characterization will be complete if we prove that this limit process also satisfies the remaining three items of Theorem \ref{def:energy}. This is what we explain briefly in the next paragraphs, since the argument is now standard and is given for example in \cite{fgsimon2015,gj2014,GJS}.

\subsubsection{Proof of item (2)}  We give a few proof elements for the sake of completeness: the Boltzmann-Gibbs principle stated in Theorem \ref{theo:BG} implies that there exists $C>0$ such that, for any {$\varphi \in \mc S_{\tmop{Dir}}$}, we have
\begin{equation}\label{eq:EstB}
\bb E_\rho\Big[\Big({\mc A_t(\varphi)-\mc A_s(\varphi)}- \chi(\rho) \mc A_{s,t}^\varepsilon(\varphi)\Big)^2\Big] \leq C(t-s)\varepsilon \big\|\nabla\varphi\big\|_{L^2([0,1])}^2\; , 
\end{equation}
where $\mc A_{s,t}^\varepsilon$ has been defined in \eqref{eq:A}.
The last claim is proved as follows: in {$\mc A_t^n(\varphi)$} given in \eqref{eq:Bt} we sum and subtract $\nabla_n^+\varphi(\frac x n)\mc Q(x,\ell,\eta_{sn^2})$ inside the sum, and we use a standard convexity inequality in order to treat two terms separately. The first one is handled using the Boltzmann-Gibbs principle, the second one is estimated thanks to the computation \eqref{last}. Then, the energy estimate \eqref{eq:lipschitz} is a consequence of  \eqref{eq:EstB}, as it follows from adding and subtracting the quantity {$\chi(\rho)^{-1}(\mc A_t(\varphi)-\mc A_s(\varphi))$} inside the square.

\subsubsection{Proof of item (3)} This point is now a straightforward consequence of the martingale decomposition given in \eqref{eq:mart_2} and in \eqref{eq:QV_density}, in which one can pass to the limit $n\to\infty$, together  with the previous paragraph.

\subsubsection{Proof of item (4)} This last property can be obtained easily by considering the reversed dynamics with the adjoint of the infinitesimal generator $\mc L_n^\star$ with respect to the Bernoulli product measure $\nu^n_{\rho}$ and repeating the same exact arguments as we did above.

\subsection{Sketch of the proof of Theorem \ref{theo:convheight}}
\label{app:height}
As mentioned previously, the proof of Theorem \ref{theo:convheight} is essentially the same as for $\mc Y^n$. Let us give here some hints and follow the sketch from the previous paragraphs. 

First, let us note that the analogue of the martingale decomposition \eqref{eq:martdyn} contains also one boundary term. Indeed, fix a  test function $\varphi\in\mc S_{\tmop{Neu}}$, and let $n^2\mc L_n^{\otimes}$ denote the generator of the joint process $\big\{ \{\eta_{tn^2}(x),h_{{t n^2}}^n(1)\}_{x\in\Lambda_n}\; ; \; t\geq 0  \big\}$. This generator acts on functions $f:\Omega_n \times \bb Z \to \bb R$ as follows: 
\begin{align}
\mc L_n^\otimes f(\eta,h) = & \; \sum_{x=1}^{n-2} r_{x,x+1}(\eta) \Big(f(\sigma^{x,x+1}\eta,h)-f(\eta,h)\Big) \notag\\
& + \eta(1)(1-\rho)\Big( f(\sigma^1\eta,h+1)-f(\eta,h)\Big) \vphantom{\frac{E}{n^\gamma}} \notag\\
& +  \Big(1+\frac{E}{n^\gamma}\Big)\rho(1-\eta(1))\Big( f(\sigma^1\eta,h-1)-f(\eta,h)\Big)  \notag\\
& +  \Big\{\Big(1+\frac{E}{n^\gamma}\Big)(1-\rho)\eta(n-1)+\rho(1-\eta(n-1))\Big\}\Big( f(\sigma^{n-1}\eta,h)-f(\eta,h)\Big) 
 \vphantom{\frac{E}{n^\gamma}}\label{eq:joint_gen}
\end{align}
where $r_{x,x+1}$ has been already defined in \eqref{eq:rate}.
From the computations in Appendix \ref{app:mart_dec_height} we get  that 
\begin{equation}
\mc N_t^n(\varphi)= \mathcal{Z}_t^n(\varphi)-\mathcal{Z}_0^n(\varphi)-\Big(1+\frac{E}{2n^\gamma}\Big)\int_0^t\mc Z_s^n(\tilde\Delta_n\varphi) ds - E{\mc B_t^n(\varphi)}+{\mc R_t^n(\varphi)}+o^n_t(1),\label{mart_decomp_height}
\end{equation}
is a martingale. Above the terms $\mc R_t^n$ and $\mc B_t^n$ are given by:
\begin{align}\label{eq:height-remainder_new}
   \mc R^n_t(\varphi) &= \sqrt n \int_0^t \bigg[ \Big( \varphi \big( \tfrac{1}{n} \big) - \varphi (0) \Big) \big(h^n_{{sn^2}}(1)-c_n s\big) - \Big( \varphi (1) - \varphi \big( \tfrac{n - 1}{n} \big) \Big)
   \big( h^n_{{sn^2}} (n)-c_ns\big)\bigg] ds\\
   \mc B^n_t(\varphi) &= \frac{\sqrt n}{n^\gamma} \int_0^t\sum_{x = 2}^{n-1} \varphi \big( \tfrac{x}{n} \big) \nabla^- h^n_{{sn^2}} (x) \nabla^+ h^n_{{sn^2}} (x) ds, \notag
\end{align}
where we set $\nabla^- h(x) = h(x) - h(x-1) =\bar\eta(x-1)$ and $\nabla^+ h(x) = h(x+1) - h(x) =\bar\eta(x)$. Moreover, in \eqref{mart_decomp_height}, $o^n_t(1)$ is a deterministic sequence of real numbers that vanishes as $n\to \infty$,  uniformly in $t\in[0,T]$, and  we also have used the notation $\tilde\Delta_n\varphi$ to denote the following approximation of the Laplacian:
\begin{equation}
\label{eq:lap2}
\tilde\Delta_n\varphi\big(\tfrac x n\big) = \begin{cases} \Delta_n\varphi \big(\tfrac x n\big)  & \text{ if } x \in \{1,\dots,n-1\},\\
2 n^2\big(\varphi\big(\tfrac{n-1}{n}\big)-\varphi\big(\tfrac{n}{n}\big)\big) & \text{ if }x=n.\end{cases}
\end{equation}
Note that, if $\varphi \in \SN$ and therefore satisfies $\nabla \varphi(1)=0$, then $\tilde\Delta_n\varphi$ is indeed an approximation of the usual Laplacian as $n\to\infty$.

Let us start with $\mc B_t^n$.  Note that we have for any test function $\varphi$, 
\[\mc B_t^n(\nabla_n^- \varphi) = -\mc A_t^n( \varphi),\] where $\nabla_n^-$ is defined similarly to $\nabla_n^+$ except that the discrete gradient is shifted, namely: $\nabla_n\varphi^-(\frac x n) = n(\varphi(\frac x n)-\varphi(\frac{x-1}{n}))$. As a consequence, this term can be treated as $\mc A_t^n$, using the Boltzmann-Gibbs principle (Theorem \ref{theo:BG}): it  gives rise to the KPZ non-linearity as soon as $\gamma=\frac12$ and  vanishes when $\gamma >\frac12$.

Next, in \eqref{mart_decomp_height}, the term $\mc R_t^n$ (which does not depend on $\gamma$) comes from boundary effects, but does not contribute to the limit if $\varphi \in \SN$, as a consequence of  the following lemma.

\begin{lemma}\label{lem:height-boundary}
  For any $\rho \in (0,1)$ we have
  \[ \lim_{n \rightarrow \infty} \bigg\{ \mathbb{E}_\rho \bigg[\sup_{t \in [0,T]} \big| n^{-
     3/2} \big(h^n_{{tn^2}} (1) - c_n t\big) \big|^2\bigg] +\mathbb{E}_\rho \bigg[\sup_{t \in [0, T]} \big| n^{- 3/2} \big(h^n_{{tn^2}} (n) - c_n t\big) \big|^2\bigg] \bigg\} = 0,
  \]
  and in particular, for any $\varphi \in \SN$, the term $\mc R^n(\varphi)$ defined in \eqref{eq:height-remainder_new} converges to $0$ in $L^2(\bb P_\rho)$, locally uniformly in time.
\end{lemma}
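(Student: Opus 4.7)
The plan is to apply Dynkin's martingale formula to the joint generator $\mc L_n^\otimes$ in \eqref{eq:joint_gen}. Since $h^n_t(1)$ changes by $+1$ whenever a particle leaves site $1$ (at rate $\eta(1)(1-\rho)$) and by $-1$ whenever a particle enters (at rate $(1+E/n^\gamma)\rho(1-\eta(1))$), a direct computation gives $n^2\mc L_n^\otimes h(\eta,\cdot)(1) = c_n + \kappa_n\bar\eta(1)$ with $\kappa_n := n^2(1+E\rho/n^\gamma)$, so that
\[
   h^n_{tn^2}(1) - c_n t \; = \; M^n_t + \kappa_n \int_0^t \bar\eta^n_{sn^2}(1)\,ds,
\]
where $M^n_t$ is a mean-zero $\bb P_\rho$-martingale. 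By direct inspection, $h^n_t(n)$ counts the net flux through the right reservoir (a particle entering at site $n-1$ raises the sum $\sum_y \bar\eta(y)$ without affecting $h^n_t(1)$), so an analogous decomposition yields $h^n_{tn^2}(n) - c_n t = \widetilde M^n_t - \tilde\kappa_n\int_0^t \bar\eta^n_{sn^2}(n-1)\,ds$ with $\tilde\kappa_n := n^2(1+E(1-\rho)/n^\gamma)$.

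For the martingale piece, Doob's $L^2$ maximal inequality combined with stationarity yields $\bb E_\rho[\sup_{t\le T}(M^n_t)^2] \leq 4\bb E_\rho[\langle M^n\rangle_T] \lesssim n^2 T$, since $\langle M^n\rangle_t$ accumulates at rate $n^2$ times the sum of the two boundary jump intensities at site $1$, of expectation $O(1)$ under $\nu^n_\rho$. Hence $\bb E_\rho[\sup_{t\le T}|n^{-3/2}M^n_t|^2] \lesssim n^{-1}T\to 0$. For the drift piece I invoke the Kipnis--Varadhan variational inequality (applied via the symmetric part of $n^2\mc L_n$, whose Dirichlet form is comparable to the form $\mf D_n$ from \eqref{eq:dir_form}): for any mean-zero $f\in L^2(\nu^n_\rho)$,
\[
   \bb E_\rho\Bigl[\sup_{t\le T}\Bigl(\int_0^t f(\eta^n_{sn^2})\,ds\Bigr)^2\Bigr] \lesssim T\,\sup_{g\in L^2(\nu^n_\rho)}\bigl\{2\langle f,g\rangle_\rho - n^2\mf D_n(g)\bigr\}.
\]
Taking $f=\bar\eta(1)$ and testing with $g=a\bar\eta(1)$ for $a\in\RR$, only the jumps touching site $1$ contribute to $\mf D_n(a\bar\eta(1))$, giving $\mf D_n(a\bar\eta(1))\lesssim a^2\rho(1-\rho)$; the inner product $\langle \bar\eta(1), a\bar\eta(1)\rangle_\rho = a\rho(1-\rho)$ is linear in $a$, and optimizing over $a$ produces $\|\bar\eta(1)\|_{-1,n}^2 \lesssim n^{-2}$. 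Therefore $\bb E_\rho\bigl[\sup_{t\le T}\bigl|\kappa_n\int_0^t\bar\eta^n_{sn^2}(1)\,ds\bigr|^2\bigr]\lesssim \kappa_n^2 T n^{-2}\lesssim n^2 T$, and dividing by $n^3$ closes the first claim. The argument with $f=\bar\eta(n-1)$ handles $h^n_{tn^2}(n)$ identically.

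For the convergence of $\mc R^n(\varphi)$ with $\varphi\in\SN$, the Neumann conditions $\nabla\varphi(0)=\nabla\varphi(1)=0$ and Taylor's formula give $\varphi(1/n)-\varphi(0) = O(n^{-2})$ and $\varphi(1)-\varphi((n-1)/n)=O(n^{-2})$, so the prefactors multiplying the height differences in \eqref{eq:height-remainder_new} are $O(n^{-3/2})$. Applying Cauchy--Schwarz in time and the second-moment bound $\bb E_\rho[(h^n_{sn^2}(x)-c_n s)^2]\lesssim n^2 s$ (which follows from the decompositions above), we obtain $\bb E_\rho[\sup_{t\le T}|\mc R^n_t(\varphi)|^2]\lesssim n^{-1}T^3\to 0$.

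The main obstacle is justifying the Kipnis--Varadhan $H^{-1}$ estimate in our non-reversible setting: because the weak asymmetry of order $n^{-\gamma}$ destroys reversibility of $\nu^n_\rho$, one must check that the sup-version of the variational inequality goes through via the symmetric part $S_n$ of $\mc L_n$, and that the Dirichlet form of $S_n$ dominates $\mf D_n$ up to constants (which holds here since the asymmetry is a bounded multiplicative perturbation of the symmetric rates); everything else is bookkeeping.
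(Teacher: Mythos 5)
Your overall architecture --- Dynkin decomposition of $h^n(1)$ and $h^n(n)$, Doob (or BDG) for the martingale, Kipnis--Varadhan for the drift $\int\bar\eta(1)\,ds$, and the Neumann conditions $\nabla\varphi(0)=\nabla\varphi(1)=0$ to squeeze an extra $1/n$ out of the prefactors in $\mc R^n(\varphi)$ --- matches the paper's proof. The martingale estimate and the $\mc R^n$ argument are fine, and your closing remark about needing the symmetric part because $\nu^n_\rho$ is not reversible is the right thing to worry about.

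The gap is in how you establish the $H^{-1}$ bound $\|\bar\eta(1)\|_{-1,n}^2\lesssim n^{-2}$. The Kipnis--Varadhan inequality gives
\begin{equation*}
\bb E_\rho\biggl[\sup_{t\le T}\Bigl(\int_0^t\bar\eta^n_{sn^2}(1)\,ds\Bigr)^2\biggr]
\lesssim T\,\sup_{g\in L^2(\nu^n_\rho)}\bigl\{2\langle\bar\eta(1),g\rangle_\rho - n^2\mf D_n(g)\bigr\},
\end{equation*}
and you then plug in $g=a\bar\eta(1)$, optimize over $a\in\RR$, and declare the right-hand side $\lesssim n^{-2}$. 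But the supremum is over \emph{all} $g\in L^2(\nu^n_\rho)$; evaluating the functional on a one-parameter subfamily and optimizing over that subfamily gives a \emph{lower} bound on the supremum, not the upper bound you need. In fact $\mf D_n$ is not diagonal in the basis $\{\bar\eta(x)\}_x$ --- the exchange at the bond $\{1,2\}$ produces a cross term between $\bar\eta(1)$ and $\bar\eta(2)$ --- so the maximizer of the variational problem (the first-chaos Green's function evaluated at $1$) lies outside $\operatorname{span}\{\bar\eta(1)\}$, and there is no reason your restricted maximum should dominate the true one. Incidentally it happens that both are of order $n^{-2}$, but your argument as written only shows $\|\bar\eta(1)\|_{-1,n}^2\gtrsim n^{-2}$.

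The fix (and the paper's argument) is to bound the functional uniformly in $g$ before taking the supremum: by the change of variable $\eta\mapsto\sigma^1\eta$,
\begin{equation*}
2\int\bar\eta(1)\,g\,d\nu^n_\rho=\int\bar\eta(1)\bigl(g(\eta)-g(\sigma^1\eta)\bigr)\,d\nu^n_\rho
\le \epsilon\int\bar\eta(1)^2\,d\nu^n_\rho+\frac{1}{4\epsilon}\int\bigl(g(\eta)-g(\sigma^1\eta)\bigr)^2\,d\nu^n_\rho
\end{equation*}
for every $\epsilon>0$ and \emph{every} $g$. Since $r_{0,1}\ge\rho\wedge(1-\rho)$, the last integral is $\le (\rho\wedge(1-\rho))^{-1}\mf D_n(g)$, and choosing $(4\epsilon)^{-1}=(\rho\wedge(1-\rho))n^2$ makes the $g$-dependent contribution $\le n^2\mf D_n(g)$, which cancels. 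What remains is $\epsilon\,\rho(1-\rho)\simeq n^{-2}$, uniformly in $g$, which is the upper bound on the sup. The same argument with $\sigma^{n-1}$ handles $\bar\eta(n-1)$.
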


\begin{proof}[Proof of Lemma \ref{lem:height-boundary}]
  Since $h^n(1)$ increases by $1$ whenever a particle leaves the system and decreases by $1$ whenever a particle enters, one can easily write
  \begin{align}\label{eq:height-boundary-pr1} \nonumber
      h^n_{{tn^2}} (1) & = h_0^n(1) +  \int_0^{{tn^2}}  \bigg\{ \frac12 \eta_{{s}} (1) -  \frac12 \Big( 1 +
     \frac{E}{n^{\gamma}} \Big) \big(1-\eta_{{s}}(1)\big) \bigg\} d s + M^n_{{tn^2}}\\ 
      & =  \int_0^{{tn^2}} \bigg\{ \Big( 1 + 
     \frac{E}{2n^{\gamma}} \Big) \bar\eta_{{s}} (1) + {n^{-2}} c_n \bigg\} d s + M^n_{{tn^2}},
  \end{align}
  where $c_n=-n^{2-\gamma}E/4$ has been defined in \eqref{eq:cn} and $M^n$ is a martingale with predictable quadratic variation
  \[
     \langle M^n \rangle_t  = \frac{{1}}{2} \bigg( \Big( 1 +
     \frac{E}{n^{\gamma}} \Big)  -  \frac{E}{n^{\gamma}} \eta^n_{{t}} (1)  \bigg).
  \]
  The Burkholder-Davis-Gundy inequality implies for all $p \ge 1$ the following: there exists $C>0$ such that
  \begin{align*}
    \mathbb{E}_\rho \Big[\sup_{t \in [0,{{Tn^2}}]} \big| n^{- 3 / 2} M^n_t \big|^p\Big] & \lesssim n^{- 3
    p / 2} \; \mathbb{E}_\rho \bigg[ \bigg\{ \int_0^{{Tn^2}} \frac{{1}}{2} \bigg( \Big( 1 +
     \frac{E}{n^{\gamma}} \Big)  -  \frac{E}{n^{\gamma}} \eta^n_{{t}} (1)  \bigg) d t \bigg\}^{p / 2} \bigg]\\
    &\quad + n^{- 3 p / 2} \mathbb{E}_\rho \Big[\big| \sup_{t \in [0,{{Tn^2}}]} \Delta_t M^n \big|^p\Big],\vphantom{\int}
  \end{align*}
  where $\Delta_t M^n$ is the jump of $M^n$ at time $t$ and therefore bounded
  by $1$. In the integrand we can bound $-\eta^n_{{t}} (1)$ from above by $0$ and therefore $n^{-3/2}M_{{tn^2}}^n$ vanishes in the limit.
From \eqref{eq:height-boundary-pr1} we get
  \[
     n^{-3/2}\big(h_{{tn^2}}^n(1)-c_nt\big) =  \int_0^t \sqrt n \Big( 1 + 
     \frac{E}{2n^{\gamma}} \Big) \bar\eta_{sn^2} (1) ds + n^{-3/2} M_{{tn^2}}^n,
  \]
 and therefore we are left with bounding $\sqrt n \int_0^t  \bar\eta_{sn^2} (1)d s$. With the
  Kipnis-Varadhan Lemma given for instance in \cite[Proposition A.1.6.1]{kl} we estimate
  \[ \mathbb{E}_{\rho} \Bigg[ \sup_{t \in [0,T]} \bigg| \int_0^t
     \bar\eta_{sn^2} (1) d s \bigg|^2 \Bigg] \lesssim T \sup_{f\in L^2(\nu^n_\rho)} \bigg\{ 2
    \int \bar\eta (1) f(\eta) \nu^n_\rho(d\eta) -  n^2\mathfrak{D}_n(f) \bigg\}
  \]
where $\mathfrak{D}_n(f)$ is the Dirichlet form defined in \eqref{eq:dir_form}. From the decomposition \eqref{eq:direxpl}  we easily obtain
  \[
   \mathfrak{D}_n(f) 
     \ge (\rho \wedge (1-\rho)) \int  \big(f (\sigma^1 \eta) - f (\eta)\big)^2 \nu^n_\rho(d\eta).
  \]
  Moreover, as in \cite[Lemma~3]{Franco2016} we can use Young's inequality to get
  \begin{align*}
    2\int \bar\eta (1)  f(\eta)  \nu^n_\rho(d\eta) & =  \int \bar\eta
    (1) \big(f (\eta) - f (\sigma^1 \eta)\big) \nu^n_\rho(d\eta) \\ &  \leq \varepsilon \int \big(\bar\eta(1)\big)^2 \nu^n_\rho(d\eta)  +  \frac{1}{4\varepsilon}\int \big(f
    (\eta) - f (\sigma^1 \eta)\big)^2 \; \nu^n_\rho(d\eta) ,
  \end{align*}
 for any $\varepsilon >0$, that we choose now such that 
 $(4\varepsilon)^{-1} = {(\rho \wedge (1-\rho))\; n^2}$.  And therefore we obtain
  \[
    2
    \int \bar\eta (1)  f(\eta) \nu^n_\rho(d\eta) -  n^2\mathfrak{D}_n(f)
    \lesssim  \frac{1}{n^2} \int \big(\bar\eta(1)\big)^2 \nu^n_\rho(d\eta) \simeq \frac{1}{n^2},
  \]
  which leads to
  \[  \mathbb{E}_{\rho} \Bigg[ \sup_{t \in [0,T]} \bigg|\sqrt n \int_0^t
     \bar\eta_{sn^2} (1) d s \bigg|^2 \Bigg] \lesssim
     \frac{T}{n}.  \]
  The bound for $h^n (n)$ is shown with the same arguments.
    To conclude the proof, take $\varphi \in \SN$, so that $\nabla \varphi(0)=\nabla\varphi(1)=0$. In that case 
\[ n^2\big(\varphi\big(\tfrac1n\big)-\varphi(0)\big) \xrightarrow[n\to\infty]{}\Delta\varphi(0), \qquad \text{and}\qquad  n^2\big(\varphi(1)-\varphi\big(\tfrac{n-1}{n}\big)\big) \xrightarrow[n\to\infty]{}\Delta\varphi(1),  \]  
  and therefore 
\[\bb E_\rho\Big[  \sup_{t\in[0,T]} \big|\mc R_t^n(\varphi)\big|^2\Big] \lesssim \frac{T}{n}.\] 
\end{proof}

There are two remaining steps, the first one being tightness of the sequence $\{\mc Z_t^n \; ; \; t \in [0,T]\}_{n\in\bb N}$.  We let the reader repeat the same proof as in Lemma \ref{lem:tight}, noting that since the height fluctuation field  is now defined in $\mc H_{\rm Neu}^{-k}$, the basis that one has to use is $\tilde e_m$ given in Section \ref{ssec:test_functions}. The arguments remain unchanged, we only note that the restriction $k>\frac52$ comes from the analog of \eqref{boundintj}.

Finally, for the characterization of limit points we essentially use the relation between $\mc Z_t^n$ and $\mc Y_t^n$, which reads: for any $\varphi \in \SD$,
\begin{equation}
\label{eq:relation}
\mc Z_t^n(\tilde\nabla_n\varphi) = -\mc Y_t^n(\varphi) - \tfrac{1}{\sqrt n}\varphi\big(\tfrac1n\big) \big(h_{tn^2}^n(1)-c_nt\big) +  \tfrac{1}{\sqrt n}\varphi\big(\tfrac n n\big)\big(h_{tn^2}^n(n)-c_nt\big),
\end{equation}
where $\tilde\nabla_n\varphi(\frac x n\big)=\mathbf{1}_{\{1,\dots,n-1\}}(x)\;\nabla_n^+\varphi(\frac x n\big)$, in particular $\tilde\nabla_n\varphi(\frac n n)=0$. Since $\varphi \in \SD$, Lemma \ref{lem:height-boundary} implies that the last two terms in \eqref{eq:relation} vanish in $L^2(\bb P_\rho)$ as $n\to\infty$ uniformly in $t\in[0,T]$. If $\mc Z$ is the limit point of $\mc Z^n$ then passing to the limit in \eqref{eq:relation}, we get: for any $\varphi \in \SD$, 
\[\nabla_{\rm Dir}\mc Z_t(\varphi) = - \mc Z_t(\nabla \varphi) = \mc Y_t(\varphi),\]
where we used the definition of $\nabla_{\rm Dir}$ given in \eqref{eq:nabladir}. From this, we deduce item \emph{(1)} of Theorem \ref{def:energy-KPZ}. The last item \emph{(3)} is obtained similarly combining \eqref{eq:relation} with the two martingale decompositions \eqref{mart_decomp} and \eqref{mart_decomp_height}.  For the quadratic variation we observe that
by Dynkin's formula
\begin{equation*}
\Big(\mc N_t^n(\varphi)\Big)^2-\int_0^t n^2\mc L_n^\otimes\big(\mc Z^n_s(\varphi)^2\big)-2\mc Z^n_s(\varphi)\; n^2\mc L_n^\otimes \mc Z^n_s(\varphi) ds
\end{equation*}
is a martingale. Here we used that the drift $-c_n t$ gives rise to a first order differential operator $\mc G^n$ which satisfies Leibniz's rule and therefore the difference $\mc G^n\big(\mc Z^n_s(\varphi)^2\big)-2\mc Z^n_s(\varphi) \mc G^n \mc Z^n_s(\varphi) $ vanishes.
A simple computation shows that last integral term can be rewritten as
\begin{equation}
\begin{split}
&\int_0^t\frac{1}{n}\sum_{x=2}^{n-2}\Big(1+\frac{E}{n^\gamma}\Big) \big(\eta_{sn^2}(x)-\eta_{sn^2}(x+1)\big)^2 \big(\varphi\big(\tfrac{x}{n}\big)\big)^2ds\\
+ &\int_0^t\frac{1}{n} r_{0,1}(\eta_{sn^2}) \big(\varphi\big(\tfrac{1}{n}\big)\big)^2+\frac{1}{n} r_{n-1,n}(\eta_{sn^2}) \big(\varphi\big(\tfrac{n-1}{n}\big)\big)^2ds.
\end{split}
\end{equation}
By taking expectation w.r.t.~$\nu_\rho^n$ and sending  $n\to\infty$  we conclude item \emph{(3)}.

\section{Proof of the second order Boltzmann-Gibbs principle}\label{sec:proofBG}

\label{s5}

In this section we prove the Boltzmann-Gibbs principle stated in Theorem \ref{theo:BG}.

Let us illustrate how the proof of this principle works: let us choose a site $x$ which is not too close to the right boundary, in the sense that there are at least $2\ell$ sites between $x$ and $n-1$, then we can replace the local function $\bar\eta(x)\bar\eta(x+1)$ by the square of the average to its right $\big(\vec\eta^\ell(x)\big)^2$ (see Figure \ref{fig:bg}). The main reason to keep at least $2\ell$ sites between $x$ and $n-1$ is because the proof makes uses of the sites situated between $x+\ell+1$ and $x+2\ell$, as explained in Section \ref{ssec:strategy} below. Otherwise, when $x+2\ell > n-1$, we replace the same local function by the square of the average to its left $(\vecleft\eta^\ell(x))^2$ (see Figure \ref{fig:bg2}). 

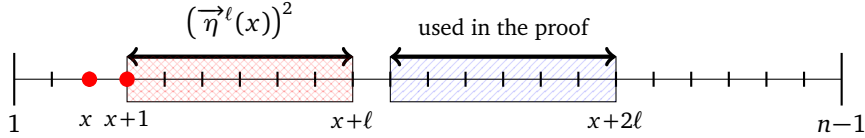
\begin{figure}[H]
\centering
\begin{tikzpicture}
\draw (-3,0) -- (8,0);
\draw[thick] (-3,-0.35) -- (-3,0.35);
\draw[thick] (8,-0.35) -- (8,0.35);
\draw[thick] (-2.5,-0.1) -- (-2.5,0.1);
\draw[thick] (-2,-0.1) -- (-2,0.1);
\draw[thick] (-1.5,-0.1) -- (-1.5,0.1);
\draw[thick] (-1,-0.1) -- (-1,0.1);
\draw[thick] (-0.5,-0.1) -- (-0.5,0.1);
\draw[thick] (0,-0.1) -- (0,0.1);
\draw[thick] (0.5,-0.1) -- (0.5,0.1);
\draw[thick] (1,-0.1) -- (1,0.1);
\draw[thick] (1.5,-0.1) -- (1.5,0.1);
\draw[thick] (2,-0.1) -- (2,0.1);
\draw[thick] (2.5,-0.1) -- (2.5,0.1);
\draw[thick] (3,-0.1) -- (3,0.1);
\draw[thick] (3.5,-0.1) -- (3.5,0.1);
\draw[thick] (4,-0.1) -- (4,0.1);
\draw[thick] (4.5,-0.1) -- (4.5,0.1);
\draw[thick] (5,-0.1) -- (5,0.1);
\draw[thick] (5.5,-0.1) -- (5.5,0.1);
\draw[thick] (6,-0.1) -- (6,0.1);
\draw[thick] (6.5,-0.1) -- (6.5,0.1);
\draw[thick] (7,-0.1) -- (7,0.1);
\draw[thick] (7.5,-0.1) -- (7.5,0.1);
\draw (-2,-0.3) node[anchor=north] {\small $x$ \vphantom{$+1$}};
\draw (-1.5,-0.3) node[anchor=north] {\small $x\!+\!1$};
\draw (1.5,-0.3) node[anchor=north] {\small $x\!+\!\ell$};
\draw (5,-0.3) node[anchor=north] {\small $x\!+\!2\ell$};

\draw (-3,-0.35) node[anchor=north] { $1$};
\draw (8,-0.35) node[anchor=north] { $n\!-\!1$};
\draw[color=white] (0,-1) node[anchor=north] {x};

\draw[ultra thick, <->] (-1.5,0.3) -- (1.5,0.3);
\draw[-] (-1.5,-0.3)--(-1.5,0.3)--(1.5,0.3)--(1.5,-0.3)--cycle;
\draw[-] (2,-0.3)--(2,0.3)--(5,0.3)--(5,-0.3)--cycle;
\fill[pattern=crosshatch, pattern color=red, opacity=0.4] (-1.5,-0.3)--(-1.5,0.3)--(1.5,0.3)--(1.5,-0.3)--cycle;
\fill[pattern=north east lines, pattern color=blue, opacity=0.4] (2,-0.3)--(2,0.3)--(5,0.3)--(5,-0.3)--cycle;
\draw (0,0.4) node[anchor=south] {$\big(\vec\eta^\ell(x)\big)^2$};
\draw (3.5,0.4) node[anchor=south] {\small \textrm{used in the proof}};
\draw[ultra thick, <->] (2,0.3) -- (5,0.3);
\filldraw[black, thick, color=red] (-2,0) circle (2.5pt) ;
\filldraw[black, thick, color=red] (-1.5,0) circle (2.5pt) ;
%

\end{tikzpicture}
\caption{Replacement for the local function $\bar\eta(x)\bar\eta(x+1)$ when $x+2\ell \leq n-1$.}
\label{fig:bg}
\end{figure}

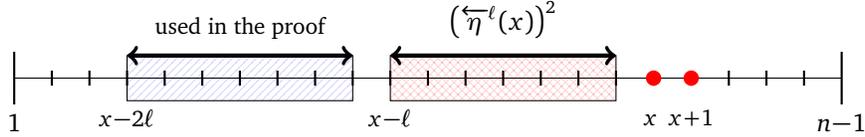
\begin{figure}[H]
\centering
\begin{tikzpicture}
\draw (-3,0) -- (8,0);
\draw[thick] (-3,-0.35) -- (-3,0.35);
\draw[thick] (8,-0.35) -- (8,0.35);
\draw[thick] (-2.5,-0.1) -- (-2.5,0.1);
\draw[thick] (-2,-0.1) -- (-2,0.1);
\draw[thick] (-1.5,-0.1) -- (-1.5,0.1);
\draw[thick] (-1,-0.1) -- (-1,0.1);
\draw[thick] (-0.5,-0.1) -- (-0.5,0.1);
\draw[thick] (0,-0.1) -- (0,0.1);
\draw[thick] (0.5,-0.1) -- (0.5,0.1);
\draw[thick] (1,-0.1) -- (1,0.1);
\draw[thick] (1.5,-0.1) -- (1.5,0.1);
\draw[thick] (2,-0.1) -- (2,0.1);
\draw[thick] (2.5,-0.1) -- (2.5,0.1);
\draw[thick] (3,-0.1) -- (3,0.1);
\draw[thick] (3.5,-0.1) -- (3.5,0.1);
\draw[thick] (4,-0.1) -- (4,0.1);
\draw[thick] (4.5,-0.1) -- (4.5,0.1);
\draw[thick] (5,-0.1) -- (5,0.1);
\draw[thick] (5.5,-0.1) -- (5.5,0.1);
\draw[thick] (6,-0.1) -- (6,0.1);
\draw[thick] (6.5,-0.1) -- (6.5,0.1);
\draw[thick] (7,-0.1) -- (7,0.1);
\draw[thick] (7.5,-0.1) -- (7.5,0.1);
\draw (5.5,-0.3) node[anchor=north] {\small $x$ \vphantom{$+1$}};
\draw (6,-0.3) node[anchor=north] {\small $x\!+\!1$};
\draw (2,-0.3) node[anchor=north] {\small $x\!-\!\ell$};
\draw (-1.5,-0.3) node[anchor=north] {\small $x\!-\!2\ell$};

\draw (-3,-0.35) node[anchor=north] { $1$};
\draw (8,-0.35) node[anchor=north] { $n\!-\!1$};
\draw[color=white] (0,-1) node[anchor=north] {x};

\draw[ultra thick, <->] (-1.5,0.3) -- (1.5,0.3);
\draw[-] (-1.5,-0.3)--(-1.5,0.3)--(1.5,0.3)--(1.5,-0.3)--cycle;
\draw[-] (2,-0.3)--(2,0.3)--(5,0.3)--(5,-0.3)--cycle;
\fill[pattern=north east lines, pattern color=blue, opacity=0.4] (-1.5,-0.3)--(-1.5,0.3)--(1.5,0.3)--(1.5,-0.3)--cycle;
\fill[pattern=crosshatch, pattern color=red, opacity=0.4] (2,-0.3)--(2,0.3)--(5,0.3)--(5,-0.3)--cycle;
\draw (0,0.4) node[anchor=south] {\small \textrm{used in the proof}};
\draw (3.5,0.4) node[anchor=south] {$\big(\vecleft\eta^\ell(x)\big)^2$};
\draw[ultra thick, <->] (2,0.3) -- (5,0.3);
\filldraw[black, thick, color=red] (5.5,0) circle (2.5pt) ;
\filldraw[black, thick, color=red] (6,0) circle (2.5pt) ;
%

\end{tikzpicture}
\caption{Replacement for the local function $\bar\eta(x)\bar\eta(x+1)$ when $x+2\ell > n-1$.}
\label{fig:bg2}
\end{figure}

Before going into the proof details, let us introduce some notations: for a function $g:\Omega_n\to\bb R$ we denote by $\|g\|_2$ its $ L^2(\nu^n_\rho)$--norm: 
\[
\|g\|_2^2=\int_{\Omega_n} g^2(\eta) \; \nu^n_\rho(d\eta).
\]
In the following, $C=C(\rho)$ denotes a constant that does not depend on $n$ nor on $t$ nor on the sizes of all boxes involved, and that may change from line to line. We fix once and for all a measurable function $v:\Lambda_n\to\bb R$, for which $\|v\|_{2,n}^2<\infty$. We say that a function $g:\{0,1 \}^{\bb Z} \to \bb R$ has its support (denoted below by $ \mathrm{Supp}(g)$) included in some subset $\Lambda \subset \bb Z$ if $g$ depends on the configuration $\eta$ only through the variables $\{\eta(x) \; ; \; x \in \Lambda\}$. We denote by $\tau_x$ the usual shift operator, that acts on functions $g:\Omega_n\to\bb R$ as follows: $\tau_xg(\eta)=g(\tau_x\eta)$, which is well defined for any $x$ such that $\mathrm{Supp}(g) \subset \{1,\dots,n-1-x\}$. 
To keep the presentation as clear as possible, we define two quantities that are needed in due course:
\begin{definition} Let $m\in\Lambda_n$ be an integer such that $m  <\frac n 2$, and  let  \begin{itemize} 
\item  $g^\rightarrow:\{0,1\}^{\bb Z}\to\bb R$ be a   function whose support is included in $\{0,..,m\}$, \item $g^\leftarrow:\{0,1\}^{\bb Z}\to\bb R$ be a function whose support is included in $\{-m,...,1\}$. 
\end{itemize}
Let us define
\begin{align*}
\mathfrak{I}^{\mathrm{left}}_{t,n}(g^\rightarrow)&=\bb E_\rho\bigg[\bigg(\int_0^t \sum_{x=1}^{n-2\ell-1} v(x) \tau_xg^\rightarrow(\eta_{sn^2}) \; ds\bigg)^2\bigg],\\
\mathfrak{I}^{\mathrm{right}}_{t,n}(g^\leftarrow)&=\bb E_\rho\bigg[\bigg(\int_0^t \sum_{x=n-2\ell}^{n-2} v(x) \tau_xg^\leftarrow(\eta_{sn^2}) \; ds\bigg)^2\bigg]. 
\end{align*}
\end{definition}

With this definition, \eqref{eq:BG-BCbis}  follows from showing that for any $n,\ell \in \bb N$ such that $\ell <\frac n 4$, and any $t>0$,
\begin{align}
\mathfrak{I}^{\mathrm{left}}_{t,n}(g_\ell^\rightarrow)&\leq Ct\bigg(\frac{\ell}{n}+\frac{tn}{\ell^2}\bigg) \|v\|_{2,n}^2 \label{eq:Ileft}\\
\mathfrak{I}^{\mathrm{right}}_{t,n}(g_\ell^\leftarrow)& \leq Ct\bigg(\frac{\ell}{n}+\frac{tn}{\ell^2}\bigg)\|v\|_{2,n}^2 \label{eq:Iright}
\end{align}
where the two local functions $g_\ell^\rightarrow$ and $g_\ell^\leftarrow$ are given by \begin{align} g_\ell^\rightarrow(\eta)&=\bar\eta(0)\bar\eta(1) - \big(\vec\eta^\ell(0)\big)^2 + \frac{\chi(\rho)}{\ell}, \label{eq:psileft}\\
g_\ell^\leftarrow(\eta)&=\bar\eta(0)\bar\eta(1) - \big(\vecleft\eta^\ell(0)\big)^2 + \frac{\chi(\rho)}{\ell}. \label{eq:psiright}
\end{align}

\subsection{Strategy of the proof} 

\label{ssec:strategy}

We prove \eqref{eq:Ileft} and \eqref{eq:Iright} separately. For both of them, we need to decompose $g^\rightarrow_\ell$ and $g^\leftarrow_\ell$ as sums of several local functions, for which the estimates are simpler.  With a small abuse of language, we say that, at each step, we \emph{replace} a local function with another one. More precisely, let $\ell_0 \leq \ell$ and assume first (to simplify) that $\ell= 2^M\ell_0$ for some integer $M\in\bb N$. Denote $\ell_k=2^k\ell_0$ for $k\in\{0,...,M\}$. One can easily check the decomposition
\begin{align}
g^\rightarrow_\ell(\eta) = & \quad \bar\eta(0)\Big[\bar\eta(1)-\vec\eta^{\ell_0}(\ell_0)\Big] \label{eq:right1} \\
& + \vec\eta^{\ell_0}(\ell_0) \Big[ \bar\eta(0)-\vec\eta^{\ell_0}(0)\Big] \label{eq:right2}\\
& + \sum_{k=0}^{M-1} \vec\eta^{\ell_k}(0) \Big[\vec\eta^{\ell_k}(\ell_k)-\vec\eta^{2\ell_k}(2\ell_k)\Big] \label{eq:right3}\\
& + \sum_{k=0}^{M-1} \vec\eta^{2\ell_k}(2\ell_k) \Big[\vec\eta^{\ell_k}(0)-\vec\eta^{2\ell_k}(0)\Big]\label{eq:right4}\\
&+ \vec\eta^\ell(0) \Big[\vec\eta^\ell(\ell)-\bar\eta(\ell+1)\Big]\label{eq:right5}\\
&+ \vec\eta^\ell(0) \Big[\bar\eta(\ell+1)-\bar\eta(0)\Big]\label{eq:right6}\\
&+ \vec\eta^\ell(0) \Big[\bar\eta(0)-\vec\eta^\ell(0)\Big] + \frac{\chi(\rho)}{\ell}. \label{eq:right7}
\end{align}
For example, in \eqref{eq:right1} we say that we \emph{replace} $\bar\eta(1)$ by $\vec\eta^{\ell_0}(\ell_0)$, while $\bar\eta(0)$ is considered as \emph{fixed}. Seven terms appear from \eqref{eq:right1} to \eqref{eq:right7}. Let us denote them by order of appearance as follows:
\[
g^\rightarrow_{\rm I}(\eta), \quad g^\rightarrow_{\rm II}(\eta),\quad g^\rightarrow_{\rm III}(\eta),\quad g^\rightarrow_{\rm IV}(\eta), \quad g^\rightarrow_{\rm V}(\eta),\quad g^\rightarrow_{\rm VI}(\eta),\quad g^\rightarrow_{\rm VII}(\eta).
\]
The decomposition above can naturally be written for $\tau_x g^\rightarrow_\ell$ ($x\in\Lambda_n$) by translating any term.
Let us now illustrate the first steps of the decomposition: in Figure \ref{fig:bgleft1} below, we use the arrows as symbols for the replacements we perform, and we illustrate the consecutive replacements from \eqref{eq:right1} to \eqref{eq:right3}, the latter corresponding to  $\vec\eta^{\ell_k}(x+\ell_k) \mapsto \vec\eta^{2\ell_k}(x+2\ell_k).$

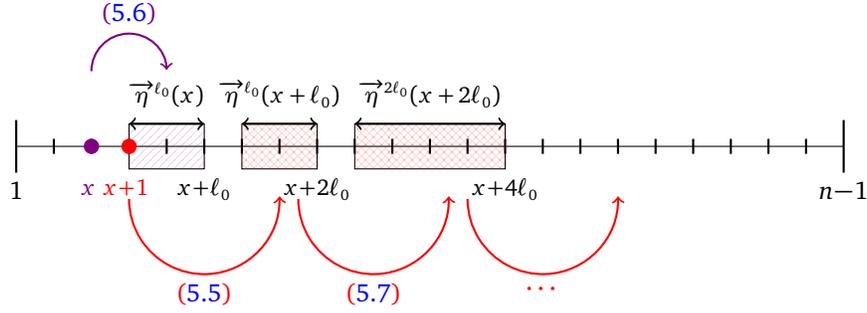
\begin{figure}[H]
\centering
\begin{tikzpicture}
\draw (-3,0) -- (8,0);
\draw[thick] (-3,-0.35) -- (-3,0.35);
\draw[thick] (8,-0.35) -- (8,0.35);
\draw[thick] (-2.5,-0.1) -- (-2.5,0.1);
\draw[thick] (-2,-0.1) -- (-2,0.1);
\draw[thick] (-1.5,-0.1) -- (-1.5,0.1);
\draw[thick] (-1,-0.1) -- (-1,0.1);
\draw[thick] (-0.5,-0.1) -- (-0.5,0.1);
\draw[thick] (0,-0.1) -- (0,0.1);
\draw[thick] (0.5,-0.1) -- (0.5,0.1);
\draw[thick] (1,-0.1) -- (1,0.1);
\draw[thick] (1.5,-0.1) -- (1.5,0.1);
\draw[thick] (2,-0.1) -- (2,0.1);
\draw[thick] (2.5,-0.1) -- (2.5,0.1);
\draw[thick] (3,-0.1) -- (3,0.1);
\draw[thick] (3.5,-0.1) -- (3.5,0.1);
\draw[thick] (4,-0.1) -- (4,0.1);
\draw[thick] (4.5,-0.1) -- (4.5,0.1);
\draw[thick] (5,-0.1) -- (5,0.1);
\draw[thick] (5.5,-0.1) -- (5.5,0.1);
\draw[thick] (6,-0.1) -- (6,0.1);
\draw[thick] (6.5,-0.1) -- (6.5,0.1);
\draw[thick] (7,-0.1) -- (7,0.1);
\draw[thick] (7.5,-0.1) -- (7.5,0.1);
\draw (-2,-0.3) node[anchor=north] {\small \color{violet}$x$ \vphantom{$+\ell_0$}};
\draw (-1.5,-0.3) node[anchor=north] {\small \color{red}$x\!+\!1$ \vphantom{$+\ell_0$}};
\draw (-0.5,-0.3) node[anchor=north] {\small $x\!+\!\ell_0$};

\draw (1,-0.3) node[anchor=north] {\small $x\!+\!2\ell_0$};
\draw (3.5,-0.3) node[anchor=north] {\small $x\!+\!4\ell_0$};

\draw (-3,-0.35) node[anchor=north] { $1$};
\draw (8,-0.35) node[anchor=north] { $n\!-\!1$};
\draw[color=white] (0,-1) node[anchor=north] {x};

\centerarc[thick,<-,color=red](-0.5,-0.7)(2:-180:1);
\draw (-0.5,-1.7) node[anchor=north] {\color{red}\bf \eqref{eq:right1}};
\centerarc[thick,<-,color=red](1.75,-0.7)(2:-180:1);
\draw (1.75,-1.7) node[anchor=north] {\color{red}\bf \eqref{eq:right3}};
\centerarc[thick,<-,color=red](4,-0.7)(2:-180:1);
\draw (4,-1.7) node[anchor=north] {\large \color{red}\bf $\cdots$};

\centerarc[thick,<-,color=violet](-1.5,1)(2:180:0.5);
\draw (-1.5,1.5) node[anchor=south] {\color{violet}\bf\eqref{eq:right2}};

\draw[thick, <->] (0,0.3) -- (1,0.3);
\draw[thick, <->] (-1.5,0.3) -- (-0.5,0.3);
\draw[-] (0,-0.3)--(0,0.3)--(1,0.3)--(1,-0.3)--cycle;
\draw[-] (-1.5,-0.3)--(-1.5,0.3)--(-0.5,0.3)--(-0.5,-0.3)--cycle;
\draw[-] (1.5,-0.3)--(1.5,0.3)--(3.5,0.3)--(3.5,-0.3)--cycle;
\fill[pattern=crosshatch, pattern color=red, opacity=0.4] (0,-0.3)--(0,0.3)--(1,0.3)--(1,-0.3)--cycle;
\fill[pattern=north east lines, pattern color=violet, opacity=0.4] (-1.5,-0.3)--(-1.5,0.3)--(-0.5,0.3)--(-0.5,-0.3)--cycle;
\fill[pattern=crosshatch, pattern color=red, opacity=0.4] (1.5,-0.3)--(1.5,0.3)--(3.5,0.3)--(3.5,-0.3)--cycle;
\draw (0.5,0.4) node[anchor=south] {\small $\vec\eta^{\ell_0}(x+\ell_0)$};
\draw (-1,0.4) node[anchor=south] {\small $\vec\eta^{\ell_0}(x)$};
\draw (2.5,0.4) node[anchor=south] {\small $\vec\eta^{2\ell_0}(x+2\ell_0)$};
\draw[thick, <->] (1.5,0.3) -- (3.5,0.3);
\filldraw[black, thick, color=violet] (-2,0) circle (2.5pt) ;
\filldraw[black, thick, color=red] (-1.5,0) circle (2.5pt) ;
%

\end{tikzpicture}
\caption{Illustration of steps \eqref{eq:right1}--\eqref{eq:right3} when $x\leq n-2\ell-1$.}
\label{fig:bgleft1}
\end{figure}
Simultaneously, one can see in \eqref{eq:right4} that $\vec\eta^{\ell_k}(x)$ is replaced with $\vec\eta^{2\ell_k}(x)$ as we illustrate now in Figure \ref{fig:bgleft2}:
\begin{figure}[H]
\centering
\begin{tikzpicture}
\draw (-3,0) -- (8,0);
\draw[thick] (-3,-0.35) -- (-3,0.35);
\draw[thick] (8,-0.35) -- (8,0.35);
\draw[thick] (-2.5,-0.1) -- (-2.5,0.1);
\draw[thick] (-2,-0.1) -- (-2,0.1);
\draw[thick] (-1.5,-0.1) -- (-1.5,0.1);
\draw[thick] (-1,-0.1) -- (-1,0.1);
\draw[thick] (-0.5,-0.1) -- (-0.5,0.1);
\draw[thick] (0,-0.1) -- (0,0.1);
\draw[thick] (0.5,-0.1) -- (0.5,0.1);
\draw[thick] (1,-0.1) -- (1,0.1);
\draw[thick] (1.5,-0.1) -- (1.5,0.1);
\draw[thick] (2,-0.1) -- (2,0.1);
\draw[thick] (2.5,-0.1) -- (2.5,0.1);
\draw[thick] (3,-0.1) -- (3,0.1);
\draw[thick] (3.5,-0.1) -- (3.5,0.1);
\draw[thick] (4,-0.1) -- (4,0.1);
\draw[thick] (4.5,-0.1) -- (4.5,0.1);
\draw[thick] (5,-0.1) -- (5,0.1);
\draw[thick] (5.5,-0.1) -- (5.5,0.1);
\draw[thick] (6,-0.1) -- (6,0.1);
\draw[thick] (6.5,-0.1) -- (6.5,0.1);
\draw[thick] (7,-0.1) -- (7,0.1);
\draw[thick] (7.5,-0.1) -- (7.5,0.1);
\draw (-2,-0.3) node[anchor=north] {\small \color{violet}$x$ \vphantom{$+\ell_0$}};
\draw (-1.5,-0.3) node[anchor=north] {\small \color{red}$x\!+\!1$ \vphantom{$+\ell_0$}};
%

\draw (-3,-0.35) node[anchor=north] { $1$};
\draw (8,-0.35) node[anchor=north] { $n\!-\!1$};
\draw[color=white] (0,-1) node[anchor=north] {x};

\centerarc[thick,<-,color=violet](-1,-1)(2:-180:1);
\draw (0,-2) node[anchor=north] {\color{violet}\bf \eqref{eq:right4} $\cdots$};

\centerarc[thick,<-,color=violet](0,-1)(2:-180:2);

\centerarc[thick,<-,color=violet](-1.5,1)(2:180:0.5);
\draw (-1.5,1.5) node[anchor=south] {\color{violet}\bf\eqref{eq:right2}};

\draw[thick, <->] (-1.5,-0.3) -- (0.5,-0.3);
\draw[thick, <->] (-1.5,0.3) -- (-0.5,0.3);
\draw[-] (-0.5,-0.3)--(-0.5,0.3)--(0.5,0.3)--(0.5,-0.3)--cycle;
\draw[-] (-1.5,-0.3)--(-1.5,0.3)--(-0.5,0.3)--(-0.5,-0.3)--cycle;
\fill[pattern=crosshatch, pattern color=violet, opacity=0.4] (-0.5,-0.3)--(-0.5,0.3)--(0.5,0.3)--(0.5,-0.3)--cycle;
\fill[pattern=north east lines, pattern color=violet, opacity=0.4] (-1.5,-0.3)--(-1.5,0.3)--(-0.5,0.3)--(-0.5,-0.3)--cycle;
\draw (0,-0.4) node[anchor=north] {\small $\vec\eta^{2\ell_0}(x)$};
\draw (-1,0.4) node[anchor=south] {\small $\vec\eta^{\ell_0}(x)$};
\filldraw[black, thick, color=violet] (-2,0) circle (2.5pt) ;
\filldraw[black, thick, color=red] (-1.5,0) circle (2.5pt) ;
%

\end{tikzpicture}
\caption{Illustration of step \eqref{eq:right4}: successive replacements  when $x \leq n-2\ell-1$.}
\label{fig:bgleft2}
\end{figure}
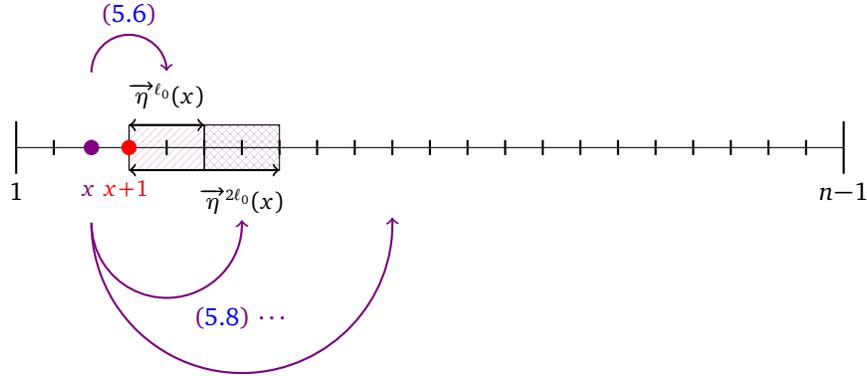

The role of the pre-factors $\vec\eta^{\ell_k}(x)$ in \eqref{eq:right3} and $\vec\eta^{2\ell_k}(x+2\ell_k)$ in \eqref{eq:right4} can be roughly understood as follows: these local functions have a variance of order $(\ell_k)^{-1}$ under $\nu^n_\rho$, which compensates the price to pay when one tries to replace $\vec\eta^{\ell_k}(x+\ell_k)$ by $\vec\eta^{2\ell_k}(x+2\ell_k)$ and $\vec\eta^{\ell_k}(x)$ by $\vec\eta^{2\ell_k}(x)$. More precisely, this compensation is optimal if the support of the pre-factor does not intersect the set of sites which are used in the replacement: for example, the support of $\vec\eta^{\ell_k}(x)$ is   $\{x+1,...,x+\ell_k\}$ and it does not intersect $\{x+\ell_k+1,...,x+4\ell_k\}$, which corresponds to the sites used in the replacement $\vec\eta^{\ell_k}(x+\ell_k)\mapsto \vec\eta^{2\ell_k}(x+2\ell_k)$, see \eqref{eq:right3}. More details are given below.

The decomposition which works for $g^\leftarrow_\ell$ is very similar and there is no difficulty to find it out, following closely \eqref{eq:right1}--\eqref{eq:right7}. 

 Let us go back to our goal estimate  \eqref{eq:Ileft}. From the standard convexity inequality $(a_1+\cdots+a_p)^2 \leq p(a_1^2+\cdots a_p^2)$, one can see that \eqref{eq:Ileft} follows from seven independent estimates. More precisely, it is enough to prove that
\begin{equation}\label{eq:separ}
\mf I_{t,n}^{\mathrm{left}}(g^\rightarrow_{\rm w}) \leq Ct\bigg(\frac{\ell}{n}+\frac{tn}{\ell^2}\bigg)\|v\|_{2,n}^2, \qquad \text{for any } {\rm w} \in \{\rm I, II, III, IV, V, VI, VII\}.\end{equation}
There is one further step in two particular cases: for $\rm w=III$ and $\rm w=IV$, we also use Minkowski's inequality, so that we can  bound as follows:
\begin{align}
 \mf I_{t,n}^{\mathrm{left}}(g^\rightarrow_{\rm III})  & \leq  C\bigg(\sum_{k=0}^{M-1} \bigg\{\bb E_\rho\bigg[\bigg(\int_0^t \sum_{x=1}^{n-2\ell-1} v(x)\vec\eta_{sn^2}^{\ell_k}(x) \notag \\ &    \qquad \qquad \qquad \times \Big[\vec\eta_{sn^2}^{\ell_k}(x+\ell_k)-\vec\eta_{sn^2}^{2\ell_k}(x+2\ell_k)\Big]ds\bigg)^2\bigg] \bigg\}^{1/2} \bigg)^2 \label{eq:right3bis} \\
 \mf I_{t,n}^{\mathrm{left}}(g^\rightarrow_{\rm IV}) 
& \leq C\bigg(\sum_{k=0}^{M-1} \bigg\{\bb E_\rho\bigg[\bigg(\int_0^t \sum_{x=1}^{n-2\ell-1} v(x)\vec\eta_{sn^2}^{2\ell_k}(x+2\ell_k) \notag \\ &   \qquad  \qquad \qquad  \times \Big[\vec\eta_{sn^2}^{\ell_k}(x)-\vec\eta_{sn^2}^{2\ell_k}(x)\Big]ds\bigg)^2\bigg] \bigg\}^{1/2} \bigg)^2.\label{eq:right4bis}
\end{align}
Finally, the proof of \eqref{eq:separ} can  almost be resumed in one general statement, which we are going to apply several times. Let us state here our main estimate:
\begin{proposition}\label{prop:main}
Let $\bf A, B$ be two subsets of $\Lambda_n$, and let us denote by $\#{\bf B}$ the cardinality of $\bf B$. We assume that: for all $x\in\bf A$, the translated set \[\tau_x{\bf B}=\{x+y\; ; \; y \in \bf B\}\] satisfies $  \tau_x{\bf B}\subset \Lambda_n$. 
Consider $g:\Omega_n\to\bb R$ a local function whose support does not intersect $\bf B$, namely:  $\mathrm{Supp}(g) \cap \mathbf{B} = \emptyset,$ and which has mean zero with respect to $\nu^n_\rho$.
Then, there exists $C>0$ such that, for any $n \in \bb N$ and $t>0$, 
\begin{multline}
\label{eq:main-estim}
\bb E_\rho\bigg[\bigg(\int_0^t\sum_{x\in\bf A} \Big\{v(x) \tau_xg(\eta_{sn^2}) \sum_{z\in\tau_x\bf B} (\bar\eta_{sn^2}(z)-\bar\eta_{sn^2}(z+1)) \Big\}  ds \bigg)^2\bigg] \\ \leq  \frac{Ct(\#{\bf B})^2}{n}\; \|g\|_2^2 \; \|v\|_{2,n}^2.
\end{multline}
\end{proposition}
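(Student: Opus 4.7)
The plan is to apply the Kipnis--Varadhan inequality (as given in~\cite[Proposition A.1.6.1]{kl} and already invoked e.g.\ in the proof of Lemma~\ref{lem:height-boundary}) to reduce the left-hand side of \eqref{eq:main-estim} to a variational $H^{-1}$--bound. More precisely, writing
\[
  F(\eta) \; = \; \sum_{x\in\mathbf{A}} v(x)\,\tau_xg(\eta) \sum_{z\in\tau_x\mathbf{B}} \big(\bar\eta(z)-\bar\eta(z+1)\big),
\]
we have $\int F\,d\nu^n_\rho = 0$ (since $g$ is centered) and Kipnis--Varadhan gives, for some universal constant $C_0$,
\[
  \bb E_\rho\bigg[\Big(\int_0^t F(\eta_{sn^2})\,ds\Big)^2\bigg]\;\leq\; C_0\,t\;\sup_{f\in L^2(\nu^n_\rho)}\Big\{2\langle F,f\rangle_{\nu^n_\rho} - n^2\,\mathfrak D_n(f)\Big\}.
\]
It therefore suffices to show that the supremum above is bounded by $C\,(\#\mathbf{B})^2 n^{-1}\|g\|_2^2\|v\|_{2,n}^2$.

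The key bulk estimate comes from integration by parts via the involution $\sigma^{z,z+1}$. Fix $x\in\mathbf{A}$ and $z\in\tau_x\mathbf{B}$ with $z, z+1\in\Lambda_n$. The measure $\nu^n_\rho$ is invariant under $\sigma^{z,z+1}$, and the support assumption ensures (modulo a harmless extension of $\mathrm{Supp}(g)$ by one site, see the obstacle below) that $\tau_xg(\sigma^{z,z+1}\eta)=\tau_xg(\eta)$. A standard change-of-variable computation then yields
\[
  2\int (\bar\eta(z)-\bar\eta(z+1))\,\tau_xg(\eta)\,f(\eta)\,d\nu^n_\rho \;=\; -\int (\bar\eta(z)-\bar\eta(z+1))\,\tau_xg(\eta)\,\nabla_{z,z+1}f(\eta)\,d\nu^n_\rho.
\]
Using that $|\bar\eta(z)-\bar\eta(z+1)|=(\bar\eta(z)-\bar\eta(z+1))^2$ for $\eta\in\{0,1\}^{\Lambda_n}$ and Young's inequality with a parameter $A>0$ to be chosen,
\[
  \Big|v(x)\,\tau_xg\,(\bar\eta(z)-\bar\eta(z+1))\,\nabla_{z,z+1}f\Big| \;\leq\; \tfrac{A}{2}\,v(x)^2(\bar\eta(z)-\bar\eta(z+1))^2(\tau_xg)^2 + \tfrac{1}{2A}(\bar\eta(z)-\bar\eta(z+1))^2(\nabla_{z,z+1}f)^2.
\]

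Summing the second term over $(x,z)$: for each bond $(z,z+1)$ the number of $x\in\mathbf{A}$ with $z\in\tau_x\mathbf{B}$ is at most $\#\mathbf{B}$, hence this contribution is bounded by $(2A)^{-1}\,\#\mathbf{B}\,\mathfrak D_n^{\tmop{bulk}}(f)$, which matches $n^2\mathfrak D_n(f)$ as long as $A\ge \#\mathbf{B}/n^2$. Choosing $A=\#\mathbf{B}/n^2$ and summing the first term, which uses $(\bar\eta(z)-\bar\eta(z+1))^2\le 1$, translation invariance of $\nu^n_\rho$ to replace $\int(\tau_xg)^2 d\nu^n_\rho$ by $\|g\|_2^2$, and $\sum_{x\in\mathbf{A}}v(x)^2\le n\|v\|_{2,n}^2$, gives
\[
  \sum_{x\in\mathbf{A}}\sum_{z\in\tau_x\mathbf{B}} \tfrac{A}{2}v(x)^2\|g\|_2^2 \;\leq\; \tfrac{\#\mathbf{B}}{2n^2}\cdot \#\mathbf{B}\cdot n\|v\|_{2,n}^2\cdot\|g\|_2^2 \;=\; \tfrac{(\#\mathbf{B})^2}{2n}\,\|v\|_{2,n}^2\|g\|_2^2,
\]
and therefore $2\langle F,f\rangle - n^2\mathfrak D_n(f) \le C(\#\mathbf{B})^2 n^{-1}\|g\|_2^2\|v\|_{2,n}^2$, which is the desired bound.

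The main obstacle I foresee is twofold. First, the integration-by-parts step requires $\tau_x g$ not to depend on $\eta(z)$ \emph{nor} on $\eta(z+1)$ for every $z\in\tau_x\mathbf{B}$, whereas the hypothesis only gives $z\notin x+\mathrm{Supp}(g)$; one must either invoke (as is done in the applications \eqref{eq:right1}--\eqref{eq:right7}, where $\mathbf{B}$ is an interval and $\mathrm{Supp}(g)$ lies strictly on one side of it) a slightly stronger separation $\mathrm{Supp}(g)\cap(\mathbf{B}\cup(\mathbf{B}-1))=\emptyset$, or redo the flip argument using a suitable path of exchanges that avoids $\mathrm{Supp}(g)$. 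Second, bonds $(z,z+1)$ with $z\in\{0,n-1\}$ (if they occur) are not controlled by $\mathfrak D_n^{\tmop{bulk}}$ but by the boundary part $\mathfrak D_n^{\tmop{bnd}}$: here $\bar\eta(z)-\bar\eta(z+1)$ reduces to $\pm\bar\eta(1)$ or $\pm\bar\eta(n-1)$, and the corresponding Young term is absorbed by $\mathfrak D_n^{\tmop{bnd}}(f)$ using that $r_{0,1}(\eta)$ and $r_{n-1,n}(\eta)$ are bounded below by $\rho\wedge(1-\rho)$, exactly as in the proof of Lemma~\ref{lem:height-boundary}. Once these two points are handled, the argument above yields the claimed bound with a constant depending only on $\rho$ and $E$.
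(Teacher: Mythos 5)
Your proof follows the same route as the paper: apply a Kipnis--Varadhan bound to reduce to a variational estimate, integrate by parts through the involution $\sigma^{z,z+1}$ (using the support disjointness so that $\tau_x g$ is invariant under the flip), apply Young's inequality with parameter $\varepsilon_x \simeq \#\mathbf B/n^2$, and absorb the quadratic gradient term into $n^2\mathfrak D_n(f)$ while the remaining term produces $(\#\mathbf B)^2 n^{-1}\|g\|_2^2\|v\|_{2,n}^2$. The only cosmetic difference is the precise reference used for Kipnis--Varadhan (the paper invokes \cite[Lemma~2.4]{KLO} rather than \cite[Proposition~A.1.6.1]{kl}); the core manipulations and the choice of Young parameter coincide.

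Your two ``obstacles'' are good catches and are worth spelling out. On the first: the paper states that $\mathrm{Supp}(\tau_x g)\cap\tau_x\mathbf B=\emptyset$ gives $\tau_x g(\sigma^{z,z+1}\eta)=\tau_x g(\eta)$ for $z\in\tau_x\mathbf B$, but strictly speaking $\sigma^{z,z+1}$ also touches the site $z+1$, so one actually needs $\mathrm{Supp}(\tau_x g)\cap\big(\tau_x\mathbf B\cup(\tau_x\mathbf B+1)\big)=\emptyset$. The paper leaves this implicit; in all uses of the Proposition in Section~5 (the decomposition \eqref{eq:right1}--\eqref{eq:right7}, and the explicit choices of $g$ and $\mathbf B$ in \eqref{eq:decomp1}--\eqref{eq:decomp3}), $\mathbf B$ is a contiguous block of bonds lying strictly on one side of $\mathrm{Supp}(g)$, so the stronger separation holds automatically. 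On the second: the paper's bound
\[
\frac{1}{\#\mathbf B}\sum_{x\in\mathbf A}\sum_{z\in\tau_x\mathbf B}\int\big(f(\eta)-f(\sigma^{z,z+1}\eta)\big)^2\,\nu^n_\rho(d\eta)\leq\mathfrak D_n(f)
\]
tacitly assumes every bond $(z,z+1)$ is a bulk bond, i.e.\ $z\leq n-2$; the hypothesis $\tau_x\mathbf B\subset\Lambda_n$ alone allows $z=n-1$, which would require the boundary Dirichlet form (and the change of variables $\sigma^{n-1}$ rather than $\sigma^{n-1,n}$). Again this is vacuous in the applications, where the ranges of $x$, $y$, $z$ are always kept at distance at least $\ell$ from $n-1$ (the restriction $\ell<n/4$ and $x\leq n-2\ell-1$ ensures this). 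So your proof is correct and, in fact, slightly more careful than the paper's on these two technical points.
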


\begin{proof}[Proof of Proposition \ref{prop:main}] We prove it in Section \ref{ssec:proof}.\end{proof}

Before that, let us apply it to our purposes.

\subsection{End of the proof of Theorem \ref{theo:BG}}
 First, let us prove that we can apply Proposition \ref{prop:main} in order to estimate $\mf I_{t,n}^{\rm left}(g^\rightarrow_{\rm w})$, for ${\rm w} \in \{ \rm I,..., VI\}$.  The only estimate that has to be considered separately is the one involving $g^\rightarrow_{\rm VII}$. 
 
 We prove that the assumptions of Proposition \ref{prop:main} are satisfied for $g_{\rm III}^\rightarrow$ and $g_{\rm IV}^\rightarrow$ (see also \eqref{eq:right3bis} and \eqref{eq:right4bis}) and we let the reader to check the other ones. First, recall \eqref{eq:right3bis} and notice that
\begin{align}
\vec\eta^{\ell_k}(x)\Big[\vec\eta^{\ell_k}(x+\ell_k)-\vec\eta^{2\ell_k}(x+2\ell_k)\Big] & = \frac{\vec\eta^{\ell_k}(x)}{2\ell_k}\sum_{y=x+1}^{x+\ell_k} \sum_{z=y+\ell_k}^{y+2\ell_k-1} (\bar\eta(z)-\bar\eta(z+1)) \label{eq:decomp1} \\
& + \frac{\vec\eta^{\ell_k}(x)}{2\ell_k} \sum_{y=x+1}^{x+\ell_k} \sum_{z=y+\ell_k}^{y+3\ell_k-1} (\bar\eta(z)-\bar\eta(z+1)). \label{eq:decomp2}
\end{align}
Note that the above identity can be easily obtained by splitting each average $\vec\eta^{\ell_k}(x+\ell_k)$ and $\vec\eta^{2\ell_k}(x+2\ell_k)$ in two parts, as follows: 
\begin{align*}
\vec\eta^{\ell_k}(x+\ell_k) & = \frac{1}{2\ell_k} \sum_{y=1}^{\ell_k} \bar\eta(y+x+\ell_k) + \frac{1}{2\ell_k} \sum_{y=1}^{\ell_k} \bar\eta(y+x+\ell_k) \\
\vec\eta^{2\ell_k}(x+2\ell_k) & = \frac{1}{2\ell_k}  \sum_{y=1}^{\ell_k} \bar\eta(y+x+2\ell_k) + \frac{1}{2\ell_k} \sum_{y=1}^{\ell_k} \bar\eta(y+x+3\ell_k),
\end{align*} and subtracting the sums one by one. Let us first deal with \eqref{eq:decomp1}: we can use Proposition \ref{prop:main} with 
\[
g(\eta) = \frac{\vec\eta^{\ell_k}(0)}{2\ell_k}, \quad \text{ and } \quad {\bf B}  = \left\{ (y,z) \in \Lambda_n \; ; \; \left\{ \begin{aligned} 1 &\leq y \leq \ell_k \\  y+\ell_k &\leq z \leq y+2\ell_k-1 \end{aligned}\right.\right\}.
\]
Note that $\|g \|_2^2 = C/\ell_k^3$ and $\#{\bf B}=\ell_k^2$, and remember that $\ell_k=2^k\ell_0$.
Next, we deal with \eqref{eq:decomp2}: we only need to change ${\bf B}$ which now reads 
\[
{\bf B}  = \left\{ (y,z) \in \Lambda_n \; ; \; \left\{ \begin{aligned} 1 &\leq y \leq \ell_k \\  y+\ell_k &\leq z \leq y+3\ell_k-1 \end{aligned}\right.\right\}, \quad \mathrm{hence} \quad \#{\bf B}=2\ell_k^2.
\]
  Therefore, one can  see from \eqref{eq:right3bis} and Proposition \ref{prop:main} that
\[
\mf I_{t,n}^{\rm left}(g^\rightarrow_{\rm III}) \leq C\bigg(\sum_{k=0}^{M-1} \bigg\{ \frac{t\ell_k^4}{n}\frac{1}{\ell_k^2}\|v\|_{2,n}^2\bigg\}^{1/2}\bigg)^2 \leq Ct \frac{\ell_{M}}{n}\|v\|_{2,n}^2.
\] Let us treat similarly $g_{\rm IV}^\rightarrow$: recall \eqref{eq:right4bis} and write
\begin{equation}
\vec\eta^{2\ell_k}(x+2\ell_k)\Big[\vec\eta^{\ell_k}(x)-\vec\eta^{2\ell_k}(x)\Big]  = \frac{\vec\eta^{2\ell_k}(x+2\ell_k)}{2\ell_k}\sum_{y=x+1}^{x+\ell_k} \sum_{z=y}^{y+\ell_k-1} (\bar\eta(z)-\bar\eta(z+1)). \label{eq:decomp3} 
\end{equation}
Here we choose 
\[ g(\eta)  = \frac{\vec\eta^{2\ell_k}(2\ell_k)}{2\ell_k}  \quad \text{ and } \quad {\bf B}  =\left\{ (y,z) \in \Lambda_n \; ; \; \left\{ \begin{aligned} 1 &\leq y \leq \ell_k \\  y &\leq z \leq y+\ell_k-1 \end{aligned}\right.\right\}
\]
and then apply Proposition \ref{prop:main}, which gives the same bound as before:
\[
\mf I_{t,n}^{\rm left}(g^\rightarrow_{\rm IV}) \leq Ct \frac{\ell_{M}}{n}\|v\|_{2,n}^2.
\] 
Performing similar arguments and using Proposition \ref{prop:main} together with Minkowski's inequality, we get that, for any $\ell,n\in\bb N$ such that $\ell <\frac n 4$, and any $t>0$,
\begin{align*}
&\mf I_{t,n}^{\rm left}(g^\rightarrow_{\rm I})  \leq Ct \frac{\ell_0^2}{n}\|v\|_{2,n}^2,  \quad \quad 
\mf I_{t,n}^{\rm left}(g^\rightarrow_{\rm II})  \leq Ct \frac{\ell_0}{n}\|v\|_{2,n}^2, \\
&\mf I_{t,n}^{\rm left}(g^\rightarrow_{\rm w})  \leq Ct \frac{\ell}{n}\|v\|_{2,n}^2, \qquad \text{for any } {\rm w} \in \{\rm III, IV, V, VI\} .
\end{align*}
Finally, we have  to estimate the last remaining term involving $g_{\rm VII}^\rightarrow$,
 which is treated separately. More precisely, in Section \ref{ssec:separate} we will prove the following:
\begin{proposition}\label{prop:last}
For any $\ell,n\in\bb N$ such that $\ell <\frac n 4$, and any $t>0$,
\begin{multline}
\label{eq:right7bis}
\mf I_{t,n}^{\rm left}(g^\rightarrow_{\rm VII})=\bb E_\rho\bigg[\bigg(\int_0^t \sum_{x=1}^{n-2\ell-1} v(x)\Big\{\vec\eta_{sn^2}^\ell(x) \Big[\bar\eta_{sn^2}(x)-\vec\eta_{sn^2}^\ell(x)\Big] + \frac{\chi(\rho)}{\ell}\Big\} ds\bigg)^2\bigg]
\\ \leq Ct \bigg(\frac{\ell}{n}+\frac{tn}{\ell^2}\bigg)\|v\|_{2,n}^2.
\end{multline}
\end{proposition}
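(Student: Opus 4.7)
The function $h_x(\eta) := \vec\eta^\ell(x)(\bar\eta(x) - \vec\eta^\ell(x)) + \chi(\rho)/\ell$ has support $B_x := \{x, x+1, \ldots, x+\ell\}$ and $\nu^n_\rho$-mean zero: indeed $\bb E_\rho[\bar\eta(x)\vec\eta^\ell(x)] = 0$ by independence under the Bernoulli product measure, and $\bb E_\rho[(\vec\eta^\ell(x))^2] = \chi(\rho)/\ell$. Since $h_x$ cannot be cast as a prefactor times a telescoping discrete gradient whose support is disjoint from the prefactor, Proposition~\ref{prop:main} is unavailable. My plan is to split $h_x = h_x^{(1)} + h_x^{(2)}$, where $h_x^{(2)} := \bb E_\rho[h_x \mid k_x]$ is the conditional expectation given the total occupation $k_x = \sum_{y \in B_x} \eta(y)$ of $B_x$, and $h_x^{(1)} := h_x - h_x^{(2)}$, and to estimate each piece separately: the conditional-mean part $h_x^{(2)}$ by a direct Cauchy--Schwarz estimate producing the $Ct \cdot tn/\ell^2$ contribution, and the fluctuation part $h_x^{(1)}$ via a Kipnis--Varadhan argument producing the $Ct\ell/n$ contribution.

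For $h_x^{(2)}$, the equivalence of ensembles gives explicitly (for $\rho=\tfrac12$; the general case follows by adding a harmless linear term in $1-2\rho$)
\[
h_x^{(2)} = \bar k_x^2\,(\ell+1)/\ell^2 - 1/(4\ell^2), \qquad \bar k_x := k_x/(\ell+1) - 1/2,
\]
so that $\bb E_\rho[(h_x^{(2)})^2] = O(1/\ell^4)$ since $\bb E_\rho[\bar k_x^{2p}] = O(\ell^{-p})$. As $h_x^{(2)}$ is supported on $B_x$, the covariance $\bb E_\rho[h_x^{(2)} h_y^{(2)}]$ vanishes for $|x-y| > \ell$, while for the at most $2\ell+1$ nearby indices it is bounded by $C/\ell^4$. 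Summing with the weight $v$ gives $\bb E_\rho[(\sum_x v(x) h_x^{(2)})^2] \leq C n/\ell^3 \|v\|_{2,n}^2$, and the trivial bound $\bb E_\rho[(\int_0^t W\, ds)^2] \leq t^2 \bb E_\rho[W^2]$ then produces $C t^2 n/\ell^3 \|v\|_{2,n}^2 \leq C t (tn/\ell^2) \|v\|_{2,n}^2$ (using $\ell \geq 1$), matching the second summand of the target.

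For $h_x^{(1)}$, by construction its conditional expectation on any hyperplane of fixed $k_x$ vanishes, so $h_x^{(1)}$ lies in the orthogonal complement of the SSEP conservation law on $B_x$. The uniform SSEP spectral gap of order $\ell^{-2}$ on boxes of size $\ell+1$ yields a local antiderivative $F_x$ supported on $B_x$ solving $-\mc L_{B_x}^{\mathrm{sym}} F_x = h_x^{(1)}$, with $\bb E_\rho[F_x h_x^{(1)}] \leq C\ell^2 \bb E_\rho[(h_x^{(1)})^2] \leq C\ell$ by Poincar\'e. I would partition the indices $\{1,\dots,n-2\ell-1\}$ into $2\ell+1$ residue classes modulo $2\ell+1$, inside each of which the boxes $B_x$ are pairwise disjoint. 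Applying the Kipnis--Varadhan inequality (Proposition 1.6 of Appendix 1.6 in \cite{kl}, in the non-reversible version where only the symmetric part of the generator $n^2\mc L_n$ enters the variational formula) to each class $\mc X_c$ separately with test function $f_c = n^{-2}\sum_{x\in \mc X_c} v(x) F_x$, optimizing the resulting quadratic expression in the amplitude and combining across classes via Minkowski/Cauchy--Schwarz, yields the desired $Ct\ell/n \|v\|_{2,n}^2$ bound.

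The main obstacle I expect is twofold: (i) carefully bounding the cross contributions in the Dirichlet form $\mathfrak D_n^{\mathrm{sym}}(f_c)$ when boxes $B_x, B_y$ overlap (handled by the residue-class decomposition, with the extra factor $2\ell+1$ absorbed by the gain coming from the smaller per-class norms $\|v \mathbf{1}_{\mc X_c}\|_{2,n}^2$), and (ii) controlling the weakly asymmetric perturbation of $\mc L_n$ together with the reservoir Dirichlet terms near sites $1$ and $n-1$, which touch $B_1$ and $B_{n-\ell-1}$. The antisymmetric part disappears from the variational formula, and the boundary reservoir contributions only add nonnegative terms to $\mathfrak D_n^{\mathrm{sym}}$, so they preserve the upper bound; nevertheless the bookkeeping for the extreme values of $x$ near the boundaries will require careful separate treatment.
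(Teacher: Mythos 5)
Your decomposition of $h_x = \tau_x g^\rightarrow_{\rm VII}$ into $h_x^{(1)} + h_x^{(2)}$ via conditioning on the local particle count $k_x$ is a natural and valid alternative to the paper's decomposition (the paper instead writes $g^\rightarrow_{\rm VII} = g_1 + g_2$ with $g_2 = \tfrac{\chi(\rho)}{\ell} - \tfrac{1}{2\ell}(\eta(0)-\eta(1))$, the extra shift being engineered to kill a leftover term in a change-of-variables argument). Your explicit formula $h_x^{(2)} = \tfrac{\bar k^2}{\ell^2(\ell+1)} - \tfrac{1}{4\ell^2}$ (with $\bar k = k_x - (\ell+1)/2$) is correct for $\rho = \tfrac12$, and your Cauchy--Schwarz estimate for $h_x^{(2)}$ indeed yields the $t\cdot tn/\ell^2$ contribution (in fact slightly better, $t^2 n/\ell^3$).

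The gap is in the treatment of $h_x^{(1)}$. You invoke the SSEP Poincar\'e inequality on the box $B_x$ to bound $\mf D_{B_x}(F_x) = \bb E_\rho[F_x h_x^{(1)}] \leq C\ell^2\bb E_\rho[(h_x^{(1)})^2] \lesssim \ell$ (since $\|h_x^{(1)}\|_{L^2}^2 \sim \ell^{-1}$). Feeding this into the Kipnis--Varadhan/Young machinery produces $\|V\|_{-1}^2 \lesssim (\ell/n^2)\sum_x v(x)^2 \cdot \sup_x \mf D_{B_x}(F_x) \lesssim (\ell^2/n)\|v\|_{2,n}^2$, which is a factor $\ell$ larger than the target $\ell/n$. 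This loss is fatal in the application: one needs $\ell = \varepsilon n$, for which $\ell^2/n = \varepsilon^2 n \to \infty$, so the second-order Boltzmann--Gibbs principle cannot be concluded. The Poincar\'e bound is genuinely off by a factor of $\ell$ here: the true local $\mc H^{-1}$-norm of $h_x^{(1)}$ is $O(1)$, not $O(\ell)$, and that gain comes precisely from the internal structure of $h_x^{(1)}$ (essentially the small $L^2$-prefactor $\vec\eta^\ell(x)$, with $\|\vec\eta^\ell\|_{L^2}^2 \sim \ell^{-1}$, multiplying a telescoping gradient), which a black-box spectral-gap estimate using only $\|h_x^{(1)}\|_{L^2}$ cannot see. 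This is exactly what the paper's telescoping plus change-of-variables plus Young argument extracts, and what any spectral-gap-based proof (\`a la Gon\c calves--Jara) also has to extract via a finer analysis of the Poisson solution than Poincar\'e alone. A secondary point: ``applying Kipnis--Varadhan with test function $f_c$'' and then optimizing the amplitude gives a \emph{lower} bound on the variational expression $\sup_f\{2\langle V,f\rangle - n^2\mf D_n(f)\}$, not an upper bound; to bound the sup from above one must use the Young inequality against a generic $f$ (as the paper does), at which point the residue-class decomposition is superfluous since the overlap of the boxes $B_x$ is absorbed into the choice $\alpha_x = \ell/n^2$.
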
 
Putting together the previous estimates into our decomposition \eqref{eq:right1}--\eqref{eq:right7} of $g_\ell^\rightarrow$, we obtain  straightforwardly the final bound \eqref{eq:Ileft}.
We let the reader  repeat all the arguments above to obtain the second part, namely \eqref{eq:Iright}. Theorem \ref{theo:BG} then easily follows. 

The next two sections are devoted to the proofs of Proposition \ref{prop:main} and Proposition \ref{prop:last}.

\subsection{Proof of Proposition \ref{prop:main}} 
\label{ssec:proof}
Take $\bf A, B$ two subsets of $\Lambda_n$ such that, for all $x\in\bf A$, $\tau_x{\bf B} \subset \Lambda_n$, and take $g:\Omega_n\to\bb R$ a mean zero function with respect to $\nu^n_\rho$ such that $\mathrm{Supp}(g)\cap {\bf B} = \emptyset$. 
From  \cite[Lemma 2.4]{KLO},  we bound the left hand side of \eqref{eq:main-estim} by
\begin{equation*}
Ct\sup_{f\in L^2(\nu^n_\rho)} \bigg\{ 2 \int f(\eta) \Big(\sum_{x\in\bf A} v(x) \tau_x g(\eta) \sum_{z \in \tau_x \bf B} (\bar\eta(z)-\bar\eta(z+1)) \nu^n_\rho(d\eta) - n^2\mathfrak D_n(f)\bigg\},
\label{eq:vari}
\end{equation*}
where $\mathfrak D_n$ is the Dirichlet form introduced in \eqref{eq:dir_form}.
We write the previous expectation as twice its half, and in one of the integrals we make the change of variables $\eta \mapsto \sigma^{z,z+1}\eta$ to rewrite it as 
\begin{align*}
\label{eq:sumint}
  \sum_{x\in\bf A} \sum_{z\in\tau_x \bf B}v(x) \int \Big\{f(\eta) \tau_xg(\eta)- f(\sigma^{z,z+1}\eta) \tau_xg(\sigma^{z,z+1}\eta)\Big\} (\bar\eta(z)-\bar\eta(z+1)) \Big\}\; \nu^n_\rho(d\eta)
\end{align*}
With our assumption, we have $\mathrm{Supp}(\tau_xg) \cap \tau_x {\bf B} = \emptyset$ for every $x \in \bf A$. Therefore: 
$ \tau_xg(\sigma^{z,z+1}\eta)=\tau_x g(\eta),$ for all $z\in\tau_x{\bf B}$, and as a consequence
 the last expression  equals   
\begin{equation*}
\sum_{x\in\bf A} \sum_{z \in \tau_x \bf B}v(x) \int \Big\{ \tau_x g(\eta) \big(\bar\eta(z)-\bar\eta(z+1)\big) \big(f(\eta)-f(\sigma^{z,z+1}\eta)\big) \Big\}\; \nu^n_\rho(d\eta).
\end{equation*}
For any $x\in\bf A$ and $z\in\tau_x\bf B$, we use Young's inequality with $\varepsilon_x >0$, and we bound the previous expression from above by 
 \begin{align}
& \sum_{x\in\bf A} \sum_{z\in\tau_x\bf B}\frac{\varepsilon_x}{2} \big(v(x)\big)^2 \int \Big\{\tau_x g(\eta)\big(\bar\eta(z)-\bar\eta(z+1)\big)\Big\}^2 \nu^n_\rho(d\eta) \label{eq:young1}\\
 &+ \sum_{x\in\bf A}\sum_{z\in\tau_x\bf B} \frac{1}{2\varepsilon_x} \int \Big\{f(\eta)-f(\sigma^{z,z+1}\eta)\Big\}^2 \nu^n_\rho(d\eta). \label{eq:young2}
 \end{align}
Now, since $\nu^n_\rho$ is invariant under translations  and since ${\bf A}\subset \Lambda_n$, it is easy to see that 
\[
\frac{1}{\#\bf B}\sum_{x\in\bf A}\sum_{z\in\tau_x\bf B} \int \Big\{f(\eta)-f(\sigma^{z,z+1}\eta)\Big\}^2 \nu^n_\rho(d\eta) \leq \mathfrak D_n(f).
\]
As a result, if we chose $2\varepsilon_x = \#{\bf B}/ n^2$, we have that \eqref{eq:young2} is bounded by $n^2 \mathfrak D_n(f)$, and \eqref{eq:young1} is bounded by 
\[
\sum_{x\in\bf A} \sum_{z\in\tau_x\bf B} \frac{\#\bf B}{4n^2}\big(v(x)\big)^2 \|g\|_2^2 \leq \frac{(\#{\bf B})^2}{4n} \|v\|_{2,n}^2 \|g\|_2^2.
\]
This ends the proof.

\subsection{Proof of Proposition \ref{prop:last}} \label{ssec:separate}

We have
\[
\mf I_{t,n}^{\rm left}(g_{\rm VII}^\rightarrow) \leq 2\mf I_{t,n}^{\rm left}(g_1) + 2 \mf I_{t,n}^{\rm left}(g_2),
\]
where we define
\begin{align*}
g_1(\eta)&=\vec\eta^\ell(0) \Big[\bar\eta(0)-\vec\eta^\ell(0)\Big] + \frac{1}{2\ell}(\eta(0)-\eta(1)),
\\ g_2(\eta)  &= \frac{\chi(\rho)}{\ell}-\frac{1}{2\ell}(\eta(0)-\eta(1)).
\end{align*}
Let us start with the easiest term to estimate, namely $\mf I_{t,n}^{\rm left}(g_2)$.
From the Cauchy-Schwarz inequality, together with the independence of $\eta(x)$ and $\eta(y)$ under the invariant measure $\nu^n_\rho$ (as soon as $x\neq y$), one can easily show that
\[
\mf I_{t,n}^{\rm left}(g_2) \leq Ct^2\frac{n}{\ell^2} \|v\|_{2,n}^2.
\]
Now let us look at the term with $g_1$. From \cite[Lemma 2.4]{KLO}, we bound $\mf I_{t,n}^{\rm left}(g_1)$ by 
\begin{multline}\label{eq:multline}
\sup_{f\in L^2(\nu^n_\rho)} \bigg\{2\int \sum_{x=1}^{n-2\ell-1}v(x)\Big\{\vec{\eta}^{\ell}(x)\Big[\bar\eta(x)-\vec{\eta}^{\ell}(x)\Big]+\frac{1}{2\ell}
\big(\bar{\eta}(x)-\bar{\eta}(x+1)\big)\Big\}f(\eta)\nu^n_\rho(d\eta)\\ - n^2 \mathfrak D_n(f)\bigg\}.
\end{multline}
 We note that
\begin{align}
&\int \sum_{x=1}^{n-2\ell-1}v(x)\vec{\eta}^\ell(x)\big\{\bar{\eta}(x)-\vec{\eta}^\ell(x)\big\}f(\eta)\nu^n_\rho(d\eta)\label{eq:firstpart}\\
& =\int \sum_{x=1}^{n-2\ell-1}v(x)\vec{\eta}^\ell(x)\big\{\bar{\eta}(x)-\bar{\eta}(x+1)\big\}f(\eta)\nu^n_\rho(d\eta)\label{eq:ff}\\
&\qquad +\int \sum_{x=1}^{n-2\ell-1}v(x)\vec{\eta}^\ell(x)\frac{\ell-1}{\ell}\big\{\bar{\eta}(x+1)-\bar{\eta}(x+2)\big\}f(\eta)\nu^n_\rho(d\eta)\notag\\
&\qquad +\cdots+ \int \sum_{x=1}^{n-2\ell-1}v(x)\vec{\eta}^\ell(x)\frac{1}{\ell}\big\{\bar{\eta}(x+\ell-1)-\bar{\eta}(x+\ell)\big\}f(\eta)\nu^n_\rho(d\eta).\notag
\end{align}
For each term of the last sum above, we do the following procedure: we write it as twice its half, and in one of the integrals we make the change of variables  $\eta$ to $\sigma^{z,z+1}\eta$ (for some suitable $z$), for which the measure $\nu^n_\rho$ is invariant. After doing this, one can check that the last expression equals 
\begin{align}
&\int \sum_{x=1}^{n-2\ell-1}v(x)\vec{\eta}^\ell(x)\big\{\bar{\eta}(x)-\bar{\eta}(x+1)\big\}\big(f(\eta)-f(\sigma^{x,x+1}\eta)\big)\nu^n_\rho(d\eta) \label{eq:lem6101}\\
+&\int \sum_{x=1}^{n-2\ell-1}v(x)\vec{\eta}^\ell(x)\frac{\ell-1}{\ell}\big\{\bar{\eta}(x+1)-\bar{\eta}(x+2)\big\}\big(f(\eta)-f(\sigma^{x+1,x+2}\eta)\big)\nu^n_\rho(d\eta) \notag\\
+&\cdots+ \notag \\ 
+& \int \sum_{x=1}^{n-2\ell-1}v(x)\vec{\eta}^\ell(x)\frac{1}{\ell}\big\{\bar{\eta}(x+\ell-1)-\bar{\eta}(x+\ell)\big\}\big(f(\eta)-f(\sigma^{x+\ell-1,x+\ell}\eta)\big)\nu^n_\rho(d\eta)\notag\\
+&\int\sum_{x=1}^{n-2\ell-1}v(x)\frac{\bar{\eta}(x+1)-\bar{\eta}(x)}{\ell}\big\{\bar{\eta}(x)-\bar{\eta}(x+1)\big\}f(\eta)\nu^n_\rho(d\eta). \label{eq:lem6102}
\end{align}
Note that the last term  \eqref{eq:lem6102} comes from the change of variables $\eta$ to $\sigma^{x,x+1}\eta$ in the first term \eqref{eq:ff} above, as well as \eqref{eq:lem6101}.
The whole sum can be rewritten as
\begin{align}
&\int \sum_{x=1}^{n-2\ell-1}v(x)\vec{\eta}^\ell(x)\frac{1}{\ell}\sum_{y=x+1}^{x+\ell}\sum_{z=x}^{y-1}\big\{\bar{\eta}(z)-\bar{\eta}(z+1)\big\}\big\{f(\eta)-f(\sigma^{z,z+1}\eta)\big\}\nu^n_\rho(d\eta)\label{eq:boundlem}\\
-&\int\sum_{x=1}^{n-2\ell-1}v(x)\frac{1}{\ell}(\bar{\eta}(x)-\bar{\eta}(x+1))^2f(\eta)\nu^n_\rho(d\eta).\label{eq:secondpart}
\end{align}
The integral  in \eqref{eq:multline} is exactly equal to the sum of \eqref{eq:firstpart} and \eqref{eq:secondpart}, therefore it is bounded by the first term in the previous expression, namely \eqref{eq:boundlem}.
Now, we use the same arguments as in the proof of Proposition \ref{prop:main}, namely, Young's inequality with $2\varepsilon_x=\ell/n^2 $ and we bound it by
\begin{multline*}
 C(\rho)\frac{\ell}{ n^2} \sum_{x=1}^{n-2\ell-1}v^2(x)+\frac{n^2}{{\ell^2}} \sum_{x=1}^{n-2\ell-1}\sum_{y=x+1}^{x+\ell}\sum_{z=x}^{y-1}\int \big\{f(\eta)-f(\sigma^{z,z+1}\eta)\big\}^2\; \nu^n_\rho(d\eta)\\
 \leq C(\rho) \frac{\ell}{n} \|v\|_{2,n}^2 + n^2 \mathfrak D_n(f),
\end{multline*}
which proves that
\[
\mf I_{t,n}^{\rm left}(g_1) \leq Ct\frac{\ell}{n} \|v\|_{2,n}^2, 
\]
so that the proof ends. \qed

\section{Uniqueness of energy solutions}\label{sec:energy-pr}

In this section we give all the details for the proof of Theorem~\ref{def:energy} and we show how the same arguments also apply to the proof of Theorem \ref{def:energy-KPZ}.

Recall that we are interested in energy solutions to
\begin{equation}\label{eq:SBE-gen}
d\mathcal Y_t = A \Delta_{\rm Dir}\; \mathcal Y_t dt + \bar{E}\;\nabla_{\rm Dir}\;  \big(\mathcal{Y}_t^2 \big) \;dt + \sqrt{D}\;  \nabla_{\rm Dir}\; d\mc W_t,
\end{equation}
where $A, D>0$ and $\bar{E} \in \RR$. But as in  \cite[Remark~2.6]{Gubinelli2015Energy} we can show that $\{\mathcal Y_t \; ; \; t \in [0,T] \}$ is an energy solution to~\eqref{eq:SBE-gen} if and only if $\big\{\sqrt{2A/D} \;  \mathcal Y_{t/A} \; ;\; t \in [0, A T]\big\}$ solves~\eqref{eq:SBE-gen} with $A \mapsto 1$, $D \mapsto 2$, and $ \bar{E} \mapsto \bar{E} \sqrt{D/(2A^3)}$. 

\medskip

So from now on we assume without loss of generality that $A = 1$ and $D=2$, and to simplify notation we write $E$ instead of $\bar{E}$, and we show in this section that the equation
\begin{equation}\label{eq:SBE-gen2}
d\mathcal Y_t =  \Delta_{\rm Dir}\; \mathcal Y_t dt + E\;\nabla_{\rm Dir}\;  \big(\mathcal{Y}_t^2 \big) \;dt + \sqrt{2}\;  \nabla_{\rm Dir}\; d\mc W_t,
\end{equation}
has a unique solution for any $E\in\bb R$, where the notion of Dirichlet boundary conditions has been properly defined in Theorem \ref{def:energy}.  

 To prove the uniqueness of energy solutions we will use the exact same strategy as in~\cite{Gubinelli2015Energy} and we will sometimes refer to that paper for additional details. The main idea consists in: first, mollifying an energy solution, then mapping the mollified process through the Cole-Hopf transform to a new process, and then taking the mollification away in order to show that the transformed process solves in the limit the linear multiplicative stochastic heat equation with certain boundary conditions. However, even using the strategy of \cite{Gubinelli2015Energy}, we have to redo all computations because our setting is somewhat different, and the boundary condition of the stochastic heat equation actually changes as we pass to the limit: in particular, it is not equal to the one we would naively guess.

\begin{remark}
Let us notice that the definition given in \cite{Gubinelli2015Energy} is not exactly the same as the one we adopted in Theorem \ref{def:energy}, but it is not difficult to check that they are indeed equivalent (see \cite[Proposition 4]{GJSi}), and therefore the same strategy can be implemented.
\end{remark}

This section is split as follows: in Section \ref{ssec:preli} we give some tools that will be used in Section \ref{ssec:construct} in order to show that in the definition the Burgers non-linearity (namely the process $\mc A$ of Theorem \ref{def:energy}) we can replace $\iota_\varepsilon$ by different approximations of the identity. We conclude in Section \ref{ssec:map} with the proof of uniqueness for the energy solution $\mc Y$, by developing the strategy explained above. In the following we always denote by $\mu$ the law of the standard white noise on $\SD'$. If $f\colon\bb R^n\to\bb R$ is multidimensional, then we denote by $\partial_\alpha f$ its derivative of order $\alpha \in \bb N_0^n$.

\subsection{Preliminaries} \label{ssec:preli}
In this section we give two ways to handle functionals written in the form $\int_0^\cdot F(\mc Y_s)ds$, where $\mc Y$ shall be the energy solution to \eqref{eq:SBE-gen2} as defined in Theorem \ref{def:energy}, and $F$ belongs to some general class of functions.

\subsubsection{It{\^o} trick and Kipnis-Varadhan Lemma}

We write $\mathcal{C}$ for the space of cylinder functions $F\colon \SD' \rightarrow \mathbb{R}$, which are such that there exist $d \in \bb N$ and $\varphi_i \in \SD \; (i=1,\dots,d)$ with  \[F (\mc{Y})= f (\mc{Y} (\varphi_1),
\ldots, \mc{Y} (\varphi_d)),\] 
where $f \in \mc C^2 (\mathbb{R}^d)$ has polynomial growth of all partial
derivatives up to order $2$. For $F \in \mathcal{C}$ we define the
operator $L_0$ as
\begin{align*}
    L_0 F (\mc Y) &= \sum_{i = 1}^d \partial_i f (\mc Y (\varphi_1), \ldots, \mc Y
   (\varphi_n)) \mc Y (\Delta \varphi_i) \\
   &\quad + \sum_{i, j = 1}^d \partial_{i j} f (\mc Y
   (\varphi_1), \ldots, \mc Y (\varphi_n)) \langle \nabla \varphi_i, \nabla
   \varphi_j \rangle_{L^2 ([0, 1])},
\end{align*}
and its domain $\tmop{Dom} (L_0)$ is defined as the closure in $L^2(\mu)$ of $\mathcal{C}$ with respect
to the norm \[\| F \|_{L^2 (\mu)} + \| L_0 F \|_{L^2 (\mu)}.\] 
First, let us take $\tilde{\mc Y}$ as the Ornstein-Uhlenbeck process with Dirichlet boundary conditions as defined in Proposition \ref{prop:OU} (with $A=1$, $D=2$) -- or, equivalently, an energy solution of \eqref{eq:SBE-gen2} with $E=0$ (as defined in Theorem \ref{def:energy}).
 Then, we have for every $F \in \mathcal{C}$,
\[ F (\tilde{\mc Y}_t) = F (\tilde{\mc Y}_0) + \int_0^t L_0 F (\tilde{\mc Y}_s)
   d s + M^F_t, \]
where $M^F$ is a continuous martingale with quadratic variation
\[ \langle M^F \rangle_t = \int_0^t 2 \big\| \nabla_u \mathrm{D}_u F (\tilde{\mc Y}_s)
   \big\|_{{L^2_u([0,1])}}^2 \; d s, \]
and where $\nabla_u$ is the usual derivative w.r.t.~$u$ and $\mathrm{D}_u F$ denotes the Malliavin derivative defined in terms of the
law of the white noise, i.e.
\[ \mathrm{D}_u F (\mc Y)= \sum_{i = 1}^d \partial_i f (\mc Y (\varphi_1), \ldots, \mc Y
   (\varphi_n)) \varphi_i (u), \qquad \text{for any } u \in [0, 1]. \]
     In the following, for any $\mc Y \in \SD'$ we denote
\begin{equation}\label{eq:E-def}
   \mathcal{E} (F) (\mc Y)= 2 \big\| \nabla_u \mathrm{D}_u F (\mc Y)
   \big\|_{{L^2_u([0,1])}}^2.
\end{equation}
Now, let $\mc Y$ be an energy solution to \eqref{eq:SBE-gen2} the stochastic
Burgers equation with Dirichlet boundary conditions, as defined in Theorem \ref{def:energy}.  Recall that $\mc Y_0$ is a $\SD'$--valued white noise, hence has law $\mu$.

Since \eqref{eq:lipschitz} implies that $\mc A$ has zero quadratic variation -- see  \cite[Proposition 4]{GJSi} for a proof -- the It{\^o} trick for additive functionals of the form
$\int_0^{\cdot} L_0 F (\mc Y_s) d s$ follows by the same
arguments as in \cite[Proposition~3.2]{Gubinelli2015Energy}. Thus, we
can prove that  for all $F \in \mathcal{C}$ and $p\geqslant 1$
\begin{equation}
  \label{eq:ito-bound-cylinder} \mathbb{E} \bigg[ \sup_{t \in [0, T]} \bigg|
  \int_0^t L_0 F (\mc Y_s) d s \bigg|^p \bigg] \lesssim T^{p / 2} \;
  \mathbb{E} \big[\mathcal{E} (F) (\mc Y_0)^{p / 2}\big],
\end{equation}
and for $p = 2$ we get in particular
\begin{equation}
  \label{eq:ito-bound-cylinder-1-norm} \mathbb{E} \bigg[ \sup_{t  \in [0, T]}
  \bigg| \int_0^t L_0 F (\mc Y_s) d s \bigg|^2 \bigg]\lesssim T \mathbb{E} \big[\mathcal{E} (F) (\mc Y_0)\big].
\end{equation}
For the sake of clarity, let us define from now on: 
\[\| F
  \|_{1,0}^2 = \mathbb{E} \big[\mathcal{E} (F) (\mc Y_0)\big]=2\mathbb{E} \big[F
  (\mc Y_0) (- L_0) F (\mc Y_0)\big],\]
where the second equality follows from the Gaussian integration by parts rule, see~\cite[Lemma~1.2.1]{Nualart2006}.

From this, let us now define two Hilbert spaces which will be useful in controlling additive functionals of $\mc Y$.

\begin{definition}
  Let us introduce an equivalence relation on $\mathcal{C}$ by identifying $F$
  and $G$ if $\|F - G\|_{1,0} = 0$, 
  so that $\| \cdot \|_{1,0}$ becomes a norm on
  the equivalence classes. We write $\mathfrak{H}^1_0$ for the completion of
  the equivalence classes with respect to $\| \cdot \|_{1,0}$.
  
  For $F \in \mc C$ we define
  \[ \| F \|_{- 1,0}^2 = \sup_{G \in \mathcal{C}} \big\{ 2\mathbb{E} [F (\mc Y_0) G
     (\mc Y_0)] - \| G \|_{1,0}^2 \big\} = \sup_{\substack{G \in \mathcal{C}\\ \|G \|_{1,0} = 1}} \mathbb{E} [F (\mc Y_0) G
     (\mc Y_0)] \]
  and we identify $F$ and $G$ if $\| F - G \|_{- 1,0} = 0$ and $\| F \|_{- 1,0} <
  \infty$. We write $\mathfrak{H}_0^{- 1}$ for the completion of the
  equivalence classes with respect to $\| \cdot \|_{- 1,0}$.
\end{definition}
\begin{remark} It is possible to show that $\| F\|_{1,0} = 0$ if and only if $F$ is constant, but we do not need this. \end{remark}

Let us now extend the It\^o trick to the entire domain of $L_0$:
\begin{lemma}[It{\^o} trick] \label{lem:ito} Let $F \in \tmop{Dom} (L_0)$. Then $F \in \mathfrak{H}^1_0$
  and 
  \begin{equation}
    \label{eq:ito-bound-general} \mathbb{E} \bigg[ \sup_{t \in [0,T]}
    \bigg| \int_0^t L_0 F (\mc Y_s) d s \bigg|^2 \bigg] \lesssim T \| F
    \|_{1,0}^2 .
  \end{equation}
  If also $\mathcal{E} (F) \in L^{p/2}(\mu)$ for $p\geqslant 1$, then
  \begin{equation}
    \label{eq:ito-bound-general-p} \mathbb{E} \bigg[ \sup_{t \in [0,T]}
    \bigg| \int_0^t L_0 F (\mc Y_s) d s \bigg|^p \bigg] \lesssim T^{p / 2}
    \mathbb{E} \big[\mathcal{E} (F) (\mc Y_0)^{p / 2}\big] .
  \end{equation}
\end{lemma}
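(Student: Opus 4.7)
My plan is to prove Lemma~\ref{lem:ito} by a density argument that transfers the cylinder-level estimates~\eqref{eq:ito-bound-cylinder} and~\eqref{eq:ito-bound-cylinder-1-norm} to the whole of $\tmop{Dom}(L_0)$. The key observation making this work is that the $\mathfrak{H}^1_0$--norm is controlled by the graph norm of $L_0$: for any cylinder $G \in \mc C$, Gaussian integration by parts gives $\|G\|_{1,0}^2 = 2\mathbb{E}[G(\mc Y_0)(-L_0)G(\mc Y_0)]$, and then Cauchy--Schwarz yields
\begin{equation*}
   \|G\|_{1,0}^2 \;\leqslant\; 2 \|G\|_{L^2(\mu)} \, \|L_0 G\|_{L^2(\mu)}.
\end{equation*}
Hence, if $F \in \tmop{Dom}(L_0)$ and $(F_n) \subset \mc C$ approximates $F$ in the graph norm $\|\cdot\|_{L^2(\mu)}+\|L_0\cdot\|_{L^2(\mu)}$ (such a sequence exists by the very definition of $\tmop{Dom}(L_0)$), applying the above inequality to $F_n-F_m$ shows that $(F_n)$ is Cauchy in $\|\cdot\|_{1,0}$. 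This yields a limit in $\mathfrak{H}^1_0$ which we identify with $F$, so $F \in \mathfrak{H}^1_0$ with $\|F\|_{1,0}^2 = \lim_n \|F_n\|_{1,0}^2 \leqslant 2\|F\|_{L^2(\mu)}\|L_0 F\|_{L^2(\mu)}$.

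Next I would use the cylinder-level bound~\eqref{eq:ito-bound-cylinder-1-norm} on differences: it gives
\begin{equation*}
   \mathbb{E}\bigg[\sup_{t\in[0,T]} \bigg| \int_0^t L_0(F_n-F_m)(\mc Y_s)\, ds \bigg|^2 \bigg] \;\lesssim\; T\, \|F_n-F_m\|_{1,0}^2 \;\xrightarrow[n,m\to\infty]{}\; 0,
\end{equation*}
so the continuous processes $t \mapsto \int_0^t L_0 F_n(\mc Y_s)\, ds$ form a Cauchy sequence in $L^2(\mathbb{P}; \mc C([0,T]))$. On the other hand, since $\mc Y$ is stationary with marginal $\mu$ and $L_0 F_n \to L_0 F$ in $L^2(\mu)$, for each fixed $t$ we have $\int_0^t L_0 F_n(\mc Y_s)\,ds \to \int_0^t L_0 F(\mc Y_s)\,ds$ in $L^2(\mathbb{P})$; combining these two facts identifies the uniform limit as $\int_0^\cdot L_0 F(\mc Y_s)\,ds$ and, passing to the limit in the above inequality with $F_m$ replaced by $0$, yields~\eqref{eq:ito-bound-general}.

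For the stronger $L^p$ estimate~\eqref{eq:ito-bound-general-p}, the graph-norm approximation is no longer sufficient because we need to pass $\mathbb{E}[\mc E(F_n)^{p/2}] \to \mathbb{E}[\mc E(F)^{p/2}]$ to the limit. Under the hypothesis $\mc E(F) \in L^{p/2}(\mu)$, I would invoke the standard density of smooth cylinder functions in the Gaussian Sobolev space $\mathbb{D}^{1,p}(\mu)$ (see e.g.~\cite[Chapter~1]{Nualart2006}) to choose an approximating sequence $(\tilde F_n) \subset \mc C$ such that simultaneously $\tilde F_n \to F$ in $L^2(\mu)$, $L_0 \tilde F_n \to L_0 F$ in $L^2(\mu)$, and $\nabla \mathrm{D} \tilde F_n \to \nabla \mathrm{D} F$ in $L^p(\mu; L^2([0,1]))$ (the last being exactly $\mc E(\tilde F_n - F) \to 0$ in $L^{p/2}(\mu)$). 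Applying~\eqref{eq:ito-bound-cylinder} to differences $\tilde F_n - \tilde F_m$ again produces a Cauchy sequence, this time in $L^p(\mathbb{P}; \mc C([0,T]))$, and passing to the limit yields~\eqref{eq:ito-bound-general-p}.

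The only genuinely delicate point is the construction of the stronger approximating sequence in the last paragraph: one must know that cylinder functions are dense in the relevant Gaussian Sobolev space so that both $L_0 \tilde F_n$ and $\mathcal E(\tilde F_n)$ converge in the appropriate $L^q(\mu)$--norms. Once this Malliavin-calculus input is in place, the remainder is a routine Cauchy-sequence argument, which is why I expect the proof to be brief.
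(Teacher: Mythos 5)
Your proof follows the same density argument as the paper's: take a graph-norm approximating sequence $\{F_n\}\subset\mathcal{C}$, use the identity $\|F_n-F\|_{1,0}^2=-2\mathbb{E}[(F_n-F)L_0(F_n-F)]$ together with Cauchy--Schwarz to convert graph-norm convergence into $\|\cdot\|_{1,0}$--convergence, conclude $F\in\mathfrak{H}_0^1$, and pass \eqref{eq:ito-bound-cylinder-1-norm} to the limit to get \eqref{eq:ito-bound-general}. Your identification of the limit process via the stationarity of $\mc Y$ is the right way to fill in the Cauchy step, which the paper compresses into a single sentence.

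What you add beyond the paper is a real observation about \eqref{eq:ito-bound-general-p}, which the paper settles with only ``the second estimate follows by approximation from~\eqref{eq:ito-bound-cylinder}.'' As you note, graph-norm convergence gives $\mathcal{E}(F_n-F)\to 0$ only in $L^1(\mu)$, not in $L^{p/2}(\mu)$; and a Fatou argument on an a.s.\ convergent subsequence of $\nabla\mathrm{D}F_n$ yields only $\mathbb{E}[\mathcal{E}(F)^{p/2}]\leqslant\liminf_n\mathbb{E}[\mathcal{E}(F_n)^{p/2}]$, which is the wrong direction. Your fix---producing a sequence $\tilde F_n\in\mathcal{C}$ for which $\mathcal{E}(\tilde F_n-F)\to 0$ in $L^{p/2}(\mu)$, then running the Cauchy argument in $L^p(\mathbb{P};\mc C([0,T]))$---is the correct way to close this. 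One minor imprecision: the space you actually need density in is not the standard Malliavin--Sobolev space $\mathbb{D}^{1,p}(\mu)$, which controls $\mathrm{D}F$ in $L^p(\mu;L^2([0,1]))$, but rather the analogue controlling $\nabla\mathrm{D}F$; since $\mathcal{C}$ is built from $\SD$--test functions and $\nabla$ maps these to smooth functions, the usual density proof (finite chaos truncation plus kernel regularization) carries over, so the argument is sound once this adjustment is made.
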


\begin{proof}[Proof of Lemma \ref{lem:ito}]
  Since $F \in \tmop{Dom} (L_0)$, there exists a sequence of cylinder functions
  $\{F_n\}_{n\in\mathbb N}$ in $\mc C$ such that $\| F_n - F \|_{L^2} + \| L_0 (F_n - F) \|_{L^2}$
  converges to zero. But then also
  \[ \lim_{n \rightarrow \infty} \| F_n - F \|_{1,0}^2 = \lim_{n \rightarrow
     \infty} \Big\{ - 2\mathbb{E} \big[(F_n - F)(\mc Y_0) L_0 (F_n - F)(\mc Y_0)\big]\Big\} = 0, \]
  so $F \in \mathfrak{H}_0^1$ and~ \eqref{eq:ito-bound-general} follows
  from~\eqref{eq:ito-bound-cylinder-1-norm}. The second estimate follows by
  approximation from~\eqref{eq:ito-bound-cylinder}.
\end{proof}

\begin{remark}
  \label{rmk:chaos-moments}If moreover $F (\mc Y_0)$ has a finite chaos expansion of length
  $d$ (see Subsection~\ref{ssec:gaussian} below for the definition of the chaos expansion), then $\mathcal{E} (F) (\mc Y_0)$ has a chaos expansion of length $2 \times
  (d - 1)$, and therefore all its moments are comparable and we can estimate $\mathbb{E} \big[\mathcal{E} (F) (\mc Y_0)^{p / 2}\big] \simeq \mathbb{E}
     \big[\mathcal{E} (F) (\mc Y_0)\big]^{p / 2} = \| F \|_{1,0}^p$.
\end{remark}

\begin{corollary}[Kipnis-Varadhan inequality]
  \label{cor:KV} Let $F \in L^2 (\mu) \cap \mathfrak{H}^{-
  1}_0$. Then
  \[ \mathbb{E} \bigg[ \bigg\| \int_0^{\cdot} F (\mc Y_s) d s \bigg\|_{p
     - \tmop{var} ; [0, T]}^2 \bigg] +\mathbb{E} \bigg[ \sup_{t \in [0,T]}
     \bigg| \int_0^t F (\mc Y_s) d s \bigg|^2 \bigg]  \lesssim T \| F \|_{-
     1,0}^2,  \]
     where the $p$-variation norm $\|f\|_{p-\tmop{var}}$ of a function $f:[a,b]\to \bb R$ is defined as
        \begin{equation}\label{eq:pvar}
           \|f \|_{p - \tmop{var} ; [a, b]}^p = \sup\bigg\{ \sum_{k=0}^{d-1} |f(t_{k+1}) - f(t_k)|^p\; ; \; d \in \bb N, a = t_0 < t_1 < \dots < t_d = b\bigg\}.
        \end{equation}
\end{corollary}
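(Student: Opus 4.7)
The plan is to combine a density argument with a forward-backward It\^o decomposition in the spirit of Kipnis and Varadhan. First I would reduce to the case $F = L_0 G$ with $G \in \tmop{Dom}(L_0)$ by showing that the range of $L_0$ is dense in $\mathfrak{H}_0^{-1}$. Since $L_0$ is the generator of the stationary OU dynamics (the reversible part underlying condition~(1) of Theorem~\ref{def:energy}), the operator $-L_0$ is positive and self-adjoint on $L^2(\mu)$, so for $F \in \mathfrak{H}_0^{-1} \cap L^2(\mu)$ and $\lambda > 0$ the resolvent $G_\lambda = (\lambda - L_0)^{-1} F \in \tmop{Dom}(L_0)$ is well defined. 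Pairing the identity $\lambda G_\lambda - L_0 G_\lambda = F$ against $G_\lambda$ and using the duality between $\mathfrak{H}_0^1$ and $\mathfrak{H}_0^{-1}$ yields
\[
   \lambda \|G_\lambda\|_{L^2(\mu)}^2 + \tfrac{1}{2}\|G_\lambda\|_{1,0}^2 \leq \|F\|_{-1,0}\,\|G_\lambda\|_{1,0},
\]
and a standard monotonicity argument then shows that $L_0 G_\lambda = \lambda G_\lambda - F \to -F$ in $\mathfrak{H}_0^{-1}$ as $\lambda \to 0$, uniformly in $\|F\|_{-1,0}$.

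For $F = L_0 G$, polarization of the Dirichlet form gives the exact identity $\|F\|_{-1,0}^2 = \tfrac{1}{4} \|G\|_{1,0}^2$, so the supremum bound is an immediate consequence of Lemma~\ref{lem:ito}. For the $p$-variation estimate, I would apply an It\^o chain rule to $G(\mc Y_t)$, giving a decomposition of the form
\[
   G(\mc Y_t) - G(\mc Y_0) = \int_0^t L_0 G(\mc Y_s)\,ds + E\,\Gamma_t^G + \mc M_t^G,
\]
where $\Gamma^G$ is a zero quadratic-variation drift arising from the Burgers nonlinearity $\mc A$ of Theorem~\ref{def:energy} and $\mc M^G$ is a continuous martingale of quadratic variation $\int_0^{\cdot} \mathcal{E}(G)(\mc Y_s)\,ds$. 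The crucial point is that item~(4) of Theorem~\ref{def:energy} ensures that the time-reversed process $t \mapsto \mc Y_{T-t}$ is again an energy solution, but with $\bar E$ replaced by $-\bar E$. Running the same chain rule for this reversed process produces the same bulk drift $L_0 G$ but carries the opposite sign in front of $\Gamma^G$, and adding the two decompositions makes the Burgers contributions cancel, yielding
\[
   2\int_0^t L_0 G(\mc Y_s)\,ds = \big(G(\mc Y_0) - G(\mc Y_T)\big) - \mc M^G_t - \big(\widetilde{\mc M}^G_T - \widetilde{\mc M}^G_{T-t}\big),
\]
with $\widetilde{\mc M}^G$ a continuous martingale in the backward filtration having the same quadratic variation functional. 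Lepingle's inequality applied to $\mc M^G$ and $\widetilde{\mc M}^G$ then controls the $p$-variation of the left-hand side for any $p>2$ by $\mathbb{E}[\langle \mc M^G\rangle_T] = T\|G\|_{1,0}^2 \simeq T\|F\|_{-1,0}^2$, completing the bound.

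The main obstacle is the rigorous justification of the It\^o chain rule for $G(\mc Y_t)$ and its backward counterpart, since item~(3) of Theorem~\ref{def:energy} formulates the martingale problem only for linear functionals $\mc Y \mapsto \mc Y(\varphi)$. Extending it to cylinder functions $G \in \mc C$ hinges on the fact that $\mc A$ has zero quadratic variation (a consequence of the energy estimate~\eqref{eq:lipschitz}), so that it can be treated as a continuous perturbation in the It\^o formula even though it is not of bounded variation. This step parallels~\cite[Proposition~3.2]{Gubinelli2015Energy}; the adaptation to the Dirichlet boundary setting is immediate since the chain rule computations involved are entirely local inside $(0,1)$.
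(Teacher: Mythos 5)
Your proposal is correct and spells out exactly what the paper's one-line reference to "the usual proof by duality" and to \cite[Corollary~3]{Gubinelli2015Energy} means: resolvent approximation $G_\lambda = (\lambda - L_0)^{-1}F$ with the a priori bound $\|G_\lambda\|_{1,0} \leq 2\|F\|_{-1,0}$, the polarization identity $\|L_0 G\|_{-1,0}^2 = \tfrac{1}{4}\|G\|_{1,0}^2$ combined with Lemma~\ref{lem:ito} for the supremum bound, and the forward--backward martingale decomposition (using item~(4) of Theorem~\ref{def:energy} so the Burgers drifts cancel) plus Lepingle's inequality for the $p$-variation. One slip: the displayed identity should read
\[
  2\int_0^t L_0 G(\mc Y_s)\,ds = -\mc M_t^G - \big(\widetilde{\mc M}_T^G - \widetilde{\mc M}_{T-t}^G\big);
\]
the increment $G(\mc Y_0)-G(\mc Y_T)$ you kept cancels once you add the forward decomposition at time $t$ to the backward one at time $T-t$ and subtract the backward identity at terminal time. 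As written, your formula fails already at $t=0$ (the right-hand side would equal $G(\mc Y_0)-G(\mc Y_T)$, which is not a.s.\ zero). This does not damage your conclusion, because that term is constant in $t$ and so contributes nothing to the $p$-variation, while you treat the sup bound separately via Lemma~\ref{lem:ito}; but the formula as stated is incorrect and should be fixed, especially since the whole point of the cancellation is that no $L^2(\mu)$-norm of $G_\lambda$ (which blows up like $\lambda^{-1/2}$) ever appears.
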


\begin{proof}[Proof of Corollary \ref{cor:KV}]
  The usual proof by duality works. The statement about the $p$-variation is
  shown as in ~\cite[Corollary~3]{Gubinelli2015Energy}.
\end{proof}

As a result, with Lemma \ref{lem:ito} and Corollary \ref{cor:KV} we are provided with two important tools, which allow us to control in some sense $\int_0^\cdot F(\mc Y_s)ds$. Note that Lemma \ref{lem:ito} is convenient only if one is able to write $F$ as $L_0G$, which may not be easy. If one cannot solve the Poisson equation $F=L_0G$, then one relies on the variational norm of $F$ given by Corollary \ref{cor:KV}. 

The next paragraph is devoted to constructing solutions to the Poisson equation $L_0G=F$ using the Gaussian structure of $L^2(\mu)$, which is now standard and fully detailed in \cite[Section 3.2]{Gubinelli2015Energy}.

\subsubsection{Gaussian analysis}\label{ssec:gaussian}

In the following we develop some Gaussian analysis that is helpful for estimating the $\| \cdot \|_{1,0}$ and $\|\cdot \|_{-1,0}$ norms from above. We refer the reader to~\cite{Nualart2006, Janson1997} for details on Malliavin calculus and chaos decompositions.

Let $\mc Y$ be a white noise on $\SD'$ and write $\sigma(\mc Y)$ for the sigma algebra generated by $\mc Y$. Then we can define a \emph{chaos expansion} in $L^2(\sigma(\mc Y))$ as follows: for $d \in \bb N_0$ and $f \in L^2([0,1]^d)$ we write $W_d(f)(\mc Y)$ for the $d$--th order Wiener It\^o-integral of $f$ against $\mc Y$,
\[
   W_d(f)(\mc Y) = \int_{[0,1]^d} f(y_1, \dots, y_d) \mc Y(y_1) \cdots \mc Y(y_d) \, dy_1 \cdots dy_d,
\]
see~\cite[Section~1.1.2]{Nualart2006} for the construction; occasionally we will simply write $W_d(f)$ instead of $W_d(f)(\mc Y)$. Recall that $W_d(f) = W_d(\widetilde f)$ for $\widetilde{f} (u) = \frac{1}{d!} \sum_{\sigma \in \mathfrak{S}_d} f
  (u_{\sigma (1)}, \ldots, u_{\sigma (d)})$, where $\mathfrak{S}_d$ is the symmetric group on $\{1,\dots,d\}$. 
\medskip  
  
  The \emph{chaos expansion} of $F \in L^2(\sigma(\mc Y))$  is then given by
\[
   F = \sum_{d=0}^\infty W_d(f_d),
\]
where $\{f_d \in L^2([0,1]^d)\}_{d \in \bb N_0}$, are uniquely determined deterministic functions.

\medskip

Let us denote by $L^2_{\rm Sym}([0,1]^d)$ the space of symmetric $L^2$--functions. Since the $d$--th homogeneous chaos $\{W_d(f_d) \; ; \; f_d \in L^2_{\rm Sym}([0,1]^d)\}$ is also the closure of the span of all random variables of the form $\mc Y \mapsto H_d(\mc Y(\varphi))$, where $H_d$ is the $d$--th Hermite polynomial and $\varphi \in \SD$ with $\|\varphi\|_{L^2([0,1])}=1$, it will be convenient to write down the action of $L_0$ onto these random variables. Indeed, it is well-known that \[H_d(\mc Y(\varphi)) = W_d(\varphi^{\otimes d})(\mc Y)\] for all $\varphi \in L^2([0,1])$ with $\|\varphi\|_{L^2([0,1])} = 1$. For all $\varphi \in
\Ctwo$ we have
\begin{align}
\notag  L_0 H_d (\mc Y(\varphi)) & = H'_d (\mc Y(\varphi)) \mc Y(\Delta \varphi) +
  H''_d (\mc Y(\varphi)) \langle \nabla \varphi, \nabla \varphi \rangle_{L^2 ([0,
  1])}\\
  & = H'_d (\mc Y(\varphi)) \mc Y(\Delta \varphi) - H''_d (\mc Y(\varphi))
  \langle \varphi, \Delta \varphi \rangle_{L^2 ([0, 1])}. \label{eq:L0H}
\end{align}
From here let us define

\begin{align*}
	\mc \SD ([0, 1]^d) = \Big\{ g \in \mc C^\infty ([0, 1]^d) \; ; \; \partial_\alpha g(u) = 0, \; &\forall\; u \in \partial ([0, 1]^d),
	\\ &\forall\; \alpha=(2k_1,\dots, 2k_d) \in \bb N_0^d  \Big\}.
\end{align*}
Then, from \eqref{eq:L0H}, the same arguments as in \cite[Lemma~3.7]{Gubinelli2015Energy} show that for all symmetric functions
$ f_d \in \SD ([0, 1]^d) $
we have
\begin{equation}\label{eq:L0W} L_0 W_d (f_d) = W_d (\Delta f_d), \end{equation}
with $\Delta := \sum_{k = 1}^d \partial_{kk}$. Therefore, the operator $L_0$ leaves the $d$--th chaos invariant, and this will be useful to solve the Poisson equation. It only remains to compute its norm $\|W_d(f_d)\|_{0,1}$, which is the goal of the remainder of this section.
\medskip

To do so, let us introduce  other notations: similary to our definitions in Section \ref{ssec:test_functions}, we denote by 
$\mc H_{\rm Dir}^1
    ([0, 1]^d)$ the completion of $ \SD ([0, 1]^d)$ with respect to the norm 
    \[
    	\|g\|_{\mc H_{\rm Dir}^1
    ([0, 1]^d)} = \| \nabla g\|_{L^2([0,1])^d} = \bigg(\sum_{i=1}^d \|\partial_{i} g\|_{L^2([0,1])^d}^2\bigg)^{1/2}
  	\]
    and $\mc H_{\rm Dir,Sym}^1([0, 1]^d)$ denotes the completion of $\SD ([0, 1]^d)$
with respect to the norm 
\[\| g \|_{\mc H_{\rm Dir,Sym}^1([0, 1]^d)}= \| \widetilde{g} \|_{\mc H_{\rm Dir}^1([0, 1]^d)} \qquad \text{ for } \widetilde{g} (u) = \frac{1}{d!} \sum_{\sigma \in \mathfrak{S}_d} g
  (u_{\sigma (1)}, \ldots, u_{\sigma (d)}),\] and therefore we identify
non-symmetric functions $g$ in $\SD ([0, 1]^d)$ with their symmetrizations $\widetilde g$. 
From the Poincar{\'e} inequality we obtain that $\| g \|_{L^2 ([0,
1]^d)} \lesssim \| g \|_{\mc H_{\rm Dir}^1([0, 1]^d)}$ for all $g \in \SD ([0, 1]^d)$,
and therefore $\mc H_{\rm Dir,Sym}^1([0, 1]^d)$ is contained in $L^2_{\rm Sym} ([0, 1]^d)$. Note also that the symmetric $\mc S_{\rm Dir}([0,1]^d)$--functions are dense in $\mc H_{\rm Dir,Sym}^1([0, 1]^d)$.

From  \eqref{eq:L0W}, in the same way as in \cite[Lemma 3.13]{Gubinelli2015Energy},
we obtain:

\begin{lemma}\label{lem:1-norm chaos}
For symmetric $f_d \in \SD ([0, 1]^d)$ we have
  \begin{align*}
    \big\| W_d (f_d) \big\|_{1,0}^2 & = - 2\mathbb{E} \big[W_d (f_d) W_d (\Delta f_d)\big] = - 2 (d!) \big\langle f_d, \Delta f_d \big\rangle_{L^2([0,1]^d)}^2\\
    & = 2 (d!) \big\|  \nabla f_d  \big\|_{L^2([0,1]^d)}^2 = 2 (d!) \big\| f_d \big\|_{\mc H_{\rm Dir}^1
    ([0, 1]^d)}^2.
  \end{align*}
  Moreover, for general (not
  necessarily symmetric) $f_d \in \SD ([0, 1]^d)$ we obtain the inequality
  \[ \big\| W_d (f_d)\big\|_{1,0}^2 = 2(d!) \big\| f_d \big\|_{\mc H_{\rm Dir, Sym}^1
    ([0, 1]^d)}^2  \leq
     2 (d!) \big\| f_d \big\|_{\mc H_{\rm Dir}^1
    ([0, 1]^d)}^2. \]
\end{lemma}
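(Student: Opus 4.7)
The plan is to unwind the definition of $\|\cdot\|_{1,0}$ using the action of $L_0$ on chaos as computed in~\eqref{eq:L0W}, and then to reduce everything to deterministic integration by parts on $[0,1]^d$ once we invoke the Wiener--It\^o isometry.

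\medskip

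First I would note that by the definition of $\|\cdot\|_{1,0}$ (via Gaussian integration by parts) and the identity $L_0 W_d(f_d) = W_d(\Delta f_d)$ established just before the statement, one has
\[
   \big\| W_d(f_d) \big\|_{1,0}^2 \;=\; -2\, \mathbb{E}\big[ W_d(f_d) \, L_0 W_d(f_d) \big] \;=\; -2\, \mathbb{E}\big[ W_d(f_d) \, W_d(\Delta f_d) \big].
\]
This is the first equality. For the next one I would apply the standard Wiener--It\^o isometry: for symmetric $g,h \in L^2([0,1]^d)$,
\[
   \mathbb{E}\big[ W_d(g)\, W_d(h) \big] \;=\; (d!)\, \langle g, h \rangle_{L^2([0,1]^d)}.
\]
Symmetry of $\Delta f_d$ follows from symmetry of $f_d$, so this gives $\mathbb{E}[W_d(f_d) W_d(\Delta f_d)] = (d!) \langle f_d, \Delta f_d \rangle_{L^2([0,1]^d)}$.

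\medskip

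The key remaining step is an integration by parts on $[0,1]^d$. Writing $\Delta = \sum_{i=1}^d \partial_{ii}$ and integrating by parts in the $i$--th variable,
\[
   \int_{[0,1]^d} f_d(u)\, \partial_{ii} f_d(u)\, du \;=\; -\int_{[0,1]^d} |\partial_i f_d(u)|^2 \, du \;+\; \text{boundary terms},
\]
where the boundary terms are integrals over the faces $\{u_i=0\}$ and $\{u_i=1\}$ of $f_d\, \partial_i f_d$. Now $f_d \in \mc S_{\rm Dir}([0,1]^d)$ means, by construction (taking $k=0$ in the definition of even-order derivatives vanishing at the boundary), that $f_d$ itself vanishes on $\partial([0,1]^d)$, so these boundary contributions are zero. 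Summing over $i$ yields
\[
   \langle f_d, \Delta f_d \rangle_{L^2([0,1]^d)} \;=\; -\,\|\nabla f_d\|_{L^2([0,1]^d)}^2,
\]
which combined with the preceding display produces the claimed identity $\|W_d(f_d)\|_{1,0}^2 = 2(d!)\|f_d\|_{\mc H^1_{\rm Dir}([0,1]^d)}^2$ for symmetric $f_d$.

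\medskip

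For the non-symmetric case, the identity $W_d(f_d) = W_d(\widetilde{f}_d)$ (where $\widetilde{f}_d$ is the symmetrization) reduces the computation to the symmetric case applied to $\widetilde{f}_d \in \mc S_{\rm Dir}([0,1]^d)$, giving $\|W_d(f_d)\|_{1,0}^2 = 2(d!)\|\widetilde{f}_d\|_{\mc H^1_{\rm Dir}([0,1]^d)}^2 = 2(d!)\|f_d\|_{\mc H^1_{\rm Dir,Sym}([0,1]^d)}^2$ by definition of the symmetric norm. The final inequality then follows from the Jensen--type estimate $\|\widetilde{f}_d\|_{\mc H^1_{\rm Dir}}\le \|f_d\|_{\mc H^1_{\rm Dir}}$, obtained by applying the triangle/convexity inequality to $\nabla \widetilde{f}_d = (d!)^{-1}\sum_{\sigma\in\mathfrak{S}_d} \nabla (f_d\circ\sigma)$ and using that the $L^2$--norm is invariant under the permutation of variables. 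The only mild subtlety I anticipate is making sure that the boundary terms in the integration by parts really do vanish for every variable---but this is immediate once one observes that $\mc S_{\rm Dir}([0,1]^d)$ forces $f_d$ itself (i.e.\ the $k=0$ case of the even-derivatives condition) to be zero on the whole boundary $\partial([0,1]^d)$, so no regularity or limiting argument is needed here.
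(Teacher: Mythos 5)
Your proposal is correct and follows essentially the same route as the paper's own (very brief) proof: use $L_0 W_d(f_d)=W_d(\Delta f_d)$ together with the Wiener--It\^o isometry, then integrate by parts without boundary terms since $f_d$ vanishes on $\partial([0,1]^d)$ (the $k=0$ case of the defining condition for $\SD([0,1]^d)$), and finally symmetrize and apply a convexity/Cauchy--Schwarz bound for the last inequality. The only cosmetic difference is that the paper cites ``the Cauchy--Schwarz inequality'' for the final bound while you phrase it as a triangle/Jensen-type estimate; these are the same elementary step.
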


\begin{proof}[Proof of Lemma \ref{lem:1-norm chaos}]
  Integration by parts works without boundary terms because $f_d \in \SD ([0, 1]^d)$. The bound $\| f_d \|_{\mc H_{\rm Dir, Sym}^1
    ([0, 1]^d)}^2  \leq \| f_d \|_{\mc H_{\rm Dir}^1
    ([0, 1]^d)}^2$ follows from the Cauchy-Schwarz inequality.
\end{proof}

 We also write $\mc H_{\rm Dir, Sym}^{-1}
    ([0, 1]^d)$
for the completion of $L^2_{\rm Sym} ([0, 1]^d)$ with respect to the norm
\begin{equation} \label{eq:H0s-var-def}
	\| f \|_{\mc H_{\rm Dir, Sym}^{-1}
    ([0, 1]^d)} = \sup_{\substack{
     g \in \mc H_{\rm Dir, Sym}^{1}
    ([0, 1]^d)\\
     \| g \|_{\mc H_{\rm Dir, Sym}^{1}
    ([0, 1]^d)} = 1}} \big\langle f, g \big\rangle_{L^2_{\rm Sym} ([0, 1]^d)} . \end{equation}
In all what follows, to simplify notation, whenever $d=1$ we will simply denote, as before \[\mc H_{\rm Dir}^{1}=\mc H_{\rm Dir}^{1}
    ([0, 1])=\mc H_{\rm Dir, Sym}^{1}
    ([0, 1]), \qquad \text{and}\qquad \mc H_{\rm Dir}^{-1}=\mc H_{\rm Dir, Sym}^{-1}
    ([0, 1]).\]If we need to make the variable $y$ precise we will highlight it by writing $\mc H_{\rm Dir}^{1}(y)$ and $\mc H_{\rm Dir}^{-1}(y)$.

\begin{lemma}\label{lem:explicit-norms}
  For $f \in \mc H_{\rm Dir, Sym}^1
    ([0, 1]^d)$ we have the more explicit representation
  \[ \| f \|_{\mc H_{\rm Dir, Sym}^1
    ([0, 1]^d)}^2 = \sum_{k \in \mathbb{N}^d} | \pi
     k |^2 \big\langle f, e_{k_1} \otimes \cdots \otimes e_{k_d}  \big\rangle_{L^2([0,1]^d)}^2 \]
  and for any $g \in \mc H_{\rm Dir, Sym}^{-1}
    ([0, 1]^d)$
 \[ \| g \|_{\mc H_{\rm Dir, Sym}^{-1}
    ([0, 1]^d)}^2 = \sum_{k \in \mathbb{N}^d}|
     \pi k |^{- 2} \big\langle g,e_{k_1} \otimes \cdots \otimes
     e_{k_d} \big\rangle_{L^2([0,1]^d)}^2\; , \]
 where $e_m(u)=\sqrt 2\sin(m\pi u)$ has been already defined in Section \ref{ssec:test_functions} and    where $|k|^2 = |k_1|^2 + \cdots + |k_d|^2$ is the squared Euclidean norm of $k=(k_1,\dots,k_d) \in \bb N^d$.
\end{lemma}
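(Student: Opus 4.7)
The plan is to reduce both identities to spectral computations with respect to the orthonormal tensor-product basis
$$\big\{ e_{k_1}\otimes \cdots \otimes e_{k_d} \; ; \; k=(k_1,\dots,k_d) \in \bb N^d \big\}$$
of $L^2([0,1]^d)$. The key observation is that each tensor $e_{k_1}\otimes \cdots \otimes e_{k_d}$ belongs to $\SD([0,1]^d)$ (since each $e_m \in \SD$ and all derivatives of odd order at $0$ and $1$ of $e_m$ still satisfy the required vanishing at the boundary of $[0,1]^d$), and is an eigenfunction of the $d$-dimensional Laplacian with eigenvalue $-|\pi k|^2 = -\sum_{i=1}^d (\pi k_i)^2$.

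First I would prove the identity for $\|\cdot\|_{\mc H^1_{\rm Dir, Sym}([0,1]^d)}$ in the case of a symmetric function $f \in \SD([0,1]^d)$. Since $f$ vanishes on $\partial [0,1]^d$, integration by parts gives no boundary contribution, so
\[
\|f\|^2_{\mc H^1_{\rm Dir, Sym}([0,1]^d)} = \big\|\nabla f\big\|^2_{L^2([0,1]^d)^d} = \big\langle f, -\Delta f\big\rangle_{L^2([0,1]^d)}.
\]
Expanding $f = \sum_{k \in \bb N^d} \hat f_k \, e_{k_1}\otimes \cdots\otimes e_{k_d}$ with $\hat f_k = \langle f, e_{k_1}\otimes \cdots\otimes e_{k_d}\rangle_{L^2([0,1]^d)}$, and applying $-\Delta$ term by term on this absolutely convergent expansion (which is legitimate since $f \in \SD([0,1]^d)$), the orthonormality of the tensor basis yields the announced formula. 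The general case $f \in \mc H^1_{\rm Dir, Sym}([0,1]^d)$ is then obtained by density, since $\SD([0,1]^d) \cap L^2_{\rm Sym}([0,1]^d)$ is dense in $\mc H^1_{\rm Dir, Sym}([0,1]^d)$ by definition.

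Next I would handle the $\mc H^{-1}$ identity by duality. For $g \in L^2_{\rm Sym}([0,1]^d)$ and any symmetric $h \in \mc H^1_{\rm Dir, Sym}([0,1]^d)$, write $\hat g_k$ and $\hat h_k$ for the coefficients in the tensor basis; both satisfy $\hat g_{\sigma k} = \hat g_k$ and $\hat h_{\sigma k} = \hat h_k$ for every $\sigma \in \mathfrak S_d$. Then Parseval's identity and the previous step give
\[
\big\langle g, h\big\rangle_{L^2([0,1]^d)} = \sum_{k \in \bb N^d} \hat g_k \hat h_k, \qquad \|h\|^2_{\mc H^1_{\rm Dir, Sym}([0,1]^d)} = \sum_{k \in \bb N^d} |\pi k|^2 |\hat h_k|^2,
\]
and by the Cauchy--Schwarz inequality
\[
\big|\langle g, h\rangle_{L^2([0,1]^d)}\big|^2 \leq \bigg(\sum_{k \in \bb N^d} |\pi k|^{-2} |\hat g_k|^2\bigg) \bigg(\sum_{k \in \bb N^d} |\pi k|^{2} |\hat h_k|^2\bigg),
\]
with equality attained (in the supremum sense) by the symmetric choice $\hat h_k = |\pi k|^{-2} \hat g_k$ (suitably truncated to remain in $\mc H^1_{\rm Dir, Sym}$, then normalized). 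Recalling the definition~\eqref{eq:H0s-var-def} of the $\mc H^{-1}_{\rm Dir, Sym}$ norm as a supremum over symmetric $h$ with unit $\mc H^1_{\rm Dir, Sym}$ norm, this produces the second identity.

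No step looks like a serious obstacle: the only point that requires some care is checking that the reduction to symmetric test functions $h$ in~\eqref{eq:H0s-var-def} is legitimate, which follows since $\langle g, h\rangle_{L^2} = \langle g, \widetilde h\rangle_{L^2}$ whenever $g$ is symmetric and $\|h\|_{\mc H^1_{\rm Dir, Sym}} = \|\widetilde h\|_{\mc H^1_{\rm Dir}}$ by definition. Everything else is a direct Fourier--type computation in the eigenbasis of the Dirichlet Laplacian.
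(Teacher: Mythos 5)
Your proposal is correct and takes essentially the same route as the paper: both proofs expand in the tensor sine basis, which consists of eigenfunctions of the Dirichlet Laplacian with eigenvalues $-|\pi k|^2$, establish the $\mc H^1$ identity for $f \in \SD([0,1]^d)$ via integration by parts with vanishing boundary terms and then extend by density, and deduce the $\mc H^{-1}$ identity by duality. The only cosmetic difference is that the paper computes each $\|\partial_i f\|^2_{L^2}$ directly by Parseval in a mixed basis (cosines in the $i$-th slot, sines elsewhere), whereas you integrate by parts once more to reduce $\|\nabla f\|^2$ to $\langle f,-\Delta f\rangle$ and read off the eigenvalues; the two computations are interchangeable. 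One small slip: in justifying $e_{k_1}\otimes\cdots\otimes e_{k_d} \in \SD([0,1]^d)$ you write ``derivatives of odd order,'' but the defining condition for $\SD$ involves derivatives of \emph{even} order, which is what the sines satisfy; the conclusion is unaffected.
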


\begin{proof}[Proof of Lemma \ref{lem:explicit-norms}]
    Recall that $\{e_m = \sqrt{2} sin(m \pi \cdot)\; ; \; m \ge 1\}$, is an orthonormal basis of $L^2([0,1])$, as is $\{\tilde e_0 \equiv 1, \tilde e_m = \sqrt{2} cos(m \pi \cdot) \; ;\;  m \ge 1\}$. Therefore,
   \[
      \big\{e_{k_1} \otimes \dots \otimes e_{k_{i-1}} \otimes \tilde e_\ell \otimes e_{k_i} \dots \otimes e_{k_{d-1}}\; ; \; k \in \NN^{d-1}, \ell \in \NN_0\big\}
   \] 
   is an orthonormal basis of $L^2([0,1]^d)$ for all $i = 1,\dots, d$. If we use this orthonormal basis to compute $\| \partial_{i} f \|_{L^2_{\rm Sym} ([0, 1]^d)}^2$ and apply integration by parts, then the first equality follows. For the second equality we use the duality of $\mc H_{\rm Dir, Sym}^{-1}
    ([0, 1]^d)$ and $\mc H_{\rm Dir, Sym}^1
    ([0, 1]^d)$.
\end{proof}

We have now all at hand to state and prove the main result of this section:

\begin{corollary}
  \label{cor:poisson-solution norm}The Poisson equation
  \[ L_0 F = W_d (g) \]
  is solvable for all $g \in L^2_{\rm Sym} ([0, 1]^d)$ and moreover \[\| F \|_{1,0}^2 = 2 (d!)\| g
  \|_{\mc H_{\rm Dir, Sym}^{-1}
    ([0, 1]^d)}^2.\]
\end{corollary}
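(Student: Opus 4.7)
The plan is to look for a solution $F$ in the $d$-th homogeneous chaos, i.e.\ of the form $F = W_d(f)$ for some symmetric $f$, since identity~\eqref{eq:L0W} says that $L_0$ preserves the chaos and acts on it simply as the Dirichlet Laplacian $\Delta$. Given $g \in L^2_{\rm Sym}([0,1]^d)$, the natural candidate is obtained by expanding $g$ in the orthonormal basis $\{e_{k_1}\otimes\cdots\otimes e_{k_d}\; ;\; k\in\bb N^d\}$, writing $g_k = \langle g, e_{k_1}\otimes\cdots\otimes e_{k_d}\rangle_{L^2([0,1]^d)}$, and inverting $\Delta$ mode by mode:
\[
   f \;=\; -\sum_{k\in\bb N^d}\frac{g_k}{|\pi k|^2}\, e_{k_1}\otimes\cdots\otimes e_{k_d}.
\]
By Lemma~\ref{lem:explicit-norms} this series defines an element of $\mc H^1_{\rm Dir,Sym}([0,1]^d)$ satisfying
\[
   \|f\|^2_{\mc H^1_{\rm Dir,Sym}([0,1]^d)} \;=\; \sum_{k\in\bb N^d}\frac{g_k^2}{|\pi k|^2} \;=\; \|g\|^2_{\mc H^{-1}_{\rm Dir,Sym}([0,1]^d)},
\]
and at least formally $\Delta f = g$.

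To promote $W_d(f)$ to an honest element of $\tmop{Dom}(L_0)$, I would proceed by a truncation--approximation argument. Namely, I would cut the spectral series at levels $|k|\le N$ to produce symmetric $f_N, g_N\in\mc S_{\rm Dir}([0,1]^d)$ with $\Delta f_N = g_N$. Then $F_N := W_d(f_N)$ belongs to the cylinder class $\mc C$, and by~\eqref{eq:L0W} we have $L_0 F_N = W_d(g_N)$. The chaos isometry $\|W_d(h)\|_{L^2(\mu)}^2 = d!\,\|h\|_{L^2_{\rm Sym}}^2$, combined with the Poincar\'e inequality $\|f_N - f_M\|_{L^2} \lesssim \|f_N - f_M\|_{\mc H^1_{\rm Dir, Sym}}$, shows that both $\{F_N\}_N$ and $\{L_0 F_N\}_N = \{W_d(g_N)\}_N$ are Cauchy in $L^2(\mu)$; hence $F_N$ converges in the graph norm of $L_0$ to a limit $F$ with $F\in\tmop{Dom}(L_0)$ and $L_0 F = W_d(g)$.

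It remains to compute $\|F\|_{1,0}$. By Lemma~\ref{lem:1-norm chaos} and the previous norm identity
\[
   \|F_N - F_M\|_{1,0}^2 \;=\; 2(d!)\,\|f_N - f_M\|^2_{\mc H^1_{\rm Dir,Sym}([0,1]^d)} \;=\; 2(d!)\!\!\sum_{N\wedge M < |k| \le N\vee M}\frac{g_k^2}{|\pi k|^2},
\]
which tends to $0$ as $N,M\to\infty$, so $\{F_N\}$ is Cauchy in $\mf H^1_0$ as well; passing to the limit yields the announced identity $\|F\|_{1,0}^2 = 2(d!)\,\|g\|^2_{\mc H^{-1}_{\rm Dir,Sym}([0,1]^d)}$. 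The only delicate point I anticipate is to make sure that the Dirichlet-type boundary conditions built into $\mc S_{\rm Dir}([0,1]^d)$ are preserved along the truncation, but this is automatic: the basis functions $e_k$ already satisfy those conditions, and finite linear combinations of tensor products of them belong to $\mc S_{\rm Dir}([0,1]^d)$, so no extra mollification is needed and the whole argument reduces to the spectral bookkeeping above.
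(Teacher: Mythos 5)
Your proposal is correct and takes essentially the same route as the paper: identify $F = W_d(f)$, invert the Laplacian mode by mode using the sine basis, and read off the norm identity from Lemma~\ref{lem:1-norm chaos} and Lemma~\ref{lem:explicit-norms}. The paper's proof writes down the spectral inverse and computes the norms directly without spelling out the truncation step that places $F$ in $\tmop{Dom}(L_0)$ and $\mathfrak{H}^1_0$; your spectral cut-off at $|k|\le N$ makes that graph-norm / $\mathfrak{H}^1_0$ approximation explicit (and you also have the correct sign on $f$, which appears with a harmless typo in the paper), but the underlying argument is the same.
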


\begin{proof}[Proof of Corollary \ref{cor:poisson-solution norm}]
  Since $L_0$ leaves the $d$--th chaos invariant, $F$ must be of the form $F = W_d (f)$
  and thus $L_0 F = W_d (g)$ is equivalent to $\Delta f = g$,
  which has the explicit solution
  \[ f = \sum_{k \in \mathbb{N}^d}  | \pi k |^{- 2} \big\langle g, e_{k_1} \otimes \cdots \otimes e_{k_d} \big\rangle e_{k_1}\otimes \cdots \otimes e_{k_d}, \]
  so Lemma~\ref{lem:1-norm chaos} and Lemma~\ref{lem:explicit-norms} give
  \begin{align*}
    \| F \|_{1,0}^2 & = 2 (d!) \| f \|_{\mc H_{\rm Dir, Sym}^1
    ([0, 1]^d)}^2 = 2 (d!) \sum_{k \in
    \mathbb{N}^d} 2^d | \pi k |^2 \big\langle f, \sin (\pi k_1 \cdot) \otimes
    \cdots \otimes \sin (\pi k_d \cdot) \big\rangle^2\\
    & = \sum_{k \in \mathbb{N}^d} 2^d | \pi k |^{- 2} \big\langle g, \sin (\pi k_1
    \cdot) \otimes \cdots \otimes \sin (\pi k_d \cdot) \big\rangle^2 =  2 (d!) \|
    g \|_{\mc H_{\rm Dir, Sym}^{-1}
    ([0, 1]^d)}^2 .
  \end{align*}
\end{proof}

\subsection{Burgers/KPZ non-linearity and existence of the process $\mc A$/$\mc B$}
\label{ssec:construct}
All along this section, the integration spaces denoted $L^p$, if not made precise, are in fact $L^p([0,1])$. 

\begin{lemma}
  \label{lem:kpz nonlin}There exists a unique process $\int_0^{\cdot}
  \mc Y_s^{\diamond 2} d s \in \mc C (\mathbb{R}_+, \SN')$ such
  that for all $\psi \in \SN$, all $p \geq 1$ and
  all $\rho \in L^{\infty} ([0, 1]^2)$ the following bound holds:
  \begin{align}\label{eq:kpz-nonlin} \nonumber 
    \mathbb{E} \bigg[ \sup_{t \in [0, T]} &\bigg| \bigg(\int_0^t \mc Y_s^{\diamond 2}
    d s\bigg) (\psi) - \int_0^t \bigg( \int_0^1 \Big\{ \mc Y^2_s \big(\rho (u,
    \cdot)\big) - \big\| \rho (u, \cdot) \big\|_{L^2}^2 \Big\} \psi (u) d u
    \bigg) d s \bigg|^p \bigg]\\ \nonumber
    & \lesssim T^{p / 2} \bigg( \sup_{u\in[0,1]} \Big\{\big\| | \rho (u, \cdot)   | \times | \cdot - u |^{1/2} \big\|_{L^1}\Big\} + \big\| \big\langle \rho (u, \cdot), 1 \big\rangle_{L^2} - 1 \big\|_{L^2_u}
    \bigg)^p \vphantom{\int_0^t}\\
    &\quad \times \bigg(\sup_{u\in[0,1]} \Big\{ \big\| \rho (\cdot, u) \big\|_{L^1} + \big\| \rho (u, \cdot)
    \big\|_{L^1} \Big\}\bigg)^p \big\| \psi \big\|_{L^{\infty}}^p.\vphantom{\int_0^t}
  \end{align} 
\end{lemma}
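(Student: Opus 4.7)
The plan is to realize the quantity inside the time integral as an element of the second Wiener chaos, apply the It\^o trick of Lemma~\ref{lem:ito} to control the resulting additive functional in $L^p$, and then estimate the arising $\mathfrak{H}^{-1}_0$--norm via the Gaussian analysis of Section~\ref{ssec:gaussian}.

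First, for fixed $\rho \in L^\infty([0,1]^2)$ and $\psi \in \SN$, the renormalization built into the integrand identifies
$$F_\rho(\mc Y) := \int_0^1 \Big\{\mc Y^2\big(\rho(u,\cdot)\big) - \|\rho(u,\cdot)\|_{L^2}^2\Big\}\psi(u)\,du = W_2(f_\rho)(\mc Y),$$
with the symmetric kernel $f_\rho(v_1,v_2) = \int_0^1 \rho(u,v_1)\rho(u,v_2)\psi(u)\,du$. For any two admissible $\rho_1, \rho_2$, Corollary~\ref{cor:poisson-solution norm} furnishes a second-chaos solution $G$ of the Poisson equation $L_0 G = W_2(f_{\rho_1} - f_{\rho_2})$ whose $\mathfrak H^1_0$--norm is $2\|f_{\rho_1} - f_{\rho_2}\|_{\mc H^{-1}_{\rm Dir,Sym}([0,1]^2)}$ up to an absolute constant. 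Since $G$ lives in a finite chaos, Remark~\ref{rmk:chaos-moments} makes all $L^{p/2}$--moments of $\mathcal E(G)(\mc Y_0)$ comparable to $\|G\|_{1,0}^p$, so Lemma~\ref{lem:ito} yields, for every $p\geq 1$,
$$\mathbb E\Big[\sup_{t\in[0,T]}\Big|\int_0^t (F_{\rho_1}-F_{\rho_2})(\mc Y_s)\,ds\Big|^p\Big] \lesssim T^{p/2}\|f_{\rho_1}-f_{\rho_2}\|_{\mc H^{-1}_{\rm Dir,Sym}([0,1]^2)}^p.$$

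The core step is then to bound $\|f_\rho - f_{\rho'}\|_{\mc H^{-1}_{\rm Dir,Sym}([0,1]^2)}$ by duality. For any symmetric test $\phi$ in the unit ball of $\mc H^1_{\rm Dir,Sym}([0,1]^2)$, one writes
$$\langle f_\rho, \phi\rangle = \int_0^1 \psi(u) \int_0^1\int_0^1 \rho(u,v_1)\rho(u,v_2)\phi(v_1,v_2)\,dv_1\,dv_2\,du,$$
and splits $\phi(v_1,v_2) = \phi(u,u) + [\phi(v_1,v_2)-\phi(u,u)]$. The constant piece produces $\int_0^1 \psi(u)\phi(u,u)\langle\rho(u,\cdot),1\rangle^2\,du$, which is close to the ``ideal'' value $\int \psi(u)\phi(u,u)\,du$ up to an error controlled by $\|\langle\rho(u,\cdot),1\rangle-1\|_{L^2_u}$ combined with a Sobolev trace estimate for the diagonal restriction $u \mapsto \phi(u,u)$. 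For the remainder, the representation $\phi(v_1,v_2)-\phi(u,u) = \int_u^{v_1}\partial_1\phi(s,v_2)\,ds + \int_u^{v_2}\partial_2\phi(u,s)\,ds$ combined with Cauchy-Schwarz produces factors $|v_i-u|^{1/2}$ times an $L^2$--slice of $\nabla\phi$; another Cauchy-Schwarz in the remaining coordinates trades that slice for the full $\|\nabla\phi\|_{L^2([0,1]^2)}$, while the pending integral of $|\rho(u,v_i)|\,|v_i-u|^{1/2}$ against the $L^1$--mass $\|\rho(u,\cdot)\|_{L^1}$ or $\|\rho(\cdot,u)\|_{L^1}$ (via Fubini) reproduces precisely the combinatorial factor on the right-hand side of~\eqref{eq:kpz-nonlin}.

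Cauchyness in $L^p$ for a canonical family of approximations $\rho_n$ of $\delta$ (for instance those built from the Neumann heat semigroup at time $1/n$) then defines the process $\int_0^\cdot \mc Y_s^{\diamond 2}\,ds \in \mc C(\mathbb R_+, \SN')$, uniqueness being automatic from the displayed bound applied to any two candidate limits. Applying the same estimate with $\rho_1 = \rho$ and $\rho_2 = \rho_n$ and letting $n\to\infty$ yields~\eqref{eq:kpz-nonlin}. The main obstacle is the duality bound of the $\mc H^{-1}_{\rm Dir,Sym}$--norm: two-dimensional $H^1$--functions are not continuous, so the diagonal trace $\phi(u,u)$ exists only in $H^{1/2}$, and one must trade the missing pointwise regularity of $\phi$ against the concentration of $\rho$ near the diagonal, the exponent $1/2$ in $|v-u|^{1/2}$ being the critical scaling that makes this trade balance.
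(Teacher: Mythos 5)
Your proposal follows the same overall strategy as the paper: represent the integrand as a second Wiener chaos $W_2(f_\rho)$ with $f_\rho(v_1,v_2)=\int_0^1\rho(u,v_1)\rho(u,v_2)\psi(u)\,du$, solve the Poisson equation via Corollary~\ref{cor:poisson-solution norm}, invoke the It\^o trick of Lemma~\ref{lem:ito} together with the chaos-moment comparability of Remark~\ref{rmk:chaos-moments}, estimate $\|f_{\rho_1}-f_{\rho_2}\|_{\mc H_{\rm Dir,Sym}^{-1}([0,1]^2)}$ by duality, and then take a Cauchy sequence of heat-kernel mollifiers.

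The one genuine difference lies in how the duality pairing $\langle f_\rho,\phi\rangle$ is decomposed. You add and subtract the diagonal value $\phi(u,u)$ (both arguments at once), while the paper adds and subtracts $f(u,v_2)$ (one argument at a time, using a telescoping split of $\rho_m^{\otimes 2}-\rho_n^{\otimes 2}$ through an intermediate Dirac). Your version is more symmetric, but you flag the diagonal restriction $u\mapsto\phi(u,u)$ as the ``main obstacle,'' requiring an $H^{1/2}$ trace theorem and a delicate trade-off with the $|v-u|^{1/2}$ concentration factor. This worry is unfounded: because $\phi\in\SD([0,1]^2)$ satisfies $\phi(u,0)=0$, one gets $\phi(u,u)=\int_0^u\partial_2\phi(u,s)\,ds$ and hence $|\phi(u,u)|\le\|\partial_2\phi(u,\cdot)\|_{L^2}$ by Cauchy--Schwarz, so
\begin{equation*}
\|\phi(u,u)\|_{L^2_u}\le\|\partial_2\phi\|_{L^2([0,1]^2)}\le\|\phi\|_{\mc H^1_{\rm Dir,Sym}([0,1]^2)}.
\end{equation*}
This is exactly the elementary one-dimensional Sobolev bound the paper uses (via $\|f(u,\cdot)\|_{L^\infty}\le\|\partial_2 f(u,\cdot)\|_{L^2}$); no trace theory is needed. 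With this observation both decompositions deliver the bound with the same tools, so your route is not actually heavier once set up correctly. A very minor point: your ``constant piece'' $\int\phi(u,u)\psi(u)\big(\langle\rho(u,\cdot),1\rangle^2-1\big)\,du$ naturally produces a factor $\big(\sup_u\|\rho(u,\cdot)\|_{L^1}+1\big)$ rather than the stated $\sup_u\|\rho(u,\cdot)\|_{L^1}$, but this is harmless (and vanishes outright when $\rho$ integrates to $1$, as for the mollifiers actually used).
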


\begin{proof}[Proof of Lemma \ref{lem:kpz nonlin}] 
  Let us define
  \[
     \widetilde{\mc Y}^{\diamond 2}\big(\rho(u,\cdot)\big) = \mc Y^2 \big(\rho (u, \cdot)\big) - \big\| \rho (u, \cdot) \big\|_{L^2}^2.
  \]
  The strategy to define the process $(\int_0^\cdot \mc Y_s^{\diamond 2} ds)(\psi)$ is to obtain it as the limit of \begin{equation}\label{eq:sequence}\int_0^\cdot \int_0^1\widetilde{\mc Y}_s^{\diamond 2}(\rho_m(u,\cdot))\psi(u)du ds
\end{equation} for some suitable sequence $\{\rho_m\}_{m \in \bb N}$. First, observe that \begin{equation}\label{eq:poiss}\int_0^1 \widetilde{\mc Y}^{\diamond 2} (\rho (u, \cdot)) \psi (u) d u = W_2
     \left( \int_0^1 \rho (u, \cdot)^{\otimes 2} \psi (u) d u
     \right) (\mc Y).\end{equation} So if $\{\rho_m\}_{m \in
  \mathbb{N}}$ is a sequence of functions in $L^{\infty} ([0, 1]^2)$, we can use Corollary~\ref{cor:poisson-solution norm} to solve the Poisson equation with \eqref{eq:poiss} and $\rho=\rho_m$ and then estimate the norm of the solution with the It{\^o} trick given in Lemma \ref{lem:ito}. This gives, for any $m, n \in \mathbb{N}$
  \begin{align*}
    \mathbb{E} \bigg[ \sup_{t \in [0, T]} &\bigg| \int_0^t \bigg( \int_0^1
   \widetilde{\mc Y}^{\diamond 2}_s (\rho_m (u, \cdot)) \psi (u) d u \bigg) d
    s - \int_0^t \bigg( \int_0^1 \widetilde{\mc Y}^{\diamond 2}_s (\rho_n (u, \cdot)) \psi (u)
    d u \bigg) d s \bigg|^p \bigg]\\
    & \lesssim T^{p / 2} \bigg\| \int_0^1 \big(\rho_m (u, \cdot)^{\otimes 2} -
    \rho_n(u, \cdot)^{\otimes 2} \big) \psi (u) d u \bigg\|_{\mc H^{-
    1}_{\rm Dir, Sym} ([0, 1]^2)}^p .
  \end{align*}
  To bound the norm on the right hand side we argue by duality and apply~\eqref{eq:H0s-var-def}: let $f$ be a
  symmetric function in $\SD ([0, 1]^2)$ and consider
  \begin{align*}
    &\bigg\langle \int_0^1 \big(\rho_m (u, \cdot)^{\otimes 2} - \rho_n (u, \cdot)^{\otimes
    2}\big) \psi (u) d u, f \bigg\rangle_{L^2 ([0, 1]^2)}\\
    &\quad = \int_{[0, 1]^3} d u d v_1 d v_2 \big(\rho_m (u, v_1) \rho_m
    (u, v_2) - \delta_0(u - v_1) \delta_0(u - v_2)\big) \psi (u) f (v_1,
    v_2)\\
    &\quad+ \int_{[0, 1]^3} d u d v_1 d v_2 \big(\delta_0(u - v_1) \delta_0
    (u - v_2) - \rho_n (u, v_1) \rho_n (u, v_2)\big) \psi (u)  f (v_1,
    v_2),
  \end{align*}where $\delta_a(\cdot)$ denotes the Dirac delta function at point $a$.
  Both terms on the right hand side are of the same form, so we argue for the
  first one only. For this purpose we decompose
  \begin{align*}
    \rho_m (u, v_1) &\rho_m (u, v_2) - \delta_0(u - v_1) \delta_0(u - v_2)\\
    &= \big(\rho_m (u, v_1) - \delta_0(u - v_1)\big) \rho_m (u, v_2) + \delta_0(u - v_1)
    \big(\rho_m (u, v_2) - \delta_0(u - v_2)\big)
  \end{align*}
  and again only treat the first contribution.  In the following list of inequalities, each step will be made clear by using a notation of the form $(*)$ over the inequality $\leq$ in order to explain where does the inequality come from. We are going to: sum and substract $f(u,v_2)$ and use the triangular inequality (denoted by $\pm f(u,v_2))$; use the Cauchy-Schwartz inequality (C-S); 
   use $L^\infty$ bounds $(L^\infty)$; and finally, use the fact that for any $\varphi \in \Ctwo$, $\|\varphi\|_{L^\infty} \leqslant \|\nabla\varphi\|_{L^2}$, which  will be  denoted below by $(\star\star)$.  Let us bound as follows:
  \begingroup
\allowdisplaybreaks
  \begin{align*}
    &\bigg| \int_{[0, 1]^3} d u d v_1 d v_2 \big(\rho_m (u, v_1) -
    \delta_0(u- v_1)\big) \rho_m (u, v_2) \psi (u) f (v_1, v_2) \bigg|\\
    &\overset{\pm f(u,v_2)}{\leq} \left| \int_{[0, 1]^3} d u d v_1 d v_2 \rho_m (u,
    v_1) \big(f (v_1, v_2) - f (u, v_2)\big) \rho_m (u, v_2) \psi (u) \right|\\
    &\qquad \quad + \bigg| \int_{[0, 1]^2} d u d v_2 \big(\big\langle \rho_m (u, \cdot),
    1 \big\rangle_{L^2} - 1\big) f (u, v_2) \rho_m (u, v_2) \psi (u) \bigg|\\
    &\overset{(C-S)}{\leq} \quad\bigg| \int_{[0, 1]^3} d u d v_1 d v_2 \rho_m (u,
    v_1) \int_u^{v_1}d w\; \partial_1 f (w, v_2)\;  \rho_m (u, v_2) \psi
    (u) \bigg|\\
    &\qquad \quad+ \big\| \big\langle \rho_m (u, \cdot), 1 \big\rangle_{L^2} - 1 \big\|_{L^2_u} \Bigg( \int_0^1
    d u \bigg| \int_0^1 d v_2 f (u, v_2) \rho_m (u, v_2) \psi (u)
    \bigg|^2 \Bigg)^{1/2}\\
    &\overset{\substack{(C-S)\\+L^\infty}}{\leq}\quad \int_{[0, 1]^3} d u d v_1 d v_2 \big| \rho_m (u, v_1)
    \big| \times \big| v_1 - u \big|^{1/2} \big\| \partial_1 f(\cdot,v_2) \big\|_{L^2} \big| \rho_m (u, v_2) \psi (u) \big|\\
    &\qquad \quad + \big\| \big\langle \rho_m (u, \cdot), 1 \big\rangle_{L^2} - 1 \big\|_{L^2_u} \Bigg( \int_0^1
    d x \big\| \rho_m (u, \cdot) \big\|_{L^1}^2 \; \big\| f (u, \cdot)
    \big\|_{L^{\infty}}^2 \; \big\| \psi \big\|_{L^{\infty}}^2 \Bigg)^{1/2}\\
    &\overset{\substack{L^\infty+(\star\star)\\ +(C-S)}}{\leq} \sup_{u\in[0,1]} \Big\{ \big\| | \rho_m (u, \cdot) | \times  | \cdot - u |^{1/2}
    \big\|_{L^1} \Big\}  \sup_{v\in[0,1]} \Big\{\big\| \rho_m (\cdot, v) \big\|_{L^1} \Big\} \big\| \partial_1 f \big\|_{L^2([0,1]^2)}
   \; \big\| \psi \big\|_{L^{\infty}}\\
    &\qquad\quad+ \big\| \big\langle \rho_m (u, \cdot), 1 \big\rangle_{L^2} - 1 \big\|_{L^2_u} \;  \sup_{u\in[0,1]} \Big\{ \big\|
    \rho_m (u, \cdot) \big\|_{L^1} \;  \Big\}\; \big\| \partial_2 f \big\|_{L^2([0,1]^2)} \big\| \psi
    \big\|_{L^{\infty}}\\
    &\overset{\hphantom{(C-S)}}{\leq}  \quad \bigg( \sup_{u\in[0,1]} \Big\{ \big\| | \rho_m (u, \cdot) | \times  | \cdot - u |^{1/2}
    \big\|_{L^1} \Big\} + \big\| \big\langle \rho_m (u, \cdot), 1 \big\rangle_{L^2} - 1 \big\|_{L^2_u} \bigg)
    \\
    &\qquad\quad\times \sup_{u\in[0,1]} \Big\{ \big\| \rho_m (\cdot, u) \big\|_{L^1} + \big\| \rho_m (u, \cdot)
    \big\|_{L^1} \Big\} \times \big\| f \big\|_{\mc H_{\rm Dir,Sym}^1([0, 1]^2)} \; \big\| \psi \big\|_{L^{\infty}},
  \end{align*}
  where in the last step we used the definition of the norm $\|\cdot\|_{\mc H_{\rm Dir,Sym}^1([0, 1]^2)}$.
  By the density of the symmetric $\SD ([0, 1]^2)$--functions in $\mc H_{\rm Dir, Sym}^1
    ([0, 1]^2)$ and the duality of $\mc H_{\rm Dir, Sym}^{-1}
    ([0, 1]^2)$ and $\mc H_{\rm Dir, Sym}^1
    ([0, 1]^2)$ we conclude that
  \begin{align*}
    \mathbb{E} \bigg[ &\sup_{t \in[0 ,T]} \bigg| \int_0^t \bigg( \int_0^1
    \widetilde{\mc Y}^{\diamond 2}_s (\rho_m (u, \cdot)) \psi (u) d u \bigg) d
    s - \int_0^t \bigg( \int_0^1 \widetilde{\mc Y}^{\diamond 2}_s (\rho_n (u, \cdot)) \psi (u)
    d u \bigg) d s \bigg|^p \bigg]\\
    &\lesssim T^{p / 2} \big\| \psi \big\|^p_{L^{\infty}} \vphantom{\int} \\
    & \quad \times \Bigg[\bigg( \sup_{u\in[0,1]} \Big\{ \big\| | \rho_m (u, \cdot) | \times  | \cdot - u |^{1/2}
    \big\|_{L^1} \Big\} + \big\| \big\langle \rho_m (u, \cdot), 1 \big\rangle_{L^2} - 1 \big\|_{L^2_u} \bigg)
    \\
    &\qquad \qquad \times \sup_{u\in[0,1]} \Big\{ \big\| \rho_m (\cdot, u) \big\|_{L^1} + \big\| \rho_m (u, \cdot)
    \big\|_{L^1} \Big\}\\
    & \qquad \quad +  \bigg( \sup_{u\in[0,1]} \Big\{ \big\| | \rho_n (u, \cdot) | \times  | \cdot - u |^{1/2}
    \big\|_{L^1} \Big\} + \big\| \big\langle \rho_n (u, \cdot), 1 \big\rangle_{L^2} - 1 \big\|_{L^2_u} \bigg)
    \\
    &\qquad \qquad \times \sup_{u\in[0,1]} \Big\{ \big\| \rho_n (\cdot, u) \big\|_{L^1} + \big\| \rho_n (u, \cdot)
    \big\|_{L^1} \Big\} \Bigg]^p     . 
  \end{align*}
  \endgroup
  Now let us choose the sequence $\{\rho_m\}_{m \in \bb N}$. For that purpose, let $p^{\rm Dir}$ be the Dirichlet heat kernel on
  $[0, 1]$, i.e.
  \begin{equation}\label{eq:dirker} p^{\rm Dir}_t (u, v) = \sum_{k = 1}^{\infty} e^{- t \pi^2 k^2} e_k (u) e_k (v) , \qquad u,v\in[0,1], t > 0, 
  \end{equation}
and let us set $\rho_m (u, v) = p^{\rm Dir}_{1 / m} (u, v) $ and $m \leq
  n$. Note that  we have, for any $f \in L^2([0,1])$,  
 \[ \lim_{\varepsilon \to 0} \int_0^1 \big|\langle p_\varepsilon^{\rm Dir}(u,\cdot) , f\rangle_{L^2([0,1])}  - f(u)\big|^2 du = 0.\]
  The other properties of $p^{\rm Dir}$ are given in  Appendix~\ref{app:heat-kernel}: from Lemma~\ref{lem:Dirichlet-heat-kernel} we obtain that 
  \begin{align*}
    \mathbb{E} \bigg[ &\sup_{t \in [0, T]} \bigg| \int_0^t \bigg( \int_0^1
   \widetilde{\mc Y}^{\diamond 2}_s (\rho_m (u, \cdot)) \psi (u) d u\bigg) d
    s - \int_0^t \bigg( \int_0^1 \widetilde{\mc Y}^{\diamond 2}_s (\rho_n (u, \cdot)) \psi (u)
    d u \bigg) ds \bigg|^p \bigg]\\
    &\lesssim T^{p / 2} \big\| \psi \big\|^p_{L^{\infty}} \big(m^{- 1 / 4} + n^{- 1 /
    4}\big)^p \lesssim T^{p / 2} \big\| \psi \big\|^p_{L^{\infty}} m^{- p / 4} . \vphantom{\int}
  \end{align*}
  Therefore, the sequence \eqref{eq:sequence} is Cauchy and there exists a limit in $\mc C (\mathbb{R}_+,
  \bb R)$ which we denote with $(\int_0^{\cdot} \mc Y_s^{\diamond 2} d s) (\psi)$. Making use of the bound in terms of $\|\psi\|$, similar arguments as in Section~\ref{sec:tightness} show that even $\psi \mapsto \int_0^\cdot \int_0^1\widetilde{\mc Y}_s^{\diamond 2}(\rho_m(u,\cdot))\psi(u)du ds$ converges in $\mc C (\mathbb{R}_+,\SN')$. By the computation above the limit satisfies~\eqref{eq:kpz-nonlin} and clearly that estimate identifies
  $\int_0^{\cdot} \mc Y_s^{\diamond 2} d s$ uniquely as the limit of
  $\int_0^{\cdot} \widetilde{\mc Y}^{\diamond 2}_s (\rho_m (u, \cdot)) d s$.
\end{proof}

\begin{corollary}\label{cor:kpz-nonlin-A-B}
   We have $- (\int_0^\cdot \mc Y^{\diamond 2}_s d s)(\nabla \varphi) = \mc A_\cdot(\varphi)$ for all $\varphi \in \Ctwo$, where $\mc A$ denotes the process defined in \eqref{eq:B} from the statement of Theorem~\ref{def:energy}.
   
    Similarly $(\int_0^\cdot \mc Y^{\diamond 2}_s d s)(\varphi) = \mc B_\cdot(\varphi)$ for all $\varphi \in \mc S_{\rm{Neu}}$, where $\mc B$ is the process defined in~\eqref{eq:B-def}.
\end{corollary}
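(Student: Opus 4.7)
The strategy is to apply Lemma~\ref{lem:kpz nonlin} directly with $\rho_\varepsilon(u,v) = \iota_\varepsilon(u)(v)$, after rewriting both $\mc A_{0,t}^\varepsilon$ and the integrand defining $\mc B_t$ in the form $\int_0^t \int_0^1 \widetilde{\mc Y}_s^{\diamond 2}(\iota_\varepsilon(u)) \psi(u)\,du\,ds$. I will first use the identity $\mc Y_s(\iota_\varepsilon(u)) = \nabla_\varepsilon \mc Z_s(u)$, which follows from $\mc Y = \nabla_{\rm Dir} \mc Z$ and the definition~\eqref{eq:nablah} of $\nabla_\varepsilon$ by integration by parts. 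Since $\mc Y_s$ is distributed according to the white noise law $\mu$ (with covariance $\langle\cdot,\cdot\rangle_{L^2}$ in the present normalization $A=1$, $D=2$), one has $\mathbb{E}[(\mc Y_s(\iota_\varepsilon(u)))^2] = \|\iota_\varepsilon(u)\|_{L^2}^2 = \varepsilon^{-1}$, so that
\[
\widetilde{\mc Y}_s^{\diamond 2}(\iota_\varepsilon(u)) = \big(\mc Y_s(\iota_\varepsilon(u))\big)^2 - \varepsilon^{-1} = \big(\nabla_\varepsilon \mc Z_s(u)\big)^2 - \varepsilon^{-1}.
\]

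Next, I will check that $\iota_\varepsilon$ makes the right-hand side of~\eqref{eq:kpz-nonlin} small. A direct computation yields $\langle \iota_\varepsilon(u,\cdot), 1\rangle_{L^2} = 1$ for every $u \in [0,1]$, and $\sup_u \{\|\iota_\varepsilon(u,\cdot)\|_{L^1} + \|\iota_\varepsilon(\cdot,u)\|_{L^1}\} \le 2$, while
\[
\sup_{u \in [0,1]} \big\| |\cdot - u|^{1/2} \iota_\varepsilon(u,\cdot)\big\|_{L^1} = \varepsilon^{-1}\int_0^\varepsilon w^{1/2}\,dw = \tfrac{2}{3}\sqrt{\varepsilon} \xrightarrow[\varepsilon \to 0]{} 0.
\]
Hence, for any bounded measurable $\psi$, Lemma~\ref{lem:kpz nonlin} yields
\[
\mathbb{E}\bigg[\sup_{t \in [0,T]} \Big| \Big(\int_0^t \mc Y_s^{\diamond 2}\,ds\Big)(\psi) - \int_0^t \int_0^1 \widetilde{\mc Y}_s^{\diamond 2}(\iota_\varepsilon(u)) \psi(u)\,du\,ds \Big|^p\bigg] \lesssim T^{p/2} \varepsilon^{p/2} \|\psi\|_{L^\infty}^p.
\]

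It remains to match the two defining approximations. For the $\mc A$ identity, the centering $\varepsilon^{-1}$ appearing in $(\mc Y_s(\iota_\varepsilon(u)))^2 = \widetilde{\mc Y}_s^{\diamond 2}(\iota_\varepsilon(u)) + \varepsilon^{-1}$, when integrated against $\nabla\varphi$ with $\varphi \in \Ctwo$, contributes $\varepsilon^{-1}\int_0^1 \nabla\varphi(u)\,du = \varepsilon^{-1}(\varphi(1)-\varphi(0)) = 0$ by the Dirichlet boundary conditions. Hence $\mc A_{0,t}^\varepsilon(\varphi) = -\int_0^t\int_0^1 \widetilde{\mc Y}_s^{\diamond 2}(\iota_\varepsilon(u)) \nabla\varphi(u)\,du\,ds$, and letting $\varepsilon \to 0$ identifies the $L^2$ limit~\eqref{eq:B} defining $\mc A_t(\varphi)$ with $-(\int_0^t \mc Y_s^{\diamond 2}\,ds)(\nabla\varphi)$. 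For the $\mc B$ identity no such cancellation is required, since the centering is already built into~\eqref{eq:B-def} and matches the variance of $\nabla_\varepsilon \mc Z_s(u)$ under the present normalization, so that $(\nabla_\varepsilon \mc Z_s(u))^2 - \varepsilon^{-1} = \widetilde{\mc Y}_s^{\diamond 2}(\iota_\varepsilon(u))$; applying the above bound with $\psi = \varphi \in \SN$ identifies $\mc B_t(\varphi)$ with $(\int_0^t \mc Y_s^{\diamond 2}\,ds)(\varphi)$. There is no genuine obstacle here beyond carefully tracking the Wick centerings and exploiting the boundary values of $\varphi$ for the $\mc A$ identity.
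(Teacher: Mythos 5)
Your proof is correct and takes essentially the same approach as the paper: both invoke Lemma~\ref{lem:kpz nonlin} with $\rho_\varepsilon = \iota_\varepsilon$ and verify the same smallness conditions (the $\cdot - u$ moment of order $\varepsilon^{1/2}$, exact mass normalization, and uniform $L^1$ bounds), and your explicit tracking of the Wick centering---the $\varepsilon^{-1}$ term killed against $\nabla\varphi$ by $\varphi(0)=\varphi(1)=0$---is a useful detail the paper leaves implicit. One small caveat: the literal constant in~\eqref{eq:B-def} is $\tfrac{1}{4\varepsilon}$, appropriate for $A=1$, $D=\tfrac12$, whereas Section~\ref{sec:energy-pr} (and hence Lemma~\ref{lem:kpz nonlin}) works in the reduced normalization $A=1$, $D=2$ where the centering is $\varepsilon^{-1}$ as you use, so your assertion that~\eqref{eq:B-def} verbatim ``matches'' your centering is off by a factor of $4$; this is a normalization bookkeeping issue the paper itself does not spell out, and your underlying reasoning is correct.
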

\begin{proof}[Proof of Corollary \ref{cor:kpz-nonlin-A-B}]
   According to Lemma~\ref{lem:kpz nonlin} we only need to verify that the approximation of the identity given in Definition \ref{def:approx}, namely:
   \[
\iota_\varepsilon(u, v)= \begin{cases} \varepsilon^{-1}  \; \mathbf{1}_{]u,u+\varepsilon]}(v)  & \text{ if } u \in [0, 1 - 2
       \varepsilon), \\
\varepsilon^{-1}\; \mathbf{1}_{[u-\varepsilon,u[}(v) & \text{ if } u \in [1 - 2
       \varepsilon, 1]. \end{cases}
\] 
   satisfies
   \begin{multline*}
      \lim_{\varepsilon \to 0}\bigg\{  \bigg( \sup_{u\in[0,1]} \Big\{\big\| | \iota_\varepsilon (u, \cdot)   | \times | \cdot - u |^{1/2} \big\|_{L^1}\Big\} + \big\| \big\langle \iota_\varepsilon (u, \cdot), 1 \big\rangle - 1 \big\|_{L^2_u}
    \bigg)^p \vphantom{\int_0^t}\\
 \times \bigg(\sup_{v\in[0,1]} \Big\{ \big\| \iota_\varepsilon (\cdot, v) \big\|_{L^1} + \big\| \iota_\varepsilon (v, \cdot)
    \big\|_{L^1} \Big\}\bigg)^p \bigg\} = 0.
   \end{multline*}
   Clearly $\langle \iota_\varepsilon (u, \cdot), 1 \rangle_{L^2} - 1 = 0$ and $\| \iota_\varepsilon (u, \cdot)\|_{L^1} \equiv 1$ for all $u \in [0,1]$, while
   \[
      \big\| | \iota_\varepsilon (u, \cdot)   | \times | \cdot - u |^{1/2} \big\|_{L^1} \le \varepsilon^{1/2} \big\|  \iota_\varepsilon (u, \cdot) \big\|_{L^1} = \varepsilon^{1/2},
   \]
   and therefore it remains to show that $\| \iota_\varepsilon (\cdot, v) \|_{L^1}$ is uniformly bounded in $v \in [0,1]$ and $\varepsilon \in (0,1]$. But $\iota_\varepsilon(u,v) = 0$ unless $|v - u| \le \varepsilon$, and therefore $\| \iota_\varepsilon (\cdot, v) \|_{L^1} \le 2$.
\end{proof}

By a similar interpolation argument as in the proof of  \cite[Corollary~3.17]{Gubinelli2015Energy} we get the following result:

\begin{lemma}
  \label{lem:kpz-nonlin-conv}For all $\alpha < \frac34$ and $\varphi
  \in \mc S_{\rm{Neu}}$, the process \[\bigg\{\bigg(\int_0^t \mc Y_s^{\diamond 2}
  d s \bigg)(\varphi) \; ; \; t \in [0, T]\bigg\}\] is almost surely in $\mc C^{\alpha}
  ([0, T], \mathbb{R})$, the space of $\alpha$-H\"older continuous functions from $[0,T]$ to $\bb R$. Moreover, writing $p^{\rm Dir}_\varepsilon$ for the Dirichlet heat kernel as defined in \eqref{eq:dirker} and
  \[
     \| f \|_{\mc C^{\alpha} ([0, T],\mathbb{R})} := \sup_{0 \le s < t \le T} \frac{|f(t)-f(s)|}{|t-s|^\alpha},
  \]
  we have for all $p \in [1, \infty)$
  \begin{multline*}  \mathbb{E} \bigg[ \bigg\|
     \bigg(\int_0^{\cdot} \mc Y_s^{\diamond 2} d s\bigg) (\varphi) - \int_0^{\cdot}
     \int_0^1 \Big\{ \mc Y^2_s \left( p^{\rm Dir}_{\varepsilon} (u, \cdot)
     \right) - \big\| p^{\rm Dir}_{\varepsilon} (u, \cdot) \big\|_{L^2}^2 \Big\} \varphi
     (u) d u d s \bigg\|_{\mc C^{\alpha} ([0, T],
     \mathbb{R})}^p \bigg] \\  \xrightarrow[\varepsilon \to 0]{} 0 \vphantom{\int}. \end{multline*}
  In particular, it follows together with Corollary~\ref{cor:kpz-nonlin-A-B} that $\mc B(\varphi)$ has zero quadratic variation.
\end{lemma}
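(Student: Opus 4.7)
My plan is to apply the Kolmogorov continuity criterion to the process
\[
   X_t \; := \; \bigg(\int_0^t \mc Y^{\diamond 2}_s \, ds\bigg)(\varphi),
\]
following the interpolation scheme of \cite[Corollary~3.17]{Gubinelli2015Energy}. Writing
\[
   X^\varepsilon_t \; := \; \int_0^t \int_0^1 \Big\{\mc Y_s^2\big(p^{\rm Dir}_\varepsilon(u,\cdot)\big) - \big\|p^{\rm Dir}_\varepsilon(u,\cdot)\big\|_{L^2}^2\Big\} \varphi(u)\, du\, ds
\]
for the mollified process, the idea is to bound $\mathbb{E}[|X_t-X_s|^p]$ by the sum of an approximation term $|(X_t-X_s) - (X^\varepsilon_t-X^\varepsilon_s)|$ and a Lipschitz-type term $|X^\varepsilon_t-X^\varepsilon_s|$, and then to optimize in $\varepsilon$.

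First, rerunning the proof of Lemma \ref{lem:kpz nonlin} on an arbitrary subinterval $[s,t]\subset[0,T]$ (which works verbatim thanks to the stationarity of $\mc Y$) yields the approximation estimate
\[
   \mathbb{E}\Big[\big|(X_t - X_s) - (X^\varepsilon_t - X^\varepsilon_s)\big|^p\Big] \;\lesssim\; (t-s)^{p/2}\, \varepsilon^{p/4}\, \|\varphi\|_{L^\infty}^p.
\]
Second, to bound the mollified increment I would combine the It\^o trick of Lemma \ref{lem:ito}, applied to the solution $G^\varepsilon$ of the Poisson equation $L_0 G^\varepsilon = F^\varepsilon$ with $F^\varepsilon = W_2(\int \varphi(u) \, p^{\rm Dir}_\varepsilon(u,\cdot)^{\otimes 2}\, du)$, with the cruder Jensen and stationarity bound
\[
   \mathbb{E}\big[|X^\varepsilon_t - X^\varepsilon_s|^p\big]^{1/p} \;\lesssim\; (t-s)\, \|F^\varepsilon\|_{L^p(\mu)} \;\lesssim\; (t-s)\, \varepsilon^{-1/2},
\]
where the second step uses Wiener chaos hypercontractivity together with the $L^2$-scaling of the Poisson kernel coming from Lemma \ref{lem:Dirichlet-heat-kernel}. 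The $\|\cdot\|_{1,0}$-norm of $G^\varepsilon$ is controlled by Corollary \ref{cor:poisson-solution norm} through the $\mc H^{-1}_{\rm Dir, Sym}([0,1]^2)$-norm of its kernel, which I expect to remain uniformly bounded in $\varepsilon$ thanks to the rapid decay of the Dirichlet Fourier coefficients of $\varphi \in \mc S_{\rm Neu}$. Interpolating the two bounds and optimizing in $\varepsilon$ produces $\mathbb{E}[|X_t - X_s|^p]^{1/p} \lesssim (t-s)^\beta$ for every $\beta < 3/4$, and Kolmogorov's criterion then delivers the claimed $\mc C^\alpha$-regularity. Applying the same reasoning to $X - X^\varepsilon$ gives the convergence in $L^p(\bb P; \mc C^\alpha([0,T],\bb R))$.

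Finally, the assertion on the quadratic variation of $\mc B(\varphi)$ is immediate from Corollary \ref{cor:kpz-nonlin-A-B} and the elementary fact that any real-valued $\alpha$-H\"older continuous function with $\alpha > 1/2$ has vanishing quadratic variation. The main obstacle will be the uniform-in-$\varepsilon$ control of $\|G^\varepsilon\|_{1,0}$: heuristically the limiting kernel is $\delta(v_1-v_2)\varphi(v_1)$, which belongs to $\mc H^{-1}_{\rm Dir,Sym}([0,1]^2)$ only thanks to subtle cancellations between diagonal and off-diagonal Fourier modes, and pushing the H\"older exponent all the way up to the claimed $3/4$ relies crucially on exploiting this cancellation rather than settling for the rougher exponent one obtains from the naive $L^p(\mu)$-bound on $F^\varepsilon$.
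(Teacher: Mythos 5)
Your overall scheme—interpolating between the approximation estimate from Lemma~\ref{lem:kpz nonlin} and a crude $L^p(\mu)$ bound on the mollified integrand, then running Kolmogorov—is indeed the right one and matches the argument of~\cite[Corollary~3.17]{Gubinelli2015Energy} that the paper invokes. The approximation estimate $(t-s)^{1/2}\varepsilon^{1/4}$ is correct, as is the observation that $\mc C^\alpha$ with $\alpha>1/2$ forces vanishing quadratic variation.

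The gap is in the second bound. You claim $\|F^\varepsilon\|_{L^p(\mu)}\lesssim\varepsilon^{-1/2}$, which is what one obtains by applying Minkowski's integral inequality to $F^\varepsilon = \int_0^1 \mc Y^{\diamond 2}(p^{\rm Dir}_\varepsilon(u,\cdot))\varphi(u)\,du$ together with $\|p^{\rm Dir}_\varepsilon(u,\cdot)\|_{L^2}^2\lesssim\varepsilon^{-1/2}$. But this is not sharp. Running the optimization honestly with $(t-s)^{1/2}\varepsilon^{1/4}+(t-s)\varepsilon^{-1/2}$ gives the optimal $\varepsilon\simeq(t-s)^{2/3}$ and the H\"older exponent $\frac12+\frac16=\frac23$, not $\frac34$. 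So with the bounds you state, your own interpolation only delivers $\alpha<2/3$, contradicting the conclusion you write down. To reach $\alpha<3/4$, one must use the sharp bound $\|F^\varepsilon\|_{L^2(\mu)}\lesssim\varepsilon^{-1/4}$, which comes from computing the $L^2$-norm of the second-chaos kernel directly via the semigroup property: writing $g_\varepsilon(v_1,v_2)=\int_0^1\varphi(u)\,p^{\rm Dir}_\varepsilon(u,v_1)\,p^{\rm Dir}_\varepsilon(u,v_2)\,du$ and using Chapman--Kolmogorov, one finds $\|g_\varepsilon\|_{L^2([0,1]^2)}^2 = \int\!\!\int \varphi(u)\varphi(u')\,p^{\rm Dir}_{2\varepsilon}(u,u')^2\,du\,du'\lesssim\varepsilon^{-1/2}$, hence $\|g_\varepsilon\|_{L^2}\lesssim\varepsilon^{-1/4}$ and $\|F^\varepsilon\|_{L^2}=\sqrt{2}\,\|g_\varepsilon\|_{L^2_{\rm Sym}}\lesssim\varepsilon^{-1/4}$. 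With this, the balance point becomes $\varepsilon\simeq(t-s)$ and the exponent $\frac12+\frac14=\frac34$. You in fact anticipate in your closing sentence that the ``naive $L^p(\mu)$-bound'' is too rough to get $3/4$, but you nevertheless use exactly that bound in the body of the argument, so as written the proof does not establish the claimed regularity.
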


\subsection{Mapping to the stochastic heat equation and conclusion}
 \label{ssec:map}
 
 To prove the uniqueness of our energy solution $\mc Y$ to~\eqref{eq:SBE-gen2}, we would like to apply the Cole-Hopf transform to map $\mc Y$ to a solution of the well posed stochastic heat equation. To do so, we should integrate $\mc Y$ in the space variable and then exponentiate the resulting process. But since we only have an explicit description of the dynamics of $\mc Y$ after testing against a test function $\varphi \in \SD$, we should first mollify $\mc Y$ with a kernel in $\SD$ before carrying out this program.
 
We find it convenient to mollify $\mc Y$ with the Dirichlet heat kernel $p^{\rm Dir}$ which was defined in \eqref{eq:dirker}, and we set
\[\mc Y^{\varepsilon}_t (u)   = \mc Y_t (p^{\rm Dir}_{\varepsilon} (u,
\cdot)).\]
The unique antiderivative $\Theta^\varepsilon_u(v)$ which satisfies $\nabla_u \Theta^\varepsilon_u(v) = p^{\rm Dir}_{\varepsilon} (u,v)$ and $\int_0^1 \Theta^\varepsilon_u(v)du = 0$ for all $v$ is explicitly given as
\[ \Theta^{\varepsilon}_u (v)= \sum_{\ell = 1}^{\infty} e^{-
   \varepsilon \pi^2 \ell^2} (- \ell \pi)^{- 1} 2 \cos (\ell \pi u) \sin (\ell
   \pi v), \qquad u,v \in [0,1], \varepsilon > 0. \]
Note that $\Theta^\varepsilon_u \in \Ctwo$ for all $u \in [0,1]$ and $\varepsilon > 0$, so to integrate $\mc Y^\varepsilon$ in the space variable we set
\[ \mc Z^{\varepsilon}_t (u) = \mc Y_t (\Theta^{\varepsilon}_u),\]
 which is a smooth function with 
$\nabla \mc Z^{\varepsilon} = \mc Y^{\varepsilon}$. Then we obtain
\[ d \mc Z^{\varepsilon}_t (u) = \mc Y_t (\Delta \Theta^{\varepsilon}_u) d t + E\; d \mathcal{A}_t
   (\Theta^{\varepsilon}_u) + \sqrt2 d \mc \nabla_{\rm{Dir}} \mc W_t (\Theta^{\varepsilon}_u), \]
where $\mc A$ is the non-linearity that has been previously defined and
\[
   \nabla_{\rm{Dir}} \mc W_t (\Theta^{\varepsilon}_u) = - \mc W_t (\nabla \Theta^\varepsilon_u) = \mc W_t (p^{\rm Neu}_{\varepsilon} (u, \cdot) - 1),
\]
where $p^{\rm Neu}$ is the heat kernel with Neumann boundary conditions, namely
\begin{equation}\label{eq:neuker} p^{\rm Neu}_{\varepsilon} (u, v) = - \nabla_v \Theta^{\varepsilon}_u (v) + 1
   = 1 + \sum_{\ell = 1}^{\infty} e^{- \varepsilon \pi^2 \ell^2} 2 \cos (\ell \pi
   u) \cos (\ell \pi v), \end{equation} for $u,v\in[0,1]$ and $\varepsilon > 0$. To shorten the notation we write
\[ J^{\varepsilon}_u (\cdot) = p^{\rm Neu}_{\varepsilon} (u, \cdot) - 1 = - \nabla
   \Theta^{\varepsilon}_u (\cdot) \in \mc S_{\rm{Neu}}, \]
so that Corollary~\ref{cor:kpz-nonlin-A-B} gives $\mathcal{A}_t (\Theta^{\varepsilon}_u) = \mathcal{B}_t(J^{\varepsilon}_u) = (\int_0^t
\mc Y_s^{\diamond 2} d s)(J^\varepsilon_u)$.
Furthermore, note that $\Delta \Theta^{\varepsilon}_u (v) = \Delta_u \Theta_u^{\varepsilon} (v)$,
from where we get
\[ d \mc Z^{\varepsilon}_t (u) = \Delta \mc Z^{\varepsilon}_t (u) d t + E
   d \mathcal{B}_t (J^{\varepsilon}_u) + \sqrt{2} d \mc W_t
   (J^{\varepsilon}_u), \]
and $d \langle \mc Z^{\varepsilon} (u) \rangle_t = 2 \| J^{\varepsilon}_u \|_{L^2}^2 \;
d t$ because $\mc B(J^\varepsilon_u)$ has zero quadratic variation by Lemma \ref{lem:kpz-nonlin-conv}.

Now, let us consider the process $\Psi^{\varepsilon}_t (u) = e^{E \mc Z^{\varepsilon}_t (u)}$. By the Cole-Hopf transform for the KPZ equation we expect that $\Psi^\varepsilon$ solves an approximate version of the stochastic heat equation as $\varepsilon \to 0$, and our goal for the remainder of this section is to prove this. It{\^o}'s formula applied to $\Psi_t^\varepsilon(u)$ gives
\begin{align*}
  d \Psi^{\varepsilon}_t (u) & = \Psi^{\varepsilon}_t (u) \left( E
  \Delta \mc Z^{\varepsilon}_t (u) d t + E^2 d \mathcal{B}_t
  (J^{\varepsilon}_u) + E \sqrt{2} d \mc W_t (J^{\varepsilon}_u) +
  E^2 \| J^{\varepsilon}_u \|_{L^2}^2 d t \right)\\
  & = \Delta \Psi^{\varepsilon}_t (u) d t + E^2
  \Psi^{\varepsilon}_t (u) \big(\| J^{\varepsilon}_u \|_{L^2}^2 -
  (\mc Y^{\varepsilon}_t (u))^2\big) d t \\
  &\quad + E^2 \Psi^{\varepsilon}_t (u) \left( d \mathcal{B}_t(p^{\rm Neu}_\varepsilon(u,\cdot)) -  d \langle \mathcal{B}_t, 1 \rangle + E^{- 1} \sqrt{2} d \mc W_t (J^{\varepsilon}_u)\right),
\end{align*}
where we wrote $\langle \mathcal{B}_t, 1 \rangle$ instead of $\mathcal{B}_1
(1)$ to avoid confusion between testing against the constant function $1$ and
evaluating in the point $1$, and where in the second step we applied the chain rule  for the Laplacian, namely $\Delta e^{E\mc Z^\varepsilon} = E e^{E\mc Z^\varepsilon} \Delta \mc Z^\varepsilon + E^2 e^{E\mc Z^\varepsilon} (\nabla \mc Z^\varepsilon)^2$. Next, recall that $\nabla
\mc Z^{\varepsilon}_t (0) = \mc Y_t(p_\varepsilon^{\rm Dir}(0,\cdot)) = \mc Y_t(0) = 0$ and similarly $\nabla \mc Z^{\varepsilon}_t (1) = 0$, which means that
for any $\varphi \in \mc C^2 ([0, 1])$ we can apply integration by parts to obtain for
all $t \geq 0$
\[
  \int_0^1 \Delta \Psi^{\varepsilon}_t (u) \varphi (u) d u = \int_0^1 \Psi^{\varepsilon}_t (u) \Delta \varphi (u) d u - \big(\nabla
  \varphi (1) \Psi^{\varepsilon}_t (1) - \nabla \varphi (0)
  \Psi^{\varepsilon}_t (0)\big) .
\]
We  apply this to derive the following weak formulation, for $\varphi \in \mc C^2 ([0, 1])$,
\begin{align*}
  d \Psi^{\varepsilon}_t (\varphi)  & = \Psi^{\varepsilon}_t (\Delta \varphi) d t + E \sqrt{2}
  \int_0^1 \Psi^{\varepsilon}_t (u) \varphi (u) d \mc W_t (J^{\varepsilon}_u)
  d x - E^2 \Psi^{\varepsilon}_t (\varphi)  d \langle \mathcal{B}_t, 1 \rangle\\
  &\quad + E^2 \int_0^1 (K^{\varepsilon}_u  - 1)\Psi^{\varepsilon}_t (u)
  \varphi (u) d u d t + \frac{E^2}{2} \big(\Psi^\varepsilon_t(0)\varphi(0) + \Psi^\varepsilon_t(1)\varphi(1)\big)dt \\
  &\quad + E^2 d R^{\varepsilon}_t (\varphi) + E^2 Q^{\varepsilon}_t
  (\varphi) d t,
\end{align*}
where
\begin{equation}\label{eq:remainder-1}
  R^{\varepsilon}_t (\varphi) = \int_0^1 \int_0^t \Psi^{\varepsilon}_s
  (u) \Big[ d \mathcal{B}_s (p^{\rm Neu}_{\varepsilon} (u, \cdot)) -
  \big((\mc Y^{\varepsilon}_s (u))^2 -\|p^{\rm Dir}_{\varepsilon} (u, \cdot) \|_{L^2}^2\big) ds -
  K^{\varepsilon}_u ds\Big] \varphi (u) d u
\end{equation}
for the deterministic function
\begin{equation}\label{eq:K-def}
   K^{\varepsilon}_u = E^2 \Bigg(\int_0^1 p^{\rm Neu}_{\varepsilon} (u, w) \big(\Theta^{\varepsilon}_u (w)\big)^2 d w - \bigg( \int_0^1 p^{\rm Dir}_{\varepsilon} (u, v) \Theta^{\varepsilon}_u (v) dv \bigg)^2 \Bigg),
\end{equation}
and
\begin{multline}\label{eq:remainder-2}
   Q^{\varepsilon}_t (\varphi) = \int_0^t \Bigg[ \int_0^1
   \Psi^{\varepsilon}_s (u) \left( \big\| J^{\varepsilon}_u \big\|_{L^2}^2 - \big\|
   p^{\rm Dir}_{\varepsilon} (u, \cdot) \big\|_{L^2}^2 + 1 - \tfrac12 (\delta_0(u) + \delta_1(u)) \right) \varphi (u) d u  \\ -
   E^{- 2} \big(\nabla \varphi (1) \Psi^{\varepsilon}_s (1) - \nabla \varphi
   (0) \Psi^{\varepsilon}_s (0)\big) \Bigg] d s.
\end{multline}
 In the next section we will show the following results concerning the three additional terms $R_t^\varepsilon(\varphi)$, $Q_t^\varepsilon(\varphi)$ and $K_u^\varepsilon$: 

\begin{lemma}
  \label{lem:remainder-1} We have for all $p > 2$ and $\varphi \in \mc C([0, 1])$
  \begin{equation}\label{eq:Rvan} \lim_{\varepsilon \rightarrow 0} \Big\{ \mathbb{E} \big[\| R^{\varepsilon}_\cdot
     (\varphi) \|_{p - \tmop{var} ; [0, T]}^2\big] +\mathbb{E} \big[\sup_{t \in[0,T]} | R^{\varepsilon}_t (\varphi) |^2\big] \Big\} = 0. \end{equation}
\end{lemma}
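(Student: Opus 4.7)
The plan is to adapt the strategy of Lemma~4.2 of \cite{Gubinelli2015Energy} to the present boundary setting, using Wiener chaos decomposition together with the Kipnis--Varadhan inequality. The point of the proof is that the two deterministic compensations $(\mc Y^\varepsilon_s(u))^2 - \|p^{\rm Dir}_\varepsilon(u,\cdot)\|_{L^2}^2$ and $K^\varepsilon_u$ were chosen (via the It\^o formula which produced \eqref{eq:remainder-1}) precisely to cancel the divergent pieces obtained when one rewrites $d\mc B_s(p^{\rm Neu}_\varepsilon(u,\cdot))$ in a more explicit form, leaving a remainder that vanishes with $\varepsilon$.

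First, I would use Lemma~\ref{lem:kpz nonlin} and Lemma~\ref{lem:kpz-nonlin-conv} to approximate $d\mc B_s(p^{\rm Neu}_\varepsilon(u,\cdot))$ by $W_2(\kappa^{\varepsilon,\delta}_u)(\mc Y_s)\,ds$ with
\[
\kappa^{\varepsilon,\delta}_u(w_1,w_2) = \int_0^1 p^{\rm Dir}_\delta(v,w_1)\,p^{\rm Dir}_\delta(v,w_2)\,p^{\rm Neu}_\varepsilon(u,v)\,dv,
\]
obtaining $R^{\varepsilon,\delta}_t(\varphi) \to R^\varepsilon_t(\varphi)$ as $\delta \to 0$ in both the supremum norm and the $p$-variation norm, uniformly in $\varepsilon$ on compact sets. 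It therefore suffices to estimate $R^{\varepsilon,\delta}_t(\varphi)$ uniformly in $\delta > 0$. Recalling that $(\mc Y^\varepsilon_s(u))^2 - \|p^{\rm Dir}_\varepsilon(u,\cdot)\|_{L^2}^2 = W_2((p^{\rm Dir}_\varepsilon(u,\cdot))^{\otimes 2})(\mc Y_s)$, the integrand of $R^{\varepsilon,\delta}_t(\varphi)$ becomes
\[
\Psi^\varepsilon_s(u)\cdot\Big(W_2\big(\kappa^{\varepsilon,\delta}_u - (p^{\rm Dir}_\varepsilon(u,\cdot))^{\otimes 2}\big)(\mc Y_s) - K^\varepsilon_u\Big).
\]

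Next I would expand $\Psi^\varepsilon_s(u) = e^{E\,\mc Y_s(\Theta^\varepsilon_u)}$ in Wiener chaos via the exponential identity $e^{W_1(f) - \|f\|_{L^2}^2/2} = \sum_{k\ge 0} \tfrac{1}{k!}W_k(f^{\otimes k})$, and apply the standard product formula for $W_k(f^{\otimes k})W_2(g)$, which decomposes into contributions of chaos orders $k+2$, $k$ and $k-2$. The contributions of order zero produced by the contractions of $\kappa^{\varepsilon,\delta}_u$ along the diagonal diverge as $\delta \to 0$, but they are exactly matched -- by direct computation using the semigroup identity $\int p^{\rm Dir}_\delta(v,\cdot)\otimes p^{\rm Dir}_\delta(v,\cdot)\,dv = p^{\rm Dir}_{2\delta}$ -- by the constant $K^\varepsilon_u$ in \eqref{eq:K-def}. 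After this cancellation, $R^{\varepsilon,\delta}_t(\varphi) = \int_0^t F^{\varepsilon,\delta}(\mc Y_s)\,ds$ for some $F^{\varepsilon,\delta} \in L^2(\mu)$ with a uniformly-in-$\delta$ bounded chaos expansion. Applying Corollary~\ref{cor:KV} controls simultaneously $\mathbb{E}[\sup_t |R^{\varepsilon,\delta}_t(\varphi)|^2]$ and $\mathbb{E}[\|R^{\varepsilon,\delta}_\cdot(\varphi)\|_{p\text{-var}}^2]$ by $T\|F^{\varepsilon,\delta}\|_{-1,0}^2$, and by Corollary~\ref{cor:poisson-solution norm} this reduces to summing $\mc H^{-1}_{\rm Dir, Sym}([0,1]^d)$-norms of each chaos component.

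The main obstacle is this last step: tracking the cancellations across the different Wiener chaoses and showing that the resulting $\mc H^{-1}_{\rm Dir, Sym}$-norms are summable in the chaos order $k$ and admit a uniform-in-$\delta$ bound that vanishes as $\varepsilon \to 0$. Because the space $[0,1]$ is no longer translation invariant, one cannot use the Fourier arguments of \cite{Gubinelli2015Energy}; instead one must rely on the spectral representation of Lemma~\ref{lem:explicit-norms} together with the Gaussian bounds and mixed Dirichlet/Neumann heat kernel estimates of Appendix~\ref{app:heat-kernel}. The key technical input is an estimate of the form $\|\kappa^{\varepsilon,\delta}_u - (p^{\rm Dir}_\varepsilon(u,\cdot))^{\otimes 2}\|_{\mc H^{-1}_{\rm Dir, Sym}([0,1]^2)} \lesssim \varepsilon^{\alpha}$ for some $\alpha > 0$, uniformly in $\delta \in (0, \varepsilon]$, which allows us to conclude by sending first $\delta \to 0$ and then $\varepsilon \to 0$.
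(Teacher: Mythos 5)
Your high-level plan (mollify by $\delta$, pass to $R^{\varepsilon,\delta}$, reduce by Kipnis--Varadhan to a $\|\cdot\|_{-1,0}$--estimate, then send $\delta\to 0$ followed by $\varepsilon\to 0$ using heat-kernel bounds) is the right shell, and it matches the paper's. But two of the pivots you propose would fail as stated, and in a way that matters.

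First, the role of $K^{\varepsilon}_u$. You claim $K^{\varepsilon}_u$ cancels a diagonal contraction of $\kappa^{\varepsilon,\delta}_u$ that diverges like $\delta^{-1/2}$. That divergent piece (which is $\|p^{\rm Dir}_\delta(v,\cdot)\|_{L^2}^2$) is \emph{already} subtracted inside the definition of the renormalized square $\widetilde{\mc Y}^{\diamond 2}$ (equivalently, inside the definition of $\mc B_t$ in Theorem~\ref{def:energy-KPZ}); that is why the inner bracket of $R^{\varepsilon,\delta}$ in~\eqref{eq:r-eps-del-def} is exactly $W_2(g^{\varepsilon,\delta}_u)$ minus a constant, with $g^{\varepsilon,\delta}_u = \kappa^{\varepsilon,\delta}_u - p_\varepsilon^{\rm Dir}(u,\cdot)^{\otimes 2}$ given in~\eqref{eq:g-eps-delta-def}. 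What $K^{\varepsilon,\delta}_u$ actually compensates is the \emph{finite} zeroth-chaos contribution obtained when both legs of $W_2(g^{\varepsilon,\delta}_u)$ are contracted against the Malliavin derivative $\mathrm D_{v}\Psi^\varepsilon_0(u)=E\Psi^\varepsilon_0(u)\Theta^\varepsilon_u(v)$, namely $E^2\int g^{\varepsilon,\delta}_u(v_1,v_2)\Theta^\varepsilon_u(v_1)\Theta^\varepsilon_u(v_2)\,dv_1dv_2 = K^{\varepsilon,\delta}_u$, which converges to the constant $E^2/12$ (Lemma~\ref{lem:constant-conv}) rather than diverging. Building the proof around the wrong cancellation picture means the chaos-by-chaos bookkeeping you describe afterwards would be organized against the wrong enemy.

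Second, the proposed key technical input $\|\kappa^{\varepsilon,\delta}_u - p^{\rm Dir}_\varepsilon(u,\cdot)^{\otimes 2}\|_{\mc H^{-1}_{\rm Dir,Sym}([0,1]^2)}\lesssim\varepsilon^\alpha$ uniformly in $u$ is not available, and it is not what closes the argument. Near the boundary the Neumann and Dirichlet heat kernels differ at order one, so any $u$--uniform estimate must be compared with a quantity like $\frac{\varepsilon^{1/2}}{u(1-u)}\wedge 1$; this is precisely why the paper's Lemma~\ref{lem:g-bound} only bounds $\|g^{\varepsilon,\delta}_u(\cdot,v_2)\|_{\mc H^{-1}_{\rm Dir}}$ (a one-variable $\mc H^{-1}$--norm) by a kernel-valued expression involving $|p^{\rm Neu}_\varepsilon(u,v_2)-p^{\rm Dir}_\varepsilon(u,v_2)|$ and $p^{\rm Dir}_\varepsilon(u,v_2)(\varepsilon^{-1/4}|v_2-u|+\varepsilon^{1/4})$, whose $L^\infty$--norm in $v_2$ is not small in $\varepsilon$. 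Smallness is recovered only after integrating against $\varphi(u)$ (and additional kernels), as is carried out in Lemmas~\ref{lem:g-theta-bound}, \ref{lem:Adelta}, \ref{lem:Bdelta}. Moreover, expanding $\Psi^\varepsilon_0(u)=e^{E\mc Y_0(\Theta^\varepsilon_u)}$ into its full chaos series and applying the product formula, as you propose, generates a genuinely infinite bookkeeping problem (summability in the chaos order $k$ with the right $u$-dependence); the paper avoids this entirely by a single Gaussian integration by parts (Lemma~\ref{lem:r-eps-constants}), which collapses $\|r^{\varepsilon,\delta}\|_{-1,0}^2$ into two finite terms $A^{\varepsilon,\delta}$ and $B^{\varepsilon,\delta}$ in which the exponential $\Psi^\varepsilon_0(u)$ appears only through $\mathbb E[|\Psi^\varepsilon_0(u)|^2]$. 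That shortcut is the structural idea your proposal is missing.
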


To control the term $Q^\varepsilon(\varphi)$, we need to assume that $\varphi$ satisfies suitable boundary conditions. But this will not be a problem because the boundary condition is compatible with our formulation of the stochastic heat equation with Robin boundary condition, as defined in Appendix~\ref{app:she}.

\begin{lemma}\label{lem:remainder-2}
For all $p > 2$ and $\varphi \in \mc C^2([0,1])$ with $\nabla \varphi(0) = \nabla \varphi(1) = 0$
  we have almost surely
  \[
     \lim_{\varepsilon \rightarrow 0} \big\| Q^{\varepsilon}_\cdot (\varphi) \big\|_{L^\infty([0,T])} = 0,
  \]
  and the $1$-variation norm (as defined in \eqref{eq:pvar}) of $\big\{Q^\varepsilon(\varphi)\big\}_{\varepsilon \in (0,1]}$ is uniformly bounded.
\end{lemma}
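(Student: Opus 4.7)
The plan is to first identify the boundary kernel appearing in the integrand of $Q^\varepsilon_t(\varphi)$ explicitly. Using $\tilde{e}_k(u)^2 - e_k(u)^2 = 2\cos(2\pi k u)$ together with Parseval and the spectral formulas for $J^\varepsilon_u = p^{\rm Neu}_\varepsilon(u,\cdot) - 1$ and $p^{\rm Dir}_\varepsilon(u,\cdot)$, one finds
\[
    \|J^\varepsilon_u\|_{L^2}^2 - \|p^{\rm Dir}_\varepsilon(u,\cdot)\|_{L^2}^2 \;=\; 2\sum_{k \ge 1} e^{-2\varepsilon \pi^2 k^2} \cos(2\pi k u) \;=\; -1 + h_\varepsilon(u),
\]
where by Poisson summation $h_\varepsilon(u) := (2\pi\varepsilon)^{-1/2}\sum_{m\in\bb Z} e^{-(m-u)^2/(2\varepsilon)}$ is (up to a factor) the $\bb Z$-periodic heat kernel at time $\varepsilon$. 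Inserting this into \eqref{eq:remainder-2} and using $\nabla\varphi(0) = \nabla\varphi(1) = 0$ to kill the boundary-flux correction produces the convenient form
\[
    Q^\varepsilon_t(\varphi) \;=\; \int_0^t \bigg[\int_0^1 \Psi^\varepsilon_s(u) \varphi(u) h_\varepsilon(u)\, du \;-\; \tfrac12 \Psi^\varepsilon_s(0)\varphi(0) \;-\; \tfrac12 \Psi^\varepsilon_s(1)\varphi(1)\bigg]\, ds.
\]

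Next I would prove the quantitative boundary-concentration estimate for $h_\varepsilon$: terms with $m \notin \{0,1\}$ contribute only $O(\varepsilon^{-1/2} e^{-1/(2\varepsilon)})$ and are harmless, while the $m=0$ and $m=1$ terms, after the substitutions $u = \sqrt\varepsilon\,v$ and $u = 1 - \sqrt\varepsilon\, v$ respectively, combined with dominated convergence and H\"older continuity, yield for any $\alpha \in (0,1]$ and $F \in \mc C^\alpha([0,1])$
\[
    \bigg|\int_0^1 F(u) h_\varepsilon(u)\, du - \tfrac12 F(0) - \tfrac12 F(1)\bigg| \;\lesssim\; \varepsilon^{\alpha/2}\, \|F\|_{\mc C^\alpha([0,1])}.
\]
Applied with $F_s = \Psi^\varepsilon_s \varphi$ and integrated in $s$, this reduces the problem to the uniform moment bound $\sup_{\varepsilon \in (0,1],\, s \in [0,T]} \bb E[\|\Psi^\varepsilon_s\|_{\mc C^\alpha([0,1])}^p] < \infty$ for some $\alpha \in (0, \tfrac12)$ and all $p \ge 1$. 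The $1$-variation bound is then essentially free: the trivial control $\|h_\varepsilon\|_{L^1([0,1])} \le C$ shows that the time-derivative of $Q^\varepsilon_\cdot(\varphi)$ is pointwise bounded by $C\|\Psi^\varepsilon_s\|_{L^\infty} \|\varphi\|_{L^\infty}$, which supplies the required uniform-in-$\varepsilon$ control of $\|Q^\varepsilon_\cdot(\varphi)\|_{1\text{-}\tmop{var}; [0,T]}$ in every $L^p$ norm.

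Finally, the heart of the proof is the uniform-in-$\varepsilon$ H\"older estimate on $\Psi^\varepsilon_s$. Writing $\Theta^\varepsilon_u = T^{\rm Dir}_\varepsilon \Theta_u$ with $\Theta_u(w) = \mathbf{1}_{[0,u]}(w) + w - 1$, the contractivity of the Dirichlet heat semigroup on $L^2([0,1])$ gives $\|\Theta^\varepsilon_u - \Theta^\varepsilon_{u'}\|_{L^2}^2 \le \|\Theta_u - \Theta_{u'}\|_{L^2}^2 = |u-u'|$, uniformly in $\varepsilon$. Since by stationarity $\mc Y_s$ is in law a spatial white noise, Gaussianity yields $\bb E[|\mc Z^\varepsilon_s(u) - \mc Z^\varepsilon_s(u')|^p] \lesssim_p |u-u'|^{p/2}$ uniformly in $\varepsilon$ and $s$, and Kolmogorov's criterion then produces $\bb E[\|\mc Z^\varepsilon_s\|_{\mc C^\alpha([0,1])}^p] \le C_{\alpha,p}$ for any $\alpha < \tfrac12$ and $p$ large enough. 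This transfers to $\Psi^\varepsilon_s = e^{E \mc Z^\varepsilon_s}$ via $|e^a - e^b| \le (e^a + e^b)|a-b|$ together with Fernique's theorem (which gives exponential moments of $\|\mc Z^\varepsilon_s\|_{L^\infty}$, and therefore all polynomial moments of $\|\Psi^\varepsilon_s\|_{L^\infty}$). Combining with the previous paragraph yields $\bb E[\sup_{t \in [0,T]} |Q^\varepsilon_t(\varphi)|^p] \lesssim \varepsilon^{p\alpha/2}$, and a Borel--Cantelli argument along the subsequence $\varepsilon_k = 2^{-k}$ then upgrades this to almost-sure uniform convergence in $t \in [0,T]$. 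The main technical subtlety, as expected, is keeping every estimate uniform in $\varepsilon$ as $\Theta^\varepsilon_u$ degenerates toward the jump function $\Theta_u$; the elegant use of $L^2$-contractivity of the Dirichlet heat semigroup for this purpose is what allows the rest of the argument to go through without any intricate spectral bookkeeping.
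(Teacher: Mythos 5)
Your Poisson-summation rewrite of $\|J^\varepsilon_u\|_{L^2}^2 - \|p^{\rm Dir}_\varepsilon(u,\cdot)\|_{L^2}^2 = -1 + h_\varepsilon(u)$ is correct and equivalent to the paper's representation via $p^{\rm Neu}_{2\varepsilon}(0,2u)$ (resp.\ $p^{\rm Neu}_{2\varepsilon}(0,2-2u)$); your resulting expression for $Q^\varepsilon_t(\varphi)$ after killing the flux term with $\nabla\varphi(0)=\nabla\varphi(1)=0$ is right, the quantitative H\"older-rate estimate for $h_\varepsilon$ is sound, and the uniform-in-$\varepsilon$ bound $\|\Theta^\varepsilon_u-\Theta^\varepsilon_{u'}\|_{L^2}\le |u-u'|^{1/2}$ via $L^2$-contractivity of the Dirichlet semigroup is a genuinely clean observation that does feed into a uniform Kolmogorov estimate for $\mc Z^\varepsilon_s$. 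The $1$-variation bound via $\|h_\varepsilon\|_{L^1([0,1])}\le C$ is also fine, and matches the paper's statement that the integrand converges absolutely.

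The gap is at the very end. Markov plus Borel--Cantelli with $\varepsilon_k=2^{-k}$ only gives $\sup_{t\in[0,T]}|Q^{\varepsilon_k}_t(\varphi)|\to 0$ almost surely \emph{along that subsequence}. The lemma asserts almost sure convergence of the full family as $\varepsilon\to 0$, and $Q^\varepsilon$ depends on $\varepsilon$ through $\Psi^\varepsilon$ in a non-monotone way, so one cannot interpolate for free. The paper gets around this by arguing pathwise (dominated convergence in $s$ and $u$ after noting that $u\mapsto\Psi^\varepsilon_s(u)\varphi(u)$ is, for almost every sample path, a family that is uniformly bounded and converges uniformly in $(s,u)$ to $\Psi_s(u)\varphi(u)$). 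Your own estimate already yields $\sup_t|Q^\varepsilon_t(\varphi)| \lesssim \varepsilon^{\alpha/2}\int_0^T\|\Psi^\varepsilon_s\varphi\|_{\mc C^\alpha}ds$, so the cleanest patch is not Borel--Cantelli at all: establish that $\sup_{\varepsilon\in(0,1]}\int_0^T\|\Psi^\varepsilon_s\varphi\|_{\mc C^\alpha}\,ds<\infty$ almost surely (this is the uniform-in-$\varepsilon$, pathwise version of your $L^p$ bound, which the $L^p$ bound for fixed $\varepsilon$ does not by itself imply), and the full limit then follows deterministically on an event of probability one. Note also that the time-increments of $\mc Y_s$ are \emph{not} Gaussian for $E\neq 0$, so a Gaussian two-parameter Kolmogorov argument in $(s,\varepsilon)$ or $(s,u)$ does not apply directly; the paper outsources this uniform regularity of the approximate Cole--Hopf family to the arguments already established in~\cite{Gubinelli2015Energy} (cf.\ the paragraph preceding Lemma~\ref{lem:exp-int}).
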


\begin{lemma}
  \label{lem:constant-conv}The function $K^{\varepsilon}$ converges in $L^2 ([0, 1])$ as
  $\varepsilon \rightarrow 0$  to $E^2 / 12$.
\end{lemma}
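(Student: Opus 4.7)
The plan is to reduce the first integral in $K_u^\varepsilon/E^2$ to $\|\Theta^\varepsilon_u\|_{L^2([0,1])}^2$ via an integration-by-parts identity, then to compute both $\|\Theta^\varepsilon_u\|_{L^2}^2$ and $B^\varepsilon(u) := \int_0^1 p^{\rm Dir}_\varepsilon(u,v)\Theta^\varepsilon_u(v)\, dv$ from the explicit Fourier expansions of the heat kernels and $\Theta^\varepsilon_u$, and finally to pass to the $\varepsilon \to 0$ limit in $L^2([0,1])$. The crucial observation is that, because $\Theta^\varepsilon_u$ is given by a series in $\sin(\ell\pi\,\cdot)$, it automatically satisfies the Dirichlet condition $\Theta^\varepsilon_u(0) = \Theta^\varepsilon_u(1) = 0$ for every $\varepsilon > 0$.

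First, using the identity $p^{\rm Neu}_\varepsilon(u,w) = 1 - \nabla_w \Theta^\varepsilon_u(w)$ (which is the explicit form of $J^\varepsilon_u = -\nabla\Theta^\varepsilon_u$ introduced in Section~\ref{ssec:map}) together with $\nabla_w\Theta^\varepsilon_u(w)\cdot(\Theta^\varepsilon_u(w))^2 = \tfrac{1}{3}\nabla_w[(\Theta^\varepsilon_u(w))^3]$, I would integrate by parts to get
\[
   \int_0^1 p^{\rm Neu}_\varepsilon(u,w)\bigl(\Theta^\varepsilon_u(w)\bigr)^2 dw = \|\Theta^\varepsilon_u\|_{L^2([0,1])}^2 - \tfrac{1}{3}\bigl[(\Theta^\varepsilon_u(1))^3 - (\Theta^\varepsilon_u(0))^3\bigr] = \|\Theta^\varepsilon_u\|_{L^2([0,1])}^2,
\]
the boundary contribution vanishing by the Dirichlet property. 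Parseval combined with the orthogonality $\int_0^1\sin(k\pi v)\sin(\ell\pi v)\, dv = \tfrac12\delta_{k,\ell}$ then yields the explicit representations
\[
   \|\Theta^\varepsilon_u\|_{L^2}^2 = \sum_{\ell \geqslant 1}\frac{e^{-2\varepsilon\pi^2\ell^2}}{\ell^2\pi^2}\bigl(1 + \cos(2\ell\pi u)\bigr), \qquad B^\varepsilon(u) = -\sum_{k \geqslant 1}\frac{e^{-2\varepsilon\pi^2 k^2}}{k\pi}\sin(2k\pi u).
\]

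Finally, I would let $\varepsilon \to 0$. Since $\sum_\ell \ell^{-4}$ and $\sum_k k^{-2}$ are finite, dominated convergence applied to the Fourier coefficients gives convergence of both series in $L^2([0,1])$ to their $\varepsilon = 0$ counterparts. The classical Fourier expansion of the Bernoulli polynomial, $\sum_{k\geqslant 1}\cos(2\pi k u)/(\pi^2 k^2) = u^2 - u + \tfrac16$, and that of the sawtooth, $\sum_{k\geqslant 1}\sin(2\pi k u)/(k\pi) = \tfrac12 - u$ for $u \in (0,1)$, identify these limits as
\[
   \lim_{\varepsilon \to 0}\|\Theta^\varepsilon_u\|_{L^2}^2 = u^2 - u + \tfrac{1}{3}, \qquad \lim_{\varepsilon \to 0}B^\varepsilon(u) = u - \tfrac{1}{2}, \qquad \text{in } L^2([0,1]).
\]
The uniform bound $|B^\varepsilon(u)| \leqslant \|\Theta^\varepsilon_u\|_{L^\infty}\|p^{\rm Dir}_\varepsilon(u,\cdot)\|_{L^1} \leqslant 1$ (which follows from the $L^\infty$-contraction of $T^{\rm Dir}_\varepsilon$ together with $\|\Theta_u\|_{L^\infty} \leqslant 1$, and from $T^{\rm Dir}_\varepsilon 1 \leqslant 1$) upgrades the $L^2$ convergence of $B^\varepsilon$ to that of its square via $(B^\varepsilon)^2 - (B^0)^2 = (B^\varepsilon - B^0)(B^\varepsilon + B^0)$, giving $(B^\varepsilon)^2 \to (u - \tfrac12)^2 = u^2 - u + \tfrac14$ in $L^2([0,1])$. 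Subtracting the two limits produces
\[
   \frac{K^\varepsilon_u}{E^2} \longrightarrow \Bigl(u^2 - u + \tfrac13\Bigr) - \Bigl(u^2 - u + \tfrac14\Bigr) = \tfrac{1}{12} \qquad \text{in } L^2([0,1]),
\]
which is the desired conclusion. The only delicate step is spotting the integration-by-parts reduction to $\|\Theta^\varepsilon_u\|_{L^2}^2$; once that is in hand, the remainder is a routine Fourier computation, and the fact that the renormalisation constant turns out to be exactly $E^2/12$ ultimately arises from the mismatch $\mathbb{E}[U^2] - (\mathbb{E}[U])^2 = \tfrac{1}{12}$ for a uniform random variable $U$ on $[0,1]$.
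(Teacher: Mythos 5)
Your proof is correct, and the crucial observation — integrating by parts to reduce $\int_0^1 p^{\rm Neu}_\varepsilon(u,w)(\Theta^\varepsilon_u(w))^2\,dw$ to $\|\Theta^\varepsilon_u\|^2_{L^2}$ via $p^{\rm Neu}_\varepsilon(u,\cdot) = 1 - \nabla\Theta^\varepsilon_u$ and the Dirichlet endpoint conditions — is exactly the one the paper uses. Where you diverge is in identifying the $\varepsilon \to 0$ limits. The paper first establishes, via the identity $\Theta^\varepsilon_u(v) = \int_0^1 \Theta_u(w)p^{\rm Dir}_\varepsilon(v,w)\,dw$, that the kernel $\Theta_u$ is explicitly $\Theta_u(w) = \mathbf 1_{[0,u]}(w) + w - 1$; it then evaluates $\lim_\varepsilon \Theta^{2\varepsilon}_u(u)$ probabilistically via the killed Brownian motion ($\Theta^{2\varepsilon}_u(u) = \mathbb E^{\rm Dir}_u[\mathbf 1_{[0,u]}(B_{2\varepsilon}) + B_{2\varepsilon}-1] \to u-\tfrac12$) and computes $\|\Theta_u\|^2_{L^2}$ by direct integration of $(\mathbf 1_{[0,u]}+w-1)^2$. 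You instead stay entirely inside Fourier analysis: you write both $\|\Theta^\varepsilon_u\|^2_{L^2}$ and $\int_0^1 p^{\rm Dir}_\varepsilon(u,\cdot)\Theta^\varepsilon_u\,dv$ as lacunary trigonometric series with explicit $\varepsilon$-dependent weights, and invoke the classical expansions of the Bernoulli polynomial $B_2$ and the sawtooth to identify the limits, with a dominated-convergence argument on the coefficients for the $L^2$ convergence. Both routes are valid; the paper's is perhaps more self-contained (no external Fourier identities needed) and makes the $u-\tfrac12$ term transparent as a conditional expectation, while yours is more mechanical and makes the $L^2$ mode of convergence explicit rather than leaving it implicit in a uniform bound. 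Your final observation that $\tfrac{1}{12} = \operatorname{Var}(U)$ for $U$ uniform on $[0,1]$ is a nice interpretation that the paper does not record.
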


Given these lemmas, the same arguments as in the proof of \cite[Theorem~2.4]{Gubinelli2015Energy} (see more precisely \cite[Section 4.1]{Gubinelli2015Energy}) show that the process \[\Psi_t (u) =
e^{E \mc Y_t (\Theta^0_u)} = \lim_{\varepsilon \rightarrow 0}
\Psi^{\varepsilon}_t (u), \qquad (t,u) \in [0,T] \times [0, 1],\] solves for all
$\varphi \in \mc \SN$
\begin{align*}
  \Psi_t (\varphi)  = &\; \big\langle e^{E \mc Y_0 (\Theta^0_u)}, \varphi \big\rangle +
   \int_0^t \Psi_s (\Delta \varphi) d s \\& +  \bigg(\frac{E^4}{12} - E^2\bigg) \int_0^t \Psi_s (\varphi) d s - E^2 \int_0^t
  \Psi_s (\varphi) d \langle \mathcal{B}_s, 1 \rangle \\
  & +  \frac{E^2}{2} (\Psi_t(0)\varphi(0) + \Psi_t(1)\varphi(1))dt\\
  & + E \sqrt{2} \bigg(
  \int_0^1 \int_0^t \Psi_s (u) \varphi (u) d \mc W_s (u) d u - \int_0^t
  \Psi_s (\varphi) d \langle \mc W_s, 1 \rangle \bigg) .
\end{align*}
Therefore, setting 
\begin{equation}\label{eq:Z-def}
    \mc X_t = E \sqrt{2} \langle \mc W_t, 1 \rangle - \frac{E^4}{12}
   t + E^2 \langle \mathcal{B}_t, 1 \rangle  \quad \text{ and } \quad   \Phi_t (u)
   = \Psi_t (u) e^{\mc X_t},
\end{equation}
we have $\Phi_0 = \Psi_0$ and $d \langle \mc X\rangle_t = E^2 2 dt$ (recall that $\langle B, 1 \rangle$ has zero quadratic variation). Moreover, for $\varphi \in  \SN$, It\^o's formula gives
\begin{align*}
  d \Phi_t (\varphi) & = e^{\mc X_t} d \Psi_t (\varphi) + \Phi_t (\varphi)
  d \mc X_t + \frac{1}{2} \Phi_t (\varphi) d \langle \mc X \rangle_t + d
  \langle \Psi (\varphi), e^{\mc X} \rangle_t\\
  & = \Phi_t (\Delta \varphi) d t + \frac{E^2}{2} \big(\Phi_t(0)\varphi(0) + \Phi_t(1)\varphi(1)\big)dt \\ & \quad + E \sqrt{2} \int_0^1 \Phi_t (u)
  \varphi (u) d \mc W_t (u) d u,
\end{align*}
where we used that
\begin{align*}
   \langle \Psi (\varphi), e^{\mc X} \rangle_t & = E^2 2 \bigg\langle
  \int_0^1 \int_0^\cdot \Psi_s (u) \varphi (u) d \mc W_s (u) d u - \int_0^\cdot
  \Psi_s (\varphi) d \langle \mc W_s, 1 \rangle, \int_0^\cdot e^{\mc X_s} d \langle \mc W_s, 1\rangle \bigg\rangle_t \\
  &= E^2 2 \bigg( \int_0^1 \int_0^t \Phi_s (u) \varphi (u) ds d u - \int_0^t
  \Phi_s (\varphi) ds \bigg) = 0.
\end{align*}
Moreover, $\Phi$ is locally uniformly bounded in $L^2(\bb P)$,  and more precisely $\Phi \in \mc L_C^2([0,T])$, as given by the following:

\begin{lemma}\label{lem:exp-int}
   Consider the process $\Phi$ defined in~\eqref{eq:Z-def}. There exists $T>0$ such that
   \[
      \sup_{t \in [0,T], \, u\in [0,1]}\bb E[\Phi_t(u)^2] < \infty.
   \]
   Moreover, if $R\ge 0$ and $\sup_{t \in [0,R], \, u\in [0,1]}  \bb E[\Phi_t(u)^{8}] < \infty$, then for some $T > 0$, independent of $R$,
   \[
      \sup_{t \in [0,R+T], \, u\in [0,1]}   \bb E[\Phi_t(u)^{2}] < \infty.
   \]
\end{lemma}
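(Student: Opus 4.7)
The plan is to exploit the fact that $\Phi$ is a weak solution to the stochastic heat equation with Robin boundary conditions as derived in Section~\ref{ssec:map}. As discussed in Remark~\ref{rmk:CS}, I will avoid the ill-behaved Robin heat kernel and instead work with the Neumann heat kernel $p^{\mathrm{Neu}}$ together with explicit boundary corrections, which leads to the mild formulation
\begin{equation*}
\Phi_t(u) = \int_0^1 p_t^{\mathrm{Neu}}(u,v)\Phi_0(v)dv + \frac{E^2}{2}\int_0^t \big[ p_{t-s}^{\mathrm{Neu}}(u,0)\Phi_s(0) + p_{t-s}^{\mathrm{Neu}}(u,1)\Phi_s(1) \big] ds + M_t(u),
\end{equation*}
where $M_t(u) = E\sqrt{2}\int_0^t \int_0^1 p_{t-s}^{\mathrm{Neu}}(u,v)\Phi_s(v)d\mc W_s(v)dv$. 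This identity would follow by applying It\^o's formula to $s \mapsto \Phi_s(T_{t-s}^{\mathrm{Neu}} p_\varepsilon^{\mathrm{Neu}}(u,\cdot))$, invoking the weak formulation~\eqref{eq:she-rob-weak} to identify the boundary terms, and then letting $\varepsilon \to 0$.

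For the first assertion, I would first observe that $\Phi_0 = \Psi_0$ with $\Psi_0(u) = e^{E\mc Y_0(\Theta_u^0)}$. Since $\mc Y_0$ is a white noise and $\|\Theta_u^0\|_{L^2}^2 \leq 1/3$ uniformly in $u$, the random variable $\Phi_0(u)$ is lognormal with all moments bounded uniformly in $u \in [0,1]$. Setting $F_t(u) = \bb E[\Phi_t(u)^2]$, I would then square the mild formula and bound each term. For the initial piece, Jensen's inequality against the probability measure $p_t^{\mathrm{Neu}}(u,\cdot)dv$ yields a bound by $\sup_v F_0(v)$. For the boundary source integrals, Cauchy-Schwarz on the double time integral gives
\begin{equation*}
\bb E\bigg[\bigg(\int_0^t p_{t-s}^{\mathrm{Neu}}(u,0)\Phi_s(0)ds\bigg)^2\bigg] \leq \bigg(\int_0^t p_{t-s}^{\mathrm{Neu}}(u,0)ds\bigg)^2 \sup_{s\leq t}F_s(0) \leq Ct\sup_{s\leq t}F_s(0),
\end{equation*}
using the kernel bound $\int_0^t p_{t-s}^{\mathrm{Neu}}(u,0)ds \leq C\sqrt{t}$ uniformly in $u$ from Appendix~\ref{app:heat-kernel}. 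For the stochastic term, It\^o's isometry combined with $\|p_\tau^{\mathrm{Neu}}(u,\cdot)\|_{L^2}^2 \lesssim \tau^{-1/2}$ produces a contribution of order $\sqrt{t}\sup_{s\leq t,v}F_s(v)$. Setting $g(t) = \sup_{s\leq t, v\in[0,1]}F_s(v)$, these pieces combine to give $g(t) \leq C_0 + C_1(\sqrt{t}+t)g(t)$, which closes for $t \leq T$ with $T$ chosen so that $C_1(\sqrt{T}+T) < 1/2$.

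The iteration step would follow the same outline, but starting the mild representation at time $R$ with initial datum $\Phi_R$; the analogous closed inequality reads $\tilde g(t) \leq C_0'(R) + C_1(\sqrt{t}+t)\tilde g(t)$, where $\tilde g(t) = \sup_{s\in[R,R+t], v} F_s(v)$, $C_0'(R)$ depends only on $\sup_v F_R(v)$, and crucially $C_1$ is independent of $R$, so the same $T$ works at every step. The $8$th-moment hypothesis would enter in two places: first, to control the boundary values $\Phi_s(0), \Phi_s(1)$ appearing in the boundary integrals via a Kolmogorov-continuity argument yielding a $u$-continuous modification of $\Phi_s$ on $[0,R]$ with finite boundary moments; and second, when bounding the cross terms $\bb E[\Phi_r(0)\Phi_s(0)]$ in the squared boundary integral via Cauchy-Schwarz, the higher-moment hypothesis ensures that $F_s(0), F_s(1)$ are dominated by $\sup_v F_s(v)$, so the Gronwall step closes uniformly.

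The principal obstacle will be the boundary source integrals: since $p_\tau^{\mathrm{Neu}}(0,0) \sim \tau^{-1/2}$, a naive It\^o-isometry-style estimate would produce a non-integrable $\tau^{-1}$ singularity. The deterministic Cauchy-Schwarz trick above (pulling out the boundary values and estimating the kernel in $L^1$ rather than $L^2$) is precisely what circumvents this difficulty, and it is the point at which this setting differs substantially from the Robin SHE with sink-type boundary conditions, where the Robin heat kernel enjoys better decay and the standard variation-of-constants formula would apply directly.
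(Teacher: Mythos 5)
Your argument is appealing, but it runs into a circularity that the paper's route (following the omitted proof of~\cite[Lemma~B.1]{Gubinelli2015Energy}) is specifically designed to avoid. The mild formula you start from is derived in the proof of Proposition~\ref{def:she-rob} only for processes already known to lie in $\mc L^2_C([0,T])$ --- one needs the $L^2$-continuity in $u$ and uniform second moments to make sense of $\Phi_s(0),\Phi_s(1)$ and to send $\varepsilon\to 0$ in $\Phi_s(T^{\rm Neu}_{t-s}p^{\rm Neu}_\varepsilon(u,\cdot))$. But putting $\Phi$ into $\mc L^2_C$ is exactly what Lemma~\ref{lem:exp-int} is for: the lemma sits \emph{before} the point in Section~\ref{ssec:map} where uniqueness and the mild formulation become available. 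A related gap is the a priori finiteness in your Gronwall step: the inequality $g(t) \le C_0 + C_1(\sqrt t + t)g(t)$ with $g(t) = \sup_{s\le t,v}\bb E[\Phi_s(v)^2]$ only yields a bound if one already knows $g(t)<\infty$, which again is the content of the lemma.

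The intended argument works instead with the explicit formula $\Phi_t(u) = \Psi_t(u)e^{\mc X_t}$ from~\eqref{eq:Z-def}. One bounds $\bb E[\Phi_t(u)^2]$ by H\"older's inequality, splitting into the lognormal factor $\Psi_t(u)$, the Gaussian factor $e^{2E\sqrt2\langle\mc W_t,1\rangle}$, and the nonlinear factor $e^{2E^2\langle\mc B_t,1\rangle}$. The key estimate is that $\langle\mc B_t,1\rangle = (\int_0^t \mc Y_s^{\diamond 2}\,ds)(1)$ has exponential tails at scale $\sim t^{1/2}$ (via Lemma~\ref{lem:ito}, Remark~\ref{rmk:chaos-moments}, and the Burkholder--Davis--Gundy constant, whose growth in $p$ is what forces $T$ to be small). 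For the iteration one writes $\Phi_{R+t}(u) = \Phi_R(u)\cdot\bigl(\Psi_{R+t}(u)/\Psi_R(u)\bigr)\cdot e^{\mc X_{R+t}-\mc X_R}$ and applies H\"older with exponents $(4,4,2)$: this is precisely where $\bb E[\Phi_R(u)^8]$ enters, and why the hypothesis is an eighth moment rather than a second. Your explanation for the $8$th-moment hypothesis (Kolmogorov continuity, or bounding $\bb E[\Phi_r(0)\Phi_s(0)]$ by $\sup_v F_s(v)$) does not reflect this structure; once one is in $\mc L^2_C$, the boundary values are controlled by second moments, and the mild-formulation-plus-Gronwall scheme starting at time $R$ would not need more than second moments of $\Phi_R$ --- which is a sign it is not the right framework here.
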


\begin{proof}[Proof of Lemma \ref{lem:exp-int}]
	The proof is basically the same as for \cite[Lemma~B.1]{Gubinelli2015Energy}, and therefore we omit it.
\end{proof}

In other words, $\{\Phi_t \; ; \; t \in [0,T]\}$ is a weak solution to the stochastic heat equation
\[
   d \Phi_t = \Delta_{\rm Rob} \Phi_t d t + E \sqrt{2} \Phi_t d \mc W_t, \qquad \Phi_0 (u) = e^{E \mc Y_0 (\Theta^0_u)},
\]
with Robin boundary conditions
\[
   \nabla \Phi_t(0) = - \frac{E^2}{2} \Phi_t(0), \qquad \nabla \Phi_t(1) = \frac{E^2}{2} \Phi_t(1),
\]
as defined  in Proposition~\ref{def:she-rob}. By the uniqueness property given in that proposition, on $[0,T]$ the process $\mc Z$ is equal to the unique weak solution of~\eqref{eq:she-rob}. Then Lemma~\ref{lem:she-moment} stated ahead gives the better moment bound
 \[
    \sup_{t \in [0,T], \, u \in [0,1]} \bb E[\Phi_t(u)^8] < \infty,
 \]
 and now Lemma~\ref{lem:exp-int} shows that $\sup_{t \in [0,2T], \, u \in [0,1]} \bb E[\Phi_t(u)^2] < \infty$, which means that $\{\Phi_t \; ; \; t \in [0,2T]\}$ is the unique weak solution of~\eqref{eq:she-rob} on $[0,2T]$. Now we can keep iterating this argument to see that $\Phi$ is uniquely determined on all of $[0,\infty)$. 

Since for all $\varphi \in \Ctwo$
\begin{equation}\label{eq:lognabla} \mc Y(\varphi) = - E^{-1} \log \Psi (\nabla \varphi) = - E^{- 1} \log (\Phi- \mc X)(\nabla \varphi) = - E^{- 1}  \log (\Phi) (\nabla \varphi),
\end{equation}
which can be easily verified for $\mc Y^\varepsilon$ and $\Psi^\varepsilon$ and then carries over to the limit $\varepsilon \to 0$, 
the uniqueness of $\mc Y$ follows from that of $\Phi$.

As in the proof of \cite[Theorem~2.10]{Gubinelli2015Energy} we also obtain the uniqueness of the almost stationary energy solution to the KPZ equation, i.e.~Theorem~\ref{def:energy-KPZ}.

\subsection{Convergence of the remainders}

In this section we prove successively Lemma \ref{lem:remainder-1}, Lemma \ref{lem:remainder-2} and Lemma \ref{lem:constant-conv}.

\subsubsection{Proof of Lemma~\ref{lem:remainder-1}}

Recall that $R^\varepsilon(\varphi)$ was defined in~\eqref{eq:remainder-1} and that $\mc B_t(\psi)=(\int_0^t \mc Y_s^{\diamond 2} ds)(\psi)$. Given $\delta > 0$ we approximate $R^{\varepsilon}$ by
\begin{align} \label{eq:r-eps-del-def} \nonumber
  R^{\varepsilon, \delta}_t (\varphi) & =  \; \int_0^t \int_0^1
  \Psi^{\varepsilon}_s (u) \Big\langle (\mc Y^{\delta}_s(\cdot))^2 -\big\|p^{\rm Dir}_{\delta} (\cdot, v)
  \big\|_{L^2_v}^2 \; ,\;  p^{\rm Neu}_{\varepsilon} (u, \cdot)\Big\rangle_{L^2}  \varphi (u) d u d
  s\\ \nonumber
  &\quad - \int_0^t \int_0^1 \Psi^{\varepsilon}_s (u) \Big[  (\mc Y^{\varepsilon}_s (u))^2
  -\big\|p^{\rm Dir}_{\varepsilon} (u, \cdot) \big\|_{L^2}^2 + K^{\varepsilon, \delta}_u \Big]
  \varphi (u) d u d s \\
  & =: \int_0^t r^{\varepsilon,\delta}(\mc Y_s) ds,
\end{align}
where $K^{\varepsilon, \delta}$ will be defined in
equation~(\ref{eq:K-eps-delta-def}) below. Provided that $K^{\varepsilon,
\delta}$ converges to $K^{\varepsilon}$ in $L^2([0, 1])$ as $\delta \rightarrow
0$ we obtain from Lemma~\ref{lem:kpz-nonlin-conv} that
\[ \lim_{\delta \rightarrow 0} \Big\{ \mathbb{E} \Big[\big\| R^{\varepsilon, \delta}
   (\varphi) - R^{\varepsilon} (\varphi) \big\|_{p - \tmop{var} ; [0, T]}^2\Big]
   +\mathbb{E} \Big[\sup_{t \in [0,T]} \big| R^{\varepsilon, \delta}_t (\varphi) -
   R^{\varepsilon}_t (\varphi) \big|^2\Big] \Big\} = 0, \]
which holds because we can control \[\int_0^\cdot
 \Psi^{\varepsilon}_s (u) \Big\langle (\mc Y^{\delta}_s)^2(\cdot) -\big\|p^{\rm Dir}_{\delta} (\cdot, v)
  \big\|_{L^2_v}^2 \; ,\;  p^{\rm Neu}_{\varepsilon} (u, \cdot)\Big\rangle_{L^2}\; d s\] as a Young integral: indeed, by Lemma~\ref{lem:kpz-nonlin-conv} the integrator converges in $\alpha$-H\"older norm for any $\alpha < \frac34$ and the integrand is almost surely $\beta$-H\"older continuous for any $\beta < \frac12$ and $\alpha + \beta > 1$; see~\cite{Young1936} or~\cite[Section~1.3]{Lyons2007} for details on the Young integral. So, to prove the convergence claimed in
Lemma~\ref{lem:remainder-1}, it suffices to show  that $R^{\varepsilon, \delta} (\varphi)$ vanishes in the same sense as in \eqref{eq:Rvan}, as
first $\delta \rightarrow 0$ and then $\varepsilon \rightarrow 0$.

By Corollary~\ref{cor:KV} it suffices to show $\lim_{\varepsilon \rightarrow
0} \lim_{\delta \rightarrow 0} \| r^{\varepsilon, \delta} (\cdot)
\|_{- 1,0} = 0$, where the random variable $r^{\varepsilon, \delta}$ was defined in~\eqref{eq:r-eps-del-def}.
  Note that $r^{\varepsilon,\delta}$ satisfies the assumption of Corollary \ref{cor:KV}: it is clearly in $L^2(\mu)$ because all the kernels appearing in its definition are bounded continuous functions. The fact that $r^{\varepsilon,\delta}$ is in $\mc H_{\rm Dir}^{- 1}$ is not obvious but will be a consequence of our estimates below. 
  
Recall that $\mc Y(\psi_1) \mc Y(\psi_2) = W_2(\psi_1 \otimes \psi_2)(\mc Y) + \langle \psi_1, \psi_2\rangle_{L^2}$ for all $\psi_1, \psi_2 \in L^2$, see~\cite[Proposition~1.1.2]{Nualart2006}, and therefore
\[ \Big\langle (\mc Y^{\delta}(\cdot))^2 -\big\|p^{\rm Dir}_{\delta} (\cdot, v)
  \big\|_{L^2_v}^2 \; ,\;  p^{\rm Neu}_{\varepsilon} (u, \cdot)\Big\rangle_{L^2} - (\mc Y^{\varepsilon})^2 (u)
+\big\|p^{\rm Dir}_{\varepsilon} (u, \cdot) \big\|_{L^2}^2 = W_2 (g^{\varepsilon,
   \delta}_u) (\mc Y) \]
with
\begin{equation}
  \label{eq:g-eps-delta-def} g^{\varepsilon, \delta}_u (v_1, v_2) =
  \int_0^1 p^{\rm Dir}_{\delta} (w, v_1) p^{\rm Dir}_{\delta} (w, v_2) p^{\rm Neu}_{\varepsilon} (u,
  w) d w - p^{\rm Dir}_{\varepsilon} (u, v_1) p^{\rm Dir}_{\varepsilon} (u, v_2) .
\end{equation}
To control the $\mathfrak{H}_0^{- 1}$--norm of $r^{\varepsilon, \delta} (\cdot)$ we need to bound $\mathbb{E} [r^{\varepsilon, \delta} (\mc Y_0) F (\mc Y_0)]$ for an arbitrary cylinder function $F \in \mathcal{C}$. As
in \cite[Lemma~4.4]{Gubinelli2015Energy} we use Gaussian (partial) integration by
parts to show that, with the choice
\begin{equation}
  \label{eq:K-eps-delta-def} K^{\varepsilon, \delta}_u = E^2
  \int_{[0, 1]^2} g^{\varepsilon, \delta}_u (v_1, v_2) \Theta^{\varepsilon}_u
  (v_1) \Theta^{\varepsilon}_u (v_2) d v_1 d v_2,
\end{equation}
we have the following result:
\begin{lemma}\label{lem:r-eps-constants}
We can bound
  \begin{align*}
    \big\| r^{\varepsilon, \delta} (\cdot) \big\|_{- 1,0}^2  \le  & \;
   \mathbb{E} \bigg[ \bigg\| \int_0^1 d u \varphi (u) \big(W_1
    (g^{\varepsilon, \delta}_u (v_1, \cdot)) \diamond \Psi^{\varepsilon}_0
    (u)\big) \bigg\|_{\mc H_{\rm Dir}^{- 1}(v_1)}^2 \bigg]\\
    & + \mathbb{E} \bigg[ \bigg\| \int_0^1 d u \varphi (u) \int_0^1 d
    v_1 g^{\varepsilon, \delta}_u (v_1, v_2) \mathrm{D}_{v_1}
    \Psi^{\varepsilon}_0 (u) \bigg\|_{\mc H_{\rm Dir}^{- 1}(v_2)}^2 \bigg] = :
 A^{\varepsilon, \delta} + B^{\varepsilon, \delta}
  \end{align*}
  where the $\diamond$-product is defined below:
\[ W_1 (g^{\varepsilon, \delta}_u (v_1, \cdot)) \diamond
   \Psi^{\varepsilon}_0 (u) := W_1 (g^{\varepsilon, \delta}_u (v_1,
   \cdot)) \Psi^{\varepsilon}_0 (u) - \int_0^1 g^{\varepsilon, \delta}_u
   (v_1, v_2) \mr D_{v_2} \Psi^{\varepsilon}_0 (u) d v_2. \]

\end{lemma}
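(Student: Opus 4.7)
The plan is to use the dual characterization of the $\mathfrak{H}_0^{-1}$--norm and bound $\mathbb{E}[r^{\varepsilon,\delta}(\mc Y_0)\,G(\mc Y_0)]$ for an arbitrary cylinder function $G \in \mc C$ with $\|G\|_{1,0}=1$. Applying Gaussian integration by parts twice to the second-order Wiener integral yields
\[
   \mathbb{E}\big[W_2(g^{\varepsilon,\delta}_u)(\mc Y_0)\,\Psi^\varepsilon_0(u)\,G(\mc Y_0)\big] = \int_{[0,1]^2} g^{\varepsilon,\delta}_u(v_1,v_2)\, \mathbb{E}\big[\mathrm D_{v_1}\mathrm D_{v_2}\big(\Psi^\varepsilon_0(u)\,G(\mc Y_0)\big)\big]\, dv_1 dv_2 ,
\]
and a Leibniz expansion produces four terms. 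The term carrying $\mathrm D_{v_1}\mathrm D_{v_2}\Psi^\varepsilon_0$ is fully explicit because $\Psi^\varepsilon_0(u) = e^{E\mc Y_0(\Theta^\varepsilon_u)}$ gives $\mathrm D_{v_1}\mathrm D_{v_2}\Psi^\varepsilon_0(u) = E^2\,\Theta^\varepsilon_u(v_1)\Theta^\varepsilon_u(v_2)\,\Psi^\varepsilon_0(u)$, so by the very definition of $K^{\varepsilon,\delta}_u$ in~\eqref{eq:K-eps-delta-def} this term is exactly cancelled by the $-K^{\varepsilon,\delta}_u\,\Psi^\varepsilon_0$ subtraction built into $r^{\varepsilon,\delta}$.

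The three remaining contributions are the two ``mixed'' terms $\mathrm D_{v_i}\Psi^\varepsilon_0 \cdot \mathrm D_{v_j}G$, which coincide after relabeling $v_1 \leftrightarrow v_2$ thanks to the symmetry of $g^{\varepsilon,\delta}_u$ in its two arguments, together with the ``pure $G$'' term $\Psi^\varepsilon_0 \cdot \mathrm D_{v_1}\mathrm D_{v_2}G$. I would then group the pure-$G$ term with one copy of the mixed term by the backward Leibniz rule: their sum equals $g^{\varepsilon,\delta}_u(v_1,v_2)\,\mathrm D_{v_1}(\Psi^\varepsilon_0\,\mathrm D_{v_2} G)$, and one further Gaussian integration by parts in $v_1$ transforms this into $W_1(g^{\varepsilon,\delta}_u(\cdot,v_2))(\mc Y_0)\,\Psi^\varepsilon_0\,\mathrm D_{v_2} G$. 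Applying the definition of the Wick product, namely
\[
   W_1(g^{\varepsilon,\delta}_u(\cdot,v_2))\,\Psi^\varepsilon_0 = W_1(g^{\varepsilon,\delta}_u(\cdot,v_2))\diamond \Psi^\varepsilon_0 + \int_0^1 g^{\varepsilon,\delta}_u(v_1,v_2)\,\mathrm D_{v_1}\Psi^\varepsilon_0\,dv_1 ,
\]
splits this contribution into an $A^{\varepsilon,\delta}$--type Wick piece and a correction piece of the same shape as the remaining mixed term, so that everything reassembles as one $A$--type and one $B$--type quantity, each tested against $\mathrm D_{v_2}G$.

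The final step is Cauchy--Schwarz in the $\mc H^{-1}_{\rm Dir}(v_2)$--$\mc H^1_{\rm Dir}(v_2)$ duality, combined with the identity $\|G\|_{1,0}^2 = 2\,\mathbb{E}\big[\|\mathrm D G(\mc Y_0)\|^2_{\mc H^1_{\rm Dir}}\big]$, which follows from the definition of $\mc E(G)$ together with Lemma~\ref{lem:1-norm chaos}. Squaring the resulting bound and taking the supremum over $G$ with $\|G\|_{1,0}=1$ yields the announced inequality $\|r^{\varepsilon,\delta}\|_{-1,0}^2 \le A^{\varepsilon,\delta}+ B^{\varepsilon,\delta}$. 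The only subtle step in the whole argument is the algebraic bookkeeping around the Wick-product manipulation: one must verify that after the integration by parts in $v_1$ the expression reassembles \emph{exactly} into one $A$--type and one $B$--type contribution without leaving any stray remainder, and that $K^{\varepsilon,\delta}_u$ is precisely tailored to annihilate the $\mathrm D_{v_1}\mathrm D_{v_2}\Psi^\varepsilon_0$ diagonal piece; the rest is routine duality estimates.
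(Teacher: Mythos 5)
Your overall strategy is right and matches what the paper points to: this is exactly the Gaussian integration-by-parts computation in the style of \cite[Lemma~4.4]{Gubinelli2015Energy}, with the constant $K^{\varepsilon,\delta}_u$ tailored to absorb the $\mathrm D_{v_1}\mathrm D_{v_2}\Psi^\varepsilon_0$ contribution and the Wick product isolating the genuine second-chaos part. You also correctly identify the subtle step, but the bookkeeping you flag as ``the only subtle step'' is in fact where your count goes wrong: after the backward Leibniz grouping $D_{v_1}\Psi\,D_{v_2}G + \Psi\,D_{v_1}D_{v_2}G = D_{v_1}(\Psi D_{v_2}G)$, the further IBP in $v_1$, and the Wick split
\[
W_1(g^{\varepsilon,\delta}_u(\cdot,v_2))\,\Psi^\varepsilon_0 = \big(W_1(g^{\varepsilon,\delta}_u(\cdot,v_2))\diamond \Psi^\varepsilon_0\big) + \int_0^1 g^{\varepsilon,\delta}_u(v_1,v_2)\,\mathrm D_{v_1}\Psi^\varepsilon_0\,dv_1,
\]
you reproduce a fresh $B$-type term which you \emph{do not} get to merge for free with the mixed term that was left aside. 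Concretely: the Leibniz expansion gives one diagonal term (killed by $K^{\varepsilon,\delta}_u$), two mixed terms that coincide by the symmetry of $g^{\varepsilon,\delta}_u$, and one pure-$G$ term; your manipulation turns the pure-$G$ term into ($A$-type) plus one more $B$-type, so in total one ends up with
\[
\mathbb{E}[r^{\varepsilon,\delta}(\mc Y_0)\,G(\mc Y_0)] = (A\text{-type pairing}) + 2\,(B\text{-type pairing}),
\]
not $A$-type $+$ $B$-type. You acknowledge that the Wick correction ``is of the same shape as the remaining mixed term,'' but then claim the expression ``reassembles as one $A$-type and one $B$-type quantity,'' which glosses over the factor $2$. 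Running the final Cauchy--Schwarz step carefully then yields something like $\|r^{\varepsilon,\delta}\|_{-1,0}^2 \le A^{\varepsilon,\delta} + 4B^{\varepsilon,\delta}$ (or another absolute constant in front of $B^{\varepsilon,\delta}$), not the clean $A^{\varepsilon,\delta}+B^{\varepsilon,\delta}$ as stated.

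This discrepancy is harmless for the purposes of the paper, since Lemmas~\ref{lem:Adelta} and~\ref{lem:Bdelta} show both $A^{\varepsilon,\delta}$ and $B^{\varepsilon,\delta}$ vanish in the iterated limit, so any fixed multiplicative constant is immaterial and the statement is best read as a bound up to an absolute constant. Still, a fully rigorous write-up should either carry the constant explicitly or state the lemma with $\lesssim$. Everything else in your argument --- the dual characterization of $\|\cdot\|_{-1,0}$ via cylinder functions, the formula $\mathbb{E}[W_2(g)F] = \int g(v_1,v_2)\mathbb{E}[D_{v_1}D_{v_2}F]\,dv_1dv_2$ for symmetric $g$, the observation $\mathrm D_{v_1}\mathrm D_{v_2}\Psi^\varepsilon_0(u) = E^2\Theta^\varepsilon_u(v_1)\Theta^\varepsilon_u(v_2)\Psi^\varepsilon_0(u)$ and its matching with the definition~\eqref{eq:K-eps-delta-def}, the use of symmetry of $g^{\varepsilon,\delta}_u$, and the $\mc H^{-1}_{\rm Dir}$--$\mc H^1_{\rm Dir}$ duality combined with $\|G\|_{1,0}^2 = 2\,\mathbb{E}\big[\|\nabla_v\mathrm D_vG\|_{L^2_v}^2\big]$ --- is sound.
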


To prove Lemma \ref{lem:r-eps-constants}, we control separately $A^{\varepsilon, \delta}$ and $B^{\varepsilon,
\delta}$ and prove that they vanish respectively in Lemma \ref{lem:Adelta} and Lemma \ref{lem:Bdelta} below. Before that, we need some auxiliary results that we give now: 

\begin{lemma}\label{lem:g-bound}
  The sequence $\| g^{\varepsilon, \delta}_u (\cdot, v_2)\|_{\mc H_{\rm Dir}^{- 1}}$ is uniformly bounded in $(\delta, u, v_2) \in (0,1] \times [0,1]^2$, and we have for almost all (w.r.t.~Lebesgue) $v_2 \in [0,1]$, and all $\varepsilon \in (0,1]$
  \[
    \lim_{\delta \rightarrow 0} \big\| g^{\varepsilon, \delta}_u (\cdot, v_2)
    \big\|_{\mc H_{\rm Dir}^{- 1}} \lesssim \big| p^{\rm Neu}_{\varepsilon} (u, v_2) - p^{\rm Dir}_{\varepsilon}
    (u, v_2) \big| + p^{\rm Dir}_{\varepsilon} (u, v_2) \times \big( \varepsilon^{- 1/4} |
    v_2 - u | + \varepsilon^{1/4} \big).
  \] 
\end{lemma}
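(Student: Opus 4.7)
The plan is to exploit duality and reduce the $\mc H^{-1}_{\rm Dir}$ norm to testing $g^{\varepsilon,\delta}_u(\cdot,v_2)$ against elements of $\mc H^1_{\rm Dir}$. For $f \in \mc H^1_{\rm Dir}$ write $f_\delta(w) = \int_0^1 p^{\rm Dir}_\delta(w, v) f(v) dv$. A Fubini swap in the first term of \eqref{eq:g-eps-delta-def} gives
\begin{equation*}
\int_0^1 g^{\varepsilon,\delta}_u(v_1, v_2) f(v_1) \, dv_1 \;=\; \int_0^1 f_\delta(w) \, p^{\rm Dir}_\delta(w, v_2) \, p^{\rm Neu}_\varepsilon(u, w)\, dw \;-\; f_\varepsilon(u) \, p^{\rm Dir}_\varepsilon(u, v_2).
\end{equation*}
The strategy is to introduce the ``formal limit'' $g^{\varepsilon,0}_u(v_1,v_2) = p^{\rm Neu}_\varepsilon(u,v_2)\delta_{v_2}(v_1) - p^{\rm Dir}_\varepsilon(u,v_1) p^{\rm Dir}_\varepsilon(u,v_2)$ and estimate $g^{\varepsilon,\delta}_u - g^{\varepsilon,0}_u$ and $g^{\varepsilon,0}_u$ separately in $\mc H^{-1}_{\rm Dir}(v_1)$.

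For the second, I would use the decomposition
\begin{equation*}
p^{\rm Neu}_\varepsilon(u,v_2) f(v_2) - p^{\rm Dir}_\varepsilon(u,v_2) f_\varepsilon(u) = f(v_2)\bigl[p^{\rm Neu}_\varepsilon(u,v_2) - p^{\rm Dir}_\varepsilon(u,v_2)\bigr] + p^{\rm Dir}_\varepsilon(u,v_2)\bigl[f(v_2) - f_\varepsilon(u)\bigr],
\end{equation*}
and control the last bracket by adding and subtracting $f_\varepsilon(v_2)$. Morrey's inequality in dimension one yields $\|f\|_{\mc C^{0,1/2}} \lesssim \|f\|_{\mc H^1_{\rm Dir}}$, which together with the mass-preservation of the Dirichlet heat semigroup gives $|f(v_2) - f_\varepsilon(v_2)| \lesssim \varepsilon^{1/4} \|f\|_{\mc H^1_{\rm Dir}}$ via $\int p^{\rm Dir}_\varepsilon(v_2,w) |v_2 - w|^{1/2} dw \lesssim \varepsilon^{1/4}$. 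For the remaining piece, the Dirichlet boundary condition on $f$ allows the integration-by-parts identity $\nabla f_\varepsilon = P^{\rm Dir}_\varepsilon (\nabla f)$, whence $\|\nabla f_\varepsilon\|_{L^\infty} \lesssim \|p^{\rm Dir}_\varepsilon\|_{L^2}\|\nabla f\|_{L^2} \lesssim \varepsilon^{-1/4} \|f\|_{\mc H^1_{\rm Dir}}$ and so $|f_\varepsilon(v_2) - f_\varepsilon(u)| \lesssim |v_2 - u|\,\varepsilon^{-1/4}\|f\|_{\mc H^1_{\rm Dir}}$. Taking the supremum over $\|f\|_{\mc H^1_{\rm Dir}} \le 1$ reproduces the target bound on $\|g^{\varepsilon,0}_u(\cdot,v_2)\|_{\mc H^{-1}_{\rm Dir}}$.

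It then remains to show that $\|g^{\varepsilon,\delta}_u(\cdot,v_2) - g^{\varepsilon,0}_u(\cdot,v_2)\|_{\mc H^{-1}_{\rm Dir}(v_1)}$ vanishes as $\delta \to 0$ for a.e.~$v_2$. Splitting $p^{\rm Neu}_\varepsilon(u,w) = p^{\rm Neu}_\varepsilon(u,v_2) + [p^{\rm Neu}_\varepsilon(u,w) - p^{\rm Neu}_\varepsilon(u,v_2)]$ under the integral and using the semigroup identity $\int p^{\rm Dir}_\delta(w',w) p^{\rm Dir}_\delta(w,v_2)\, dw = p^{\rm Dir}_{2\delta}(w',v_2)$, this difference splits into $p^{\rm Neu}_\varepsilon(u,v_2)(f_{2\delta}(v_2) - f(v_2))$, which is $O(\delta^{1/4}\|f\|_{\mc H^1_{\rm Dir}})$ by the same Morrey-type estimate, plus a remainder controlled by $\|f\|_{L^\infty} \int p^{\rm Dir}_\delta(w,v_2)|p^{\rm Neu}_\varepsilon(u,w)-p^{\rm Neu}_\varepsilon(u,v_2)|dw$, which vanishes as $\delta \to 0$ at every Lebesgue point $v_2$ of $w \mapsto p^{\rm Neu}_\varepsilon(u,w)$ (all interior points, since $p^{\rm Neu}_\varepsilon(u,\cdot)$ is smooth for $\varepsilon > 0$). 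For the uniform boundedness in $\delta$, a crude estimate suffices: $|f_\delta(w)| \lesssim \|f\|_{L^\infty} \lesssim \|f\|_{\mc H^1_{\rm Dir}}$ by Morrey and the $L^\infty$-contraction of the semigroup, and $\int p^{\rm Dir}_\delta(w,v_2) dw \le 1$ combined with the fixed-$\varepsilon$ bound $\|p^{\rm Neu}_\varepsilon\|_\infty < \infty$ bounds the dual pairing uniformly in $(\delta,u,v_2)$. The main delicate point is locating the correct exponents $\varepsilon^{\pm 1/4}$: they come out because our Sobolev scale is $\mc H^1$ (not higher), so that heat-kernel smoothing gives exactly a $\varepsilon^{1/4}$ loss at the $L^\infty$ level, and I expect this is the step where the computation requires most care.
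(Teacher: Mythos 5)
Your approach is genuinely different from the paper's and it works, modulo two fixable slips. The paper bounds $\|g^{\varepsilon,\delta}_u(\cdot,v_2)\|_{\mc H^{-1}_{\rm Dir}}$ by going straight to the Fourier representation of Lemma~\ref{lem:explicit-norms}: it computes $\langle g^{\varepsilon,\delta}_u(\cdot,v_2),\sin(k\pi\cdot)\rangle$ using the eigenrelation $\int_0^1 p^{\rm Dir}_t(\cdot,v)\sin(k\pi v)dv = e^{-t\pi^2k^2}\sin(k\pi\cdot)$, passes to $\delta\to 0$, and then splits the sum $\sum_k k^{-2}|\cdots|^2$ at $k\sim\varepsilon^{-1/2}$ to manufacture the $\varepsilon^{\pm 1/4}$ exponents. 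You instead work by duality, testing against $f\in\mc H^1_{\rm Dir}$ and using Morrey plus heat-kernel smoothing; the dyadic cutoff in $k$ is replaced by your $\varepsilon^{\mp 1/4}$ estimates on $\|\nabla f_\varepsilon\|_{L^\infty}$ and on $|f-f_\varepsilon|$. The two routes are two faces of the same coin (your duality argument is what the Fourier identity encodes in a basis), and yours is arguably more transparent about where the exponents come from.

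Two points need repair. First, you invoke ``mass-preservation of the Dirichlet heat semigroup'' to get $|f(v_2)-f_\varepsilon(v_2)|\lesssim\varepsilon^{1/4}\|f\|_{\mc H^1_{\rm Dir}}$: the Dirichlet semigroup does \emph{not} preserve mass, so the correct decomposition is
\[
f(v_2)-f_\varepsilon(v_2)=\int_0^1 p^{\rm Dir}_\varepsilon(v_2,w)\big(f(v_2)-f(w)\big)dw + f(v_2)\Big(1-\langle p^{\rm Dir}_\varepsilon(v_2,\cdot),1\rangle\Big),
\]
and you must control the second term separately. It is where you need the Dirichlet boundary condition on $f$ in a substantive way: Morrey gives $|f(v_2)|\lesssim (v_2\wedge(1-v_2))^{1/2}\|f\|_{\mc H^1_{\rm Dir}}$, while the mass defect satisfies $1-\langle p^{\rm Dir}_\varepsilon(v_2,\cdot),1\rangle\lesssim \frac{\varepsilon^{1/2}}{v_2(1-v_2)}\wedge 1$ (Lemma~\ref{lem:Dirichlet-heat-kernel}), and the product is still $\lesssim\varepsilon^{1/4}$ after splitting on $v_2\lessgtr\varepsilon^{1/2}$. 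Second, the integration-by-parts identity should read $\nabla f_\varepsilon = P^{\rm Neu}_\varepsilon(\nabla f)$, not $P^{\rm Dir}_\varepsilon(\nabla f)$: one has $\partial_v p^{\rm Dir}_\varepsilon(v,w)=-\partial_w p^{\rm Neu}_\varepsilon(v,w)$, and integrating by parts (with no boundary contribution because $f(0)=f(1)=0$) turns the Dirichlet kernel into the Neumann one acting on $\nabla f$. Your subsequent bound $\|\nabla f_\varepsilon\|_{L^\infty}\lesssim\varepsilon^{-1/4}\|f\|_{\mc H^1_{\rm Dir}}$ survives unchanged since $\|p^{\rm Neu}_\varepsilon(v,\cdot)\|_{L^2}\lesssim\varepsilon^{-1/4}$.
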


\begin{proof}[Proof of Lemma \ref{lem:g-bound}]
  Recall that $g^{\varepsilon, \delta}_u (v_1, v_2)$ is given in~\eqref{eq:g-eps-delta-def}. We use the explicit characterization of the $\mc H_{\rm Dir}^{- 1}$--norm in Lemma~\ref{lem:explicit-norms} to write
  \[
    \big\| g^{\varepsilon, \delta}_u (\cdot, v_2) \big\|_{\mc H_{\rm Dir}^{- 1}}^2 = \pi^{-2}
    \sum_{k = 1}^{\infty} k^{- 2} \big| \big\langle  g^{\varepsilon, \delta}_u
    (\cdot, v_2) \; ,\; \sin (k \pi \cdot) \big\rangle_{L^2} \big|^2,
  \]
  and since $\int_0^1 p^{\rm Dir}_{t} (u, v_1) \sin (k \pi v_1) d v_1 = e^{-
  t \pi^2 k^2} \sin (k \pi u)$ for $k \in \bb N$, $t > 0$, $u \in [0,1]$, and also $p^{\rm Neu}_{\varepsilon} (u, \cdot)$ integrates to $1$, we have
  \begin{multline*}
    \big| \big\langle  g^{\varepsilon, \delta}_u
    (\cdot, v_2) \; , \;  \sin (k \pi \cdot) \big\rangle_{L^2([0,1])} \big| \\ = \bigg| \int_0^1 \big[ e^{- \delta \pi^2 k^2}
    p^{\rm Dir}_{\delta} (w, v_2) - e^{- \varepsilon \pi^2 k^2} p^{\rm Dir}_{\varepsilon} (u,
    v_2) \big] \sin (k \pi w) p^{\rm Neu}_{\varepsilon} (u, w) d w \bigg|,
  \end{multline*}
  from where the uniform bound follows. Moreover, since $p^{\rm Dir}$ are the transition densities of a killed Brownian motion we have for all $v_2 \in (0,1)$
  \begin{align*}
    \lim_{\delta \rightarrow 0} \bigg| &\int_0^1 \big[ e^{- \delta \pi^2 k^2}
    p^{\rm Dir}_{\delta} (w, v_2) - e^{- \varepsilon \pi^2 k^2} p^{\rm Dir}_{\varepsilon} (u,
    v_2) \big] \sin (k \pi w) p^{\rm Neu}_{\varepsilon} (u, w) d w \bigg|\\
    &= \bigg| \sin (k \pi v_2) p^{\rm Neu}_{\varepsilon} (u, v_2) - e^{- \varepsilon
    \pi^2 k^2} p^{\rm Dir}_{\varepsilon} (u, v_2) \int_0^1 \sin (k \pi w)
    p^{\rm Neu}_{\varepsilon} (u, w) d w \bigg|\\
    &\leq \big| p^{\rm Neu}_{\varepsilon} (u, v_2) - p^{\rm Dir}_{\varepsilon} (u, v_2) \big| \vphantom{\int}\\
    & \quad  + \big|
    p^{\rm Dir}_{\varepsilon} (u, v_2) \big| \times \bigg| \int_0^1 \big(\sin (k \pi v_2) -
    e^{- \varepsilon \pi^2 k^2} \sin (k \pi w)\big) p^{\rm Neu}_{\varepsilon} (u, w)
    d w \bigg| .
  \end{align*}
  The integral term on the right hand side is bounded by
  \begin{align*}
    \bigg| \int_0^1 &\big(\sin (k \pi v_2) - e^{- \varepsilon \pi^2 k^2} \sin (k
    \pi w)\big) p^{\rm Neu}_{\varepsilon} (u, w) d w \bigg|\\
    & \leq |1 - e^{- \varepsilon \pi^2 k^2}| + \bigg| \int_0^1 \big(\sin (k \pi
    v_2) - \sin (k \pi w)\big) p^{\rm Neu}_{\varepsilon} (u, w) d w \bigg|\\
    & \lesssim  (\varepsilon \pi^2 k^2)^{1/2} + \bigg| \int_0^1 k \big(| v_2 - u |
    + | u - w |\big) p^{\rm Neu}_{\varepsilon} (u, w) d w \bigg| \lesssim k \big(| v_2 -
    u | + \varepsilon^{1/2}\big),
  \end{align*}
  where we applied the heat kernel bounds stated and proved in Lemma~\ref{lem:Dirichlet-heat-kernel} below for $p^{\rm Neu}$, and the bound $|1 - e^{-c}| \le  c^{1/2}$, true for any $c>0$. We use this bound for $k \leq
  \varepsilon^{- 1/2}$, while for $k > \varepsilon^{- 1/2}$ we simply
  bound the integral by $2$. Thus, we obtain
  \begin{align*}
    \lim_{\delta \rightarrow 0} \big\| g^{\varepsilon, \delta}_u (\cdot,
    v_2) \big\|_{\mc H_{\rm Dir}^{- 1}}^2 & \lesssim \sum_{k = 1}^{\infty} k^{- 2} \bigg(\big| p^{\rm Neu}_{\varepsilon} (u, v_2) -
    p^{\rm Dir}_{\varepsilon} (u, v_2) \big|^2  \\
    & \vphantom{\sum_{k=1}^\infty}  + \big|p^{\rm Dir}_{\varepsilon} (u, v_2)\big|^2 \times
    \big[\mathbf{1}_{(0, \varepsilon^{- 1/2}]}(k)\; \big(k^2 (| v_2 - u |^2 +
    \varepsilon)\big) +\mathbf{1}_{(\varepsilon^{- 1/2},+\infty)}(k)\big]\bigg)\\
    & \lesssim \Big(\big| p^{\rm Neu}_{\varepsilon} (u, v_2) - p^{\rm Dir}_{\varepsilon} (u, v_2) \big|^2  \\ & \qquad \qquad +
    \big|p^{\rm Dir}_{\varepsilon} (u, v_2)\big|^2 \times \big[\varepsilon^{- 1/2} | v_2 - u |^2 +
    \varepsilon^{1/2}\big]\Big)\vphantom{\int},
  \end{align*}
  and now it suffices to take the square root on both sides.
\end{proof}

The following lemma, which is a simple consequence of the estimates we derived
so far, will be useful for bounding both constants $A^{\varepsilon, \delta}$
and $B^{\varepsilon, \delta}$ of Lemma~\ref{lem:r-eps-constants}:

\begin{lemma}
  \label{lem:g-theta-bound}
For all $u' \in [0,1]$ the function $x\mapsto \big\| \int_0^1 g^{\varepsilon, \delta}_u (v_1, \cdot) \Theta^{\varepsilon}_{u'} (v_1) d v_1 \big\|_{\mc H_{\rm Dir}^{-1}}$ is uniformly bounded in $(\delta,u) \in (0,1] \times [0,1]$ and satisfies, for all $\varepsilon \in (0,1]$
  \[ \sup_{u' \in [0, 1]} \lim_{\delta \rightarrow 0} \bigg\| \int_0^1
     g^{\varepsilon, \delta}_u (v_1, \cdot) \Theta^{\varepsilon}_{u'} (v_1)
     d v_1 \bigg\|_{\mc H_{\rm Dir}^{- 1}} \lesssim \frac{\varepsilon^{1/2}}{u (1 - u)} \wedge 1  + \varepsilon^{1/4}.  \] 
\end{lemma}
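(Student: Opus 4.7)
The plan is to reduce the $\mc H_{\rm Dir}^{-1}$--norm of the averaged kernel to the pointwise bound of Lemma~\ref{lem:g-bound} via a duality/Minkowski argument, and then to evaluate the resulting $v_1$--integral by means of heat-kernel estimates. The key preliminary observation is that $g^{\varepsilon,\delta}_u(v_1,v_2)$ is symmetric in $(v_1,v_2)$, as is immediate from \eqref{eq:g-eps-delta-def}. Hence $\|g^{\varepsilon,\delta}_u(v_1,\cdot)\|_{\mc H_{\rm Dir}^{-1}}$, seen as a function of $v_1$, is precisely the quantity controlled by Lemma~\ref{lem:g-bound}.

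First I would test against an arbitrary $\psi \in \mc H_{\rm Dir}^1$ with $\|\psi\|_{\mc H_{\rm Dir}^1}\le 1$ and use Fubini together with the duality of $\mc H_{\rm Dir}^{-1}$ and $\mc H_{\rm Dir}^1$ to obtain the Minkowski-type bound
\begin{equation*}
\Bigl\| \int_0^1 g^{\varepsilon,\delta}_u(v_1,\cdot)\,\Theta^{\varepsilon}_{u'}(v_1)\,dv_1 \Bigr\|_{\mc H_{\rm Dir}^{-1}} \;\leq\; \int_0^1 |\Theta^{\varepsilon}_{u'}(v_1)|\, \|g^{\varepsilon,\delta}_u(v_1,\cdot)\|_{\mc H_{\rm Dir}^{-1}}\, dv_1.
\end{equation*}
The uniform boundedness in $(\delta,u)\in (0,1]\times[0,1]$ is then immediate from the uniform bound of Lemma~\ref{lem:g-bound} combined with $\|\Theta^{\varepsilon}_{u'}\|_{L^\infty}\le 1$. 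The latter follows from $\Theta^{\varepsilon}_{u'}=T^{\rm Dir}_\varepsilon \Theta_{u'}$, from $\|\Theta_{u'}\|_\infty\le 1$ (an elementary property of the step function $\mathbf{1}_{[0,u']}+(\cdot)-1$), and from the $L^\infty$--contraction property of the Dirichlet semigroup (whose nonnegative kernel has total mass at most $1$).

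For the refined bound in the limit $\delta\to 0$, I would invoke dominated convergence, using the uniform-in-$\delta$ bound of Lemma~\ref{lem:g-bound} as dominating function, to get
\begin{equation*}
\lim_{\delta\to 0}\Bigl\| \!\int_0^1\! g^{\varepsilon,\delta}_u(v_1,\cdot)\,\Theta^{\varepsilon}_{u'}(v_1)\,dv_1 \Bigr\|_{\mc H_{\rm Dir}^{-1}}\!\lesssim\!\int_0^1\!|p^{\rm Neu}_\varepsilon\!-\!p^{\rm Dir}_\varepsilon|(u,v_1)\,dv_1 + \!\int_0^1\! p^{\rm Dir}_\varepsilon(u,v_1)\bigl(\varepsilon^{-1/4}|v_1\!-\!u|+\varepsilon^{1/4}\bigr)dv_1,
\end{equation*}
uniformly in $u'\in[0,1]$. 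The three heat-kernel integrals are then controlled by standard estimates that should be contained in (or easily deducible from) Appendix~\ref{app:heat-kernel}: namely $\int p^{\rm Dir}_\varepsilon(u,v_1)\,dv_1\le 1$; $\int p^{\rm Dir}_\varepsilon(u,v_1)|v_1-u|\,dv_1\lesssim \varepsilon^{1/2}$ via $p^{\rm Dir}\le p^{\rm Neu}$ and Gaussian moment estimates for the reflecting kernel; and
\begin{equation*}
\int_0^1 |p^{\rm Neu}_\varepsilon - p^{\rm Dir}_\varepsilon|(u,v_1)\,dv_1 \;=\; \bb P^u\bigl[\tau_{[0,1]^c}\le \varepsilon\bigr] \;\lesssim\; \min\Bigl(1,\frac{\varepsilon^{1/2}}{u(1-u)}\Bigr),
\end{equation*}
where the equality uses $p^{\rm Neu}\ge p^{\rm Dir}$ pointwise and the identity $\int p^{\rm Neu}_\varepsilon=1$, and the inequality follows from the reflection principle and the Gaussian tail bound $\bb P^u[T_0\le \varepsilon]\lesssim e^{-u^2/(2\varepsilon)}\lesssim \varepsilon^{1/2}/u$. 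Combining yields the claimed bound. The argument is essentially a bookkeeping exercise once Lemma~\ref{lem:g-bound} is in hand; the only delicate point to double-check is the $L^1$--gap estimate between $p^{\rm Neu}_\varepsilon$ and $p^{\rm Dir}_\varepsilon$, which is the source of the $\varepsilon^{1/2}/(u(1-u))\wedge 1$ term.
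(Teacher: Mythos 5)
Your argument is correct and is essentially the paper's own proof: the symmetry of $g^{\varepsilon,\delta}_u$ lets you apply Lemma~\ref{lem:g-bound} after pulling the $\mc H_{\rm Dir}^{-1}$--norm inside the $v_1$--integral (Minkowski), the uniform boundedness comes from the $\delta$--uniform bound in Lemma~\ref{lem:g-bound} together with $\|\Theta^{\varepsilon}_{u'}\|_{L^\infty}\lesssim 1$, and the refined estimate follows by dominated convergence plus the heat-kernel integrals. The one stylistic difference is that you re-derive the $L^1$--gap bound between $p^{\rm Neu}_\varepsilon$ and $p^{\rm Dir}_\varepsilon$ from scratch using the pointwise ordering $p^{\rm Neu}_\varepsilon\ge p^{\rm Dir}_\varepsilon$ (which indeed holds, e.g. by the method of images) and the reflection principle, whereas the paper isolates this as Lemma~\ref{lem:Neumann-Dirichlet} and proves it via a total-variation coupling argument; the two routes give the same bound $\varepsilon^{1/2}/(u(1-u))\wedge 1$ (note $1/u+1/(1-u)=1/(u(1-u))$), so either is acceptable. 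The moment estimate $\int p^{\rm Dir}_\varepsilon(u,v)|v-u|\,dv\lesssim \varepsilon^{1/2}$ is exactly the $\lambda=1$ case of Lemma~\ref{lem:Dirichlet-heat-kernel}, which you could cite directly rather than re-sketch.
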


\begin{proof}[Proof of Lemma \ref{lem:g-theta-bound}]
  From Lemma~\ref{lem:g-bound} (note that $g^{\varepsilon, \delta}_u$ is
  symmetric in its two arguments) we obtain
  \begin{align*}
    &\lim_{\delta \rightarrow 0} \bigg\| \int_0^1 g^{\varepsilon, \delta}_u
    (v_1, \cdot) \Theta^{\varepsilon}_{u'} (v_1) d v_1 \bigg\|_{\mc H_{\rm Dir}^{- 1}} \leq \lim_{\delta \rightarrow 0} \int_0^1 \big\| g^{\varepsilon,
    \delta}_u (v_1, \cdot) \big\|_{\mc H_{\rm Dir}^{- 1}} | \Theta^{\varepsilon}_{u'} (v_1)
    | d v_1\\
    &\quad \lesssim\int_0^1 \Big(\big| p^{\rm Neu}_{\varepsilon} (u, v_1) - p^{\rm Dir}_{\varepsilon} (u,
    v_1) \big| + p^{\rm Dir}_{\varepsilon} (u, v_1) \times \big( \varepsilon^{- 1/4} | v_1
    - u | + \varepsilon^{1/4} \big)\Big) | \Theta^{\varepsilon}_{u'} (v_1) | d
    v_1\\
    &\quad \leq  \| \Theta^{\varepsilon}_{u'} \|_{L^{\infty}} \Big(\big\|
    p^{\rm Neu}_{\varepsilon} (u, \cdot ) - p^{\rm Dir}_{\varepsilon} (u, \cdot)
    \big\|_{L^1} + \varepsilon^{- 1/4} \big\| p^{\rm Dir}_{\varepsilon} (u, \cdot) \times |
    \cdot - u |\; \big\|_{L^1} + \varepsilon^{1/4}\Big)\\
    & \quad \lesssim  \frac{\varepsilon^{1/2}}{u (1
     - u)} \wedge 1 + \varepsilon^{1/4}, \qquad \text{for any } u' \in [0,1],
  \end{align*}
  where the last estimate comes from 
  Lemma~\ref{lem:Dirichlet-heat-kernel} and Lemma~\ref{lem:Neumann-Dirichlet} of the Appendix.
\end{proof}

Finally we are now able to state and prove that $A^{\varepsilon,\delta}$ and $B^{\varepsilon, \delta}$ vanish:

\begin{lemma}\label{lem:Bdelta}
  We have
  \[ \lim_{\varepsilon \rightarrow 0} \lim_{\delta \rightarrow 0}
     B^{\varepsilon, \delta}  = 0. \]
\end{lemma}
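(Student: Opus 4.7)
The plan is to reduce the expression defining $B^{\varepsilon,\delta}$ to an integral involving the quantity controlled by Lemma~\ref{lem:g-theta-bound}, and then show that the resulting integral vanishes as first $\delta\to 0$ and then $\varepsilon\to 0$.

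First I would compute the Malliavin derivative of $\Psi^{\varepsilon}_0(u) = e^{E\,\mc Y_0(\Theta^{\varepsilon}_u)}$ explicitly: the chain rule yields
\[
\mr D_{v_1} \Psi^{\varepsilon}_0(u) \;=\; E\,\Psi^{\varepsilon}_0(u)\,\Theta^{\varepsilon}_u(v_1).
\]
Plugging this into the integrand of $B^{\varepsilon,\delta}$ and pulling the $v_1$-integration inside, the argument of the $\mc H_{\rm Dir}^{-1}(v_2)$ norm becomes
\[
E\int_0^1 \varphi(u)\,\Psi^{\varepsilon}_0(u)\,G^{\varepsilon,\delta}_u(\cdot)\,du, \qquad G^{\varepsilon,\delta}_u(v_2) := \int_0^1 g^{\varepsilon,\delta}_u(v_1,v_2)\,\Theta^{\varepsilon}_u(v_1)\,dv_1.
\]
By Minkowski's integral inequality for the $\mc H_{\rm Dir}^{-1}(v_2)$ norm, followed by the Cauchy--Schwarz inequality in the $u$-variable, I obtain
\[
B^{\varepsilon,\delta} \;\lesssim\; E^2\,\|\varphi\|_{L^\infty}^2 \int_0^1 \mathbb{E}\!\left[\Psi^{\varepsilon}_0(u)^2\right]\,du \;\times\;\int_0^1 \bigl\|G^{\varepsilon,\delta}_u\bigr\|_{\mc H_{\rm Dir}^{-1}}^2\,du.
\]
The first factor is uniformly bounded in $\varepsilon$: since $\mc Y_0$ is a white noise, $\mc Y_0(\Theta^{\varepsilon}_u)$ is centered Gaussian with variance $\|\Theta^{\varepsilon}_u\|_{L^2}^2$, and the latter is bounded uniformly in $(u,\varepsilon)$ because $\Theta^{\varepsilon}_u \to \Theta^0_u = \mathbf{1}_{[0,u]} + (\,\cdot\,) - 1$ in $L^2$ and this limit is bounded uniformly in $u$. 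Hence $\sup_{u,\varepsilon}\mathbb{E}[\Psi^{\varepsilon}_0(u)^2] < \infty$.

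It remains to control the $u$-integral of $\|G^{\varepsilon,\delta}_u\|_{\mc H_{\rm Dir}^{-1}}^2$. By the uniform bound in Lemma~\ref{lem:g-theta-bound}, the integrand is uniformly bounded in $(\delta,u)\in(0,1]\times[0,1]$, so dominated convergence permits exchanging $\lim_{\delta\to 0}$ with the integral, yielding
\[
\lim_{\delta\to 0}\int_0^1 \bigl\|G^{\varepsilon,\delta}_u\bigr\|_{\mc H_{\rm Dir}^{-1}}^2\,du \;\lesssim\; \int_0^1 \left(\frac{\varepsilon^{1/2}}{u(1-u)}\wedge 1 + \varepsilon^{1/4}\right)^{2} du.
\]
The $\varepsilon^{1/2}$ term contributes $O(\varepsilon^{1/2})$ from the boundary layer $\{u(1-u)\le \varepsilon^{1/2}\}$, plus $\int_{\varepsilon^{1/2}}^{1-\varepsilon^{1/2}} \varepsilon/(u(1-u))^2\,du \lesssim \varepsilon^{1/2}$ from the interior; together with the $O(\varepsilon^{1/2})$ coming from $\varepsilon^{1/4}$ squared, the whole integral is $O(\varepsilon^{1/2})$ and vanishes as $\varepsilon\to 0$. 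Combining the three factors proves $\lim_{\varepsilon\to 0}\lim_{\delta\to 0} B^{\varepsilon,\delta} = 0$.

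The routine ingredients are the Malliavin derivative and the Minkowski/Cauchy--Schwarz manipulations; the only genuinely delicate step is the integrability of the upper bound from Lemma~\ref{lem:g-theta-bound} near the boundary $u\in\{0,1\}$. The near-boundary singularity $(u(1-u))^{-1}$ is borderline non-integrable, but squaring it together with the truncation at $1$ produces exactly the threshold where the integral collapses to order $\varepsilon^{1/2}$, which is what makes the argument work.
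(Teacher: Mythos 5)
Your proof is correct and follows essentially the same route as the paper: compute the Malliavin derivative of $\Psi^\varepsilon_0(u)$, apply Minkowski/Cauchy--Schwarz to peel off $\mathbb{E}[\Psi^\varepsilon_0(u)^2]$, invoke Lemma~\ref{lem:g-theta-bound} and dominated convergence in $\delta$, and compute that $\int_0^1 \big(\frac{\varepsilon^{1/2}}{u(1-u)}\wedge 1 + \varepsilon^{1/4}\big)^2 du = O(\varepsilon^{1/2})$. The only cosmetic difference is that you factor the $u$-integral into two separate integrals via Cauchy--Schwarz, whereas the paper applies Jensen to keep $\mathbb{E}[\Psi^\varepsilon_0(u)^2]$ and $\|G^{\varepsilon,\delta}_u\|_{\mc H^{-1}_{\rm Dir}}^2$ inside a single $u$-integral; both are valid and yield the same conclusion.
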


\begin{proof}[Proof of Lemma \ref{lem:Bdelta}]
  Recall that $\Psi_0^\varepsilon(u)=e^{E\mc Z_0^\varepsilon(u)} = e^{E \mc Y_0(\Theta^\varepsilon_u)}$ and therefore its Malliavin derivative is $\mathrm{D}_{v_1} \Psi^{\varepsilon}_0 (u) = E
  \Psi^{\varepsilon}_0 (u) \Theta^{\varepsilon}_u (v_1)$. As a result,
  \begin{align*}
    B^{\varepsilon, \delta} &\overset{\hphantom{(C-S)}}{=}\mathbb{E} \bigg[ \bigg\| \int_0^1 d u
    \varphi (u) \int_0^1 d v_1 g^{\varepsilon, \delta}_u (v_1, \cdot)
    E \Theta^{\varepsilon}_u (v_1) \Psi^{\varepsilon}_0 (u)
    \bigg\|_{\mc H_{\rm Dir}^{- 1}}^2 \bigg]\\
    &\overset{(C-S)}{\leq} \int_0^1 d u | \varphi (u) |^2 E^2 \bigg\| \int_0^1
    d v_1 g^{\varepsilon, \delta}_u (v_1, \cdot) \Theta^{\varepsilon}_u
    (v_1) \bigg\|_{\mc H_{\rm Dir}^{- 1}}^2 \mathbb{E} \big[| \Psi^{\varepsilon}_0 (u) |^2\big],
  \end{align*}
  so from Lemma~\ref{lem:g-theta-bound} together with the dominated convergence theorem we get
  \[ \lim_{\varepsilon \rightarrow 0} \lim_{\delta \rightarrow 0}
     B^{\varepsilon, \delta} \lesssim \lim_{\varepsilon \rightarrow 0}
     \int_0^1 d u | \varphi (u) |^2 \left( \left( \frac{\varepsilon^{1/2}}{u (1 - u)} \wedge 1 \right) + \varepsilon^{1/4} \right)^2
     \mathbb{E} \big[| \Psi^{\varepsilon}_0 (u) |^2\big] = 0. \]
\end{proof}

To complete the proof of Lemma~\ref{lem:remainder-1} we need to control
$A^{\varepsilon, \delta}$, which is achieved in the next lemma.

\begin{lemma}\label{lem:Adelta}
  We have
  \[ \lim_{\varepsilon \rightarrow 0} \lim_{\delta \rightarrow 0}
     A^{\varepsilon, \delta}  = 0. \]
\end{lemma}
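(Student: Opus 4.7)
The plan is to follow the template of Lemma~4.5 in Gubinelli--Perkowski~\cite{Gubinelli2015Energy}, suitably adapted to the Dirichlet setting. First I would rewrite $A^{\varepsilon,\delta}$ in a form that makes the Gaussian structure transparent. Since $\Psi^{\varepsilon}_0(u) = \exp(E\, W_1(\Theta^{\varepsilon}_u))$ is an exponential of a Wiener integral, its Malliavin derivative is $\mathrm{D}_{v_2}\Psi^{\varepsilon}_0(u) = E\,\Theta^{\varepsilon}_u(v_2)\,\Psi^{\varepsilon}_0(u)$, so the $\diamond$-product simplifies to
\[
W_1\big(g^{\varepsilon,\delta}_u(v_1,\cdot)\big) \diamond \Psi^{\varepsilon}_0(u) = \Big(W_1\big(g^{\varepsilon,\delta}_u(v_1,\cdot)\big) - E\,\big\langle g^{\varepsilon,\delta}_u(v_1,\cdot),\Theta^{\varepsilon}_u\big\rangle_{L^2}\Big)\Psi^{\varepsilon}_0(u).
\]
In particular, the $\diamond$-correction is precisely the Wick renormalization that kills the mean, so the resulting expression sits in chaoses of order $\ge 1$ with respect to the underlying Gaussian.

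Next I would apply duality to turn the $\mc H_{\rm Dir}^{-1}$-norm into a pairing: $A^{\varepsilon,\delta}$ is the supremum, over $h \in \mc H_{\rm Dir}^1$ with $\|h\|_{\mc H_{\rm Dir}^1}=1$, of
\[
\mathbb{E}\bigg[\bigg(\int_0^1\!\!\int_0^1 \varphi(u)\,h(v_1)\,\big(W_1(g^{\varepsilon,\delta}_u(v_1,\cdot))\diamond \Psi^{\varepsilon}_0(u)\big)\,du\,dv_1\bigg)^{\!2}\bigg].
\]
Expanding the square produces a double integral over $(u,u',v_1,v_1')$; the expectation is computed using Gaussian integration by parts (Wick's theorem) applied to the pair $W_1(g^{\varepsilon,\delta}_u(v_1,\cdot))\,W_1(g^{\varepsilon,\delta}_{u'}(v_1',\cdot))$ multiplied by the smooth functional $\Psi^{\varepsilon}_0(u)\Psi^{\varepsilon}_0(u')$. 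Because of the Wick renormalization built into the $\diamond$-product, the only contractions that survive are those that hit the exponentials via $\mathrm{D}\Psi^\varepsilon_0$, and each such contraction yields a factor of the type
\[
\int_0^1 g^{\varepsilon,\delta}_u(v_1,v_2)\,\Theta^{\varepsilon}_{u'}(v_2)\,dv_2,
\]
that is, precisely the object estimated in Lemma~\ref{lem:g-theta-bound}.

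After performing the contractions and bounding the remaining exponential moments of $\Psi^\varepsilon_0$ uniformly in $\varepsilon$ (using that $\mc Y_0(\Theta^\varepsilon_u)$ is Gaussian with uniformly bounded variance), I would dualize against $h$ using Cauchy--Schwarz in $v_1$, arriving at a bound of the form
\[
A^{\varepsilon,\delta} \lesssim \int_0^1 |\varphi(u)|^2 \bigg(\sup_{u'\in[0,1]}\Big\|\!\int_0^1 g^{\varepsilon,\delta}_u(v_1,\cdot)\Theta^{\varepsilon}_{u'}(v_1)\,dv_1\Big\|_{\mc H_{\rm Dir}^{-1}}\bigg)^{\!2}\,du + \text{(lower order)}.
\]
Passing to the limit $\delta\to 0$ (allowed by the uniform-in-$\delta$ bound from Lemma~\ref{lem:g-theta-bound} plus dominated convergence), then $\varepsilon\to 0$, Lemma~\ref{lem:g-theta-bound} together with the integrability of $((\varepsilon^{1/2}/(u(1-u)))\wedge 1 + \varepsilon^{1/4})^2$ on $[0,1]$ and dominated convergence yields $\lim_{\varepsilon\to 0}\lim_{\delta\to 0} A^{\varepsilon,\delta}=0$.

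The main obstacle I anticipate is the careful bookkeeping of Wick contractions in step three: one must show that the $\diamond$-renormalization cancels exactly the contractions that would otherwise produce a non-integrable (in $\varepsilon$) singularity near the boundary, leaving only the "cross" contractions that Lemma~\ref{lem:g-theta-bound} can handle. Equivalently, one has to verify that the chaos expansion of the integrand starts at order one and that the order-one coefficient is a controllable kernel; beyond that, the argument is a fairly mechanical adaptation of its translation-invariant counterpart in~\cite{Gubinelli2015Energy}, with the Dirichlet heat kernel estimates of Appendix~\ref{app:heat-kernel} playing the role of the Gaussian kernel estimates used there.
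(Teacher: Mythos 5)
There is a genuine gap. You claim that the $\diamond$-renormalization leaves only the ``cross'' contractions that hit the exponentials via $\mathrm{D}\Psi^\varepsilon_0$. That is not what the $\diamond$-product does: by its definition it removes only the \emph{self-contraction} between $W_1(g^{\varepsilon,\delta}_u(v_1,\cdot))$ and the exponential $\Psi^\varepsilon_0(u)$ carrying the \emph{same} index $u$. After expanding the square and applying Gaussian integration by parts, two contributions survive (this is formula~\eqref{eq:A-eps-delta-pr1} in the paper):
\begin{align*}
    &\mathbb{E} \Big[\big\langle W_1 (g^{\varepsilon, \delta}_u (v_1, \cdot))
    \diamond \Psi^{\varepsilon}_0 (u), W_1 (g^{\varepsilon, \delta}_{u'} (v_1,
    \cdot)) \diamond \Psi^{\varepsilon}_0 (u') \big\rangle_{\mc H_{\rm Dir}^{- 1} (v_1)} \Big] \\
    &\quad =\mathbb{E} \big[\Psi^{\varepsilon}_0 (u) \Psi^{\varepsilon}_0 (u')\big] \int_0^1
    \big\langle g^{\varepsilon, \delta}_u (v_1, v_2), g^{\varepsilon, \delta}_{u'}
    (v_1, v_2) \big\rangle_{\mc H_{\rm Dir}^{- 1}(v_1)} d v_2\\
    &\quad \quad + E^2 \mathbb{E} \big[\Psi^{\varepsilon}_0 (u) \Psi^{\varepsilon}_0
    (u')\big] \int_0^1\!\!\int_0^1 \Theta^{\varepsilon}_u (v_3)
    \Theta^{\varepsilon}_{u'} (v_2) \big\langle g^{\varepsilon, \delta}_u (v_1,
    v_2), g^{\varepsilon, \delta}_{u'} (v_1, v_3) \big\rangle_{\mc H_{\rm Dir}^{- 1}(v_1)} d v_2\, d v_3 .
\end{align*}
Your sketch, and the final bound you write down in terms of $\int_0^1 g^{\varepsilon,\delta}_u(v_1,\cdot)\Theta^\varepsilon_{u'}(v_1)\,dv_1$ and Lemma~\ref{lem:g-theta-bound}, treat only the second line. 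The first line --- the direct contraction of the two Wiener integrals, weighted by $\mathbb{E}[\Psi_0^\varepsilon(u)\Psi_0^\varepsilon(u')]$ --- is neither cancelled by the renormalization nor lower order, and it must be estimated separately. The paper does this by setting $V^\varepsilon(u) = |\varphi(u)|\,\mathbb{E}[\Psi_0^\varepsilon(u)^2]^{1/2}$, applying Cauchy--Schwarz to pull the $u,u'$ integrals inside the $\mc H^{-1}_{\rm Dir}(v_1)$ norm, and then invoking Lemma~\ref{lem:g-bound} (the pointwise bound on $\|g^{\varepsilon,\delta}_u(\cdot,v_2)\|_{\mc H_{\rm Dir}^{-1}}$, not Lemma~\ref{lem:g-theta-bound}) together with the heat kernel estimates of Lemmas~\ref{lem:Dirichlet-heat-kernel} and~\ref{lem:Neumann-Dirichlet}, obtaining a bound of order $\varepsilon^{1/2}$. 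Without this step your argument does not close.
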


\begin{proof}[Proof of Lemma \ref{lem:Adelta}]
  We expand
  \begin{align}
    A^{\varepsilon,\delta}& = \mathbb{E} \bigg[ \bigg\| \int_0^1 d u \varphi (u) \big(W_1
    (g^{\varepsilon, \delta}_u (v_1, \cdot)\big) \diamond \Psi^{\varepsilon}_0
    (u)) \bigg\|_{\mc H_{\rm Dir}^{- 1} (v_1)}^2 \bigg] \notag\\
    & = \int_0^1 d u \int_0^1 d u' \varphi (u) \varphi (u') \notag
    \\
    & \qquad \qquad \times \mathbb{E} \Big[\big\langle W_1 (g^{\varepsilon, \delta}_u (v_1, \cdot))
    \diamond \Psi^{\varepsilon}_0 (u)\; ,\;  W_1 (g^{\varepsilon, \delta}_{u'} (v_1,
    \cdot)) \diamond \Psi^{\varepsilon}_0 (u') \big\rangle_{\mc H_{\rm Dir}^{- 1} (v_1)}\Big],\label{eq:Aest}
  \end{align}
  and as in the proof of \cite[Lemma~4.7]{Gubinelli2015Energy} we get
  from an integration by parts
  \begin{align}\label{eq:A-eps-delta-pr1}
    &\mathbb{E} \Big[\big\langle W_1 (g^{\varepsilon, \delta}_u (v_1, \cdot))
    \diamond \Psi^{\varepsilon}_0 (u)\; ,\;  W_1 (g^{\varepsilon, \delta}_{u'} (v_1,
    \cdot)) \diamond \Psi^{\varepsilon}_0 (u') \big\rangle_{\mc H_{\rm Dir}^{- 1} (v_1)} \Big] \nonumber\\ \nonumber
    &\quad =\mathbb{E} \big[\Psi^{\varepsilon}_0 (u) \Psi^{\varepsilon}_0 (u')\big] \int_0^1
    \big\langle g^{\varepsilon, \delta}_u (v_1, v_2), g^{\varepsilon, \delta}_{u'}
    (v_1, v_2) \big\rangle_{\mc H_{\rm Dir}^{- 1}(v_1)} d v_2\\ \nonumber
    &\quad \quad + E^2 \mathbb{E} \big[\Psi^{\varepsilon}_0 (u) \Psi^{\varepsilon}_0
    (u')\big] \vphantom{\int_0^1}\\ 
    & \qquad \qquad \times \int_0^1 d v_2 \int_0^1 d v_3 \Theta^{\varepsilon}_u (v_3)
    \Theta^{\varepsilon}_{u'} (v_2) \big\langle g^{\varepsilon, \delta}_u (v_1,
    v_2), g^{\varepsilon, \delta}_{u'} (v_1, v_3) \big\rangle_{\mc H_{\rm Dir}^{- 1}(v_1)} .
  \end{align}
  For the second term on the right hand side we simply estimate from the Cauchy-Schwartz inequality
  \begin{align*}
    \lim_{\delta \rightarrow 0} \bigg| &\int_0^1 d v_2 \int_0^1 d v_3
    \Theta^{\varepsilon}_u (v_3) \Theta^{\varepsilon}_{u'} (v_2) \big\langle
    g^{\varepsilon, \delta}_u (v_1, v_2) , g^{\varepsilon, \delta}_{u'} (v_1,
    v_3) \big\rangle_{\mc H_{\rm Dir}^{- 1} (v_1)} \bigg|\\
    &\quad = \lim_{\delta \rightarrow 0} \bigg| \bigg\langle \int_0^1 d v_2
    \Theta^{\varepsilon}_{u'} (v_2) g^{\varepsilon, \delta}_u (v_1, v_2)\; ,\;
    \int_0^1 d v_3 \Theta^{\varepsilon}_u (v_3) g^{\varepsilon,
    \delta}_{u'} (v_1, v_3) \bigg\rangle_{\mc H_{\rm Dir}^{- 1} (v_1)} \bigg|\\
    &\quad \leq \lim_{\delta \rightarrow 0} \bigg\| \int_0^1 d v_2
    \Theta^{\varepsilon}_{u'} (v_2) g^{\varepsilon, \delta}_u (\cdot, v_2)
    \bigg\|_{\mc H_{\rm Dir}^{- 1}} \bigg\| \int_0^1 d v_3 \Theta^{\varepsilon}_u
    (v_3) g^{\varepsilon, \delta}_{u'} (\cdot, v_3) \bigg\|_{\mc H_{\rm Dir}^{- 1}}\\
    &\quad \lesssim \bigg(  \frac{\varepsilon^{1/2}}{u (1 - u)} \wedge 1
     + \varepsilon^{1/4} \bigg) \bigg( \frac{\varepsilon^{1/2}}{u' (1 - u')} \wedge 1  + \varepsilon^{1/4} \bigg),
  \end{align*}
  where in the last step we used Lemma~\ref{lem:g-theta-bound} and the symmetry of $g^{\varepsilon,
  \delta}_u$. So when we plug this contribution into \eqref{eq:Aest} inside the integration w.r.t.~$u$ and $u'$,  one easily shows that it
  vanishes for $\varepsilon \rightarrow 0$. We are left with bounding the
  first contribution coming from~\eqref{eq:A-eps-delta-pr1}. We set $V^{\varepsilon} (u) = |
  \varphi (u) | \mathbb{E} [\Psi^{\varepsilon}_0 (u)^2]^{1 / 2}$ and obtain
  from Lemma~\ref{lem:g-bound} and Lemma~\ref{lem:Neumann-Dirichlet} as well
  as Lemma~\ref{lem:Dirichlet-heat-kernel}:
  \begingroup
  \allowdisplaybreaks
  \begin{align*}
    \lim_{\delta \rightarrow 0} \int_0^1 d u &\int_0^1 d u' \varphi
    (u) \varphi (u') \mathbb{E} \big[\Psi^{\varepsilon}_0 (u)
    \Psi^{\varepsilon}_0 (u')\big] \int_0^1 \big\langle g^{\varepsilon, \delta}_u
    (v_1, v_2), g^{\varepsilon, \delta}_{u'} (v_1, v_2) \big\rangle_{\mc H_{\rm Dir}^{- 1}
    (v_1)} d v_2\\
    & \overset{\substack{\hphantom{\text{Lem. \ref{lem:g-bound}}}\\(C-S)}}{\leq} \lim_{\delta \rightarrow 0} \int_0^1 d v_2 \bigg( \int_0^1
    d u V^{\varepsilon} (u) \big\| g^{\varepsilon,\delta}_u (\cdot, v_2) \big\|_{\mc H_{\rm Dir}^{- 1}}
    \bigg)^2\\
    & \overset{\text{(Lem. \ref{lem:g-bound})}}{\lesssim}  \int_0^1 d v_2 \bigg( \int_0^1 d u V^{\varepsilon} (u)
    \Big[| p^{\rm Neu}_{\varepsilon} (u, v_2) - p^{\rm Dir}_{\varepsilon} (u, v_2) | \\
    & \qquad \qquad \qquad \qquad \qquad \qquad \qquad +
    p^{\rm Dir}_{\varepsilon} (u, v_2) \times \big( \varepsilon^{- 1/4} | v_2 - u | +
    \varepsilon^{1/4} \big)\Big] \bigg)^2\\
    & \overset{\hphantom{\text{Lem. \ref{lem:g-bound}}}}{\lesssim}   \int_0^1 d v_2 \bigg( \| V^{\varepsilon} \|_{L^{\infty}}
    \bigg[ \bigg( \frac{\varepsilon^{1/2}}{v_2 (1 - v_2)} \wedge 1 \bigg) +
    \varepsilon^{1/4} \bigg) \bigg]^2 \lesssim \varepsilon^{1/2},
  \end{align*}
  which also vanishes as $\varepsilon \rightarrow 0$.
\end{proof}
\endgroup

This concludes the proof of Lemma \ref{lem:remainder-1}.

\subsubsection{Proof of Lemma~\ref{lem:remainder-2}}

Recall that $Q^{\varepsilon} (\varphi)$ was defined in~\eqref{eq:remainder-2}. One term that appears in its definition is
\[ \| J^{\varepsilon}_u \|_{L^2}^2 - \| p^{\rm Dir}_{\varepsilon} (u, \cdot)
   \|_{L^2}^2 = \sum_{\ell = 1}^{\infty} e^{- 2\varepsilon \pi^2 \ell^2} 2
   \big(\cos (\ell \pi u)^2 - \sin (\ell \pi u)^2\big) = \sum_{\ell = 1}^{\infty} e^{-2
   \varepsilon \pi^2 \ell^2} 2 \cos (2 \ell \pi u), \]
and for $u \in [0, \frac12]$ we have
\[ \sum_{\ell = 1}^{\infty} e^{- 2\varepsilon \pi^2 \ell^2} 2 \cos (2 \ell \pi
   u) = p^{\rm Neu}_{2\varepsilon} (0, 2 u) - 1, \]
while for $u \in (\frac12, 1]$
\begin{align*}
  \sum_{\ell = 1}^{\infty} e^{- 2\varepsilon \pi^2 \ell^2} 2 \cos \big(2 \ell \pi
  \big(\tfrac12 + (u - \tfrac12)\big)\big) & = \sum_{\ell = 1}^{\infty} e^{-2 \varepsilon \pi^2
  \ell^2} 2 \cos \big(\ell \pi \big[2 - (1 - 2 (u - \tfrac12))\big]\big)\\
  & = p^{\rm Neu}_{2\varepsilon} (0, 2 - 2 u) - 1.
\end{align*}
If $\{f^{\varepsilon}\}_{\varepsilon>0}$ is a uniformly bounded family of continuous
functions that converges to a continuous function $f$, then
\[ \lim_{\varepsilon \rightarrow 0} \bigg[ \int_0^{1/2} p^{\rm Neu}_{2\varepsilon}
   (0, 2 u)  f^{\varepsilon} (u) d u + \int_{1/2}^1
   p^{\rm Neu}_{2\varepsilon} (0, 2 - 2 u)  f^{\varepsilon} (u) d u \bigg] =
   \frac{1}{2} \big(f (0) + f (1)\big), \]
which means that for $\varphi \in \mc C^1([0,1])$
\begin{align*}
  Q^{\varepsilon}_t (\varphi) & = \int_0^t
  \bigg[ \int_0^1 \Psi^{\varepsilon}_s (u) \left( \big\| J^{\varepsilon}_u
  \big\|_{L^2}^2 - \big\| p^{\rm Dir}_{\varepsilon} (u, \cdot) \big\|_{L^2}^2 + 1\right) \varphi
  (u) d u  \\ & \qquad \qquad  \qquad  - \frac{1}{2} \big(\Psi^\varepsilon_s (0) \varphi (0) + \Psi^\varepsilon_s (1) \varphi (1)\big) \\
  & \qquad \qquad  \qquad - E^{- 2} \big(\nabla \varphi (1) \Psi^{\varepsilon}_s (1) -
  \nabla \varphi (0) \Psi^{\varepsilon}_s (0)\big) \bigg] d s\\
  & \xrightarrow{\varepsilon \to 0} \int_0^t \Big[  - E^{- 2} \big(\nabla \varphi (1) \Psi_s (1) - \nabla \varphi
  (0) \Psi_s (0)\big)\Big] d s,
\end{align*}
and if $\nabla \varphi(0) = \nabla \varphi(1) = 0$, the right hand side vanishes. Moreover, since the integrand in the time integral in the definition of $Q^\varepsilon(\varphi)$ converges absolutely, then $\{Q^\varepsilon(\varphi)\}_{\varepsilon \in (0,1]}$ is uniformly bounded in 1-variation norm.

\subsubsection{Proof of Lemma~\ref{lem:constant-conv}}

This is a consequence of the following result, which uses the approximation $K_u^{\varepsilon,\delta}$ defined in \eqref{eq:K-eps-delta-def}.

\begin{lemma}\label{lem:Kdelta}
  We have
  \[ \lim_{\varepsilon \rightarrow 0} \lim_{\delta \rightarrow 0}
     K^{\varepsilon, \delta}_u = \frac{E^2}{12}, \]
  where the convergence is in $L_u^2 ([0, 1])$.
\end{lemma}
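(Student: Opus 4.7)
The argument splits into two stages: first we reduce the double limit to the study of the single-parameter object $K^\varepsilon_u$ defined in~\eqref{eq:K-def} by sending $\delta \to 0$, then we compute the remaining limit as $\varepsilon \to 0$. For the first stage, only the first integral in $g^{\varepsilon,\delta}_u$ depends on $\delta$, and for every fixed $\varepsilon>0$ the function $v \mapsto \Theta^\varepsilon_u(v)$ is smooth and bounded uniformly in $u$, as is $w \mapsto p^{\rm Neu}_\varepsilon(u,w)$. Since $p^{\rm Dir}_\delta(w,\cdot)$ is a weak approximation of the Dirac delta at $w$ as $\delta \to 0$ (with the boundary loss of mass controlled in $L^2$ via Lemma~\ref{lem:Dirichlet-heat-kernel}), applying it twice against the continuous integrand $\Theta^\varepsilon_u(v_1)\Theta^\varepsilon_u(v_2)$ and then integrating in $w$ against $p^{\rm Neu}_\varepsilon(u,\cdot)$ yields pointwise convergence $K^{\varepsilon,\delta}_u \to K^\varepsilon_u$ together with a uniform bound in $u$, hence convergence in $L^2([0,1])$.

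For the second stage, writing $K^\varepsilon_u = E^2 (I_1(\varepsilon,u) - I_2(\varepsilon,u)^2)$ with
\[
I_1(\varepsilon,u) = \int_0^1 p^{\rm Neu}_\varepsilon(u,v) \big(\Theta^\varepsilon_u(v)\big)^2 dv, \qquad I_2(\varepsilon,u) = \int_0^1 p^{\rm Dir}_\varepsilon(u,v) \Theta^\varepsilon_u(v) dv,
\]
I would plug in the spectral expansions. Parseval's identity on the sine basis $\{e_k\}$ directly gives
\[
I_2(\varepsilon,u) = -\sum_{k \ge 1} e^{-2\varepsilon\pi^2 k^2} \frac{\sin(2k\pi u)}{k\pi},
\]
which, via the Fourier identity $\sum_{k \ge 1}\sin(kx)/k = (\pi-x)/2$ on $(0,2\pi)$ together with the uniform bound $|I_2|\le C$, converges to $u - 1/2$ in $L^2([0,1])$, so $I_2^2 \to (u-1/2)^2 = u^2 - u + 1/4$ in $L^2$. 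For $I_1$, I would decompose $p^{\rm Neu}_\varepsilon(u,v) = 1 + \sum_{k \ge 1} e^{-\varepsilon\pi^2 k^2} 2 \cos(k\pi u) \cos(k\pi v)$; the constant mode contributes $\int_0^1 (\Theta^\varepsilon_u(v))^2 dv = \sum_{\ell \ge 1} e^{-2\varepsilon\pi^2 \ell^2} \frac{2 \cos^2(\ell\pi u)}{\ell^2 \pi^2}$, which by the classical identity $\sum_{\ell \ge 1} \cos(\ell x)/\ell^2 = (x-\pi)^2/4 - \pi^2/12$ converges in $L^2$ to $u^2 - u + 1/3$.

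It remains to show that the off-diagonal cosine contributions to $I_1$,
\[
R(\varepsilon,u) := \sum_{k \ge 1} e^{-\varepsilon\pi^2 k^2}\, 2\cos(k\pi u) \int_0^1 \cos(k\pi v) (\Theta^\varepsilon_u(v))^2 dv,
\]
vanish in $L^2_u$. Expanding $(\Theta^\varepsilon_u(v))^2$ via the product-to-sum formula $\sin(\ell\pi v)\sin(m\pi v) = \tfrac12(\cos((\ell-m)\pi v) - \cos((\ell+m)\pi v))$ isolates the triples with $\ell+m=k$ or $|\ell-m|=k$, and Cauchy-Schwarz combined with the damping $e^{-\varepsilon\pi^2(k^2+\ell^2+m^2)}$ bounds $\|R(\varepsilon,\cdot)\|_{L^2}$ by a vanishing power of $\varepsilon$ (say $\varepsilon^{1/4}$). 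Putting everything together,
\[
\lim_{\varepsilon \to 0} K^\varepsilon_u = E^2\Big[(u^2-u+\tfrac13)-(u^2-u+\tfrac14)\Big] = \frac{E^2}{12} \qquad \text{in } L^2([0,1]).
\]

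\textbf{Main obstacle.} The delicate step is the control of the remainder $R(\varepsilon,u)$, since the outer summation in $k$ is only weakly damped and the inner integrals do not vanish termwise. A clarifying heuristic that pinpoints the origin of the $1/12$ uses Gaussian rescaling at interior points: setting $v = u + \sqrt{2\varepsilon} z$, both kernels $p^{\rm Neu}_\varepsilon(u,\cdot)$ and $p^{\rm Dir}_\varepsilon(u,\cdot)$ look like standard Gaussians, and $\Theta^\varepsilon_u(v) \approx u - \Phi(z)$ with $\Phi$ the Gaussian cdf, so $I_1 \to \int_{\RR} \phi(z)(u-\Phi(z))^2 dz$ and $I_2 \to \int_{\RR}\phi(z)(u-\Phi(z)) dz$; the identities $\int\phi\Phi = 1/2$ and $\int\phi\Phi^2 = 1/3$ give the values $u^2-u+1/3$ and $u-1/2$, and $1/12 = 1/3 - 1/4$ is exactly the variance of a uniform $[0,1]$ random variable, which is the reason behind the appearance of this universal constant.
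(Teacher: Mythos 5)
Your decomposition $K^\varepsilon_u = E^2(I_1 - I_2^2)$, the Fourier computations of the limits of $I_2$ and of the constant mode of $I_1$, and the final arithmetic $\tfrac13 - \tfrac14 = \tfrac{1}{12}$ are all correct, and your probabilistic heuristic at the end is in fact closer in spirit to the paper's argument than the main body of your sketch. The decisive difference, however, is the treatment of $I_1$: the paper never separates $p^{\rm Neu}_\varepsilon$ into its constant mode plus cosine modes. Instead it uses the exact relation $p^{\rm Neu}_\varepsilon(u,\cdot) = 1 - \nabla_{\cdot}\,\Theta^\varepsilon_u$ (built into the definition \eqref{eq:neuker}) to integrate by parts,
\[
\int_0^1 p^{\rm Neu}_\varepsilon(u,w)\big(\Theta^\varepsilon_u(w)\big)^2\,dw
= \int_0^1 \big(\Theta^\varepsilon_u(w)\big)^2\,dw - \tfrac{1}{3}\big(\Theta^\varepsilon_u(w)\big)^3\Big|_0^1
= \big\|\Theta^\varepsilon_u\big\|_{L^2}^2,
\]
since $\Theta^\varepsilon_u \in \SD$ and hence vanishes at $0$ and $1$. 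In other words, the quantity you call $R(\varepsilon,u)$ is \emph{identically zero} for every $\varepsilon > 0$ — there is nothing to estimate. Your plan of attacking $R$ via the product-to-sum decomposition and Cauchy--Schwarz would eventually be seen to collapse to the same telescoping cancellation, but it is far more labor than necessary, and you have not actually carried out the estimate, only described it; so as written this is a gap. The other difference is cosmetic: for $I_2$ the paper observes $I_2(\varepsilon,u) = \Theta^{2\varepsilon}_u(u)$ by orthogonality and then evaluates the $\varepsilon \to 0$ limit via the killed-Brownian-motion representation $\Theta^{\varepsilon}_u(u) = \mathbb{E}^{\rm Dir}_u[\mathbf{1}_{[0,u]}(B_{2\varepsilon}) + B_{2\varepsilon} - 1] \to u - \tfrac12$ for $u \in (0,1)$, whereas you use the Fourier identity $\sum_k \sin(kx)/k = (\pi - x)/2$. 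Both are valid; the probabilistic one matches the heuristic in your last paragraph and sidesteps any Abel-summation subtleties at the endpoints.
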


\begin{proof}[Proof of Lemma \ref{lem:Kdelta}]
  Recall that, by definition
  \begin{align*}
    \lim_{\delta \rightarrow 0} E^{- 2} K^{\varepsilon, \delta}_u & =
    \lim_{\delta \rightarrow 0} \int_{[0, 1]^2} g^{\varepsilon, \delta}_u
    (v_1, v_2) \Theta^{\varepsilon}_u (v_1) \Theta^{\varepsilon}_u (v_2)
    d v_1 d v_2\\
    & = \lim_{\delta \rightarrow 0} \int_{[0, 1]^3} p^{\rm Dir}_{\delta} (w, v_1)
    p^{\rm Dir}_{\delta} (w, v_2) p^{\rm Neu}_{\varepsilon} (u, w) \Theta^{\varepsilon}_u
    (v_1) \Theta^{\varepsilon}_u (v_2) d w d v_1 d v_2\\
    &\quad - \int_{[0, 1]^2} p^{\rm Dir}_{\varepsilon} (u, v_1) p^{\rm Dir}_{\varepsilon} (u, v_2)
    \Theta^{\varepsilon}_u (v_1) \Theta^{\varepsilon}_u (v_2) d v_1
    d v_2\\
    & = \int_0^1 p^{\rm Neu}_{\varepsilon} (u, w) (\Theta^{\varepsilon}_u (w))^2 d w
    - \left( \int_0^1 p^{\rm Dir}_{\varepsilon} (u, v_1) \Theta^{\varepsilon}_u (v_1)
    d v_1 \right)^2,
  \end{align*}
  where the convergence is pointwise in $u$, with a uniform bound in
  $\delta$, so in particular in $L_u^2 ([0, 1])$. Since $- \nabla
  \Theta^{\varepsilon}_u (w) = p^{\rm Neu}_{\varepsilon} (u, w) - 1$, the first term
  in the last line is
  \begin{align*} \int_0^1 p^{\rm Neu}_{\varepsilon} (u, w) (\Theta^{\varepsilon}_u (w))^2 d w& =
     - \frac{1}{3} \int_0^1 \nabla (\Theta^{\varepsilon}_u (w))^3 d w +
     \int_0^1 (\Theta^{\varepsilon}_u (w))^2 d w \\ & = \int_0^1
     (\Theta^{\varepsilon}_u (w))^2 d w, \end{align*}
  where we used that $\Theta^{\varepsilon}_u (0) = \Theta^{\varepsilon}_u (1)
  = 0$ because $\Theta^\varepsilon_u \in \SD$. For the remaining term we get
  \[ \int_0^1 p^{\rm Dir}_{\varepsilon} (u, v_1) \Theta^{\varepsilon}_u (v_1) d
     v_1 = \sum_{\ell = 1}^{\infty} (e^{- \varepsilon \pi^2 \ell^2})^2 (- \ell
     \pi)^{- 1} 2 \sin (\ell \pi u) \cos (\ell \pi u) = \Theta^{2
     \varepsilon}_u (u) . \]
  In other words, we obtain $\lim_{\delta \rightarrow 0} E^{- 2}
  K^{\varepsilon, \delta}_u = \| \Theta^{\varepsilon}_u \|_{L^2}^2 - |
  \Theta^{2 \varepsilon}_u (u) |^2$. Now note that $\Theta^{\varepsilon}_u (v)$ also writes as \[\Theta^\varepsilon_u(v)
  = \int_0^1 \Theta_u (w) p^{\rm Dir}_{\varepsilon} (v, w) d w,\] where $\Theta_u
  (w)$ is the unique kernel satisfying 
  $\nabla_u \langle \Theta_u, f \rangle_{L^2} = f (u)$ and  $\int_0^1 \langle
  \Theta_u, f \rangle_{L^2} d u = 0$ for all $f \in \mc C([0,1])$. From the first condition we get $\Theta_u
  (w) =\mathbf{1}_{[0, u]} (w) + g(w)$ for some $g$, the second
  condition gives for almost all $w$
  \[ 0 = \int_0^1 \Theta_u (w) d u = \int_0^1 \mathbf{1}_{[w,1]}(u)\;
     d u + g(w) = 1 - w + g(w), \]
  so $g (w) = w - 1$. Therefore,
  \[ \Theta^{\varepsilon}_u (u) = \int_0^1 \Theta_u (w) p^{\rm Dir}_{\varepsilon} (u,
     w) d w =\mathbb{E}_u^{\rm Dir} \big[\mathbf{1}_{[0, u]} (B_{2 \varepsilon}) +
     B_{2 \varepsilon} - 1\big], \]
  where $\mathbb{E}^{\rm Dir}_u$ is the measure under which $B$ is a Brownian motion
  started at $u$, killed when reaching $0$ or $1$, and consequently $\lim_{\varepsilon \rightarrow 0} \Theta^{\varepsilon}_u (u) = \frac{1}{2} + u - 1 = u - \frac{1}{2}$ for all $u \in (0,1)$.  Also,
  \begin{equation*}
    \lim_{\varepsilon \rightarrow 0} \| \Theta^{\varepsilon}_u \|_{L^2}^2 = \|
    \Theta_u \|_{L^2}^2 = \int_0^1 (\mathbf{1}_{[0, u]} (w) + w - 1)^2 \; dw = u^2 - u + \tfrac{1}{3},
  \end{equation*}
  which leads to
  \[ \lim_{\varepsilon \rightarrow 0} \lim_{\delta \rightarrow 0} E^{-
     2} K^{\varepsilon, \delta}_u = \| \Theta^{\varepsilon}_u\|_{L^2}^2 - |
     \Theta^{2 \varepsilon}_u (u) |^2 = u^2 - u + \tfrac{1}{3} - \left( u -
     \tfrac{1}{2} \right)^2  = \tfrac{1}{12} . \]
  
\end{proof}

\appendix

\section{Auxiliary computations involving the generator}
\label{sec:app}

\subsection{Martingale decomposition for the density fluctuation field $\mc Y_t^n$}\label{app:mart-dens}

In this section we provide  the technical computations that we need to prove \eqref{eq:martdyn}, assuming $\rho=\frac12$. We consider the general case $\gamma \geq \frac12$, and we aim at computing $\int_0^t n^2\mc L_n \mc Y_s^n(\varphi)ds$ when $\varphi \in \SD$.

First, we note that  for any $x\in\Lambda_n$, 
$
\mathcal L_n \eta(x)=j_{x-1,x}(\eta)-j_{x,x+1}(\eta), 
$
where the local function $j_{x,x+1}$ is the microscopic current of the system, which can be decomposed into its symmetric and antisymmetric parts as
$$j_{x,x+1}(\eta)=j^{\rm s}_{x,x+1}(\eta)+j^{\rm a}_{x,x+1}(\eta)$$
with \begin{align*} j^{\rm s}_{x,x+1}(\eta) =\eta(x)-\eta(x+1),  \quad j^{\rm a}_{x,x+1}(\eta)=\frac{E}{n^\gamma}\eta(x)(1-\eta(x+1)).\end{align*}

Recall that above, as well as everywhere it appears, we assume by convention $\eta(0)=\eta(n)=\rho$.

We start by looking at the action of the symmetric part of the current in the density field. In the following, for $\varphi \in \SD$, and $x\in\Lambda_n$, we denote $\varphi_x=\varphi(\frac x n)$.
A simple computation, which makes use of the property of the test function $\varphi$ at the boundary, namely $\varphi_0=\varphi_n=0$,  shows that
\begin{multline}
\frac{n^2}{\sqrt n}\sum_{x=1}^{n-1}\varphi_x \big\{j_{x-1,x}^{\rm s}(\eta)-j_{x,x+1}^{\rm s}(\eta)\big\} \\
 =\frac{1}{\sqrt n}\sum_{x=1}^{n-1}\Delta_n\varphi_x\;\eta(x)+ \frac{n^2}{\sqrt n}\varphi_1\;\eta(0)+\frac{n^2}{\sqrt n}\varphi_{n-1}\;\eta(n).  \label{eq:eq}
\end{multline}
Since we need to close the previous equation in terms of the density field,  we note that the identity $ \sum_{x=1}^{n-1}\Delta_n\varphi_x=-n^2(\varphi_1+\varphi_{n-1})$  implies that: 
\begin{equation}\label{eq:WASEP-IBP}
 \frac{1}{\sqrt n}\sum_{x=1}^{n-1}\Delta_n\varphi_x\;\bar\eta(x)=
 \frac{1}{\sqrt n}\sum_{x=1}^{n-1}\Delta_n\varphi_x(\eta(x)-\rho)=\mathcal{Y}^n
 (\Delta_n\varphi).
\end{equation}
Now we look at the action of the antisymmetric part of the current in the density field. We use the fact that, when $\rho=\frac12$,  for any $x\in\Lambda_n$ we have the identity
\begin{equation} \eta(x-1)\big(1-\eta(x)\big)= - \bar\eta(x)\bar\eta(x-1) + \frac{1}{2}\big(\eta(x-1)-\eta(x)\big) + \frac{1}{4}.\label{eq:ide}\end{equation}
A simple computation shows that 
\begin{align}
&\frac{n^2}{\sqrt n}\sum_{x=1}^{n-1}\varphi_x \big\{j_{x-1,x}^{\rm a}(\eta)-j_{x,x+1}^{\rm a}(\eta)\big\}\label{eqdecompp} \\ & \quad =-\frac{1\sqrt n}{n^{\gamma}}\; \sum_{x=1}^{n-2}\nabla_n^+\varphi_x\; \bar{\eta}(x)\bar{\eta}(x+1) + \frac{E\sqrt n}{2n^{\gamma}}\; \sum_{x=0}^{n-1}\nabla_n^+\varphi_x\;(\eta(x)-\eta(x+1))  + \textrm{BC}_n(\varphi),   \notag \end{align}
where $\textrm{BC}_n(\varphi)$ is a boundary term that is given by
\[
\textrm{BC}_n(\varphi) = \frac{En^{3/2}}{2n^{\gamma}} \Big(-\varphi_n\;\bar\eta(n-1) - \varphi_0\; \bar\eta(1)\Big)
\]
Recall that $\varphi_n=\varphi_0=0$. Therefore, \eqref{eqdecompp} rewrites as 
\begin{multline}
 -E \; \frac{\sqrt n}{n^\gamma}\; \sum_{x=1}^{n-2}\nabla_n^+\varphi_x\; \bar{\eta}(x)\bar{\eta}(x+1) +  \frac{E}{2n^{\gamma+1/2}} \sum_{x=1}^{n-1}\Delta_n\varphi_x\:\bar\eta(x) \\
 = -E \; \frac{\sqrt n}{n^\gamma}\; \sum_{x=1}^{n-2}\nabla_n^+\varphi_x\; \bar{\eta}(x)\bar{\eta}(x+1) + \frac{E}{2n^\gamma}\; \mc Y^n(\Delta_n \varphi) .\label{eq:WASEP2} \end{multline}
 From \eqref{eq:WASEP-IBP} and \eqref{eq:WASEP2} we easily deduce \eqref{eq:martdyn}. 
 
 \subsection{Martingale decomposition for the height fluctuation field $\mc Z_t^n$}
 \label{app:mart_dec_height}

Now let us show \eqref{mart_decomp_height} in a very similar way as we did in Appendix \ref{app:mart-dens}.  Recall that $h(x)=h(1)+\sum_{y=1}^{x-1} \bar\eta(y)$, as explained in Section \ref{ssec:height}. As before we always use the convention $\eta(0)=\eta(n)=\rho$ and we also conveniently set $h(0)=h(1)$ and $h(n+1)=h(n)$. One easily obtains, for any $x\in\{1,\dots,n\}$:
\begin{align} 
\mc L_n^\otimes h(x) & =  - j_{x - 1, x} (\eta) = \eta (x) (1 - \eta (x - 1)) -
  \Big( 1 + \frac{E}{n^{\gamma}} \Big) \eta (x - 1) (1 - \eta (x)) \notag\\
  & = \Delta h(x) - \frac{E}{n^{\gamma}} \eta (x - 1) (1 - \eta (x)), \label{eq:gen-h}
\end{align}
where we wrote $\Delta h (x) = h (x + 1) + h (x - 1) - 2 h (x)$. Note that with our convention $\Delta h(1)=\bar \eta(1)$ and $\Delta h(n)=-\bar\eta(n-1)$. In \eqref{eq:gen-h} we want to rewrite the second term in terms of the height configuration values. This can be easily done for $\rho=\frac12$ using the identity \eqref{eq:ide}, which we rewrite here as:
\[- \eta (x - 1) (1 - \eta (x)) = \nabla^- h (x) \nabla^+ h (x) + \frac{1}{2} \Delta h (x) - \frac{1}{4},\]
where $\nabla^-h(x)=h(x)-h(x-1)$ and $\nabla^+h(x)=h(x+1)-h(x)$. Note that the previous identity does hold for $x=1$ and $x=n$ since with our convention, $\nabla^-h(1)=0$ and $\nabla^+h(n)=0$. This implies
\begin{equation}\mc L_n^\otimes h(x) = \Big(1+\frac{E}{2n^\gamma}\Big)\Delta h(x) + \frac{E}{n^\gamma} \Big( \nabla^- h (x) \nabla^+ h (x) - \frac14\Big), \quad \text{for any } x\in\{1,\dots,n\}.\label{eq:gen-h-2}\end{equation}
Now, let us take $\varphi \in \SN$ and as before denote $\varphi_x=\varphi(\frac x n)$. We start by treating the first term in \eqref{eq:gen-h-2}, and more precisely $\Delta h(x)$. A simple computation shows that
\begin{equation}\label{eq:gen-lap}
\frac{n^2}{n^{3/2}} \sum_{x=1}^n \varphi_x \Delta h(x)  = \frac{1}{n^{3/2}}\sum_{x=1}^n \tilde\Delta_n\varphi_x\; h(x) + \sqrt n \big(\varphi_1-\varphi_0)h(1)+\sqrt n(\varphi_n-\varphi_{n-1})h(n), \end{equation}
where $\tilde\Delta_n\varphi_x$ has been defined in \eqref{eq:lap2} as: 
\[\tilde\Delta_n\varphi_x = \begin{cases}n^2\big(\varphi_{x+1} + \varphi_{x-1} - 2 \varphi_x\big)= \Delta_n\varphi_x  & \text{ if } x \in \{1,\dots,n-1\},\\
2 n^2\big(\varphi_{n-1}-\varphi_n\big) & \text{ if }x=n.\end{cases}\]
Since our goal is to see the height fluctuation field appear, as long as $h(x)$ is replaced with the time dependent configuration $h_{sn^2}^n(x)$, in the first term of \eqref{eq:gen-lap}  we have to recenter the heights  as follows: 
\begin{align}
\frac{n^2}{n^{3/2}} \sum_{x=1}^n &\varphi_x \Delta h_{sn^2}^n(x)   = \frac{1}{n^{3/2}}\sum_{x=1}^n \tilde\Delta_n\varphi_x\; \big(h_{sn^2}^n(x)-c_n s\big) \label{eq:hcontr1}\\
& \quad + \sqrt n\big(\varphi_1-\varphi_0\big)\big(h_{sn^2}^n(1)-c_n s\big) + \sqrt n\big(\varphi_n-\varphi_{n-1}\big)\big(h_{sn^2}^n(n)-c_ns\big) \label{eq:hcontr2}\\
& \quad + \frac{n^2}{n^{3/2}} \frac{E}{n^\gamma} \big(\varphi_0-\varphi_1 + \varphi_{n-1}-\varphi_n\big) c_n s.\label{eq:hcontr3}
\end{align}
Note that the term \eqref{eq:hcontr1} corresponds to $\mc Z_s^n(\tilde\Delta_n\varphi)$,  while the term \eqref{eq:hcontr2}, when integrated in time between $0$ and $t$, will give the contribution $\mc R_t^n(\varphi)$ (see \eqref{eq:height-remainder_new}). It remains to show that \eqref{eq:hcontr3} vanishes in the $n\to\infty$ limit. This is a consequence of the fact that $\varphi \in \SN$ and therefore we have
\[ 
n^2\big(\varphi_0-\varphi_1\big) \xrightarrow[n\to\infty]{} -\Delta\varphi(0), \qquad n^2\big(\varphi_{n-1}-\varphi_n\big) \xrightarrow[n\to\infty]{} - \Delta\varphi(1).
\]
As a consequence, \eqref{eq:hcontr3} is of the same order as 
\[ 
\frac{c_n}{n^{\gamma+3/2}} = - \frac{En^2}{4n^{2\gamma+3/2}},
\] which vanishes for any $\gamma \geq \frac12$. We are left with the second term in \eqref{eq:gen-h-2}, that we put directly into the martingale decomposition as follows: from the computation above, and from Dynkin's formula, the following quantity is a martingale:
\begin{align} 
\mc Z_t^n(\varphi) &  -\mc Z_0^n(\varphi)- \int_0^t \Big(n^2 \mc L_n^\otimes \mc Z_s^n(\varphi) - \frac{c_n}{n^{3/2}}\sum_{x=1}^n\varphi_x\Big) ds \notag \\
\notag
 & = \mc Z_t^n(\varphi) - \mc Z_0^n(\varphi) - \Big(1+\frac{E}{2n^\gamma} \Big)\int_0^t \mc Z_s^n(\tilde\Delta_n\varphi)ds + \mc R_t^n(\varphi) +o^n_t(1)\\
& \quad - \frac{E}{n^\gamma} \frac{n^2}{n^{3/2}} \int_0^t \sum_{x=1}^{n} \Big(\nabla^-h_{sn^2}^n(x)\nabla^+h_{sn^2}^n(x)-\frac14\Big)\varphi_x ds \label{eq:cancel1}\\
& \quad + \frac{n^2c_n t}{n^{3/2}} \sum_{x=1}^n \varphi_x,\label{eq:cancel2}
\end{align}
where the last term \eqref{eq:cancel2} cancels out 
with the term coming with $(-\frac14)$ in \eqref{eq:cancel1} (since $c_n = -En^2/(4n^\gamma)$), and then \eqref{mart_decomp_height} follows.

\section{Proofs of Proposition \ref{prop:boundary} and Proposition \ref{prop:CH-boundary}: boundary behavior}

\label{sec:proof-boundary}

As explained in Section~\ref{sec:energy-pr}, we may assume without loss of generality that $A=1$ and $D=2$, and recall that we denote $\bar E=E$. Therefore, in Proposition \ref{prop:CH-boundary} we have $\frac{D(\bar E)^2}{4A^3}=\frac{E^2}{2}$.

\begin{proof}[Proof of Proposition \ref{prop:boundary}] Let $\mc Y$ and $\rho_\varepsilon$ be as in the assumptions of Proposition \ref{prop:boundary}.
  The map $\mc Y(\rho_\varepsilon) = W_1 (\rho_\varepsilon) (\mc Y)$ is in the first chaos, so Lemma~\ref{lem:ito} and Corollary~\ref{cor:poisson-solution norm} give
  \begin{align*}
  	 \mathbb{E} \bigg[ \sup_{t \in [0,T]}
     \bigg| \int_0^t \mc Y_s (\rho_{\varepsilon}) d s \bigg|^p \bigg]
     & \lesssim  T^{p / 2} \big\| \rho_{\varepsilon}
     \big\|_{\mc H_{\rm Dir}^{- 1}}^p \\
     &  \lesssim T^{p / 2} \bigg( \sup_{\substack{f \in \mc H_{\rm Dir}^1\\ \| f\|_{\mc H_{\rm Dir}^{1}}=1}} \int_0^1 \rho^\varepsilon(u) (f(u) -f(0)) du \bigg)^p\\
     & \le T^{p / 2} \bigg( \sup_{\substack{f \in \mc H_{\rm Dir}^1\\ \| f\|_{\mc H_{\rm Dir}^{1}}=1}} \int_0^1 \rho^\varepsilon(u) \; \sqrt u \; \|f\|_{\mc H_{\rm Dir}^{1}} du \bigg)^p \\
     & =  T^{p / 2} \bigg(\int_0^1 \rho^\varepsilon(u) \sqrt u du\bigg)^p,
  \end{align*}
  where in the second step we used that $f(0)=0$ and in the third step we applied the fundamental theorem of calculus and the Cauchy-Schwarz inequality. By assumption, the right hand side converges to zero as $\varepsilon \to 0$.
\end{proof}
\begin{remark}
In the case $p=2$ and for  $\rho_\varepsilon = \iota_{\varepsilon}(0)$ (resp.~$\rho_\varepsilon = \iota_\varepsilon(1)$), with $\iota_\varepsilon$ as in Definition~\ref{def:approx}, the previous result can also be obtained from the microscopic dynamics by a two--steps procedure. We just sketch the idea and we leave the details to the reader. Note that $\mathcal{Y}_s^n(\iota_{\varepsilon}(0))=\sqrt n \vec\eta_{sn^2}^{\varepsilon n}(0)$. Adapting \cite[Lemma 3]{Franco2016}  we know how to control the variance of $\int_0^t \sqrt n (\eta_{sn^2}(1)-\rho) \, ds$ and adapting \cite[Lemma 7.1]{FRANCO20134156} we can control the variance of $\int_0^t \sqrt n (\eta_{sn^2}(1)-\vec\eta_{sn^2}^{\varepsilon n}(0))\ ds$. This together with the convergence of the fluctuation field is enough to conclude.
\end{remark}

\begin{proof}[Proof of Proposition \ref{prop:CH-boundary}]
   Let us use the same mapping argument as in Section \ref{ssec:map}. Let  $\Psi_s^\delta(u) = e^{E \mc Y_s(\Theta^\delta_u)}$ with $\Theta^\delta_u(v) = \int_0^1 \Theta_u(w) p_\delta^{\rm Dir}(v,w) dw$ and where $p_\delta^{\rm Dir}$ denotes the Dirichlet heat kernel as defined in \eqref{eq:dirker}. Then
   \begin{align}\label{eq:CH-boundary-pr1} \nonumber
      \mathbb{E} \bigg[  \bigg| \int_0^t &\bigg( \frac{\Psi_s (\varepsilon) - \Psi_s(0)}{\varepsilon}  + \frac{E^2}{2} \Psi_s(0) \bigg) d s \bigg|^2 \bigg] \\ \nonumber
      & = \lim_{\delta \to 0} \mathbb{E} \bigg[  \bigg| \int_0^t \bigg( \frac{\Psi^\delta_s (\varepsilon) - \Psi^\delta_s(0)}{\varepsilon}  + \frac{E^2}{2} \Psi_s(0) \bigg) d s \bigg|^2 \bigg] \\ \nonumber
      & \leq 2 \lim_{\delta \to 0}\mathbb{E} \bigg[\bigg| \int_0^t \bigg( \frac{\Psi^\delta_s (\varepsilon) - \Psi^\delta_s(0)}{\varepsilon}  - \frac{E^2}{\varepsilon} \int_0^\varepsilon  \Psi^\delta_s(u) \Theta^{2\delta}_u(u) du \bigg) d s \bigg|^2 \bigg] \\
      & \quad + 2 \lim_{\delta \to 0} \mathbb{E} \bigg[ \bigg| \int_0^t \bigg( \frac{E^2}{\varepsilon} \int_0^\varepsilon  \Psi^\delta_s(u) \Theta^{2\delta}_u(u) du + \frac{E^2}{2} \Psi_s(0) \bigg) d s \bigg|^2 \bigg].
   \end{align}
   The Kipnis-Varadhan inequality (Corollary~\ref{cor:KV}), yields for the first term
   \begin{align*}
       \mathbb{E} \bigg[ \sup_{t \in [0,T]} &\bigg| \int_0^t \bigg(\frac{\Psi^\delta_s (\varepsilon) - \Psi^\delta_s(0)}{\varepsilon}  - \frac{E^2}{\varepsilon} \int_0^\varepsilon  \Psi^\delta_s(u) \Theta^{2\delta}_u(u) du \bigg)d s \bigg|^2 \bigg] \\
      & \lesssim T \bigg\| \frac{\Psi^\delta_0 (\varepsilon) - \Psi^\delta_0(0)}{\varepsilon}  - \frac{E^2}{\varepsilon} \int_0^\varepsilon  \Psi^\delta_0(u) \Theta^{2\delta}_u(u) du \bigg\|_{-1,0}^2 \\
      & \lesssim T \bigg\| \frac{1}{\varepsilon} \int_0^\varepsilon \Psi^\delta_0 (u) E \mc Y_0(p^{\rm Dir}_\delta(u,\cdot)) du - \frac{E^2}{\varepsilon} \int_0^\varepsilon  \Psi^\delta_0(u) \Theta^{2\delta}_u(u) du \bigg\|_{-1,0}^2,
   \end{align*}
   where in the last step we used the fundamental theorem of calculus and the identity $\nabla_u \mc Y_0(\Theta^\delta_u) = \mc Y_0(p^{\rm Dir}_\delta(u,\cdot))$. Let now $F \in \mc C$ be a cylinder function. Then Gaussian integration by parts~\cite[Lemma~1.2.1]{Nualart2006} yields
   \begin{align*}
      \bb E\big[ &e^{E \mc Y_0(\Theta^\delta_u)}  \mc Y_0(p^{\rm Dir}_\delta( u, \cdot)) F(\mc Y_0) \big] \\
      &\quad = \int_0^1 \bb E\Big[  p^{\rm Dir}_\delta(u,v) \big(e^{E \mc Y_0(\Theta^\delta_u)}  \mr D_vF(\mc Y_0) + F(\mc Y_0) \mr D_v e^{E \mc Y_0(\Theta^\delta_u)} \big) \Big] dv \\
      &\quad = \int_0^1 \bb E\Big[  p^{\rm Dir}_\delta(u,v) \big(e^{E \mc Y_0(\Theta^\delta_u)}  \mr D_vF(\mc Y_0) + F(\mc Y_0)  e^{E \mc Y_0(\Theta^\delta_u)} E \Theta^\delta_u(v) \big) \Big] dv \\
      &\quad =  \bb E\bigg[  \int_0^1 p^{\rm Dir}_\delta(u,v) e^{E \mc Y_0(\Theta^\delta_u)}  \mr D_vF(\mc Y_0) dv+ F(\mc Y_0)  e^{E \mc Y_0(\Theta^\delta_u)} E \Theta^{2\delta}_u(u)  \bigg],
   \end{align*}
   and therefore
   \begin{align*}
      &\bb E\bigg[ \bigg(\frac{1}{\varepsilon} \int_0^\varepsilon \Psi^\delta_0 (u) E \mc Y_0(p^{\rm Dir}_\delta(u,\cdot)) du - \frac{E^2}{\varepsilon} \int_0^\varepsilon  \Psi^\delta_0(u) \Theta^{2\delta}_u(u) du\bigg) F(\mc Y_0) \bigg] \\
      &\quad = \bb E\bigg[  \int_0^1  \frac{E}{\varepsilon} \int_0^\varepsilon p^{\rm Dir}_\delta(u,v) e^{E \mc Y_0(\Theta^\delta_u)} du\;  \mr D_vF(\mc Y_0) dv \bigg] \\
      &\quad \overset{\substack{\text{duality}\\+(C-S)}}{\leq} \bb E\bigg[ \bigg\|  \frac{E}{\varepsilon} \int_0^\varepsilon p^{\rm Dir}_\delta(u,\cdot) e^{E \mc Y_0(\Theta^\delta_u)} du \bigg\|_{\mc H_{\rm Dir}^{- 1}}^2 \bigg]^{1/2} \| F\|_{1,0} \\
      &\quad \overset{(C-S)} \le  \bb E\bigg[   \frac{E^2}{\varepsilon} \int_0^\varepsilon \big\| p^{\rm Dir}_\delta(u,\cdot)\big\|_{\mc H_{\rm Dir}^{- 1}}^2 \; e^{2E \mc Y_0(\Theta^\delta_u)} du  \bigg]^{1/2} \| F\|_{1,0} \lesssim \sup_{u \in (0,\varepsilon]} \big\| p^{\rm Dir}_\delta(u,\cdot)\big\|_{\mc H_{\rm Dir}^{- 1}} \| F \|_{1,0},
   \end{align*}
  which leads to the estimate
   \[
      \lim_{\delta \to 0} \bigg\| \frac{\Psi^\delta (\varepsilon) - \Psi^\delta(0)}{\varepsilon}  - \frac{E^2}{\varepsilon} \int_0^\varepsilon  \Psi^\delta(u) \Theta^{2\delta}_u(u) du \bigg\|_{-1,0} \lesssim \limsup_{\delta \to 0} \sup_{u \in (0,\varepsilon]} \big\| p^{\rm Dir}_\delta(u,\cdot)\big\|_{\mc H_{\rm Dir}^{- 1}},
   \]
   and as in the proof of Proposition~\ref{prop:boundary} we see that the right hand side converges to zero for $\varepsilon \to 0$. To treat the second term in~\eqref{eq:CH-boundary-pr1} recall that we showed in the proof of Lemma~\ref{lem:Kdelta} that $\lim_{\delta \rightarrow 0} \Theta^{2\delta}_u (u) = u - \frac{1}{2}$ for all $u \in (0,1)$. Therefore,
   \begin{multline*}
\lim_{\varepsilon \to 0} \lim_{\delta \to 0}\mathbb{E} \bigg[\sup_{t \in [0,T]}  \bigg| \int_0^t \bigg( \frac{E^2}{\varepsilon} \int_0^\varepsilon  \Psi^\delta_s(u) \Theta^{2\delta}_u(u) du + \frac{E^2}{2} \Psi_s(0) \bigg) d s \bigg|^2 \bigg] \\
 \le \lim_{\varepsilon \to 0}  \mathbb{E} \bigg[  \bigg(\int_0^T\bigg|  \frac{E^2}{\varepsilon} \int_0^\varepsilon  \Psi_s(u) \big(u - \tfrac{1}{2}\big) du + \frac{E^2}{2} \Psi_s(0)\bigg|  d s \bigg)^2 \bigg] = 0
   \end{multline*}
   by Lebesgue's differentiation Theorem. The same arguments give the boundary behavior at $u = 1$.
\end{proof}

\section{Heat kernel estimates}\label{app:heat-kernel}

Here we collect basic estimates for the Dirichlet and Neumann heat kernels on $[0,1]$,  namely $p^{\rm Dir}$ and $p^{\rm Neu}$, that we already defined respectively in \eqref{eq:dirker} and \eqref{eq:neuker}.

\begin{lemma}
  \label{lem:Dirichlet-heat-kernel}
For all $\lambda > - 1$ and $t \in (0, 1]$
  \begin{align*}
      &\sup_{u, v \in [0, 1]} | p^{\rm Dir}_t (u, v) | \lesssim t^{- 1 / 2}, \qquad
     \sup_{u \in [0, 1]} \big\| p^{\rm Dir}_t (u, \cdot)\times | u - \cdot |^{\lambda}
     \big\|_{L^1} \lesssim t^{\lambda / 2}, \\
     & \vphantom{\int}\quad \big\| \langle
     p_t^{\rm Dir} (u, \cdot), 1 \rangle - 1 \big\|_{L^2_u} \lesssim t^{1 / 4} .
  \end{align*}
  The same bounds also hold for the Neumann heat kernel $p^{\rm Neu}$.
\end{lemma}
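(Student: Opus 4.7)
The plan is to use the probabilistic representations of the two heat kernels and, where needed, their explicit method-of-images representations. Throughout, let $B$ denote standard Brownian motion, let $\tau = \inf\{s > 0\; ;\; u + B_s \notin (0,1)\}$ when starting from $u$, and write $g_t(x) = (4\pi t)^{-1/2}e^{-x^2/(4t)}$ for the Gaussian heat kernel on $\bb R$. I would use throughout the standard image representations
\[
  p_t^{\rm Dir}(u,v) = \sum_{k \in \bb Z}\big[g_t(u-v+2k) - g_t(u+v+2k)\big], \qquad p_t^{\rm Neu}(u,v) = \sum_{k \in \bb Z}\big[g_t(u-v+2k) + g_t(u+v+2k)\big].
\]

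For the uniform bound $|p_t^{\rm Dir}(u,v)| \vee p_t^{\rm Neu}(u,v) \lesssim t^{-1/2}$ one observes that $0 \le p_t^{\rm Dir} \le p_t^{\rm Neu}$, so it suffices to bound $p_t^{\rm Neu}$. In its image representation, the terms with $|k| \ge 1$ satisfy $|u \pm v + 2k| \ge 2|k|-1 \ge 1$ for $u,v \in [0,1]$ and hence contribute at most a multiple of $e^{-1/(4t)} \le 1$ uniformly in $t \in (0,1]$, while the $k=0$ terms are trivially bounded by $g_t(0) \lesssim t^{-1/2}$.

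For the weighted $L^1$ bound I would split according to the sign of $\lambda$. When $\lambda \ge 0$ I would appeal directly to the probabilistic representations: for Dirichlet, $\int_0^1 p_t^{\rm Dir}(u,v)|u-v|^\lambda dv = \bb E[|B_t|^\lambda \mathbf 1_{\tau > t}] \le c_\lambda t^{\lambda/2}$, and for Neumann, writing $Y_t = F(u+B_t)$ for the $1$-Lipschitz periodic fold map $F\colon \bb R \to [0,1]$ gives $|Y_t - u| \le |B_t|$ and hence $\int_0^1 p_t^{\rm Neu}(u,v)|u-v|^\lambda dv = \bb E[|Y_t-u|^\lambda] \le \bb E[|B_t|^\lambda] = c_\lambda t^{\lambda/2}$. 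When $-1 < \lambda < 0$ this pathwise comparison fails because $x \mapsto x^\lambda$ is decreasing, so instead I would combine the uniform bound just established with the total-mass bound $\int_0^1 p_t(u,v)\,dv \le 1$ (valid for $p_t \in \{p_t^{\rm Dir}, p_t^{\rm Neu}\}$): splitting the integral at $|v-u| = \varepsilon$ yields
\[
  \int_0^1 p_t(u,v)|u-v|^\lambda dv \lesssim t^{-1/2} \int_{|w| \le \varepsilon}|w|^\lambda dw + \varepsilon^\lambda \lesssim t^{-1/2}\varepsilon^{\lambda+1} + \varepsilon^\lambda,
\]
and optimizing over $\varepsilon \asymp \sqrt t$ produces the desired $t^{\lambda/2}$.

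For the last estimate, the Neumann case is trivial since $\langle p_t^{\rm Neu}(u,\cdot),1\rangle \equiv 1$. For Dirichlet, $1 - \langle p_t^{\rm Dir}(u,\cdot),1\rangle = \bb P_u(\tau \le t)$, and applying the reflection principle separately to the hitting times $\tau_0$ and $\tau_1$ of $0$ and $1$ gives
\[
  \bb P_u(\tau \le t) \le \bb P_u(\tau_0 \le t) + \bb P_u(\tau_1 \le t) = 2\Phi(-u/\sqrt t) + 2\Phi(-(1-u)/\sqrt t),
\]
where $\Phi$ is the standard normal CDF. Since this bound is also $\le 1$ we have $\|\bb P_\cdot(\tau \le t)\|_{L^2_u}^2 \le \int_0^1 \bb P_u(\tau \le t)du$, and the change of variables $w = u/\sqrt t$ together with $\int_0^\infty \Phi(-w)dw < \infty$ shows that this last integral is $\lesssim \sqrt t$, giving the $t^{1/4}$ bound after taking square roots.

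The only mildly technical point is the weighted $L^1$ bound in the regime $-1 < \lambda < 0$: neither the Brownian pathwise comparison nor a direct term-by-term analysis of the image sum is completely clean, but the elementary $\varepsilon$-splitting above handles both kernels uniformly once the $L^\infty$ bound is in hand.
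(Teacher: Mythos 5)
Your proof is essentially correct and the overall strategy (Gaussian bound for $L^\infty$, probabilistic/weighted representations for the $L^1$ estimate, and the reflection principle for the $L^2$ estimate) is close in spirit to the paper's. There is one small slip in the uniform bound: the claim $|u \pm v + 2k| \ge 2|k| - 1$ for all $|k| \ge 1$ fails for the $+$ branch at $k = -1$, since $|u+v-2|$ can be $0$ when $u, v$ are both near $1$. The conclusion is unaffected because only finitely many terms (there the image term $g_t(u+v-2)$, which represents reflection at the boundary $v=1$) can be as large as $g_t(0) \lesssim t^{-1/2}$, while all remaining terms really are $O(e^{-c/t})$; you just need to list $k=-1$ among the "dominant" terms rather than among the exponentially small ones. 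Where your route genuinely diverges from the paper is in the weighted $L^1$ bound: the paper establishes $0 \le p_t^{\rm Dir}(u,v) \lesssim t^{-1/2}e^{-(u-v)^2/4t}$ (by domination of the killed walk by the free walk) and then handles all $\lambda > -1$ in a single change-of-variables computation $\int_0^1 p_t^{\rm Dir}(u,v)|u-v|^\lambda dv \lesssim t^{\lambda/2}\int_{\mathbb R} e^{-w^2/4}|w|^\lambda dw$, whereas you split into $\lambda \ge 0$ (pathwise comparison) and $-1 < \lambda < 0$ (truncation at radius $\varepsilon \asymp \sqrt t$); both work, but the single-formula computation is a bit more economical and avoids needing the Lipschitz folding map for the Neumann case. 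For the $L^2$ estimate your argument is in fact slightly cleaner than the paper's: you observe $\bb P_u(\tau \le t)^2 \le \bb P_u(\tau \le t)$ and reduce the $L^2$ bound to an $L^1$ bound of $\sqrt t$, while the paper first derives the pointwise bound $|\langle p_t^{\rm Dir}(u,\cdot),1\rangle - 1| \lesssim \tfrac{t^{1/2}}{u(1-u)}\wedge 1$ and then integrates it (this more refined intermediate bound is, however, used elsewhere in the paper — e.g.\ in the proofs of the $\mc H_{\rm Dir}^{-1}$ estimates — so the authors had additional reasons to record it in that form).
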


\begin{proof}[Proof of Lemma \ref{lem:Dirichlet-heat-kernel}]
  The Dirichlet heat kernel is the transition density of $\{B_{2 t} \; : \; t
  \geq 0\}$ where $B$ is a Brownian motion that is killed when it reaches
  $0$ or $1$. In particular it is bounded
  from above by the transition density of the Brownian motion, i.e.
  \[ 0 \leq p^{\rm Dir}_t (u, v) \lesssim t^{- 1 / 2} e^{- (u - v)^2 / 4 t}, \]
  from where the first estimate follows. This also gives for $\lambda > - 1$
  \begin{align*} \int_0^1 p^{\rm Dir}_t (u, v) | u - v |^{\lambda} d v &\lesssim t^{\lambda /
     2} \int_0^1 t^{- 1 / 2} \; e^{- \frac{1}{4} \left( \frac{u - v}{\sqrt{t}}
     \right)^2} \bigg| \frac{u - v}{\sqrt{t}} \bigg|^{\lambda} d v \\& =
     t^{\lambda / 2} \int_{\mathbb{R}} e^{- \frac{1}{4} u^2} | u |^{\lambda}
     d u \lesssim t^{\lambda / 2} . \end{align*}
  To estimate the $L^2$--norm, note that if $B$ is a standard Brownian motion
  we have
    \begingroup
\allowdisplaybreaks
  \begin{align*}
    1 & \geq \langle p_t^{\rm Dir} (u, \cdot), 1 \rangle =\mathbb{P} \big(u + B_{2
    s} \in [0, 1], \; \forall \; s \in [0, t]\big) \vphantom{\int} \\
    & \geq \mathbb{P} \big(\big\{ \sup_{s \in
    [0, 2 t]} B_s < 1 - u \big\} \cap \big\{ \inf_{s \in [0, 2t]} B_s > - u \big\}\big)\vphantom{\int}\\
    & \geq \mathbb{P} \big(\sup_{s \in [0, 2 t]} B_s < 1 - u\big) +\mathbb{P}
    \big(\inf_{s \in [0, 2t]} B_s > - u\big) - 1 \geq 2\mathbb{P}\big(\sup_{s \in
    [0, 2 t]} B_s < u \wedge (1 - u)\big) - 1\vphantom{\int}\\
    & = 2\mathbb{P} \big(| B_{2 t} | < u \wedge (1 - u)\big) - 1 \geq 2 \Big(1 -
    2\mathbb{P} \big(B_{2 t} \geq u \wedge (1 - u)\big)\Big) - 1\vphantom{\int}\\
    & = 1 - 4\mathbb{P} \big(B_{2 t} \geq u \wedge (1 - u)\big) \geq 1 - 4
    \frac{e^{- \frac{(u \wedge (1 - u))^2}{4 t}}}{\sqrt{2 \pi} \frac{u \wedge
    (1 - u)}{\sqrt{2 t}}},
  \end{align*}
  \endgroup
  where we used the reflection principle for the Brownian motion and standard
  tail estimates for the normal distribution. From here we get $| \langle
  p_t^{\rm Dir} (u, \cdot), 1 \rangle - 1 | \lesssim \frac{t^{1 / 2}}{u (1 - u)}
  \wedge 1$, which leads to
  \[ \big\| \langle p_t^{\rm Dir} (u, \cdot), 1 \rangle - 1 \big\|_{L^2_u}^2 \lesssim
     \int_0^{t^{1 / 2}} d u + \int_{t^{1 / 2}}^{\infty} \frac{t}{u^2}
     d u \lesssim t^{1 / 2}. \]
  For the Neumann heat kernel we have $\langle p^{\rm Neu}_t(u, \cdot) -1\rangle \equiv 1$, so the last bound is trivial. The remaining bounds for the Neumann kernel follow once we know that $p^{\rm Neu}_t (u, v) \lesssim t^{- 1 / 2} e^{- (u - v)^2 / 4 t}$ uniformly in $t \in (0,1]$, which is basically (3.7) in~\cite{Walsh1986}.
\end{proof}

\begin{lemma}
  \label{lem:Neumann-Dirichlet}The difference between Neumann and Dirichlet
  heat kernel is bounded in $L^1$:
  \[ \big\| p^{\rm Neu}_{\varepsilon} (u, \cdot) - p^{\rm Dir}_{\varepsilon} (u, \cdot)
     \big\|_{L^1} \lesssim \frac{\varepsilon^{1 / 2}}{u (1 - u)} \wedge 1, \]
     uniformly in $\varepsilon \in (0,1]$ and $u \in [0,1]$.
\end{lemma}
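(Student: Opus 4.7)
The plan is to reduce the $L^1$ estimate of the difference to the scalar quantity $1 - \langle p^{\rm Dir}_\varepsilon(u,\cdot), 1 \rangle$, which has already been controlled pointwise inside the proof of Lemma~\ref{lem:Dirichlet-heat-kernel}. The reduction will hinge on the observation that $p^{\rm Neu}_\varepsilon - p^{\rm Dir}_\varepsilon$ has a definite sign, so the absolute value in the $L^1$ norm can simply be dropped.

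First I would establish the pointwise inequality $p^{\rm Neu}_\varepsilon(u,v) \geq p^{\rm Dir}_\varepsilon(u,v)$ for all $u,v \in [0,1]$ and $\varepsilon > 0$. The cleanest route is via the method-of-images representations
\begin{align*}
p^{\rm Dir}_\varepsilon(u,v) &= \sum_{n \in \bb Z} \big[g_\varepsilon(u - v - 2n) - g_\varepsilon(u + v - 2n)\big], \\
p^{\rm Neu}_\varepsilon(u,v) &= \sum_{n \in \bb Z} \big[g_\varepsilon(u - v - 2n) + g_\varepsilon(u + v - 2n)\big],
\end{align*}
where $g_\varepsilon(x) = (4\pi\varepsilon)^{-1/2} e^{-x^2/(4\varepsilon)}$ is the free heat kernel on $\bb R$ at time $\varepsilon$ (the factor of $4$ matches the convention in the proof of Lemma~\ref{lem:Dirichlet-heat-kernel}, where $p^{\rm Dir}$ was identified with the transition density of $B_{2t}$). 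These representations are equivalent to the Fourier series~\eqref{eq:dirker} and~\eqref{eq:neuker} by Poisson summation. Subtracting gives $p^{\rm Neu}_\varepsilon - p^{\rm Dir}_\varepsilon = 2 \sum_{n \in \bb Z} g_\varepsilon(u+v-2n) \geq 0$, so the positivity is immediate. (A probabilistic comparison between reflected and killed Brownian motion started at $u$ would do the job just as well.)

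Combining this positivity with the mass-conservation identity $\int_0^1 p^{\rm Neu}_\varepsilon(u,v)\,dv \equiv 1$ (immediate from~\eqref{eq:neuker}, or from the fact that constants are fixed by the Neumann heat semigroup) yields the key identity
\[
\big\|p^{\rm Neu}_\varepsilon(u,\cdot) - p^{\rm Dir}_\varepsilon(u,\cdot)\big\|_{L^1} \;=\; \int_0^1 \big(p^{\rm Neu}_\varepsilon(u,v) - p^{\rm Dir}_\varepsilon(u,v)\big)\,dv \;=\; 1 \,-\, \big\langle p^{\rm Dir}_\varepsilon(u,\cdot),\, 1 \big\rangle.
\]
The proof of Lemma~\ref{lem:Dirichlet-heat-kernel} already derives, via the reflection principle for a standard Brownian motion together with a standard Gaussian tail bound, the \emph{pointwise} estimate $|1 - \langle p^{\rm Dir}_\varepsilon(u,\cdot), 1\rangle| \lesssim \frac{\varepsilon^{1/2}}{u(1-u)} \wedge 1$ (the $L^2_u$ bound stated there is just an immediate consequence of this pointwise bound). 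Invoking this estimate closes the proof. No substantive obstacle is expected; the only care needed is to cite the pointwise bound from inside the proof of Lemma~\ref{lem:Dirichlet-heat-kernel} rather than its $L^2_u$ corollary.
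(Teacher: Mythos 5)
Your proof is correct, but it takes a genuinely different route than the paper's. The paper bounds $\|p^{\rm Neu}_\varepsilon(u,\cdot) - p^{\rm Dir}_\varepsilon(u,\cdot)\|_{L^1}$ by twice the total variation distance between the laws of the reflected and killed Brownian motion started at $u$ (both run at speed~$2$ up to time~$\varepsilon$), and then exploits the coupling on the event that the path stays inside $(0,1)$: the two laws agree on that event, so the TV distance is at most the escape probability from $(0,1)$, which is $\lesssim \frac{\varepsilon^{1/2}}{u(1-u)} \wedge 1$. You instead observe that $p^{\rm Neu}_\varepsilon - p^{\rm Dir}_\varepsilon$ is pointwise nonnegative (via the method of images, or equivalently a pathwise domination of the killed process by the reflected one), so the $L^1$ norm is just the integral, and by mass conservation of the Neumann kernel this integral equals $1 - \langle p^{\rm Dir}_\varepsilon(u,\cdot), 1\rangle$ --- which is exactly the quantity already bounded pointwise inside the proof of Lemma~\ref{lem:Dirichlet-heat-kernel} (you correctly note that one must quote the pointwise bound there, not the $L^2_u$ statement of the lemma). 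Both arguments ultimately rest on the same Brownian escape-probability estimate, so they are of comparable length and content; yours is more deterministic and algebraic, converting the $L^1$ estimate into a scalar identity and re-using an existing bound, while the paper's is a coupling argument in terms of total variation. Your route also saves the harmless factor of~$2$ from the $L^1 \le 2\,\mathrm{TV}$ step.
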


\begin{proof}[Proof of Lemma \ref{lem:Neumann-Dirichlet}]
  Note that $\| p^{\rm Neu}_{\varepsilon} (u, \cdot) - p_{\varepsilon}^{\rm Dir} (u,
  \cdot) \|_{L^1} \leq 2 \| p^{\rm Neu}_{\varepsilon} (u,
  \cdot) - p_{\varepsilon}^{\rm Dir} (u, \cdot) \|_{\tmop{TV}}$, where
  \[
     \| \nu \|_{\tmop{TV}} = \sup_{A \in \mc B([0,1])} |\nu(A)|
  \]
   denotes the total variation norm of the signed measure $\nu$ on the Borel sets $\mc B([0,1])$ of $[0,1]$. We know that $p^{\rm Neu}$
  (resp. $p^{\rm Dir}$) is the transition density of a Brownian motion that is
  reflected (resp. killed) in $0$ and $1$ -- both with speed $2$. We write
  $\mathbb{P}^{\rm Neu}_u$ respectively $\mathbb{P}^{\rm Dir}_u$ for the law of the
  reflected respectively killed Brownian motion with speed $2$, both started
  in $u$, while $\mathbb{P}_u$ is the law of the (usual) Brownian motion with
  speed 2, started in $u$. Then we have for all Borel sets $A \in \mathcal{B}
  (\mathbb{R})$
  \[ \mathbb{P}^{\rm Dir}_u \big(\big\{ B_{\varepsilon} \in A \big\} \cap \big\{ B_s \in [0, 1],\; \forall\; s
     \in [0, \varepsilon] \big\}\big) =\mathbb{P}^{\rm Neu}_u \big(\big\{ B_{\varepsilon} \in A \big\}
     \cap \big\{ B_s \in [0, 1], \; \forall \; s \in [0, \varepsilon] \big\}\big), \]
  and therefore
  \[
    \big| \mathbb{P}^{\rm Dir}_u (B_{\varepsilon} \in A) -\mathbb{P}^{\rm Neu}_u
    (B_{\varepsilon} \in A) \big| \leq 2\mathbb{P}_u \big(\big\{ \sup_{s \leq
    \varepsilon} B_s \geq 1 \big\} \cup \big\{ \inf_{s \leq \varepsilon} B_s
    \leq 0 \big\}\big) \lesssim \frac{\varepsilon^{1 / 2}}{u (1 - u)},
  \]
  where the last step follows as in the proof of
  Lemma~\ref{lem:Dirichlet-heat-kernel}. We now take the supremum in $A\in \mc B([0,1])$ and get a
  bound for the total variation norm and thus for the $L^1$--norm of the
  difference of the densities.
\end{proof}

\section{Proof of Proposition \ref{def:she-rob}}\label{app:she-proof}

Let $\Phi$ satisfy the conditions of Proposition~\ref{def:she-rob}. As in~\cite[Chapter~5]{BerG} we see that there exists a standard $\SN'$--valued Brownian motion with covariance~\eqref{eq:Bm-cov} (possibly on an extended probability space) such that for all $\varphi \in \SN$
	\begin{align}   \Phi_t(\varphi) & = \Phi_0(\varphi) + A \int_0^t  \Phi_s(\Delta \varphi) ds  + A \int_0^t (- \alpha \Phi_s(0) \varphi(0)+ \beta \Phi_s(1) \varphi(1)) ds  \nonumber \\
   &\quad  + \sqrt{D} \int_0^1 \int_0^t \Phi_s(u) \varphi(u) d \mc W_s(u) du,\label{eq:she-rob-strong}
\end{align}
and also that it suffices to show the \emph{strong uniqueness} of solutions to the equation driven by this given $\mc W$. The proof for the strong uniqueness is essentially the same as in~\cite[Theorem~3.2]{Walsh1986}, the only difference is that we have to deal with the additional terms coming from the Robin boundary condition. If $\Phi^i$, $i=1,2$ are solutions to~\eqref{eq:she-rob-strong}, then we obtain easily (see also~\cite[Exercise~3.1]{Walsh1986}) that for  all $t \in [0,T]$, $u\in [0,1]$, and $\varepsilon > 0$
   \begin{align*}
      \Phi_t^i(p^{\rm Neu}_\varepsilon(u)) & = A \int_0^1 \Phi_0(v) p^{\rm Neu}_{t+\varepsilon}(u,v)dv \\
      &\quad + A \int_0^t \big(- \alpha \Phi^i_s(0) p_{t-s+\varepsilon}^{\rm Neu}(u,0)+ \beta \Phi^i_s(1) p_{t-s+\varepsilon}^{\rm Neu}(u,1)\big) ds  \\
      &\quad  + \sqrt{D} \int_0^1 \int_0^t p^{\rm Neu}_{t-s+\varepsilon}(u,v) \Phi^i_s(v) d \mc W_s(v) dv,
   \end{align*}
   and with the $L^2$-continuity of $\Phi^i_t$ this extends to $\varepsilon = 0$ (with $p^{\rm Neu}_0(u,\cdot) = \delta_u(\cdot)$). Then the difference $\mc U = \Phi^1 - \Phi^2$ satisfies
   \begin{align*}
      \bb E[\mc U_t(u)^2] & \lesssim \bb E\Big[\Big|\int_0^t \big(- \alpha \mc U_s(0) p_{t-s}^{\rm Neu}(u,0)+ \beta \mc U_s(1) p_{t-s}^{\rm Neu}(u,1)\big) ds \Big|^2 \Big] \\
      &\quad + \int_0^1 \int_0^t p^{\rm Neu}_{t-s}(u,v)^2 \bb E\big[ \mc U_s(v)^2\big] ds dv.
   \end{align*}
   Set now $\mc V_t = \sup_{u \in [0,1]} \bb E[\mc U_t(u)^2]$. Lemma~\ref{lem:Dirichlet-heat-kernel} gives
   \[
      \sup_{u \in [0,1]} \big\| p^{\rm Neu}_t(u,\cdot)\big\|_{L^\infty} \lesssim t^{-1/2},\qquad \sup_{u \in [0,1]} \big\| p^{\rm Neu}_t(u,\cdot)\big\|_{L^1} \lesssim 1,
   \]
    and we apply the Cauchy-Schwarz inequality for the measure $A \mapsto \int_0^t \mb 1_A(s) (t-s)^{-1/2} ds$ to obtain, uniformly in $t \in [0,T]$,
   \[
      \mc V_t \lesssim \int_0^t \mc V_s (t-s)^{-1/2} ds.
   \]
   Now we can simply iterate this inequality to see that $\mc V_t = 0$ for all $t \in [0,T]$, for details see~\cite[Theorem~3.2]{Walsh1986}. This concludes the proof of Proposition~\ref{def:she-rob}. \qed

Having proved uniqueness, let us prove an additional moment bound for the solution to~\eqref{eq:she-rob-strong}, which has been used in Section \ref{ssec:map}.

\begin{lemma}\label{lem:she-moment}
   Assume that $\Phi_0$ satisfies $\sup_{u \in [0,1]} \bb E[|\Phi_0(u)|^p] = M <\infty$ for some $p \in (6,\infty)$ and let $\{\Phi_t \; ; \; t \in [0,T]\}$ solve~\eqref{eq:she-rob-strong} with initial condition $\Phi_0$. Then there exists a constant $C>0$ that only depends on $T$ and $\alpha,\beta,A,D$ but not on $M$, such that
   \[
      \sup_{t\in [0,T],\, u \in [0,1]} \bb E[|\Phi_t(u)|^p] \le C\times  M.
   \]
\end{lemma}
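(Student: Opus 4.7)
\medskip

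\noindent\textbf{Proof plan for Lemma~\ref{lem:she-moment}.} The plan is to work with the mild formulation of~\eqref{eq:she-rob-strong}. From the proof of Proposition~\ref{def:she-rob}, the (unique) solution $\Phi$ satisfies, for all $t\in[0,T]$ and $u\in[0,1]$,
\[
\Phi_t(u) \;=\; I_0(t,u) \,+\, I_\partial(t,u) \,+\, I_{\mc W}(t,u),
\]
where $I_0(t,u)=A\int_0^1\Phi_0(v)\,p^{\rm Neu}_t(u,v)\,dv$, the boundary term $I_\partial(t,u)$ collects the two contributions proportional to $\alpha\Phi_s(0)p^{\rm Neu}_{t-s}(u,0)$ and $\beta\Phi_s(1)p^{\rm Neu}_{t-s}(u,1)$, and $I_{\mc W}(t,u)=\sqrt{D}\int_0^t\!\!\int_0^1 p^{\rm Neu}_{t-s}(u,v)\,\Phi_s(v)\,d\mc W_s(v)dv$. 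Write $f(t):=\sup_{u\in[0,1]}\mathbb{E}[|\Phi_t(u)|^p]$; the goal is an integral inequality of generalized Gronwall type for $f$.

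For the deterministic part $I_0$, since $p^{\rm Neu}_t(u,\cdot)\ge 0$ with $\|p^{\rm Neu}_t(u,\cdot)\|_{L^1}\lesssim 1$ by Lemma~\ref{lem:Dirichlet-heat-kernel}, I would apply Jensen's inequality with the probability measure $p^{\rm Neu}_t(u,v)dv/\|p^{\rm Neu}_t(u,\cdot)\|_{L^1}$ to obtain $\mathbb{E}[|I_0(t,u)|^p]\lesssim M$ uniformly in $(t,u)$. For $I_\partial$, Lemma~\ref{lem:Dirichlet-heat-kernel} gives $p^{\rm Neu}_r(u,0)\lesssim r^{-1/2}$, so that $\int_0^t p^{\rm Neu}_{t-s}(u,0)ds\lesssim \sqrt{T}$; applying Hölder's inequality in $s$ with the measure $p^{\rm Neu}_{t-s}(u,0)ds$ then yields
\[
\mathbb{E}\bigl[|I_\partial(t,u)|^p\bigr] \;\lesssim\; \int_0^t f(s)\,(t-s)^{-1/2}\,ds.
\]
For the stochastic term $I_{\mc W}$, I would apply the Burkholder--Davis--Gundy inequality for Walsh integrals and, since $p/2>1$, use Jensen's inequality with the (unnormalized) kernel $(p^{\rm Neu}_{t-s}(u,v))^2\,dvds$, which has total mass $\int_0^t\|p^{\rm Neu}_{t-s}(u,\cdot)\|_{L^2}^2ds\lesssim \sqrt{T}$ by Lemma~\ref{lem:Dirichlet-heat-kernel}. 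This produces
\[
\mathbb{E}\bigl[|I_{\mc W}(t,u)|^p\bigr] \;\lesssim\; \Bigl(\sqrt{T}\Bigr)^{p/2-1}\!\int_0^t \|p^{\rm Neu}_{t-s}(u,\cdot)\|_{L^2}^2\,f(s)\,ds \;\lesssim\; \int_0^t f(s)\,(t-s)^{-1/2}\,ds.
\]
Combining these three bounds, I obtain constants $C_0,C_1>0$, depending only on $T$ and on $\alpha,\beta,A,D$, such that
\[
f(t)\;\le\; C_0\,M \,+\, C_1\int_0^t f(s)\,(t-s)^{-1/2}\,ds,\qquad t\in[0,T].
\]
A generalized (singular kernel) Gronwall inequality of Henry's type then concludes $f(t)\le C\cdot M$ on $[0,T]$.

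The main obstacle is that to close the Gronwall argument rigorously, one needs $f$ to be \emph{a priori finite} (even locally bounded) on $[0,T]$, which does not follow directly from $\Phi\in\mc L_C^2([0,T])$. I would handle this by a standard localization: introduce the stopping times $\tau_n=\inf\{t\in[0,T]:\sup_{u}\mathbb{E}[|\Phi_{t\wedge\tau_n}(u)|^p]>n\}$ (or equivalently truncate $\Phi$ by $-n\vee\cdot\wedge n$ inside the noise term), rerun the three estimates above for the truncated quantity $f_n(t):=\sup_{u}\mathbb{E}[|\Phi_{t\wedge\tau_n}(u)|^p]$, apply Gronwall to get a bound independent of $n$, and then let $n\to\infty$ using monotone convergence. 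A parallel minor technical point is the justification of BDG for a Walsh-type integral with $p$-moments, which is standard given the integrand lies in $L^p(\mathbb{P};L^2(dsdv))$, a property that is propagated through the truncation step.
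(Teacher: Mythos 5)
Your decomposition via the mild formulation, the Jensen bounds for the deterministic and boundary terms, and the resulting singular Gronwall inequality all match the paper's proof in structure; the one genuine difference is the treatment of the stochastic convolution. The paper applies H\"older's inequality in $(s,v)$ with conjugate exponents $p/2$ and $q=p/(p-2)$, so it has to bound $\|p^{\rm Neu}_{t-s}(u,\cdot)^{2q}\|_{L^1}\lesssim (t-s)^{1/2-q}$, and integrability of $(t-s)^{1/2-q}$ in $s$ forces $q<3/2$, i.e.\ $p>6$ --- this is precisely where the hypothesis $p\in(6,\infty)$ enters. Your Jensen argument with the unnormalized kernel $(p^{\rm Neu}_{t-s}(u,v))^2\,dvds$ (total mass $\lesssim T^{1/2}$, by combining the $L^\infty$ and $L^1$ bounds of Lemma~\ref{lem:Dirichlet-heat-kernel}) bypasses this entirely and needs only $p\ge 2$ for convexity of $r\mapsto r^{p/2}$; it is a cleaner route that covers more exponents than the paper's, consistent with the remark after the lemma that $p>6$ is a convenience and not essential. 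Your direct appeal to a singular-kernel (Henry) Gronwall inequality, in place of the paper's one-step iteration of the integral inequality to remove the $(t-s)^{-1/2}$ singularity before applying classical Gronwall, is an equivalent way to close.

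The point you raise about a priori finiteness of $f(t)=\sup_u\bb E[|\Phi_t(u)|^p]$ is genuine: the hypothesis $\Phi\in\mc L^2_C([0,T])$ only gives $p=2$, and the paper (like Walsh's Theorem~3.2, to which it defers) handles this silently by running the argument along Picard iterates. However, your first localization device is not well-formed: the definition $\tau_n=\inf\{t:\sup_u\bb E[|\Phi_{t\wedge\tau_n}(u)|^p]>n\}$ is circular, and since $\sup_u\bb E[\cdots]$ is a deterministic function of $t$, the object you define is not a stopping time in any useful sense; moreover you cannot establish $\tau_n\uparrow T$ without already knowing the finiteness you are trying to prove. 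The alternative you offer in the same sentence --- truncate $\Phi$ at level $n$ inside the stochastic integral, rerun the three estimates to obtain a bound uniform in $n$, then pass to the limit --- is the correct repair, as is working directly along Picard iterates, each of which has all moments finite by induction.
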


\begin{remark}
   Of course, the same bound holds for $p \in [1,\infty)$. But for $p>6$ the proof slightly simplifies.
\end{remark}

\begin{proof}[Proof of Lemma \ref{lem:she-moment}]
   The following argument is also essentially contained in~\cite[Theorem~3.2]{Walsh1986}. By the Burkholder-Davis-Gundy inequality and Jensen's inequality we have uniformly in $t \in [0,T]$ and locally uniformly in $\alpha,\beta$
   \begin{align*}
       \bb E[|\Phi_t(u)|^p] & \lesssim \int_0^1 \bb E\big[|\Phi_0(v)|^p\big] p^{\rm Neu}_t(u,v) dv \\
       &\quad + \int_0^t \big(p^{\rm Neu}_{t-s}(u,0) \bb E\big[|\Phi_s(0)|^p\big] + p^{\rm Neu}_{t-s}(u,1) \bb E\big[|\Phi_s(1)|^p\big]\big) ds \\
       &\quad + \bb E \bigg[ \Big| \int_0^1 \int_0^t  p^{\rm Neu}_{t-s}(u,v)^2  \Phi_s(v)^2 ds dv \Big|^{p/2} \bigg],
   \end{align*}
   and we bound the integral inside the expectation with H\"older's inequality by
   \begin{multline*}
       \Big| \int_0^1 \int_0^t  p^{\rm Neu}_{t-s}(u,v)^2  \Phi_s(v)^2 ds dv \Big|^{p/2} \\ \le  \Big| \int_0^1 \int_0^t  p^{\rm Neu}_{t-s}(u,v)^{2 q} ds dv \Big|^{p/(2q)} \int_0^1 \int_0^t |\Phi_s(v)|^p ds dv
   \end{multline*}
   for $q = p /(p-2)$, the conjugate exponent of $p/2$. To control the integral over the Neumann heat kernel we first integrate in the space variable and bound
   \[
      \big\| p^{\rm Neu}_{t-s}(u,\cdot)^{2q}\big\|_{L^1} \le \big\| p^{\rm Neu}_{t-s}(u,\cdot)^{2q-1}\big\|_{L^\infty} \big\| p^{\rm Neu}_{t-s}(u,\cdot)\big\|_{L^1} \leq C (t-s)^{1/2-q},
   \]
   where the last step follows from Lemma~\ref{lem:Dirichlet-heat-kernel}. Since $p>6$ we have $q = p/(p-2) < \frac32$, and therefore $(t-s)^{1/2-q}$ is integrable on $[0,t]$ and the integral is uniformly bounded in $t \in [0,T]$. Thus, we have shown that $\mc V_t = \sup_{u \in [0,1]} \bb E[|\Phi_t(u)|^p]$ satisfies on $[0,T]$
   \begin{align*}
      \mc V_t & \lesssim M + \int_0^t \big(1 + (t-s)^{-1/2}\big)\; \mc V_s ds \\
      & \lesssim M + \int_0^t \big(1 + (t-s)^{-1/2}\big) \bigg( M + \int_0^s (1 + (s-r)^{-1/2}) \mc V_r dr \bigg)  ds \\
      & \lesssim M + \int_0^t \int_r^t \big(1 + (t-s)^{-1/2}\big)\big(1 + (s-r)^{-1/2}\big)\; \mc V_r \; ds  dr \\
      & \lesssim M + \int_0^t \mc V_r dr,
   \end{align*}
   where the last step is a simple computation: after expanding the product the most complicated integrand is  $(t-r)^{-1/2}(s-r)^{-1/2}$, for which the integral is computed in~\cite[p.315]{Walsh1986}. Now the claim follows from Gronwall's Lemma.
\end{proof}

\section{Some considerations about the microscopic Cole-Hopf process}\label{app:micro-CH}

Since the major breakthrough made by \cite{BerG}, the KPZ behavior for interacting particle systems has often been investigated through the so-called Cole-Hopf process, and it has been -- until very recently -- the only way to prove the convergence of the microscopic height function. We note however, that this approach has only been possible for very particular microscopic dynamics which allow the Cole-Hopf transformation, such as the one we consider in this paper. Even if here we do not need to use this transformation in order to prove the KPZ--type macroscopic fluctuations, we want to highlight in this section the fact that the specific Robin boundary conditions that we obtain for the stochastic heat equation also emerge from the microscopic Cole-Hopf transformation.

In what follows, we assume $\gamma=\tfrac12$ because we just want to recover the Robin boundary conditions obtained in Proposition \ref{prop:link}. For that purpose, we  define the \emph{microscopic Cole-Hopf transformation} as follows: we set 
\begin{equation}
\label{eq:xitn}
\xi_t^n(x) = \exp\Big( \tfrac{\theta_n}{\sqrt n} \; h_t^n(x)+\lambda_n t\Big), \quad \text{for any } x \in \{1,\dots,n\},
\end{equation}
where $\theta_n$ is such that: 
\begin{equation}\label{eq:gammalambda}
\exp\Big(\tfrac{\theta_n}{\sqrt n}\Big) = 1+ \tfrac{E}{\sqrt n},
\end{equation}
and $\lambda_n$ will be chosen ahead. 
Note that $\theta_n\ge 0$.  We will see below in Section \ref{sec:appCH} the reason for this choice  of $\theta_n$. 
\begin{definition}\label{def:current}
For any $t>0$ let $\mc J_t^n$ be the \emph{current fluctuation field} which acts on functions $\varphi \in \SN$ as: 
\[ \mc J_t^n(\varphi)=\frac{1}{n} \sum_{x=1}^n \varphi\big(\tfrac x n \big) \xi_{tn^2}^n(x). \]
\end{definition}

\begin{remark}
Before proceeding we note that by the conservation law, for any $x\in\Lambda_n$, we have that
\begin{equation}\label{eq:cons_law}
\eta_{t}(x)-\eta_0(x)=J_{x-1,x}(t)-J_{x,x+1}(t)+ \mathbf{1}_{\{1,n-1\}}(x)\;C_t^n(x),
\end{equation}
where $J_{x,x+1}(t)$ is the counting process for the net number of particles at the bond $\{x,x+1\}$ during the time interval $[0,t]$,  with, by convention, $J_{0,1}(t)=0=J_{n-1,n}(t)$. For $x=1$ and $x=n$, the process $C_t^n(x)$  counts the number of particles created at the site $x$ minus the number of particles destroyed at the site $x$ during the time interval $[0,t]$. Also note that $C^n_t(1)=-h_t^n(1)$, where $h_t^n(1)$ was introduced in \eqref{eq:htn}. 
From \eqref{eq:cons_law} we have an equivalent way of defining the microscopic Cole-Hopf transform as
\begin{equation}\label{eq:equixi}\xi_t^n(x) =\exp \bigg( \tfrac{-\theta_n}{\sqrt n} \; \Big(J_{x-1,x}^n(t)-\sum_{y=1}^{x-1} \eta_{0}(y)\Big)\bigg)\exp\Big(-\tfrac{\theta_n(x-1)}{2\sqrt n}+\lambda_n t\Big).\end{equation}
The first exponential on the right hand side of the last equality corresponds to the Cole-Hopf transformation used in \cite{GLM}, {except that our scaling is different, since the strength of asymmetry is taken here with $\gamma=\frac12$, and not $\gamma=1$ as in \cite{GLM} (this explains the factor $\frac{1}{\sqrt n}$ in \eqref{eq:equixi} instead of $\frac1n$).  

Note also that, up to the level of the current fluctuation field $\mc J^n$,  both definitions of the Cole-Hopf process basically coincide: in \cite{GLM} the current field  is defined  on the discrete gradient of the test function $\varphi$ which is assumed to vanish at the boundary,  see (3.1) and the equation above (3.11) in \cite{GLM}. Here, instead we take test functions in $\SN$ (Definition \ref{def:current}). Moreover, the extra factor at the right hand side of \eqref{eq:equixi} corresponds to the average of the microscopic Cole-Hopf variables and also appears in the definition of the current field in \cite{GLM} (formula above (3.11)). }
\end{remark}

\begin{remark}
Note that, when $t=0$, one can compute the average of the current fluctuation field with respect to $\bb E_\rho$ as follows: 
\begin{align*} 
\bb E_\rho\big[\mc J_0^n(\varphi) \big]& = \frac{1}{n} \sum_{x=1}^n \varphi\big(\tfrac x n\big) \bb E\big[\xi_0^n(x)\big] \\
& = \frac{1}{n}\sum_{x=1}^n  \varphi\big(\tfrac x n\big) \Big(\tfrac12\big(e^{\theta_n/(2\sqrt n)}+e^{-\theta_n/(2\sqrt n)}\big)\Big)^{x-1}\\
& \xrightarrow[n\to\infty]{} \int_0^1 \varphi(u) e^{u E^2/2} \; du.
\end{align*}
\end{remark}

Without entering too much into details, let us give in this last section some properties of the microscopic Cole-Hopf transformation that one could prove using some well-known past works.

 \subsection{Martingale decomposition for the current fluctuation field $\mc J_t^n$} \label{sec:appCH}
 
Let us first  write the martingale decomposition that the current field satisfies, in the case $\gamma=\frac12$. For that purpose we take $\varphi$ as a test function, which will be chosen ahead, and denote as before $\varphi_x=\varphi(\frac x n)$. Recall also from \eqref{eq:xitn}
 \[\xi_t^n(x) = \exp\bigg( \frac{\theta_n}{\sqrt n} \Big( h_t^n(1) + \sum_{y=1}^{x-1} \bar\eta_{t}(y)\Big)+\lambda_n t \bigg), \quad \text{for any } x \in \{1,\dots,n\}.\]
First, we are going to explain the choice for $\theta_n$ and $\lambda_n$. By using the usual convention $\eta_{t}(0)=\eta_{t}(n)=\frac12$, since $\xi_t^n$ is a function of $\eta$ and $h_t^n(1)$, recalling \eqref{eq:joint_gen}, we have, for $x \in \{1,\dots,n\}$, that
 \begin{equation*}
 \begin{split}
\xi_{{tn^2}}^n(x) -\xi_{{0}}^n(x)-&\int_0^t n^2\xi_{{sn^2}}^n(x)\Big[ \lambda_n    +  \big(e^{\theta_n/\sqrt n}-1\big)\big(1-\eta_{sn^2}(x-1)\big)\eta_{sn^2}(x) \Big]ds\\
  -&  \int_0^t n^2\xi_{{sn^2}}^n(x)\Big[\big(e^{-\theta_n/\sqrt n}-1\big)\big(1+\tfrac{E}{\sqrt n}\big)\eta_{sn^2}(x-1)\big(1-\eta_{sn^2}(x)\big)\Big]ds,
\end{split} 
  \end{equation*}
  is a martingale.  
From the choice \eqref{eq:gammalambda} we see that the integral term above simplifies to 
 \begin{equation}\label{eq:mart_dec_CH}
\int_0^t n^2\xi_{{tn^2}}^n(x) \lambda_n+En^{3/2}\xi_{{sn^2}}^n(x)\big[\eta_{sn^2}(x)-\eta_{sn^2}(x-1)\big] ds. 
 \end{equation} 
 In order to close the equation above in terms of $\xi_{{sn^2}}^n(x)$ we note that from the trivial identities:
\begin{equation}\label{eq:trivial_id}
\begin{split}
&\xi_{{sn^2}}^n(x)\eta_{sn^2}(x-1)a_n+\xi_{{sn^2}}^n(x){b_n}=\xi_{{sn^2}}^n(x-1)-\xi_{{sn^2}}^n(x), \text{ for } \, x \in \{2,\dots, n\}\\ &\xi_{sn^2}^n(x)\eta_{sn^2}(x)a_n+\xi_{{sn^2}}^n(x){b_ne^{-\theta_n/(2\sqrt n)}}=\xi_{{sn^2}}^n(x)-\xi_{{sn^2}}^n(x+1), \textrm{ for } \, x \in \{1,\dots, n-1\}\
\end{split}
 \end{equation}
 with $a_n=(e^{-\theta_n/(2\sqrt n)}-e^{\theta_n/(2\sqrt n)})$ and ${b_n=e^{\theta_n/(2\sqrt n)}-1}$, the expression \eqref{eq:mart_dec_CH}, for $x\in\{2,\dots, n-1\}$, is reduced to 
  \begin{equation*}
\int_0^t n^2\xi_{{sn^2}}^n(x) \lambda_n- \frac{En^{3/2}}{a_n}\Delta \xi_{{sn^2}}^n(x)+\frac{En^{3/2}}{a_n}\xi_{{sn^2}}^n(x){b_n^2 e^{-\theta_n/(2\sqrt n)}}ds. 
 \end{equation*} 
  Now note that
   for the choice $\sqrt n \lambda_n=-\frac{E}{a_n}{b_n^2 e^{-\theta_n/(2\sqrt n)}}$ the last identity becomes
    \begin{equation*}
 \int_0^t -\frac{En^{3/2}}{a_n}\Delta \xi_{{sn^2}}^n(x) ds.
 \end{equation*} 
 Also note that 
 \begin{equation} \label{eq:solution}
\theta_n \xrightarrow[n\to\infty]{} E\; ; \quad \quad  
a_n   \underset{n\to\infty}{\simeq} \frac{-E}{\sqrt n}\; ; \quad \quad  {\lambda_n\underset{n\to\infty}{\simeq}\frac{E^2}{4n}}.
\end{equation}
To treat  the boundary we do the following. For $x=1$ (resp. $x=n$) we plug the second (resp. first) identity of \eqref{eq:trivial_id} in \eqref{eq:mart_dec_CH} to arrive at \eqref{eq:mart_dec_CH} for $x=1$ (resp. $x=n$) written  as 
 \begin{equation}\label{eq:mart_dec_CH_boundary}\begin{split}
\int_0^t n^2\xi_{{sn^2}}^n(1) \lambda_n-\frac{En^{3/2}}{a_n}\nabla^+\xi_{{sn^2}}^n(1)-En^{3/2}\xi_{{sn^2}}^n(1)\bigg(\frac{{b_ne^{-\theta_n/(2\sqrt n)}}}{a_n}+\frac{1}{2}\bigg)ds, \\
\Big(\textrm{resp.} \int_0^t n^2\xi_{{sn^2}}^n({n}) \lambda_n+\frac{En^{3/2}}{a_n}\nabla^-\xi_{sn^2}^n(n)+En^{3/2}\xi_{{sn^2}}^n(n)\bigg(\frac{{b_n}}{a_n}+\frac{1}{2}\bigg) ds.\Big)  \end{split}
 \end{equation} 
Collecting the previous observations we conclude that for any $x\in\{1,\dots,n\}$, 
\[\xi_{{tn^2}}^n(x)-\xi_0^n(x) - \int_0^t (T_n\xi_{{sn^2}}^n)(x) \; ds\] is a martingale, where $T_n$ is given by
\begin{equation}
\label{eq:omega}
\begin{cases}
T_n\xi^n(1) & =n\alpha_n\xi^n(1){+D_n\; n^2}\nabla^+ \xi^n(1), \\ 
T_n\xi^n(x) & ={D_n\; n^2} \Delta \xi^n(x), \qquad \qquad \qquad \text{if } x \in\{2,\dots,n-1\} \\
T_n\xi^n(n) & = n{\alpha_n} \xi^n(n) {-D_n\; n^2} \nabla^-\xi^n(n),
\end{cases}
\end{equation}
where 
\[D_n = - \frac{E}{a_n \; \sqrt n}, \quad \qquad  \alpha_n=n\lambda_n-{\frac{E\sqrt n}{2}\;\bigg(\frac{2-e^{\theta_n/(2\sqrt n)} - e^{-\theta_n/(2\sqrt n)}}{a_n}\bigg)}\]
and from \eqref{eq:solution} we get that $\alpha_n \to   \frac{E^2}{8}$  {and $D_n\to 1$}, as $n\to\infty$.
Now, from Dynkin's formula, we compute the martingale decomposition  for the current fluctuation field $\mc J_t^n$. From the computations above and by doing a summation by parts, we get that 
\begin{equation}\label{eq:mar_curr}
\mc J_t^n(\varphi)  - \mc J_0^n(\varphi)  -\int_{0}^t 
\frac1n \sum_{x=1}^n  (T_n \xi_{{sn^2}}^n)(x) \varphi_x  ds
\end{equation} is a martingale, where the integral term above can be rewritten as
\begin{equation*}
\begin{split}
\int_0^t \frac{1}{n} \sum_{x=1}^{n-1} {D_n}\; {\Delta}_n\varphi_x \; \xi_{{sn^2}}^n(x) ds &    +\int_0^t\frac 1n \Big({D_n\; n^2}\big( \varphi_1 - \varphi_0\big) + n\alpha_n \varphi_1\Big) \xi_{{sn^2}}^n(1) ds \vphantom{\bigg(} \\
&+\int_0^t\frac 1n \Big( {D_n\; n^2} \big( \varphi_{n-1} - \varphi_{n}\big) + n{\alpha_n} \varphi_n\Big) \xi_{{sn^2}}^n(n)ds. \vphantom{\bigg(}
\end{split}
\end{equation*}
Now to close the field let us sum and subtract, inside the previous sum, the term $\Delta_n \varphi_{n-1} \xi_{sn^2}^n(n)$ so that the last expression becomes:
\begin{equation*}\begin{split}
\int_0^t
\frac1n \sum_{x=1}^n  (T_n \xi_{{sn^2}}^n)(x) \varphi_x ds &=\int_0^t \frac{1}{n} \sum_{x=1}^{n-1} {D_n} \; \tilde{\tilde\Delta}_n\varphi_x \; \xi_{{sn^2}}^n(x) ds \\
& +\int_0^t\frac 1n \Big( {D_n\; n^2}\big( \varphi_1 - \varphi_0\big) + n\alpha_n \varphi_1\Big) \xi_{{sn^2}}^n(1)ds \vphantom{\bigg(} \\
&+\int_0^t\frac 1n \Big(  {D_n\; n^2} \big( \varphi_{n-1} - \varphi_{n}-\tilde{\tilde\Delta}_n\varphi_n \big) +n{\alpha_n} \varphi_n\Big) \xi_{{sn^2}}^n(n) ds. \vphantom{\bigg(} 
\end{split}\end{equation*}
where $\tilde{\tilde\Delta}_n$ is a slightly different approximation of the discrete  Laplacian  defined as 
\begin{equation}
\label{eq:lap3}
\tilde{\tilde\Delta}_n\varphi\big(\tfrac x n\big) = \begin{cases}
\Delta_n\varphi\big(\tfrac x n\big) & \text{ if } x \in\{1,\dots,n-1\},\\
\Delta_n\varphi\big(\tfrac {n-1} n\big) & \text{ if } x = n.
\end{cases}
\end{equation}
Putting everything together, we rewrite \eqref{eq:mar_curr} as 
\begin{align} \mc J_t^n(\varphi)-\mc J_0^n(\varphi) &{- D_n}  \int_0^t \mc J_s^n(\tilde{\tilde\Delta}_n\varphi)ds \label{eq:CH0}\\ 
&- \frac 1n\Big( {D_n\; n^2}\big(\varphi\big(\tfrac 1n\big)-\varphi(0)\big)+ n\alpha_n\varphi\big(\tfrac1n\big)\Big)\int_0^t \xi_{{sn^2}}^n(1) ds \label{eq:CH1}\\
& - \frac 1n\Big({D_n\; n^2} \big(3\varphi\big(\tfrac{n-1}{n}\big)-2\varphi(1)-\varphi\big(\tfrac{n-2}{n}\big)\big)+n\beta_n\varphi(1)\Big) \int_0^t \xi_{{sn^2}}^n(n) ds. \label{eq:CH2}
\end{align}
We note that last expression is a martingale whose  quadratic variation  is given by
\begin{multline}\label{eq:QV-CH}
\int_0^t \frac{E^2}{n} \sum_{x=1}^n\big(\varphi\big(\tfrac x n\big)\big)^2\big(\xi_{{sn^2}}^n(x)\big)^2 \Big[\eta_{sn^2}(x)\big(1-\eta_{sn^2}(x-1)\big) \\ + \eta_{sn^2}(x-1)\big(1-\eta_{sn^2}(x)\big) e^{-\theta_n/\sqrt n}\Big]ds.
\end{multline}
Note that in \eqref{eq:CH2} $$\big(3\varphi\big(\tfrac{n-1}{n}\big)-2\varphi(1)-\varphi\big(\tfrac{n-2}{n}\big)\big)=-\tfrac{1}{n}\nabla \varphi(1)+ O(n^{-2}).$$

\subsection{Asymptotic limit}

From \eqref{eq:CH0}--\eqref{eq:CH2}, one sees that there are two natural ways to close the martingale problem. First, note that if $\varphi \in \mc C^\infty([0,1])$, then $\tilde{\tilde\Delta}_n\varphi$ is an approximation of $\Delta\varphi$. Moreover,
\begin{itemize}\item if $\varphi$ satisfies the Robin boundary condition 

\begin{enumerate}[(i)]
\item $\nabla\varphi(0) = -\frac{E^2}{8}\varphi(0)$, then \eqref{eq:CH1} is of order $1/n$ in $L^2(\bb P_\rho)$ and therefore vanishes as $n\to\infty$;
\medskip

\item $\nabla\varphi(1)= \frac{E^2}{8}\varphi(1)$, then \eqref{eq:CH2} is also of order $1/ n$ and vanishes. \end{enumerate}
\medskip

\item If, however, $\varphi \in \SN$, then $n(\varphi_1-\varphi_0) \to 0$ as $n\to\infty$, and therefore only one term remains in \eqref{eq:CH1}, which reads
\[  \alpha_n  \varphi\big(\tfrac 1 n\big)\int_0^t \xi_s^n(1)ds  \underset{n\to\infty}{\simeq} \tfrac{E^2}{8} \varphi\big(\tfrac 1 n\big)\int_0^t\xi_s^n(1)ds. \]
In the macroscopic limit, the last term will correspond to 
\[ \tfrac{E^2}{8} \int_0^t  \Phi_s(0)\varphi(0)ds\] in the definition \eqref{eq:she-rob-weak} of the solution to the SHE with Robin boundary condition, as soon as one proves convergence of $\xi_{{tn^2}}^n$ in ${\mc D}([0,T],\mc C([0,1]))$. \end{itemize}

\subsection{Exponential moments and quadratic variation} 

Finally, one might check that the quadratic variation \eqref{eq:QV-CH} converges to
\[\frac{E^2 t}{2} \int_0^1 |\Phi_s(u)|^2\varphi^2(u)du,\] where $\Phi_\cdot$ is the limit of the current field $\mc J_\cdot^n$ in $\mc D([0,T],\mc C([0,1])$. Heuristically, this is indeed the case if one is able to replace $\xi_{sn^2}^n(x)^2$ in \eqref{eq:QV-CH} with $(\bb E[\xi_{sn^2}^n(x)])^2$. This could be proved by using some ideas taken from \cite[Lemma 4.3]{GLM} which permit to control all the exponential moments
\[\sup_{x\in\{1,\dots,n\}} \bb E_\rho \big[ \big(\xi_{sn^2}^n(x)-\bb E[\xi_{sn^2}^n(x)]\big)^k\big],\]
with $k\in\mathbb N$. 

\section*{Acknowledgements}

We would like to thank Ivan Corwin and Hao Shen for sending and discussing with us their work~\cite{Corwin2016} when it was still a draft.

This work has been supported by the French Ministry of Education through the grant  EDNHS ANR-14-CE25-0011.  PG thanks FCT/Portugal for support through the project UID/MAT/ 04459/2013. NP thanks the DFG for financial support via Research Unit FOR 2402. MS thanks Labex CEMPI (ANR-11-LABX-0007-01) for its support.

This project has received funding from the European Research Council (ERC) under  the European Union's Horizon 2020 research and innovative programme (grant agreement  No 715734). 

This work was finished during the stay of PG and MS at Institut Henri Poincar\'e - Centre Emile Borel during the trimester "Stochastic Dynamics Out of Equilibrium". PG and MS thank this institution for hospitality and support.

\bibliography{bibliography}
\bibliographystyle{plain}

\end{document}